\documentclass[11pt,times]{amsart}

\usepackage{tikz}
\usetikzlibrary{arrows,snakes,backgrounds}
\usepackage{graphicx}
\usepackage{subfigure}               
\usepackage{relsize}
\usepackage[foot]{amsaddr}
\usepackage[all]{xy}
\usepackage[english]{babel}
\usepackage[margin=1in]{geometry}
\usepackage[utf8]{inputenc}
\usepackage{amsmath}
\usepackage{amsthm}
\usepackage[normalem]{ulem}
\usepackage{tikz-cd}
\usepackage[new]{old-arrows}
\usepackage{hyperref}
\hypersetup{
    colorlinks=true, 
    linkcolor=blue, 
    urlcolor=red, 
    linktoc=all, 
    citecolor=blue
}
\usepackage{framed}
\usepackage{amsfonts}
\usepackage{graphicx}
\usepackage{amssymb}
\usepackage[colorinlistoftodos]{todonotes}
\usepackage{indentfirst}
\usepackage{mathtools}
\usepackage{stmaryrd}
\usepackage{bbm}
\usepackage{algorithm} 
\usepackage{algpseudocode}
\usepackage{thmtools}
\usepackage{thm-restate} 

\makeatletter 
\@namedef{subjclassname@1991}{Mathematics Subject Classification}
\makeatother

\newtheorem{theorem}{Theorem}
\newtheorem{atheorem}{Theorem}[section]
\newtheorem{alemma}{Lemma}[section]
\newtheorem{adefinition}{Definition}[section]
\newtheorem{aremark}{Remark}[section]
\newtheorem{aproposition}[atheorem]{Proposition}
\newtheorem{acorollary}{Corollary}[section]
\newtheorem{aexample}{Example}[section]
\newtheorem{proposition}{Proposition}[section]
\newtheorem{corollary}[proposition]{Corollary}
\newtheorem{lemma}[proposition]{Lemma}
\newtheorem{remark}[proposition]{Remark}
\newtheorem{example}[proposition]{Example}

\theoremstyle{definition}
\newtheorem{definition}[proposition]{Definition}
\newtheorem{claim}{Claim}

\newtheorem{conjecture}{Conjecture}

\newcommand{\mX}{\mathcal{X}}
\newcommand{\mY}{\mathcal{Y}}

\newcommand{\card}[1]{\mathrm{card}(#1)}
\newcommand{\diamna}{\mathrm{diam}}
\newcommand{\supp}{\mathrm{supp}}
\newcommand{\vol}{\mathrm{vol}}
\newcommand{\nvol}{\mathrm{nvol}}

\newcommand{\mw}{\mathcal{M}_w}
\newcommand{\mwtr}{\mathcal{M}_w^{\mathrm{tr}}}
\newcommand{\pr}{\mathrm{pr}}
\newcommand{\nr}{\mathrm{nr}}
\newcommand{\cmdsop}{\mathfrak{K}}
\newcommand{\tr}{\mathrm{Tr}}
\newcommand{\trng}{\mathrm{Tr}_{\mathrm{neg}}}
\newcommand{\dis}{\mathrm{dis}}

\newcommand{\N}{\mathbb{N}}
\newcommand{\Z}{\mathbb{Z}}

\newcommand{\R}{\mathbb{R}}

\newcommand{\Sp}{\mathbb{S}}

\author{Sunhyuk Lim}
\address{Sungkyunkwan University (SKKU), Suwon-si, Gyeonggi-do, Republic of Korea.}
\email{lsh3109@skku.edu}

\author{Facundo M\'emoli}
\address{The Ohio State University, Columbus, Ohio, USA.}
\email{facundo.memoli@gmail.com}

\begin{document}
\title{Classical Multidimensional Scaling on Metric Measure Spaces}
\subjclass{15A18, 51F99, 62H25}

\maketitle

\begin{abstract}
We study a generalization of the classical Multidimensional Scaling procedure (cMDS) which is applicable in the setting of  metric measure spaces. Metric measure spaces can be seen as natural ``continuous limits" of finite data sets. 
Given a metric measure space $\mX = (X,d_X,\mu_X)$, the generalized cMDS procedure involves studying an operator which may have infinite rank, a possibility which leads to studying its traceability.
\newline We  establish that several continuous exemplar metric measure spaces such as spheres and tori (both with their respective geodesic metrics) induce traceable cMDS operators, a fact which allows us to obtain the complete characterization of the metrics induced by their resulting cMDS embeddings. To complement this, we also exhibit a metric measure space whose associated cMDS operator is not traceable.
\newline Finally, we establish the stability of the generalized cMDS method with respect to the Gromov-Wasserstein distance.
\end{abstract}

\tableofcontents

\section{Introduction}\label{sec:intro}
The direct manipulation of high dimensional data is often difficult if not intractable. At the same time,  in the past few decades, many areas of science (e.g., neuroscience, climate science,  network science, etc) have  increasingly been confronted with large volumes of high dimensional data whose acquisition or generation has been facilitated by rapid technological advances.  Dimension reduction techniques have become central to data science in the past several decades due to their ability to obtain low-dimensional data representations which retain  meaningful intrinsic properties of the original data. 

When a dataset is modeled as a collection of points in some Euclidean space, principal component analysis (PCA) provides a natural change of basis which can be used for ulterior processing such as dimension reduction \cite{pearson1901}.

Multidimensional Scaling methods (MDS) are a family of \emph{dimension reduction techniques} that are applicable to data represented as a dissimilarity or distance matrix which may or may not arise from Euclidean distances. Given a dissimilarity matrix, MDS methods produce a configuration of points in Euclidean space whose interpoint distance matrix approximates, according to different criteria, the input dissimilarity matrix. Some well known MDS methods are Sammon's mapping, metric MDS  and  classical MDS; see \cite[Sections 2.2 and 2.4]{cox2000multidimensional}, \cite[Chapter 14]{mardiamultivariate}, \cite[Sections 14.8 and 14.9]{hastie2009elements}.

Classical Multidimensional Scaling (cMDS, from now on)  satisfies the property that its output  coincides with that of PCA whenever the input dissimilarities arise from Euclidean distances \cite[Section 14.3]{mardiamultivariate}. 
While initially developed for visualization of data in Behavioral Science back in the mid-to-late 1930s, thus predating the current Data Science era, cMDS serves as a method for dimension reduction \cite[Section 2.2.3]{cox2000multidimensional}.\footnote{There, the authors say ``$\ldots$  appropriate as an exploratory data technique for dimension reduction."}  cMDS is closely related to the ISOMAP method  \cite{tenenbaum2000global} for nonlinear dimension reduction. The latter method consists of two main internal procedures: first, a  step which  estimates pairwise geodesic distances and a second step that applies cMDS  to the  distance matrix resulting from the first step.

From a mathematical perspective, the goal of cMDS is to find an \emph{optimal} embedding of a finite metric space $(X,d_X)$ into Euclidean space. This optimality condition is encoded through the  minimization of a certain notion of distortion (cf. Theorem \ref{thm:optimalthm}). The following result proved by Schoenberg in the early 1930s \cite{schoenberg1935remarks} (cf. Theorem \ref{thm:Schoenberg}) can be regarded as a cornerstone of the theory supporting cMDS:

\begin{quote}
\textit{A finite metric space $\mX=(X=\{x_1,\dots,x_n\},d_X)$ can be embedded into a Euclidean space if and only if the matrix $K_\mX:=((K_\mX)_{ij})$, where $$(K_\mX)_{ij}:=-\frac{1}{2}\left(d_X^2(x_i,x_j)-\frac{1}{n}\sum_{s=1}^n d_X^2(x_i,x_s)-\frac{1}{n}\sum_{r=1}^n d_X^2(x_r,x_j)+\frac{1}{n^2}\sum_{r,s=1}^n d_X^2(x_r,x_s)\right),$$ is positive semi-definite.}
\end{quote}

Schoenberg's theorem not only provides a direct method for ascertaining the embedability of a given finite metric space into some Euclidean space, but it also  reveals that the obstacle to achieving this dwells in the presence of negative eigenvalues of the matrix $K_\mX$. In this way it  
leads to one idea about how to produce an embedding of $\mX$ to Euclidean space by abandoning the negative eigenvalues of $K_\mX$, which is exactly what cMDS does (cf. Algorithm \ref{alg:cMDSfin}).

Schoenberg's theorem was independently rediscovered by scientists working in psychophysics  \cite{richardson1938multidimensional,young1938discussion} whose original motivation came from the area of \emph{sensory analysis}. More precisely, their goal was to estimate the dimensionality of the stimulus space, as well as the positions of the stimuli within this space \cite{mead1992review}. A few years later, Torgerson and Gower gave the formulation of cMDS that we know today and provided a supporting mathematical theory  \cite{torgerson1952multidimensional,torgerson1958theory,gower1966some}.

The list of dimension reduction techniques has been constantly growing. Some notable examples are isomap \cite{tenenbaum2000global}, locally linear embeddings \cite{roweis2000nonlinear}, Laplacian eigenmaps \cite{belkin2003laplacian}, diffusion maps \cite{coifman2006diffusion}, Hessian eigenmaps \cite{donoho2003hessian}, and more recently t-SNE \cite{tsne} and UMAP \cite{umap}. While their differences mainly come from how the distance (or, proximity) of  data points is computed, these methods all attempt to preserve the local structure of data.

\subsection*{Contributions}
The cMDS method has been mostly considered through its applicability to data analysis tasks, which means that its understanding has been mostly restricted to the case of finite datasets. Properties such as its \emph{stability} and \emph{consistency} (i.e. the suitably convergent behavior of the result of applying cMDS to a sample of a space as the cardinality of the sample increases) have not yet  been fully clarified.

In order to tackle these and other questions, in this paper we study a generalization of  Schoenberg's result to the \emph{metric measure space} setting \cite{gromov2007metric,memoli2011gromov} which will permit (1) modeling both these ideal limiting objects and their finite approximations within a flexible framework and (2) reasoning about such questions in an effective manner.

A \emph{metric measure space}, or mm-space for short, is a triple $(X,d_X,\mu_X)$ where $\mX=(X,d_X)$ is a  metric space and $\mu_X$ is a fully supported Borel probability measure on $X$. In this paper we will concentrate on the analysis of compact mm-spaces and will therefore slightly abuse terminology: mm-space will always refer to compact mm-spaces unless specified otherwise.

In this setting, we aim to quantify how much distortion will be incurred when trying to embed $\mX$ into  Euclidean space of a given dimension via a certain generalized formulation of the cMDS procedure. This extension of cMDS to mm-spaces was first considered in \cite{kassab2019multidimensional,adams2020multidimensional} where the authors were able to compute the spectrum of the cMDS operator for the case of the circle $\Sp^1$ with geodesic distance and normalized length measure. They also formulated a number of questions, such as extending the analysis to the case of spheres with arbitrary dimension (with geodesic distance and normalized volume measure) and  establishing the stability of the cMDS procedure. The results in the present paper fully answer those and other questions about cMDS on continuous spaces.

When studying the spectral properties of the generalized cMDS operator in the setting of possibly continuous mm-spaces (such as spheres with their geodesic distance), one must pay careful attention to the possibility that  the rank of the generalized cMDS operator can be infinite. This leads to considering  the collection of mm-spaces with \emph{traceable} cMDS operators. It turns out that the traceability of the cMDS operator is a natural condition  permitting  the study of the more specialized cMDS embedding into $\ell^2$, which is an infinite dimensional analogue of the usual cMDS embeddings into finite dimensional Euclidean spaces (see Proposition \ref{prop:traceclasscmds}). Understanding the cMDS embedding into $\ell^2$ is important because it expresses the limiting behaviour of the usual cMDS embedding into low-dimensional Euclidean spaces in the sense that, in the $\ell^2$ setting, cMDS loses the least amount of geometric information  (see item (2) of Proposition \ref{prop:generrorbdd}).

Ascertaining whether the cMDS operator associated to a given non-euclidean mm-space is traceable (i.e. in trace class) or not is far from trivial.  In this paper we do this for some concrete examples\footnote{Note that all finite mm-spaces induce traceable cMDS operators.}, including spheres of arbitrary dimension with geodesic distances, tori and metric graphs (with their geodesic distances). In particular, we compute all eigenvalues and eigenfunctions of the cMDS operator for spheres $\Sp^{d-1}$ with its geodesic distance $d_{\Sp^{d-1}}$ and  normalized volume measure (i.e. regarded as a mm-space), and establish that the cMDS operator is indeed trace class.

Furthermore, we fully characterize the metric  on $\Sp^{d-1}$ induced by the cMDS procedure:  we prove that this metric precisely coincides  with $ \sqrt{\pi\cdot d_{\Sp^{d-1}}}$ (cf. Theorem \ref{thm:cMDSSd-1dist}). 

In order to highlight the difficulty in establishing that a given mm-space induces a traceable cMDS operator, we describe the construction of a pathological  space which induces a non-traceable cMDS operator (cf. \S \ref{sec:non-trace-class}). We also formulate a number of questions suggested by our analysis.
 
Finally, we  prove results which clarify the stability properties of the cMDS procedure with respect to the Gromov-Wasserstein distance. 

\medskip

This paper is part of the first author's PhD dissertation \cite[Chapter 5]{lim2021thesis}.

\medskip
Kroshnin, Stepanov and Trevisan  contemporaneously and independently developed \cite{cmds-stepanov}  containing related results. Their paper has been recently published \cite{kroshnin2022infinite} (see \S\ref{sec:discussion} for a comparison).

\subsection*{Organization}
In \S\ref{sec:fms}, we provide some necessary definitions and results on the classical multidimensional scaling on finite metric spaces.

In \S\ref{sec:gencMDS}, we consider a generalized version of cMDS for mm-spaces and prove some basic properties. We define a notion of distortion involving the negative eigenvalues of the cMDS operator, and show that it somehow measures  non-flatness of a mm-space. Also, we prove that this generalized cMDS operator is the optimal choice in a certain rigorous sense. Next, we show that the generalized cMDS procedure (mapping any mm-space into $\ell^2$) is well-defined whenever the generalized cMDS operator is traceable. Moreover, we provide an example of a non-compact mm-space associated to a non-traceable cMDS operator. Lastly, we study how a certain notion of thickness of Euclidean data interacts with the embeddings produced by the cMDS operator.

In \S\ref{sec:ntrvalexmple}, we analyze the behaviour of the generalized cMDS operator on a  variety of  mm-spaces which include the circle $\Sp^1$,  products such as the $n$-torus, and also metric graphs.

In \S\ref{sec:Spd-1}, we carefully analyze the spectrum of the generalized cMDS operator for  higher dimensional spheres $\Sp^{d-1}$ ($d\geq 3$) with their geodesic distance and prove that this operator is in  trace class. Furthermore, we establish a precise relationship between the geodesic metric of $\Sp^{d-1}$ and the Euclidean metric resulting from applying the generalized cMDS to $\Sp^{d-1}$.

In \S\ref{sec:stability}, we prove that, in a certain sense, the generalized cMDS procedure is stable with respect to the Gromov-Wasserstein distance and briefly discuss an application to proving the consistency of cMDS.

In \S\ref{sec:discussion},we provide a detailed comparison between our paper and the closely related recent work \cite{kroshnin2022infinite} by Kroshnin, Stepanov and Trevisan. Additionally, we explore several potential avenues for future research that expand upon the concepts presented in this paper.

In Appendix \S\ref{sec:otherproofs} we provide several relegated proofs. In Appendix \S\ref{sec:sec:functanal} we provide an account of concepts from the  spectral theory of operators which are used throughout the paper. In Appendix \S\ref{sec:sphehar}, we collect some of standard results on the spherical harmonics which will be employed in order to analyze the cMDS embedding of $\Sp^{d-1}$.


\section{cMDS: The case of finite metric spaces}\label{sec:fms}
Throughout this paper, $\R^k$ will denote $k$-dimensional Euclidean space and, as an abuse of notation, $\Vert\cdot\Vert$ will  denote both of the Euclidean norm and the metric induced by it.\medskip

In this section we will provide a terse overview of cMDS on finite metric spaces. The main reference for this subsection is \cite[Chapter 14]{mardiamultivariate}. Throughout this section we will also intersperse restricted versions of novel results that we prove in this paper.
\medskip

For each $n\in\mathbb{Z}_{>0}$, let $\mathcal{M}_n$ be the collection of all finite metric spaces $\mX=(X,d_X)$ with $\card{X}=n$.  Schoenberg's Theorem (Theorem \ref{thm:Schoenberg}) provides a criterion for checking whether a finite metric space $\mX=(X,d_X)\in\mathcal{M}_n$ admits an isometric embedding into Euclidean space $\R^k$. We need some preparation before stating this theorem.

\subsection{The formulation of cMDS on finite metric spaces}\label{sec:sec:cmdsfin}
For a finite metric space $\mX=(X=\{x_1,\dots,x_n\},d_X)\in\mathcal{M}_n$, consider an $n$ by $n$ matrix $A_\mX$ where $(A_\mX)_{ij}:=d_X^2(x_i,x_j)$ for each $i,j=1,\dots,n$. Next, construct another $n$ by $n$ matrix
$$K_\mX:=-\frac{1}{2}H_nA_\mX H_n$$
where $H_n:=I_n-\frac{1}{n}e_ne_n^\mathrm{T}$, $I_n$ is the $n$ by $n$ identity matrix, and $e_n$ is the $n$ by $1$ column vector where every entry is $1$.

Let us collect some of simple properties of $A_\mX$ and $K_\mX$.
\begin{lemma}\label{lemma:simpletech}
$\mX=(X,d_X)\in\mathcal{M}_n$ is given. $A_\mX$ and $K_\mX$ are as before. Then,
\begin{enumerate}
    \item $A_\mX$ and $K_\mX$ are symmetric matrices. 
    
    \item $(K_\mX)_{ij}=-\frac{1}{2}\left((A_\mX)_{ij}-\frac{1}{n}\sum_{s=1}^n (A_\mX)_{is}-\frac{1}{n}\sum_{r=1}^n (A_\mX)_{rj}+\frac{1}{n^2}\sum_{r,s=1}^n (A_\mX)_{rs}\right)$ for each $i,j=1,\dots,n$.
    
    \item $(A_\mX)_{ij}=d_X^2(x_i,x_j)=(K_\mX)_{ii}+(K_\mX)_{jj}-2(K_\mX)_{ij}$ for each $i,j=1,\dots,n$.
    
    \item $e_n$ is an eigenvector of $K_\mX$ with zero eigenvalue. In particular, $\mathrm{dim}(\ker( K_\mX))\geq 1$.
    
    \item If $v\in\R^n$ is an eigenvector for a nonzero eigenvalue of $K_\mX$, then $\sum_{i=1}^nv(i)=0$.
\end{enumerate}
\end{lemma}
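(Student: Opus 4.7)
The plan is to prove the five items in order, since each builds on the previous, and all of them reduce to elementary linear algebra together with two structural observations about the centering matrix: $H_n$ is symmetric (being $I_n$ minus a symmetric rank-one projection) and $H_n e_n = e_n - \frac{1}{n} e_n(e_n^\mathrm{T} e_n) = 0$.

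For (1), symmetry of $A_\mX$ is immediate from $d_X^2(x_i,x_j) = d_X^2(x_j,x_i)$, and symmetry of $K_\mX$ follows from $K_\mX^\mathrm{T} = -\tfrac{1}{2} H_n^\mathrm{T} A_\mX^\mathrm{T} H_n^\mathrm{T} = -\tfrac{1}{2} H_n A_\mX H_n = K_\mX$.

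For (2), I would simply expand $H_n A_\mX H_n$ using $H_n = I_n - \tfrac{1}{n} e_n e_n^\mathrm{T}$, noting that left/right multiplication by $\tfrac{1}{n} e_n e_n^\mathrm{T}$ replaces rows/columns by averages. A direct computation then yields the stated identity. For (3), I would apply (2) to each of $(K_\mX)_{ii}$, $(K_\mX)_{jj}$, $(K_\mX)_{ij}$, use $(A_\mX)_{ii}=(A_\mX)_{jj}=0$ and the symmetry of $A_\mX$ (to identify row sums with column sums), and watch the cross-terms cancel, leaving $(A_\mX)_{ij} = d_X^2(x_i,x_j)$.

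For (4), since $H_n e_n = 0$, we immediately get $K_\mX e_n = -\tfrac{1}{2} H_n A_\mX H_n e_n = 0$, so $e_n \in \ker K_\mX$ and $\dim(\ker K_\mX) \geq 1$. For (5), I would use that $e_n^\mathrm{T} H_n = (H_n e_n)^\mathrm{T} = 0$, hence $e_n^\mathrm{T} K_\mX = 0$; if $K_\mX v = \lambda v$ with $\lambda \neq 0$, then $0 = e_n^\mathrm{T} K_\mX v = \lambda \, e_n^\mathrm{T} v$, forcing $\sum_{i=1}^n v(i) = e_n^\mathrm{T} v = 0$.

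The whole lemma is bookkeeping rather than mathematics; the only place one must be slightly careful is (3), where six terms need to be tracked and collapsed via the symmetry of $A_\mX$, so that is the main (if modest) obstacle.
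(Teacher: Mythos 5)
Your proof is correct, and there is nothing to compare it against: the paper states Lemma \ref{lemma:simpletech} without proof (these are standard facts from the classical cMDS literature, cf.\ \cite[Chapter 14]{mardiamultivariate}), so your write-up simply supplies the routine verification the authors chose to omit. All five items check out — in particular you correctly isolate the two structural facts ($H_n^\mathrm{T}=H_n$ and $H_ne_n=0$) that drive (1), (4), and (5), and your expansion in (2) and the cancellation in (3), using $(A_\mX)_{ii}=0$ and the symmetry of $A_\mX$ to identify the $i$-th row and column sums, are exactly right.
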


\begin{remark}
Observe that, by item (2) of Lemma \ref{lemma:simpletech}, we  have that the trace of $K_\mX$ satisfies
$$\tr(K_\mX)=\frac{1}{2n}\sum_{i,j=1}^n d_X^2(x_i,x_j).$$
Hence, for any non-singleton metric space $\mX=(X,d_X)$, $K_\mX$ must have at least one strictly positive eigenvalue.
\end{remark}

\begin{theorem}[Schoenberg's Theorem \cite{schoenberg1935remarks,young1938discussion,mardiamultivariate}]\label{thm:Schoenberg}
Let $(X=\{x_1,\dots,x_n\},d_X)\in\mathcal{M}_n$ and define $K_\mX$ as before. Then, $X$ is isometric to some $\{z_1,\dots,z_n\}\subset\R^k$ for some positive integer $k$ if and only if $K_\mX$ is positive semi-definite.
\end{theorem}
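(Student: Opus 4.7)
The plan is to prove the theorem by interpreting $K_\mX$ as (up to a centering correction) the Gram matrix of the points realizing the embedding.

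For the \emph{necessity} direction, suppose $\mX$ embeds isometrically as $\{z_1,\dots,z_n\}\subset\R^k$, so that $d_X^2(x_i,x_j)=\Vert z_i-z_j\Vert^2$. Since translations are isometries of $\R^k$, I may replace each $z_i$ by $z_i-\bar z$, where $\bar z := \frac{1}{n}\sum_{s=1}^n z_s$, and assume without loss of generality that $\sum_{i} z_i=0$. I would then plug the polarization identity $\Vert z_i-z_j\Vert^2=\Vert z_i\Vert^2+\Vert z_j\Vert^2-2\langle z_i,z_j\rangle$ into the explicit formula from Lemma~\ref{lemma:simpletech}(2). After expanding each of the four averages and using $\sum_s z_s=0$ to kill the cross terms of the form $\frac{1}{n}\sum_s\langle z_i,z_s\rangle$, everything collapses to $(K_\mX)_{ij}=\langle z_i,z_j\rangle$. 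Thus $K_\mX$ is a genuine Gram matrix, hence positive semi-definite.

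For the \emph{sufficiency} direction, assume $K_\mX\succeq 0$ and let $k:=\mathrm{rank}(K_\mX)$. By the spectral theorem for symmetric matrices, write $K_\mX=U\Lambda U^{\mathrm{T}}$ with $\Lambda=\mathrm{diag}(\lambda_1,\dots,\lambda_k,0,\dots,0)$ and $\lambda_i>0$. Define $z_i\in\R^k$ as the $i$-th row of $U\,\mathrm{diag}(\sqrt{\lambda_1},\dots,\sqrt{\lambda_k})$ (truncated to the nonzero block). Then by construction $\langle z_i,z_j\rangle=(K_\mX)_{ij}$. Invoking Lemma~\ref{lemma:simpletech}(3) gives
$$d_X^2(x_i,x_j)=(K_\mX)_{ii}+(K_\mX)_{jj}-2(K_\mX)_{ij}=\Vert z_i\Vert^2+\Vert z_j\Vert^2-2\langle z_i,z_j\rangle=\Vert z_i-z_j\Vert^2,$$
so the map $x_i\mapsto z_i$ is the desired isometric embedding of $\mX$ into $\R^k$.

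The main content is concentrated in the forward direction: the algebraic verification that the doubly-centered squared-distance matrix equals the Gram matrix of the centered embedding. It is a routine expansion, but requires clean bookkeeping of the four sums in Lemma~\ref{lemma:simpletech}(2) and the observation that centering the $z_i$'s turns all the $\frac{1}{n}$-averages into zero cross-products. Once this identity is in hand, the reverse direction is essentially a restatement of the spectral theorem combined with Lemma~\ref{lemma:simpletech}(3), and no further analytic difficulty arises.
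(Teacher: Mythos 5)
Your proof is correct and takes essentially the same route the paper relies on: your centered-Gram identity $(K_\mX)_{ij}=\langle z_i,z_j\rangle$ is the coordinate form of the factorization $K_\mX=(ZH_n)^{\mathrm{T}}(ZH_n)$ that Remark~\ref{rmk:rnkEuclid} attributes to the proof, and your spectral construction for the converse is precisely the embedding $\widehat{\Lambda}_k^{\frac{1}{2}}\widehat{V}_k^{\mathrm{T}}$ of Algorithm~\ref{alg:cMDSfin} combined with Lemma~\ref{lemma:simpletech}(3). Both directions check out, so nothing further is needed.
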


\begin{remark}\label{rmk:rnkEuclid}
The proof of this theorem reveals that, if $X$ is isometric to $Z=\{z_1,\dots,z_n\}\subset\R^k$, then the rank of $K_\mX$ is less than or equal to $k$ since $K_\mX=(ZH_n)^T(ZH_n)$ where $ZH_n$ is a $k$ by $n$ matrix (by interpreting $Z$ as a $k$ by $n$ matrix with columns $z_1^\mathrm{T},\dots,z_n^\mathrm{T}$).
\end{remark}

\begin{definition}
For each square symmetric matrix $A$ of size $n$,
$$\pr(A):=\text{the number of positive eigenvalues of }A\text{ counted with their multiplicity.}$$
\end{definition}

Of course, not every finite metric space can be isometrically embedded into Euclidean space. However, Schoenberg's result shows that the negative eigenvalues of $K_\mX$ are the obstruction to achieving this. Indeed, by eliminating such negative eigenvalues and by keeping the $k$ largest eigenvalues and their corresponding orthonormal eigenvectors, the classical cMDS procedure \cite{mardiamultivariate} (see item (6) in Algorithm \ref{alg:cMDSfin} below) gives a precise way of building a cMDS embedding  $\Phi_{\mX\!\!,k}:X\rightarrow \R^k$ together with its induced finite metric space $\widehat{\mX}_k:=(\Phi_{\mX\!\!,k}(X),\Vert\cdot\Vert)$: 

\begin{algorithm}
\caption{cMDS embeding for finite metric spaces}\label{alg:cMDSfin}
\begin{algorithmic}[1]
    \State \textbf{Input}: $\mX=(X,d_X)\in\mathcal{M}_n$ and a positive integer $k\leq \pr(K_\mX)$.
    
    \State Construct $A_\mX$ and $K_\mX$ as before.
    
    \State Compute the eigenvalues $\lambda_1\geq\dots\geq\lambda_{\pr(K_\mX)}>0=\lambda_{\pr(K_\mX)+1}\geq\dots\geq\lambda_n$ and corresponding orthonormal eigenvectors $v_1,\dots,v_n$ of $K_\mX$.
    
    \State Construct $\widehat{\Lambda}_k^{\frac{1}{2}}\,\widehat{V}_k^T$ where $\widehat{\Lambda}_k:=\mathrm{diag}(\lambda_1,\lambda_2,\dots,\lambda_k)$ and $\widehat{V}_k:=[v_1\dots v_k]$. 
    
    \State \textbf{Let $\Phi_{\mX\!\!,k}(x_i)\in\R^k$ be the $i$-th column vector of $\widehat{\Lambda}_k^{\frac{1}{2}}\,\widehat{V}_k^T$ for each $i=1,\dots,n$.}
    
    \State \textbf{Output}: the embedding $\Phi_{\mX\!\!,k}:X\rightarrow \R^k$, and its induced finite metric space $\widehat{\mX}_k.$
\end{algorithmic}
\end{algorithm}

For the case when $\mX\in\mathcal{M}_n$ and $k=n$, often we will use the notation $\widehat{\mX}$ instead of $\widehat{\mX}_n$.

\begin{remark}\label{rmk:cMDSfacts}
Here are some simple facts from the above cMDS procedure.
\begin{enumerate}
    \item $\Phi_{\mX\!\!,k}(x_i)=(\lambda_1^{\frac{1}{2}}v_1(i),\dots,\lambda_k^{\frac{1}{2}}v_k(i))^T$ for each $i=1,\dots,n$.
    
    \item The centroid of $\Phi_{\mX\!\!,k}(x_1),\dots,\Phi_{\mX\!\!,k}(x_n)$ is the origin $(0,....,0)\in \R^k$ by Lemma \ref{lemma:simpletech} item (5).
    
    \item $\lambda_1\geq\dots\geq\lambda_k$ are all positive eigenvalues of $K_{\widehat{\mX}_k}$ and all the other eigenvalues are zero.
\end{enumerate}
\end{remark}

\begin{example}\label{example:simplest}
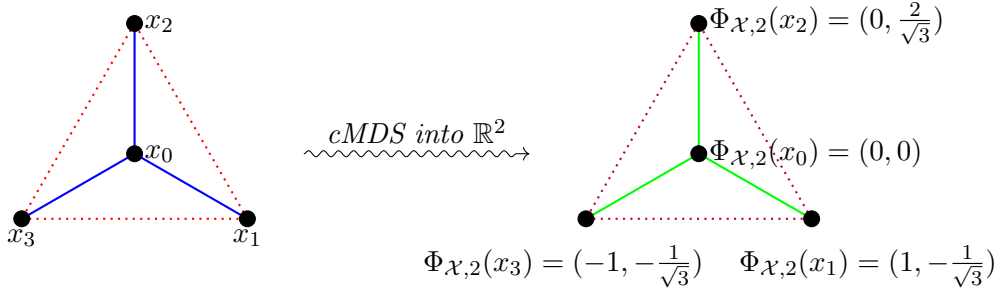
\begin{figure}[h!]
\centering
\begin{tikzpicture}[scale=1.5]
\node (a1) at (0,0) {};
\node (b1) at (0,1.155) {};
\node (c1) at (-1,-0.577) {};
\node (d1) at (1,-0.577) {};

\draw [color= blue, thick] (a1.center)--(b1.center);
\draw [color= blue, thick] (a1.center)--(c1.center); 
\draw [color= blue, thick] (a1.center)--(d1.center);
\draw [dotted,color= red, thick] (b1.center)--(c1.center);
\draw [dotted,color= red, thick] (c1.center)--(d1.center); 
\draw [dotted,color= red, thick] (d1.center)--(b1.center);

\filldraw (a1) circle[radius=2pt] node[anchor=west] {$x_0$};
\filldraw (b1) circle[radius=2pt] node[anchor=west] {$x_2$};
\filldraw (c1) circle[radius=2pt] node[anchor=north] {$x_3$};
\filldraw (d1) circle[radius=2pt] node[anchor=north] {$x_1$};

\node (a2) at (5,0) {};
\node (b2) at (5,1.155) {};
\node (c2) at (4,-0.577) {};
\node (d2) at (6,-0.577) {};

\draw [color= green, thick] (a2.center)--(b2.center);
\draw [color= green, thick] (a2.center)--(c2.center); 
\draw [color= green, thick] (a2.center)--(d2.center);
\draw [dotted,color= purple, thick] (b2.center)--(c2.center);
\draw [dotted,color= purple, thick] (c2.center)--(d2.center); 
\draw [dotted,color= purple, thick] (d2.center)--(b2.center);

\filldraw (a2) circle[radius=2pt] node[anchor=west] {$\Phi_{\mX\!\!,2}(x_0)=(0,0)$};
\filldraw (b2) circle[radius=2pt] node[anchor=west] {$\Phi_{\mX\!\!,2}(x_2)=(0,\frac{2}{\sqrt{3}})$};
\filldraw (c2) circle[radius=2pt] node at (3.8,-1) {$\Phi_{\mX\!\!,2}(x_3)=(-1,-\frac{1}{\sqrt{3}})$};
\filldraw (d2) circle[radius=2pt] node at (6.5,-1) {$\Phi_{\mX\!\!,2}(x_1)=(1,-\frac{1}{\sqrt{3}})$};

\draw [->,snake=snake,
segment amplitude=.4mm,
segment length=2mm,
line after snake=1mm] (1.5,0) -- (3.5,0)
node [above,text width=3cm,text centered,midway] {cMDS into $\R^2$};
\end{tikzpicture} 
\caption{Description of the cMDS procedure into $\R^2$ from Example \ref{example:simplest}. Blue lines indicate length $1$,  red lines indicate length $2$,  green lines indicate length $\frac{2}{\sqrt{3}}$, whereas  dotted purple lines indicate length $2$.}\label{fig:cMDSexample}
\end{figure}
Consider $(X=\{x_0,x_1,x_2,x_3\},d_X)\in\mathcal{M}_4$ such that $d_X(x_0,x_i)=1$ for each $i=1,2,3$ and $d_X(x_i,x_j)=2$ for each $i\neq j\in\{1,2,3\}$. Then,
$$A_\mX=\left(\begin{array}{cccc} 0 & 1 &1 &1\\ 1 & 0 &4 &4\\ 1 & 4 &0 &4\\ 1 & 4 &4 &0\end{array}\right)\mbox{  and  }
K_\mX=\frac{1}{16}\cdot\left(\begin{array}{cccc} -3 & 1 &1 &1\\ 1 & 21 &-11 &-11\\ 1 & -11 &21 &-11\\ 1 & -11 &-11 &21\end{array}\right)$$
The eigenvalues of $K_\mX$ are $2,2,0,-1/4$. Their corresponding mutually orthogonal eigenvectors are $(0,1,0,-1),(0,-\frac{1}{2},1,-\frac{1}{2}),(1,1,1,1),(-3,1,1,1)$. Therefore, the result of cMDS into $\R^2$ is the following:
$\Phi_{\mX\!\!,2}(x_0)=(0,0),\Phi_{\mX\!\!,2}(x_1)=(1,-\frac{1}{\sqrt{3}}),\Phi_{\mX\!\!,2}(x_2)=(0,\frac{2}{\sqrt{3}})$, and $\Phi_{\mX\!\!,2}(x_3)=(-1,-\frac{1}{\sqrt{3}})$.
\end{example}

Moreover, this $\widehat{\mX}_k$ is the optimal one in the following sense identified by Mardia \cite{mardia1978some}.\footnote{Note that in contrast to the `metric' formulation of multidimensional scaling, the underlying optimality criterion does not directly compare distance matrices, but instead compares the Gram matrices they induce.}

\begin{theorem}[\cite{mardiamultivariate,mardia1978some}]\label{thm:optimalthm}
Suppose $\mX=(X,d_X)\in\mathcal{M}_n$ and $k\leq \pr(K_\mX)$ are given. Then, for any positive semi-definite matrix $A$ of size $n$ with $\mathrm{rank}(A)\leq k$, we have
$$\Vert K_\mX-A \Vert_{\mathrm{F}}\geq\Vert K_\mX-K_{\widehat{\mX}_k} \Vert_{\mathrm{F}}$$
where $\Vert\cdot\Vert_{\mathrm{F}}$ is the Frobenius norm.
\end{theorem}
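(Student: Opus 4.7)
The plan is to reduce this to the classical Eckart–Young theorem by decomposing $K_\mX$ into its positive and negative spectral parts. Write the spectral decomposition
\[
K_\mX = \sum_{i=1}^n \lambda_i v_i v_i^T, \qquad \lambda_1 \geq \dots \geq \lambda_p > 0 \geq \lambda_{p+1} \geq \dots \geq \lambda_n,
\]
where $p := \pr(K_\mX)$ and $\{v_i\}$ is an orthonormal basis. Using Remark \ref{rmk:cMDSfacts}(2) (the cMDS embedding is centered) together with Remark \ref{rmk:rnkEuclid}, a direct computation yields $K_{\widehat{\mX}_k} = \widehat{V}_k\widehat{\Lambda}_k\widehat{V}_k^T = \sum_{i=1}^k \lambda_i v_i v_i^T$, namely the truncation of $K_\mX$ to its top-$k$ (necessarily positive, since $k\le p$) eigendirections. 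In particular,
\[
\Vert K_\mX - K_{\widehat{\mX}_k}\Vert_{\mathrm{F}}^2 = \sum_{i=k+1}^n \lambda_i^2.
\]

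Next, let $A$ be any positive semi-definite matrix of size $n$ with $\mathrm{rank}(A)\leq k$, and split $K_\mX = M_+ + M_-$ where $M_+ := \sum_{i=1}^p \lambda_i v_i v_i^T \succeq 0$ and $M_- := \sum_{i=p+1}^n \lambda_i v_i v_i^T \preceq 0$. Since $M_+$ and $M_-$ have disjoint spectral supports, $\langle M_+, M_-\rangle_{\mathrm{F}} = 0$, so expanding gives
\[
\Vert K_\mX - A\Vert_{\mathrm{F}}^2 = \Vert M_+ - A\Vert_{\mathrm{F}}^2 - 2\langle A, M_-\rangle_{\mathrm{F}} + \Vert M_-\Vert_{\mathrm{F}}^2.
\]
The key sign observation is that $A\succeq 0$ and $-M_-\succeq 0$ imply $\langle A, M_-\rangle_{\mathrm{F}} = \tr(AM_-) = -\tr\bigl((-M_-)^{1/2}A(-M_-)^{1/2}\bigr) \leq 0$, whence
\[
\Vert K_\mX - A\Vert_{\mathrm{F}}^2 \geq \Vert M_+ - A\Vert_{\mathrm{F}}^2 + \Vert M_-\Vert_{\mathrm{F}}^2.
\]

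The final step is the classical Eckart–Young theorem applied to the p.s.d.\ matrix $M_+$: its best rank-$\leq k$ approximation in Frobenius norm (with no sign constraint) is $\sum_{i=1}^k \lambda_i v_i v_i^T$, with squared error $\sum_{i=k+1}^p \lambda_i^2$. Since this minimizer is already p.s.d., it also solves the p.s.d.-constrained problem, giving $\Vert M_+ - A\Vert_{\mathrm{F}}^2 \geq \sum_{i=k+1}^p \lambda_i^2$. Combining,
\[
\Vert K_\mX - A\Vert_{\mathrm{F}}^2 \geq \sum_{i=k+1}^p \lambda_i^2 + \sum_{i=p+1}^n \lambda_i^2 = \sum_{i=k+1}^n \lambda_i^2 = \Vert K_\mX - K_{\widehat{\mX}_k}\Vert_{\mathrm{F}}^2,
\]
which is the desired inequality.

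The only delicate point, rather than a real obstacle, is the sign estimate $\langle A, M_-\rangle_{\mathrm{F}} \leq 0$; it is precisely what allows the negative spectral part of $K_\mX$ to be peeled off for free and reduces the problem to a p.s.d.\ approximation of a p.s.d.\ matrix, where Eckart–Young applies verbatim. The hypothesis $k \leq \pr(K_\mX) = p$ is used only to ensure that the top-$k$ eigenvalues of $K_\mX$ are strictly positive, so that $\widehat{\Lambda}_k^{1/2}$ is well-defined and the truncation captures $k$ genuine positive eigendirections.
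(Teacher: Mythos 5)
Your proof is correct, and it takes a genuinely different route from the one in the paper. The paper cites Mardia for the finite statement and proves only the generalization (Theorem \ref{thm:optimalthmgen}, Appendix \S\ref{sec:sec:genoptimal}): there the Frobenius/Hilbert--Schmidt norm is expanded via traces, $\Vert K_\mX - A\Vert^2 = \tr(\cmdsop_\mX^2)+\tr(\cmdsop^2)-2\tr(\cmdsop_\mX\cmdsop)$, and the cross term is bounded by a von Neumann--type trace inequality $\tr(\mathfrak{A}\mathfrak{B})\leq\sum_{i=1}^M\lambda_i\lambda_i'$ (Lemma \ref{lemma:TRdiagineq}), proved from scratch via doubly substochastic matrices, the Birkhoff--von Neumann theorem, and the rearrangement inequality. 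You instead split $K_\mX=M_++M_-$, discard the negative part for free via the sign estimate $\tr(AM_-)\leq 0$ (which is valid: with $B:=-M_-\succeq 0$, $\tr(AB)=\tr(B^{1/2}AB^{1/2})\geq 0$), and reduce to the unconstrained Eckart--Young theorem applied to the positive semi-definite matrix $M_+$; the observation that the minimizer of the unconstrained problem is itself p.s.d.\ handles the constraint at no cost. Note your sign step appears implicitly in the paper's argument too, as the dropped term $\sum_{j}\sum_{i}\lambda_j'\zeta_i\langle\phi_j',\psi_i\rangle^2\leq 0$ in the proof of Lemma \ref{lemma:TRdiagineq}, but the mechanism differs. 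What each approach buys: yours is shorter and more modular in finite dimensions, cleanly isolating the two ideas (negative spectrum is free to peel off; best low-rank approximation is spectral truncation) and outsourcing the second to a classical theorem; the paper's trace-inequality route avoids invoking Eckart--Young, yields the sharper intermediate bound $\Vert K_\mX-A\Vert_{\mathrm{F}}^2\geq\sum_{i=1}^{k}(\lambda_i-\lambda_i')^2+\sum_{i>k}\lambda_i^2+\sum_i\zeta_i^2$, and is engineered so that the same argument transfers verbatim to compact self-adjoint operators with possibly infinitely many positive eigenvalues, where the careful case analysis on $\pr(\mathfrak{A})=\infty$ and $\mathrm{rank}(\mathfrak{B})=\infty$ replaces what you would otherwise need from an operator-theoretic Eckart--Young--Mirsky theorem. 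Your identification $K_{\widehat{\mX}_k}=\sum_{i=1}^k\lambda_i v_iv_i^T$ and the resulting error $\sum_{i=k+1}^n\lambda_i^2$ are also correct, following from Remark \ref{rmk:cMDSfacts} exactly as you say.
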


Theorem \ref{thm:optimalthm} will be suitably generalized to the case of an arbitrary mm-space in Theorem \ref{thm:optimalthmgen}. This generalization will be crucial for establishing the stability results in \S \ref{sec:stability}.

\subsection{The metric distortion incurred by cMDS}\label{sec:sec:metdist}
Now we know that for a given $k\in\mathbb{N}$ the cMDS method gives an optimal embedding  into $\R^k$ (with respect to the Frobenius norm on the induced), the next natural goal is to estimate the metric distortion incurred during this process. It will turn out that there is a distinct sense in which distortion is controlled. 

To explain this phenomenon  assume that $k=\pr(K_\mX).$  We consider two different types of distortion: the $L^\infty$-metric distortion is the number $$\max_{i,j}\left|\Vert\Phi_{\mX\!\!,\pr(K_\mX)}(x_i)-\Phi_{\mX\!\!,\pr(K_\mX)}(x_j)\Vert-d_X(x_i,x_j)\right|$$
whereas the $L^2$-metric distortion is the number
$$\dis(\mX):=\left(\frac{1}{n^2} \sum_{i,j=1}^n\left|\Vert\Phi_{\mX\!\!,\pr(K_\mX)}(x_i)-\Phi_{\mX\!\!,\pr(K_\mX)}(x_j)\Vert^2-d_X^2(x_i,x_j)\right|\right)^{\frac{1}{2}}.$$

In what follows, for a  symmetric square matrix $A$, let $\trng(A)$ be the sum of the absolute values of the negative eigenvalues of $A$. Suppose $\mX=(X=\{x_1,\dots,x_n\},d_X)\in\mathcal{M}_n$ is given. Let $$\lambda_1\geq\dots\geq\lambda_{\pr(K_\mX)}>0=\lambda_{\pr(K_\mX)+1}\geq\dots\geq\lambda_n$$ be the eigenvalues of $K_\mX$ and $\{v_m\}_{m=1}^n$ be the corresponding orthonormal eigenvectors. Then, by item (3) of Lemma \ref{lemma:simpletech} and item (1) of Remark \ref{rmk:cMDSfacts}, it is easy to check that for any $i,j\in\{1,\ldots,n\}$:

\begin{itemize}
    \item $d_X^2(x_i,x_j)=\sum_{m=1}^n\lambda_m\,(v_m(i)-v_m(j))^2$, and
    
    \item $\Vert \Phi_{\mX\!\!,\pr(K_\mX)}(x_i)-\Phi_{\mX\!\!,\pr(K_\mX)}(x_j) \Vert^2=\sum_{m=1}^{\pr(K_\mX)}\lambda_m\,(v_m(i)-v_m(j))^2$.
\end{itemize}

 Because of these observations and through a direct calculation, it holds that:
\begin{equation}\label{eq:l-inf-fin}
0\leq\Vert\Phi_{\mX\!\!,\pr(K_\mX)}(x_i)-\Phi_{\mX\!\!,\pr(K_\mX)}(x_j)\Vert-d_X(x_i,x_j)\leq\sqrt{2\,\trng(K_\mX)},\end{equation}
for every $i,j=1,\dots,n$. The leftmost inequality implies that the cMDS embedding $\Phi_{\mX\!\!,\pr(K_\mX)}$ is \emph{non-contractive}, and the rightmost inequality implies that the $L^\infty$-metric distortion of $\Phi_{\mX\!\!,\pr(K_\mX)}$ is upper bounded by $\sqrt{2\,\trng(K_\mX)}$. However, the   two examples below demonstrate that $\sqrt{2\,\trng(K_\mX)}$ does not scale well with respect to the cardinality of $X$.

In contrast, the  $L^2$-metric distortion satisfies the \emph{equality} below (which is also derived through direct calculation via the spectral expansion mentioned above and by the leftmost inequality in equation (\ref{eq:l-inf-fin})) and is well behaved (see examples below):
$$\dis(\mX)=\sqrt{\frac{2\,\trng(K_\mX)}{n}}.$$
Proposition \ref{prop:generrorbdd} will  generalize both the non-contractive property and the relation between the $L^2$-metric distortion and the negative trace to mm-spaces satisfying suitable conditions.

\begin{example}[Paley graphs]\label{ex:PaleyL2natural}
Fix a prime $q$ that is congruent to $1$ modulo $4$ and let $G_q$ be the \textbf{Paley graph} with  $q$ vertices: two vertices $i,j \in\{1,\ldots,q\}$ form an edge if and only if the difference $i-j$ is a nonzero square in the field $\mathbb{F}_q$. It is well-known that the Paley graph $G_q$ on $q$ vertices is strongly regular such that: (1) the degree of each vertex equals  $(q-1)/2$, (2) each pair of adjacent vertices has $(q-5)/4$ common neighbors, and (3) each pair of nonadjacent neighbors has $(q-1)/4$ common neighbors \cite[\S 9.1.2]{brouwer2011spectra}. In particular, all shortest path distances on $G_q$ are $0,1,$ or $2$; see Figure \ref{fig:paley}.

Now, let $\mathcal{G}_q:=(V(G_q):=\{x_1,\dots,x_q\},d_{G_q})\in\mathcal{M}_q$ be the metric space such that $V(G_q)$ is the set of vertices of $G_q$ and $d_{G_q}$ is the shortest path metric. Then, by  direct computation one has the following equality (cf. Proposition \ref{prop:homogeneousspace}):
$$(K_{\mathcal{G}_q})_{ij}=-\frac{1}{2}(d_{G_q}^2(x_i,x_j)-c_q)$$
for any $i,j=1,\dots,q$ where $c_q:=\frac{5(q-1)}{2q}$.

Another well-known fact about $G_q$ is that the adjacency matrix of $G_q$ has eigenvalues $\frac{q-1}{2}$ (with multiplicity $1$) and $\frac{-1\pm\sqrt{q}}{2}$ (with multiplicity $\frac{q-1}{2}$ each) \cite[Proposition 9.1.1]{brouwer2011spectra}. Moreover, the eigenvectors of its adjacency matrix are either constant functions or are orthogonal to constant functions. The constant eigenvectors correspond to the eigenvalue $\frac{q-1}{2}$, whereas  eigenvectors orthogonal to constant functions correspond to the eigevalue pair $\frac{-1\pm\sqrt{q}}{2}$. 

With these facts and the strong regularity of $G_q$, one can easily check that (1) the eigenvalues of $K_{\mathcal{G}_q}$ are $0$ (with multiplicity $1$) and $\frac{5\pm 3\sqrt{q}}{4}$ (each with multiplicity $\frac{q-1}{2}$) and (2)  eigenvectors of $K_{\mathcal{G}_q}$ are either constant functions or orthogonal to constant functions: Constant eigenvectors correspond to the eigenvalue $0$, and the eigenvectors orthogonal to constant functions correspond to the eigenvalue pair $\frac{5\pm 3\sqrt{q}}{4}$. At any rate, it follows that $$\trng(K_{\mathcal{G}_q})=\frac{(3\sqrt{q}-5)(q-1)}{8}.$$

Now, for each $k\geq 1$,  let $\mX_k:=(X_k,d_{X_k})\in\mathcal{M}_{kq}$ be the finite metric space defined as follows. First let $\mathcal{G}_q^i:=(V(G_q^i),d_{G_q}^i)$ for $i=1,\dots,k$ be  $k$ copies of $\mathcal{G}_q$. Also, we let

\begin{itemize}
    \item $X_k:=\sqcup_{i=1}^k V(G_q^i)$,
    
    \item $d_{X_k}|_{V(G_q^i)\times V(G_q^i)} = d_{G_q}^i$ for each $i$, whereas, for $i\neq j$, we let $d_{X_k}|_{V(G_q^i)\times V(G_q^j)}\equiv\sqrt{c_q}$. 
\end{itemize}
Note that $d_{X_k}$ indeed defines a metric on $X_k$: we only need to check the triangle inequality for $d_{X_k}$. Since $\sqrt{c_q}\geq 1$ and the value of $d_{G_q}$ is at most $2$, the triangle inequality of $d_{X_k}$  must hold.

A calculation shows that for every eigenvector $v$ of $K_{\mathcal{G}_q}$ with eigenvalue $\lambda$, and for each $i=1,\dots,k$, the vector $f_i:X_k\rightarrow \R$ defined by
\begin{align*}
    f_i(x):=\begin{cases}v(x)&\text{if }x\in V(G_q^i)\\ 0&\text{otherwise}\end{cases}
\end{align*}
is an eigenvector of $K_{\mX_k}$ associated to the eigenvalue $\lambda$. This implies that $0$ (with multiplicity at least $k$) and $\frac{5\pm 3\sqrt{q}}{4}$ (each with multiplicity at least $\frac{k(q-1)}{2}$) are eigenvalues of $K_{\mX_k}$. Furthermore, since the total number of eigenvalues of $K_{\mX_k}$ is exactly $kq$, it must be that both $0$ is an eigenvalue with multiplicity exactly $k$ and $\frac{k(q-1)}{2}$ is an eigenvalue with multiplicity exactly $\frac{k(q-1)}{2}$. Hence, $\trng(K_{\mX_k})=k\cdot\trng(K_{\mathcal{G}_q})$.\medskip

Thus, $\sqrt{2\,\trng(K_{\mX_k})}\rightarrow\infty$ as $k\rightarrow\infty$, whereas $\sqrt{\frac{2\,\trng(K_{\mX_k})}{kq}}=\sqrt{\frac{2\,\trng(K_{\mathcal{G}_q})}{q}}<\infty$ for  $k\geq 1$.
\end{example}

In \S\ref{sec:non-trace-class} Paley graphs will provide building blocks for constructing  a discrete but infinite metric (measure) space whose cMDS operator is ill behaved. 

\begin{figure}
\centering
\begin{tikzpicture}
\node[draw,circle,minimum size=10,inner sep=0] (0) at (0.0,2.23606797749979) {};
\node[draw,circle,minimum size=10,inner sep=0] (1) at (2.1266270208801,0.6909830056250527) {};
\node[draw,circle,minimum size=10,inner sep=0] (2) at (1.3143277802978344,-1.8090169943749472) {};
\node[draw,circle,minimum size=10,inner sep=0] (3) at (-1.314327780297834,-1.8090169943749477) {};
\node[draw,circle,minimum size=10,inner sep=0] (4) at (-2.1266270208801,0.6909830056250522) {};
\draw (0) -- (1);
\draw (0) -- (4);
\draw (1) -- (2);
\draw (2) -- (3);
\draw (3) -- (4);
\end{tikzpicture}
\begin{tikzpicture}[scale=0.6]
\node[draw,circle,minimum size=10,inner sep=0] (0) at (0.0,3.605551275463989) {};
\node[draw,circle,minimum size=10,inner sep=0] (1) at (1.6755832257000804,3.1925571026612056) {};
\node[draw,circle,minimum size=10,inner sep=0] (2) at (2.967310527359157,2.0481865721226473) {};
\node[draw,circle,minimum size=10,inner sep=0] (3) at (3.579262747168659,0.4346011812347757) {};
\node[draw,circle,minimum size=10,inner sep=0] (4) at (3.371249006393944,-1.2785461027619776) {};
\node[draw,circle,minimum size=10,inner sep=0] (5) at (2.390922746209171,-2.6987938827668176) {};
\node[draw,circle,minimum size=10,inner sep=0] (6) at (0.862864898610517,-3.5007805082218253) {};
\node[draw,circle,minimum size=10,inner sep=0] (7) at (-0.8628648986105145,-3.5007805082218257) {};
\node[draw,circle,minimum size=10,inner sep=0] (8) at (-2.3909227462091693,-2.698793882766819) {};
\node[draw,circle,minimum size=10,inner sep=0] (9) at (-3.3712490063939438,-1.278546102761979) {};
\node[draw,circle,minimum size=10,inner sep=0] (10) at (-3.5792627471686593,0.4346011812347724) {};
\node[draw,circle,minimum size=10,inner sep=0] (11) at (-2.967310527359158,2.048186572122646) {};
\node[draw,circle,minimum size=10,inner sep=0] (12) at (-1.675583225700083,3.1925571026612043) {};
\draw (0) -- (1);
\draw (0) -- (3);
\draw (0) -- (4);
\draw (0) -- (9);
\draw (0) -- (10);
\draw (0) -- (12);
\draw (1) -- (2);
\draw (1) -- (4);
\draw (1) -- (5);
\draw (1) -- (10);
\draw (1) -- (11);
\draw (2) -- (3);
\draw (2) -- (5);
\draw (2) -- (6);
\draw (2) -- (11);
\draw (2) -- (12);
\draw (3) -- (4);
\draw (3) -- (6);
\draw (3) -- (7);
\draw (3) -- (12);
\draw (4) -- (5);
\draw (4) -- (7);
\draw (4) -- (8);
\draw (5) -- (6);
\draw (5) -- (8);
\draw (5) -- (9);
\draw (6) -- (7);
\draw (6) -- (9);
\draw (6) -- (10);
\draw (7) -- (8);
\draw (7) -- (10);
\draw (7) -- (11);
\draw (8) -- (9);
\draw (8) -- (11);
\draw (8) -- (12);
\draw (9) -- (10);
\draw (9) -- (12);
\draw (10) -- (11);
\draw (11) -- (12);
\end{tikzpicture}
\begin{tikzpicture}[scale=0.4]
\node[draw,circle,minimum size=10,inner sep=0] (0) at (0.0,5.385164807134504) {};
\node[draw,circle,minimum size=10,inner sep=0] (1) at (1.1576512491986188,5.259262646534101) {};
\node[draw,circle,minimum size=10,inner sep=0] (2) at (2.2611720126216626,4.887443209832366) {};
\node[draw,circle,minimum size=10,inner sep=0] (3) at (3.258962885846707,4.287092360641849) {};
\node[draw,circle,minimum size=10,inner sep=0] (4) at (4.104368276606656,3.486281837428654) {};
\node[draw,circle,minimum size=10,inner sep=0] (5) at (4.757857968430178,2.522456650221259) {};
\node[draw,circle,minimum size=10,inner sep=0] (6) at (5.188875509629234,1.4406841935587253) {};
\node[draw,circle,minimum size=10,inner sep=0] (7) at (5.377266999018945,0.29154694521086294) {};
\node[draw,circle,minimum size=10,inner sep=0] (8) at (5.31422345993755,-0.8712227142639043) {};
\node[draw,circle,minimum size=10,inner sep=0] (9) at (5.002692738204636,-1.9932549679141913) {};
\node[draw,circle,minimum size=10,inner sep=0] (10) at (4.457241664126903,-3.0220848346085916) {};
\node[draw,circle,minimum size=10,inner sep=0] (11) at (3.703374923702898,-3.909605373242746) {};
\node[draw,circle,minimum size=10,inner sep=0] (12) at (2.776342487852145,-4.6143171098383515) {};
\node[draw,circle,minimum size=10,inner sep=0] (13) at (1.7194913629524755,-5.103268506823038) {};
\node[draw,circle,minimum size=10,inner sep=0] (14) at (0.582238732998539,-5.353596740304247) {};
\node[draw,circle,minimum size=10,inner sep=0] (15) at (-0.5822387329985378,-5.353596740304247) {};
\node[draw,circle,minimum size=10,inner sep=0] (16) at (-1.7194913629524764,-5.103268506823038) {};
\node[draw,circle,minimum size=10,inner sep=0] (17) at (-2.7763424878521463,-4.614317109838351) {};
\node[draw,circle,minimum size=10,inner sep=0] (18) at (-3.703374923702899,-3.9096053732427456) {};
\node[draw,circle,minimum size=10,inner sep=0] (19) at (-4.457241664126902,-3.022084834608593) {};
\node[draw,circle,minimum size=10,inner sep=0] (20) at (-5.002692738204636,-1.9932549679141913) {};
\node[draw,circle,minimum size=10,inner sep=0] (21) at (-5.31422345993755,-0.8712227142639044) {};
\node[draw,circle,minimum size=10,inner sep=0] (22) at (-5.377266999018945,0.29154694521086283) {};
\node[draw,circle,minimum size=10,inner sep=0] (23) at (-5.188875509629233,1.4406841935587265) {};
\node[draw,circle,minimum size=10,inner sep=0] (24) at (-4.757857968430177,2.52245665022126) {};
\node[draw,circle,minimum size=10,inner sep=0] (25) at (-4.104368276606654,3.4862818374286553) {};
\node[draw,circle,minimum size=10,inner sep=0] (26) at (-3.2589628858467057,4.28709236064185) {};
\node[draw,circle,minimum size=10,inner sep=0] (27) at (-2.2611720126216643,4.887443209832365) {};
\node[draw,circle,minimum size=10,inner sep=0] (28) at (-1.1576512491986204,5.259262646534101) {};
\draw (0) -- (1);
\draw (0) -- (4);
\draw (0) -- (5);
\draw (0) -- (6);
\draw (0) -- (7);
\draw (0) -- (9);
\draw (0) -- (13);
\draw (0) -- (16);
\draw (0) -- (20);
\draw (0) -- (22);
\draw (0) -- (23);
\draw (0) -- (24);
\draw (0) -- (25);
\draw (0) -- (28);
\draw (1) -- (2);
\draw (1) -- (5);
\draw (1) -- (6);
\draw (1) -- (7);
\draw (1) -- (8);
\draw (1) -- (10);
\draw (1) -- (14);
\draw (1) -- (17);
\draw (1) -- (21);
\draw (1) -- (23);
\draw (1) -- (24);
\draw (1) -- (25);
\draw (1) -- (26);
\draw (2) -- (3);
\draw (2) -- (6);
\draw (2) -- (7);
\draw (2) -- (8);
\draw (2) -- (9);
\draw (2) -- (11);
\draw (2) -- (15);
\draw (2) -- (18);
\draw (2) -- (22);
\draw (2) -- (24);
\draw (2) -- (25);
\draw (2) -- (26);
\draw (2) -- (27);
\draw (3) -- (4);
\draw (3) -- (7);
\draw (3) -- (8);
\draw (3) -- (9);
\draw (3) -- (10);
\draw (3) -- (12);
\draw (3) -- (16);
\draw (3) -- (19);
\draw (3) -- (23);
\draw (3) -- (25);
\draw (3) -- (26);
\draw (3) -- (27);
\draw (3) -- (28);
\draw (4) -- (5);
\draw (4) -- (8);
\draw (4) -- (9);
\draw (4) -- (10);
\draw (4) -- (11);
\draw (4) -- (13);
\draw (4) -- (17);
\draw (4) -- (20);
\draw (4) -- (24);
\draw (4) -- (26);
\draw (4) -- (27);
\draw (4) -- (28);
\draw (5) -- (6);
\draw (5) -- (9);
\draw (5) -- (10);
\draw (5) -- (11);
\draw (5) -- (12);
\draw (5) -- (14);
\draw (5) -- (18);
\draw (5) -- (21);
\draw (5) -- (25);
\draw (5) -- (27);
\draw (5) -- (28);
\draw (6) -- (7);
\draw (6) -- (10);
\draw (6) -- (11);
\draw (6) -- (12);
\draw (6) -- (13);
\draw (6) -- (15);
\draw (6) -- (19);
\draw (6) -- (22);
\draw (6) -- (26);
\draw (6) -- (28);
\draw (7) -- (8);
\draw (7) -- (11);
\draw (7) -- (12);
\draw (7) -- (13);
\draw (7) -- (14);
\draw (7) -- (16);
\draw (7) -- (20);
\draw (7) -- (23);
\draw (7) -- (27);
\draw (8) -- (9);
\draw (8) -- (12);
\draw (8) -- (13);
\draw (8) -- (14);
\draw (8) -- (15);
\draw (8) -- (17);
\draw (8) -- (21);
\draw (8) -- (24);
\draw (8) -- (28);
\draw (9) -- (10);
\draw (9) -- (13);
\draw (9) -- (14);
\draw (9) -- (15);
\draw (9) -- (16);
\draw (9) -- (18);
\draw (9) -- (22);
\draw (9) -- (25);
\draw (10) -- (11);
\draw (10) -- (14);
\draw (10) -- (15);
\draw (10) -- (16);
\draw (10) -- (17);
\draw (10) -- (19);
\draw (10) -- (23);
\draw (10) -- (26);
\draw (11) -- (12);
\draw (11) -- (15);
\draw (11) -- (16);
\draw (11) -- (17);
\draw (11) -- (18);
\draw (11) -- (20);
\draw (11) -- (24);
\draw (11) -- (27);
\draw (12) -- (13);
\draw (12) -- (16);
\draw (12) -- (17);
\draw (12) -- (18);
\draw (12) -- (19);
\draw (12) -- (21);
\draw (12) -- (25);
\draw (12) -- (28);
\draw (13) -- (14);
\draw (13) -- (17);
\draw (13) -- (18);
\draw (13) -- (19);
\draw (13) -- (20);
\draw (13) -- (22);
\draw (13) -- (26);
\draw (14) -- (15);
\draw (14) -- (18);
\draw (14) -- (19);
\draw (14) -- (20);
\draw (14) -- (21);
\draw (14) -- (23);
\draw (14) -- (27);
\draw (15) -- (16);
\draw (15) -- (19);
\draw (15) -- (20);
\draw (15) -- (21);
\draw (15) -- (22);
\draw (15) -- (24);
\draw (15) -- (28);
\draw (16) -- (17);
\draw (16) -- (20);
\draw (16) -- (21);
\draw (16) -- (22);
\draw (16) -- (23);
\draw (16) -- (25);
\draw (17) -- (18);
\draw (17) -- (21);
\draw (17) -- (22);
\draw (17) -- (23);
\draw (17) -- (24);
\draw (17) -- (26);
\draw (18) -- (19);
\draw (18) -- (22);
\draw (18) -- (23);
\draw (18) -- (24);
\draw (18) -- (25);
\draw (18) -- (27);
\draw (19) -- (20);
\draw (19) -- (23);
\draw (19) -- (24);
\draw (19) -- (25);
\draw (19) -- (26);
\draw (19) -- (28);
\draw (20) -- (21);
\draw (20) -- (24);
\draw (20) -- (25);
\draw (20) -- (26);
\draw (20) -- (27);
\draw (21) -- (22);
\draw (21) -- (25);
\draw (21) -- (26);
\draw (21) -- (27);
\draw (21) -- (28);
\draw (22) -- (23);
\draw (22) -- (26);
\draw (22) -- (27);
\draw (22) -- (28);
\draw (23) -- (24);
\draw (23) -- (27);
\draw (23) -- (28);
\draw (24) -- (25);
\draw (24) -- (28);
\draw (25) -- (26);
\draw (26) -- (27);
\draw (27) -- (28);
\end{tikzpicture}

\caption{$G_5$, $G_{17}$, and $G_{29}$.\label{fig:paley}}
\end{figure}
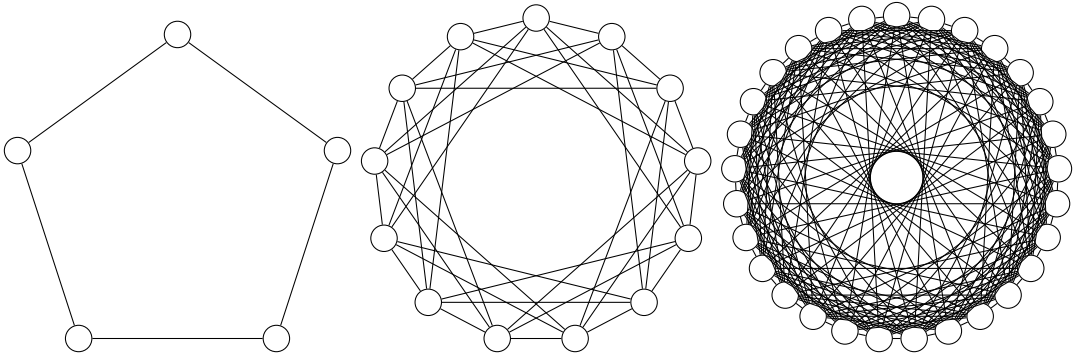

\begin{example}[vertex set of regular polygons]\label{ex:reg-polys}
Let $\mathcal{C}_n=(C_n,d_{C_n})\in\mathcal{M}_n$ be a finite metric space such that $C_n\subset\Sp^1$ is a set of $n$ evenly spaced points on $\Sp^1$, and $d_{C_n}$ is the metric inherited from the geodesic metric $d_{\Sp^1}$ of $\Sp^1$. Kassab already pointed out in \cite[Lemma 7.2.1]{kassab2019multidimensional} that, since   $d_{\mathcal{C}_n}$ is a circulant matrix, then $\lambda_k(n):=-\frac{1}{2}\sum_{j=0}^{n-1}d_{0j}^2\omega_{n}^{jk}$ for $k=1,\dots,n-1$ are all nonzero eigenvalues of $K_{\mathcal{C}_n}$ where $\frac{n}{2\pi}d_{0j}:=\begin{cases}j&\text{for }0\leq j\leq\lfloor\frac{n}{2}\rfloor\\ n-j&\text{for }\lceil\frac{n}{2}\rceil\leq j\leq n-1\end{cases}$ and $\omega_n:=e^{\frac{2\pi i}{n}}$. We now push this observation further to obtain an \emph{explicit formula} for all eigenvalues $\lambda_k(n)$ when $n=4m+2$ for some positive integer $m$.

From the above, when $n=4m+2$, one can verify that
\begin{equation}\label{eq:ncycleeigenvalue1}
    \left(\frac{4m+2}{2\pi}\right)^2\lambda_k(4m+2)=\frac{1}{2}(2m+1)^2(-1)^k-S_k(m),
\end{equation}
where $S_k(m) := \sum_{j=0}^{2m+1}j^2\cos\left(\frac{jk\pi}{2m+1}\right)$. Moreover, by  ``summation by parts" \cite[Theorem 3.41]{rudin1976principles} and the Lagrange's trigonometric identities \cite[pg.130]{jeffrey2008handbook},
$$S_k(m) = \sum_{j=0}^{2m}(2j+1)\sum_{l=j+1}^{2m+1}\cos\left(\frac{lk\pi}{2m+1}\right)=\frac{1}{2}(2m+1)^2(-1)^k-\frac{1}{2\sin\left(\frac{k\pi}{4m+2}\right)}\sum_{j=0}^{2m}(2j+1)\sin\left(\frac{(2j+1)k\pi}{4m+2}\right).$$

Hence, $\left(\frac{4m+2}{2\pi}\right)^2\lambda_k(n)=\frac{1}{2\sin\left(\frac{k\pi}{4m+2}\right)}\sum_{j=0}^{2m}(2j+1)\sin\left(\frac{(2j+1)k\pi}{4m+2}\right)$. By  elementary but tedious computations, we finally obtain the explicit formula:
\begin{equation}\label{eq:ncycleeigenvalue2}
    \lambda_k(4m+2)=\left(\frac{2\pi}{4m+2}\right)^2\cdot\frac{(-1)^{k+1}(4m+2)}{4\sin^2\left(\frac{k\pi}{4m+2}\right)}.
\end{equation}

By equation (\ref{eq:ncycleeigenvalue2}),  $\lambda_k(4m+2)<0$ if and only if $k=2,4,\dots,4m$. Hence, from equation (\ref{eq:ncycleeigenvalue1}):
\begin{align*} 
    \trng(K_{\mathcal{C}_{4m+2}})
    &=\left(\frac{2\pi}{4m+2}\right)^2\cdot\left(\sum_{j=0}^{2m+1}j^2\sum_{l=1}^{2m}\cos\left(\frac{2lj\pi}{2m+1}\right)-m(2m+1)^2\right)\\
    &=\left(\frac{\pi}{2m+1}\right)^2\cdot\frac{m(m+1)(4m+2)}{3}
\end{align*}

where the last equality holds because of  Lagrange's trigonometric identities. Therefore, whereas $\trng(K_{\mathcal{C}_{4m+2}})\rightarrow \infty$ as $m\rightarrow\infty$, $\frac{\trng(K_{\mathcal{C}_{4m+2}})}{4m+2}$ (and therefore the distortion $\dis(\mathcal{C}_{4m+2})$) not only remain bounded but in fact converge to $\frac{\pi^2}{12}$ (and $\frac{\pi}{\sqrt{6}}$, respectively). It will turn out that $\frac{\pi^2}{12}$  also coincides with the negative trace of the limiting space $\Sp^1$ which we will study in \S\ref{sec:gencMDS} (cf. Definition \ref{def:ngtr} and Corollary \ref{cor:propsofS1}).
\end{example}

\subsection{The stability of cMDS on finite metric spaces}\label{sec:sec:finstab}
Stability of cMDS is an obviously desirable property which guarantees that a small perturbation in the input data does not much affect  the output of the cMDS procedure.

There are some major existing contributions on this topic. 
In \cite{sibson1979studies}, Sibson studies the robustness of  cMDS when applied to  finite sets in Euclidean space in the following manner. Let $\mX$ be a finite set in Euclidean space where its true dimensionality is known and the positive eigenvalues of its Gram matrix $K_\mX$ are all simple. He first analyzes how the spectrum of $K_\mX$ is affected if $K_\mX$ is perturbed to $K_\mX+\varepsilon\cdot C+O(\varepsilon^2)$ where $C$ is symmetric and $\varepsilon>0$ is small enough (so that the smallest genuine eigenvalues are much larger than the largest spurious ones). Observe that the perturbed matrix $K_\mX+\varepsilon\cdot C+O(\varepsilon^2)$ is in general not the Gram matrix associated to a finite set in Euclidean space (i.e. some of its eigenvalues might be negative). Through this analysis, Sibson computes the first order error of the cMDS embedding (where the target dimension of the embedding is equal to the true dimension of $\mX$).

In \cite{de2004sparse}, de Silva and Tenenbaum slightly modify the argument in \cite{sibson1979studies} in order to show the robustness of \emph{landmark MDS}, a variant of cMDS which they introduce. Though their setting is quite similar to Sibson's, there are some improvements. Firstly, the positive eigenvalues of $K_\mX$, the Gram matrix associated to the input data, are not required to be simple. Secondly, it is not necessary to know the true dimension of the input dataset in order to apply their result. Instead, the target dimension of the embedding is required to satisfy certain conditions involving the eigenvalues of $K_\mX$.

More recently, in \cite{arias2020perturbation}, Arias-Castro et al. establish other perturbation bounds for cMDS. For example, in \cite[Corollary 2]{arias2020perturbation} the authors  consider a finite set $\mX$ in Euclidean space and its associated squared distance matrix $A_\mX$.\footnote{We have used slightly different notation.} Next, they assume another squared distance matrix $A_\mY$ is close enough to $A_\mX$ (with respect to a certain matrix norm) where $\mY$ is a finite metric space with the same cardinality as $\mX$. Note that in \cite[Corollary 2]{arias2020perturbation} $\mY$ is not required to be Euclidean. Finally, via an analysis of the stability of Procrustes-like methods, the authors show that a certain distance between $\mX$ and the result of the cMDS embedding on $\mY$ (where the target dimension is equal to the dimension of $\mX$) is stable with respect to the Frobenius norm of the difference between $A_\mX$ and $A_\mY$.

Proposition \ref{prop:finstability} below extends the existing stability results for cMDS of finite metric spaces in a certain way that we will explain below. Theorem   \ref{thm:stability} will eventually generalize Proposition \ref{prop:finstability} beyond the case of finite metric spaces and  will  be used to obtain convergence/consistency results for cMDS. 
 
Unlike the  results discussed above, the proof of Proposition \ref{prop:finstability}  heavily relies on both the convexity of the set of positive semi-definite matrices and the optimality of cMDS (cf. Theorem \ref{thm:optimalthm}). Some advantages of our result are the following: 
\begin{itemize}
    \item[(1)] We do not require the  input metric spaces to be Euclidean, 
    \item[(2)] The difference (measured by the Frobenius norm) between the  Gram matrices associated to the two input metric spaces is not required to be small for our results to be applicable (actually, the difference can be arbitrarily large), and 
    \item[(3)] Our results hold in the continuous setting as well as in the finite setting (cf. Theorem \ref{thm:stability}).
\end{itemize}
One disadvantage of our approach is that the target dimension $k$ of the embedding $\Phi_{\mathcal{X},k}:X\rightarrow\R^k$ into Euclidean space must be maximal (i.e. equal to the cardinality of $X$) in order to invoke the convexity of the set of positive semi-definite matrices. Note that, for $k < n$, the set of $n$ by $n$ positive semi-definite matrices with rank upper bounded by $k$, is not convex in general.

The setting of continuous metric (measure) spaces is considered in \S \ref{sec:stability} which deals with the formulation of  Theorem \ref{thm:stability} (which is applicable in the setting of mm-spaces with the Gromov-Wasserstein distance). Though the argument employed to prove Theorem \ref{thm:stability} is fairly complicated, its core idea is exemplified in the proof of the following proposition. More precisely, the nonexpansiveness of the nearest point projection to a convex set, which is invoked in the proof of Proposition \ref{prop:finstability}, will also be used in the proof of Lemma \ref{lemma:pullbackkernel}.

\begin{figure}
    \centering
    \includegraphics[width=0.5\linewidth]{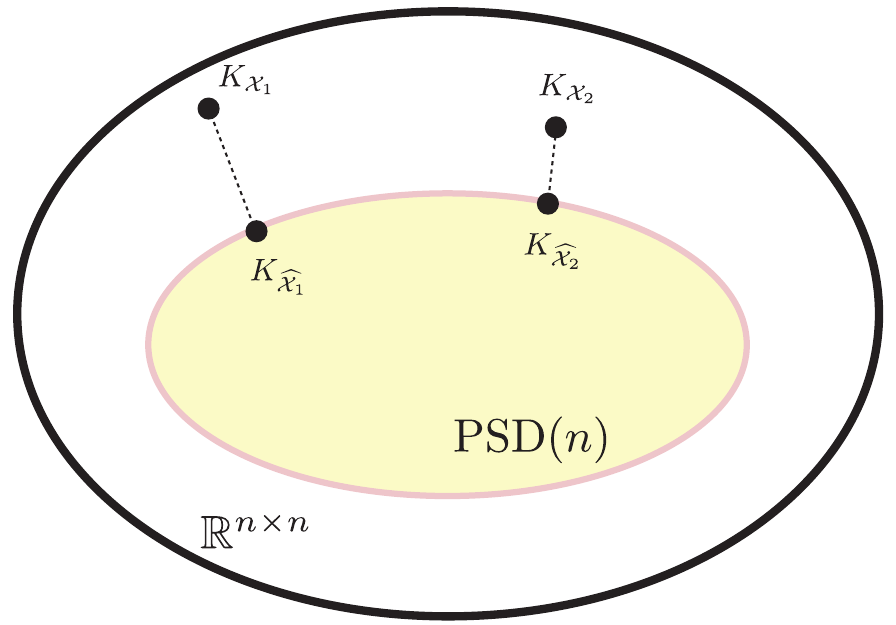}
    \caption{Illustration of Proposition \ref{prop:finstability}.}
    \label{fig:proj-same-X}
\end{figure}

\begin{proposition}\label{prop:finstability}
Let $n\geq 1$ be any positive integer and let $X$ be any finite set with cardinality $n$. Let $d_1$ and $d_2$ be any two metrics on $X$. Consider the finite metric spaces  $\mX_1=(X,d_1),\mX_2=(X,d_2)\in\mathcal{M}_n$. Then, we have the following inequality:
$$\Vert K_{\widehat{\mX_1}}-K_{\widehat{\mX_2}}\Vert_{\mathrm{F}}\leq\Vert K_{\mX_1}-K_{\mX_2}\Vert_{\mathrm{F}}.$$
\end{proposition}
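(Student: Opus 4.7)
The plan is to recognize the cMDS Gram matrix $K_{\widehat{\mX}}$ as the metric projection of $K_{\mX}$ onto the convex cone $\mathcal{P}_n\subset\mathrm{Sym}_n$ of $n\times n$ positive semi-definite matrices (with $\mathrm{Sym}_n$ endowed with the Frobenius inner product), and then to invoke the general fact that such a projection is $1$-Lipschitz. Once both pieces are in place, specializing the non-expansiveness inequality to $K_{\mX_1}$ and $K_{\mX_2}$ yields the claim immediately.

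The first step is to verify that $K_{\widehat{\mX}}=\Pi_{\mathcal{P}_n}(K_{\mX})$ for every $\mX\in\mathcal{M}_n$. Using the spectral decomposition $K_{\mX}=\sum_j\lambda_jv_jv_j^T$, we have $K_{\widehat{\mX}}=\sum_{\lambda_j>0}\lambda_jv_jv_j^T$, so $N:=K_{\mX}-K_{\widehat{\mX}}=\sum_{\lambda_j<0}\lambda_jv_jv_j^T$ is negative semi-definite and its range is orthogonal to that of $K_{\widehat{\mX}}$. The projection is characterized by the variational inequality
$$\langle K_{\mX}-K_{\widehat{\mX}},\,C-K_{\widehat{\mX}}\rangle_{\mathrm{F}}\leq 0\qquad\text{for every }C\in\mathcal{P}_n,$$
which follows because $\langle N,K_{\widehat{\mX}}\rangle_{\mathrm{F}}=\tr(NK_{\widehat{\mX}})=0$ by orthogonality of the positive and negative spectral subspaces, while $\langle N,C\rangle_{\mathrm{F}}=-\tr((-N)C)\leq 0$ since the trace of a product of two positive semi-definite matrices is non-negative. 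Theorem \ref{thm:optimalthm} at $k=\pr(K_{\mX})$ alternatively gives optimality of $K_{\widehat{\mX}}$ among PSD matrices of rank at most $\pr(K_{\mX})$, and the variational argument above upgrades this to optimality across the whole cone $\mathcal{P}_n$.

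The second step is the standard non-expansiveness of metric projections onto closed convex subsets of a Hilbert space: adding the two variational inequalities for $K_{\mX_1},K_{\mX_2}$ tested against each other's projections and applying the Cauchy--Schwarz inequality yields
$$\Vert\Pi_{\mathcal{P}_n}(K_{\mX_1})-\Pi_{\mathcal{P}_n}(K_{\mX_2})\Vert_{\mathrm{F}}\leq\Vert K_{\mX_1}-K_{\mX_2}\Vert_{\mathrm{F}},$$
which is exactly the desired inequality after substituting $\Pi_{\mathcal{P}_n}(K_{\mX_i})=K_{\widehat{\mX_i}}$. The main conceptual hurdle is the first step --- identifying $K_{\widehat{\mX_i}}$ with the projection onto the full PSD cone rather than merely with the best rank-constrained PSD approximation supplied by Theorem \ref{thm:optimalthm} --- and this is resolved by the short spectral/variational argument just sketched. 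Note also that this line of reasoning is intrinsically ``$k=n$'' in flavor, which foreshadows the caveat, mentioned in the preceding discussion, that the target dimension must be taken sufficiently high in order for convexity of $\mathcal{P}_n$ to be the governing structure.
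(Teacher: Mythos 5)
Your proof is correct and follows essentially the same route as the paper's: identify $K_{\widehat{\mX_i}}$ with the nearest-point projection of $K_{\mX_i}$ onto the convex cone $\mathrm{PSD}(n)$ in the Frobenius Hilbert space, then apply nonexpansiveness of projections onto closed convex sets (the paper cites Theorem \ref{thm:optimalthm} for the first step and Theorem \ref{thm:projnonexpansive} for the second). Your explicit variational/spectral verification that $K_{\widehat{\mX}}$ is optimal over the \emph{entire} cone, not merely among PSD matrices of rank at most $\pr(K_{\mX})$, is a welcome extra detail, since Theorem \ref{thm:optimalthm} as stated is rank-constrained and the paper's citation silently glosses over this point (which its continuous analogue, Theorem \ref{thm:optimalthmgen} item (2), handles explicitly).
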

\begin{proof} 
Let $\mathrm{PSD}(n)$ denote the set of positive semi-definite $n$ by $n$ matrices. Also, recall that $\R^{n\times n}$ equipped with the Frobenius norm is a Hilbert space and $\mathrm{PSD}(n)$ is a convex subset of $\R^{n\times n}$. Also, by Theorem \ref{thm:optimalthm}, $K_{\widehat{\mX_1}}$ is the (unique) nearest point of $K_{\mX_1}$ to $\mathrm{PSD}(n)$ (cf. \S \ref{sec:sec:sec:altcmds}). Similarly, $K_{\widehat{\mX_2}}$ is the (unique) nearest point of $K_{\mX_2}$ to $\mathrm{PSD}(n)$. We  obtain the claim via the nonexpansive property of the nearest point projection (cf. Theorem \ref{thm:projnonexpansive}). See Figure \ref{fig:proj-same-X}.
\end{proof}


\section{Generalized cMDS on metric measure spaces}\label{sec:gencMDS}
The set of square-summable sequences $\{(x_1,x_2,\dots):x_i\in\R\text{ for  }i\in\mathbb{Z}_{>0}\text{ and }\sum_{i=1}^\infty x_i^2<\infty\}$, together with the Euclidean inner product structure, will be denoted by $\ell^2$. Also, $m_k$ will denote the Lebesgue measure on $\R^k$.\medskip

From now we will model (possibly infinite) datasets as \textbf{mm-spaces}, that is triples $\mX=(X,d_X,\mu_X)$ where $(X,d_X)$ is a metric space and $\mu_X$ is a Borel probability measure on $X$ with full support.\footnote{The support of $\mu_X$ is defined as the smallest closed set $C$ satisfying $\mu_X(X\backslash C)=0$ and is denoted by $\supp[\mu_X]$. Note that if $X$ is discrete,
$\supp[\mu_X]=\{x\in X\colon\mu_X(x)>0\}.$} We will denote the collection  of all compact mm-spaces by $\mathcal{M}_w$. As we already mentioned in the introduction, as a slight abuse of terminology the term mm-space will always refer to a compact mm-space unless specified otherwise.

Given any mm-space $\mX$ and $p\in[1,\infty)$, the \emph{$p$-diameter} of $\mX$ is 
\begin{equation}\label{eq:diam-p}\diamna_p(\mX):=\left(\iint_{X\times X}d_X^p(x,x')\,\mu_X(dx)\,\mu_X(dx')\right)^{1/p}.\end{equation}

The most basic example of an mm-space is $\star$, the mm-space consisting of a single point. Another family of examples comes from compact Riemannian manifolds endowed with their geodesic distances and normalized volume measures.

\begin{example}
For any $d\geq 2$, we consider the standard unit $(d-1)$-dimensional sphere
$$\Sp^{d-1}:=\left\{(x_1,\dots,x_d)\in\R^d:\sum_{i=1}x_i^2=1\right\}$$
with the canonical Riemannian metric. Let $d_{\Sp^{d-1}}$ be the geodesic distance of $\Sp^{d-1}$ and let $\vol_{\Sp^{d-1}}$ be the volume measure of $\Sp^{d-1}$ induced by the canonical Riemannian metric. Since the volume $\vol_{\Sp^{d-1}}(\Sp^{d-1})$ of $\Sp^{d-1}$ will be used multiple times, we introduce the abbreviated notation $\vert\Sp^{d-1}\vert:=\vol_{\Sp^{d-1}}(\Sp^{d-1})$.\footnote{The following equality is well-known (\cite[(1.19)]{atkinson2012spherical}):
    $\vert\Sp^{d-1}\vert=\frac{2\pi^{\frac{d}{2}}}{\Gamma(\frac{d}{2})}.$} Consider the probability measure $\nvol_{\Sp^{d-1}}:=\frac{1}{\vert\Sp^{d-1}\vert}\vol_{\Sp^{d-1}}$ induced   in this way. Then, $\left(\Sp^{d-1},d_{\Sp^{d-1}},\nvol_{\Sp^{d-1}}\right)$ is a mm-space.
\end{example}

\subsection{cMDS for metric measure spaces}\label{sec:sec:cmdsmm}
The treatment of cMDS of finite metric spaces reviewed in \S\ref{sec:fms} can be recovered from the point of view in this section by noticing that any finite metric space $(X,d_X)$ can be regarded as a mm-space $\mX=(X,d_X,\mu_X)$ where $\mu_X$ is the \emph{uniform probability measure} on $X$, that is $\mu_X := \frac{1}{\mathrm{card}(X)}\sum_{x\in X} \delta_x.$\medskip

This section will employ a number of standard results from Functional Analysis and Spectral Theory of operators which can be consulted in Appendix \S\ref{sec:sec:functanal}.\medskip

Assume a mm-space $\mX\in\mathcal{M}_w$ is given. Partially motivated by \cite{zha2003isometric} and cMDS for finite metric spaces,  and following \cite{kassab2019multidimensional,adams2020multidimensional}, we now define a generalized cMDS procedure for the mm-space $\mX=(X,d_X,\mu_X)$ in the following way.

\medskip First, define the integral kernel $K_\mX:X\times X\rightarrow\mathbb{R}$, for each $x,x'\in X$,  by  
{\footnotesize{$$
K_\mX(x,x')
:= -\frac{1}{2} \left( d_X^2(x,x')-\int_X d_X^2(x,v)\,d\mu_X(v) -\int_X d_X^2(u,x')\,d\mu_X(u) +\iint_{X\times X} d_X^2(u,v)\, d\mu_X(u)\,d\mu_X(v) \right).$$}}

\begin{remark}
The last term in the above definition is $\big(\diamna_2(\mX)\big)^2$, cf. equation (\ref{eq:diam-p}).
\end{remark}

Now we are ready to define the generalized cMDS operator.

\begin{definition}[generalized cMDS operator]
For a given $\mX\in\mw$, we define the \emph{generalized cMDS operator} $\cmdsop_\mX:L^2(\mu_X)\rightarrow L^2(\mu_X)$ in the following way:
$$\phi\mapsto \cmdsop_\mX(\phi) := \int_X K_\mX(\cdot,x')\,\phi(x')\,d\mu_X(x').$$
\end{definition}

\begin{remark}[finite mm-spaces]\label{rmk:finmmcompare}
Notice that when $\mX=(X,d_X,\mu_X)$ is a finite mm-space with cardinality $n$ and $\mu_X$ is the uniform measure on $X$, then the integral kernel $K_\mX$ coincides with the expression of the matrix $K_\mX$ described in item (2) of Lemma \ref{lemma:simpletech}. Furthermore, observe that $\lambda$ is an eigenvalue of the matrix $K_\mX$ if and only if $\frac{\lambda}{n}$ is an eigenvalue of the operator $\cmdsop_\mX$.
\end{remark}

We now establish some basic properties of $K_\mX$ and $\cmdsop_\mX$.

\begin{lemma}\label{lemma:simpletechgen}
Let $\mathcal{X}=(X,d_X,\mu_X)\in\mathcal{M}_w$ be given and let $K_\mX$ and $\cmdsop_\mX$ be as above. Then,
\begin{enumerate}
    \item $K_\mX(x,x')=K_\mX(x',x)$ for any $x,x'\in X$. In particular, $\cmdsop_\mX$ is a self-adjoint operator.
    
    \item $\cmdsop_\mX$ is a Hilbert-Schmidt operator\footnote{See Definition \ref{def:HSop} for the general definition of Hilbert-Schmidt operators. For the case when the operator $\cmdsop$ is induced by an integral kernel $K$, it is known that $\cmdsop$ is Hilbert-schmidt if and only the $L^2$-norm of $K$ is finite (cf. Theorem \ref{thm:Hilbertschmidtintgkernell}). Also, every Hilbert-Schmidt operator is compact.}. In particular, it is a compact operator.
    
    \item $d_X^2(x,x')=K_\mX(x,x)+K_\mX(x',x')-2K_\mX(x,x')$ for any $x,x'\in X$.
    
    \item The constant function $\phi\equiv 1$ is an eigenfunction of $\cmdsop_\mX$ with zero eigenvalue. In particular, $\mathrm{dim}(\ker (\cmdsop_\mX))\geq 1.$
    
    \item If $\phi\in L^2(\mu_X)$ is an eigenfunction for a nonzero eigenvalue of $\cmdsop_\mX$, then $\int_X\phi(x)\,d\mu_X(x)=0.$
    
    \item If $\phi\in L^2(\mu_X)$ is an eigenfunction for a nonzero eigenvalue of $\cmdsop_\mX$, then there is a uniformly continuous function $\phi'$ such that $\phi=\phi'$ $\mu_X$-almost everywhere. 
\end{enumerate}
\end{lemma}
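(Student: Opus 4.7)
The plan is to verify the six items by direct computation, exploiting the double-centering structure of $K_\mX$ and the compactness of $X\times X$. I expect nothing to be genuinely difficult; the trickiest bookkeeping is in item (3), while item (6) is the only one requiring a functional-analytic argument (a standard bootstrapping).

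For items (1)--(3) my approach will be direct. Item (1) is immediate because the defining formula for $K_\mX(x,x')$ is visibly symmetric in $(x,x')$: the first term is symmetric, the second and third swap into each other, and the fourth depends on neither variable. Self-adjointness of $\cmdsop_\mX$ then follows from the standard fact that an integral operator with a symmetric kernel is self-adjoint. For item (2), compactness of $X$ gives that $d_X^2$ (and hence $K_\mX$) is bounded on $X\times X$; since $(\mu_X\otimes\mu_X)(X\times X)=1$ one has $K_\mX\in L^2(\mu_X\otimes\mu_X)$, so by Theorem~\ref{thm:Hilbertschmidtintgkernell} the operator $\cmdsop_\mX$ is Hilbert--Schmidt, hence compact. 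For item (3) I will substitute the definition into $K_\mX(x,x)+K_\mX(x',x')-2K_\mX(x,x')$: the terms $-\int d_X^2(x,v)\,d\mu_X(v)$ and $-\int d_X^2(u,x')\,d\mu_X(u)$ appearing with coefficient $-\tfrac12$ in each $K_\mX(x,x)$, $K_\mX(x',x')$ exactly cancel the matching pieces (now with coefficient $+1$) arising from $-2K_\mX(x,x')$, and the four copies of $\iint d_X^2(u,v)\,d\mu_X(u)\,d\mu_X(v)$ cancel; what survives is $d_X^2(x,x')$.

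For items (4) and (5), I will compute
\[
\cmdsop_\mX(1)(x)=\int_X K_\mX(x,x')\,d\mu_X(x')
\]
and observe that, after integrating out $x'$ against $\mu_X$, the first and third terms in the definition cancel, and (by Fubini) so do the second and fourth; hence $\cmdsop_\mX(\mathbf{1})\equiv 0$, giving (4). For (5), I invoke self-adjointness: if $\cmdsop_\mX\phi=\lambda\phi$ with $\lambda\neq 0$, then
\[
\lambda\int_X\phi\,d\mu_X=\langle\cmdsop_\mX\phi,\mathbf{1}\rangle_{L^2(\mu_X)}=\langle\phi,\cmdsop_\mX\mathbf{1}\rangle_{L^2(\mu_X)}=0,
\]
so $\int\phi\,d\mu_X=0$.

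Item (6) is the only substantive point, and the plan is to bootstrap from the eigenvalue equation. Because the relation $\phi=\lambda^{-1}\cmdsop_\mX\phi$ holds in $L^2(\mu_X)$, it suffices to show that the representative $\phi'(x):=\lambda^{-1}\int_X K_\mX(x,x')\phi(x')\,d\mu_X(x')$ is uniformly continuous on $X$. The main step will be to verify that $K_\mX\colon X\times X\to\R$ is uniformly continuous: $d_X^2$ is uniformly continuous on the compact space $X\times X$, and the two one-variable centering integrals in the definition of $K_\mX$ are uniformly continuous in their free variable by dominated convergence and the uniform bound on $d_X^2$. Given this, Cauchy--Schwarz gives
\[
|\phi'(x_1)-\phi'(x_2)|\leq|\lambda|^{-1}\,\|\phi\|_{L^2(\mu_X)}\,\Bigl(\int_X|K_\mX(x_1,x')-K_\mX(x_2,x')|^2\,d\mu_X(x')\Bigr)^{1/2},
\]
and a standard $\varepsilon/3$ argument using uniform continuity of $K_\mX$ in the first variable (uniformly over the second) yields the uniform continuity of $\phi'$. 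The only genuine obstacle is stating the uniform continuity of $K_\mX$ cleanly enough to apply it to the integral above, which is routine given compactness.
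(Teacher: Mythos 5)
Your proposal is correct and follows essentially the same route as the paper's proof: items (1)--(3) by direct computation with the bound $\sup_{x,x'}|K_\mX(x,x')|\leq 2(\diamna(X))^2$ giving the Hilbert--Schmidt property, (4) by integrating the kernel against $\mu_X$, (5) via orthogonality to the constant eigenfunction (your explicit pairing computation is just the proof of that orthogonality), and (6) by bootstrapping $\phi=\lambda^{-1}\cmdsop_\mX\phi$ through the uniform continuity of $K_\mX$ on the compact space $X\times X$; the only cosmetic difference is that in (6) the paper bounds the increment by $\tfrac{\varepsilon}{\lambda}\Vert\phi\Vert_{L^1(\mu_X)}$ using $L^2(\mu_X)\subset L^1(\mu_X)$, while you use Cauchy--Schwarz with $\Vert\phi\Vert_{L^2(\mu_X)}$ --- both work equally well since $\mu_X$ is a probability measure. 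One bookkeeping slip in item (4): after integrating $K_\mX(x,\cdot)$ against $\mu_X$, the cancellation pairs the \emph{first} term with the \emph{second} (both become $\int_X d_X^2(x,v)\,d\mu_X(v)$ up to sign) and the \emph{third} with the \emph{fourth} (this is where Fubini enters, turning $\int_X\int_X d_X^2(u,x')\,d\mu_X(u)\,d\mu_X(x')$ into $\big(\diamna_2(\mX)\big)^2$). Your stated pairing --- first with third, second with fourth --- would require the map $x\mapsto\int_X d_X^2(x,v)\,d\mu_X(v)$ to be constant, which holds only for special spaces such as two-point homogeneous ones (cf. Proposition \ref{prop:homogeneousspace}); the total sum still vanishes, so the conclusion $\cmdsop_\mX\mathbf{1}=0$ is unaffected, but the pairing should be corrected in a final writeup.
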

\begin{proof} 
    
    \smallskip
    \noindent (1) Obvious from the definition since $d_X$ is metric.
    
   \smallskip
    \noindent (2)  Note that, since $X$ is compact, $\mathrm{diam}(X)<\infty$ and $\sup_{x,x'\in X}\big|K_\mX(x,x')\big|\leq 2\,\big(\diamna(X)\big)^2$. Hence, $\int_X\int_X (K_\mX(x,x'))^2 \,d\mu_X(x')\,d\mu_X(x)\leq 4\,\big(\mathrm{diam}(X)\big)^4<\infty$. This implies that $\cmdsop_\mX$ is a Hilbert-Schmidt operator. In particular, this is a compact operator.
    
   \smallskip
    \noindent (3) Easy to show by using the definition and direct computation.
    
    \smallskip
    \noindent (4) Let $\phi\equiv 1$. Then,
    $\cmdsop_\mX(\phi)(x)=\int_X K_\mX(x,x')\,\phi(x')\,d\mu_X(x')=\int_X K_\mX(x,x')\,d\mu_X(x')=0$ for any $x\in X$.
    
   \smallskip
    \noindent (5) Obvious by item (3) of Theorem \ref{thm"selfadjtspectrmprpty} and item (4) of this lemma.

   \smallskip
    \noindent (6) Let $\phi$ be an eigenfunction of $\cmdsop_\mX$ with nonzero eigenvalue $\lambda$. This implies $\phi(x)=\frac{1}{\lambda}\cmdsop_\mX\phi(x)$ for $\mu_X$-a.e. $x\in X$. Hence, it is enough to show that $\phi':=\frac{1}{\lambda}\cmdsop_\mX\phi$ is a uniformly continuous function on $X$. Since $K_\mX$ is continuous on a compact space $X\times X$, it is uniformly continuous. Therefore, for arbitrary $\varepsilon>0$, there exists $\delta>0$ such that $\vert K_\mX(x,x'')-K_\mX(x',x'') \vert<\varepsilon$ for any $x,x',x''\in X$ whenever $d_X(x,x')<\delta$. Moreover, observe that $\int_X\vert\phi\vert\,d\mu_X=C_\phi>0$ since $\phi\neq 0$ $\mu_X$-almost everywhere. Also, $C_\phi$ cannot be infinity because $L^2(\mu_X)\subset L^1(\mu_X)$ (see \cite[Proposition 6.12]{folland2013real}). Now, observe that
     \begin{align*}
         \left\vert \frac{1}{\lambda}\cmdsop_\mX\phi(x)-\frac{1}{\lambda}\cmdsop_\mX\phi(x') \right\vert&=\frac{1}{\lambda}\left\vert \int_X (K_\mX(x,x'')-K_\mX(x',x''))\phi(x'')\,d\mu_X(x'')\right\vert\\
         &\leq\frac{1}{\lambda} \int_X \vert K_\mX(x,x'')-K_\mX(x',x'') \vert\,\vert\phi(x'')\vert\,d\mu_X(x'')\\
         &\leq\frac{\varepsilon}{\lambda}\int_X \vert\phi(x'')\vert\,d\mu_X(x'')=\frac{C_\phi\,\varepsilon}{\lambda}
     \end{align*}
     for  $x,x'\in X$ with $d_X(x,x')<\delta$. Since $\varepsilon>0$ is arbitrary, this concludes the proof.
\end{proof}

Even though the definition of the kernel $K_\mX$ appears to be rather complicated, finding the spectrum of $\cmdsop_\mX$ can be reduced to a simpler eigenproblem involving the kernel $-\frac{1}{2}d_X^2$ as made explicit in the following result.

\begin{lemma}\label{lemma:simplereigenproblem}
Suppose $\mX\in\mw$ is given. If $\lambda\in\R$ and $0\neq\phi\in L^2(\mu_X)$ satisfy the following two conditions:
\begin{enumerate}
    \item $-\frac{1}{2}\int_X d_X^2(x,x')\phi(x')\,d\mu_X(x')=\lambda\phi(x)$ for $\mu_X$-a.e $x\in X$, and
    \item $\int_X\phi(x)\,d\mu_X(x)=0$,
\end{enumerate}
then $\lambda$ is an eigenvalue of $\cmdsop_\mX$ and $\phi$ is a associated eigenfunction.
\end{lemma}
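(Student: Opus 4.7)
The plan is to verify directly, by expanding the four-term kernel $K_\mX(x,x')$ against $\phi$ and integrating, that $\mathfrak{K}_\mX(\phi) = \lambda\phi$ as elements of $L^2(\mu_X)$. Since every quantity in sight is bounded (because $X$ is compact, so $d_X^2$ is bounded) and $\phi \in L^2(\mu_X) \subseteq L^1(\mu_X)$, all integrals that appear converge absolutely and Fubini's theorem applies freely, so the proof should be a short, essentially mechanical computation. There is no real obstacle beyond bookkeeping.

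Concretely, I would fix $x \in X$ and write
\[
\mathfrak{K}_\mX(\phi)(x) = \int_X K_\mX(x,x')\,\phi(x')\,d\mu_X(x')
\]
as a sum of four integrals corresponding to the four terms in the definition of $K_\mX$. The first term contributes $-\tfrac{1}{2}\int_X d_X^2(x,x')\phi(x')\,d\mu_X(x')$, which equals $\lambda\phi(x)$ for $\mu_X$-a.e.\ $x$ by hypothesis (1). The second and fourth terms each pull out a factor of $\int_X \phi(x')\,d\mu_X(x')$, which vanishes by hypothesis (2). For the third term, I would apply Fubini to swap the order of integration and then reuse hypothesis (1) with $u$ in place of $x$, obtaining a constant multiple of $\int_X \phi(u)\,d\mu_X(u)$, which is again zero by (2).

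Adding up the four contributions yields $\mathfrak{K}_\mX(\phi)(x) = \lambda\phi(x)$ for $\mu_X$-a.e.\ $x \in X$. Since $\phi \neq 0$ in $L^2(\mu_X)$, this says exactly that $\lambda$ is an eigenvalue of $\mathfrak{K}_\mX$ with $\phi$ as an associated eigenfunction, completing the argument. The only mildly delicate point is ensuring that one may invoke hypothesis (1) inside the third integral for $\mu_X$-a.e.\ $u$, which is justified by Fubini together with the $\mu_X$-a.e.\ formulation in (1).
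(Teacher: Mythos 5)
Your proposal is correct and follows essentially the same route as the paper's own proof: both expand $K_\mX(x,x')$ into its four terms and integrate against $\phi$, using hypothesis (1) on the first term, hypothesis (2) to kill the two terms carrying a factor $\int_X\phi\,d\mu_X$, and Fubini together with (1) on the remaining double integral, which contributes $-\lambda\int_X\phi\,d\mu_X=0$. Your explicit remarks that compactness of $X$ bounds $d_X^2$ (so Fubini applies) and that (1) may be invoked for $\mu_X$-a.e.\ $u$ inside that double integral merely spell out justifications the paper leaves implicit.
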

\begin{proof}
Assume $\phi$ and $\lambda$ satisfy the two conditions above. Then, for any $x\in X$, we have
\begin{align*}
&\int_X K_\mX(x,x')\phi(x')\,d\mu_X(x')\\
&= -\frac{1}{2} \bigg(\int_X d_X^2(x,x')\phi(x')\,d\mu_X(x')-\int_X d_X^2(x,v)\,d\mu_X(v)\cdot\int_X \phi(x')\,d\mu_X(x')\\
&\quad-\iint_{X\times X}d_X^2(u,x')\phi(x')\,d\mu_X(u)\,d\mu_X(x') +\iint_{X\times X} d_X^2(u,v)\, d\mu_X(u)\,d\mu_X(v)\cdot\int_X \phi(x')\,d\mu_X(x') \bigg)\\
&=\lambda\phi(x)-\lambda\int_X \phi(u)\,d\mu_X(u)=\lambda\phi(x).
\end{align*}

This completes the proof.
\end{proof}

Moreover, the reverse implication of Lemma \ref{lemma:simplereigenproblem} holds for nonzero eigenvalues in the setting of \textbf{two-point homogeneous spaces} (cf. Corollary \ref{cor:twohomoeigenproblem}).

\begin{definition}[two-point homogeneous space]
A mm-space $\mX=(X,d_X,\mu_X)\in\mw$ is said to be \textbf{two-point homogeneous} if it satisfies the following condition:

\begin{quote}For arbitrary $x,x'\in X$ there is an isometry $f:X\rightarrow X$ such that $f_{*}\mu_X=\mu_X$ and $f(x)=x'$.
\end{quote}
\end{definition}

\begin{proposition}\label{prop:homogeneousspace}
Suppose $\mX=(X,d_X,\mu_X)\in\mathcal{M}_w$ is two-point homogeneous. Then, 
$$K_\mX(x,x')=-\frac{1}{2}\Big(d_X^2(x,x')-\big(\diamna_2(\mX)\big)^2\Big)
\,\,\mbox{for every $x,x'\in X$.}$$
\end{proposition}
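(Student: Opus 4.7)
The plan is to show that under two-point homogeneity, the two ``marginal'' integral terms $\int_X d_X^2(x,v)\,d\mu_X(v)$ and $\int_X d_X^2(u,x')\,d\mu_X(u)$ are both \emph{constant} in their free variable, and moreover each constant equals $(\diamna_2(\mX))^2$. Once that is established, plugging into the definition of $K_\mX$ gives $K_\mX(x,x') = -\frac{1}{2}(d_X^2(x,x') - A - A + (\diamna_2(\mX))^2)$ with $A = (\diamna_2(\mX))^2$, which immediately collapses to the claimed expression.

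The main (and essentially only) step is therefore to verify that $A(x) := \int_X d_X^2(x,v)\,d\mu_X(v)$ does not depend on $x$. Given any two points $x, x' \in X$, two-point homogeneity provides a measure-preserving isometry $f : X \to X$ with $f(x) = x'$. Then, using $f_*\mu_X = \mu_X$ to change variables and the fact that $d_X(f(\cdot), f(\cdot)) = d_X(\cdot, \cdot)$, one computes
\begin{equation*}
A(x') = \int_X d_X^2(f(x), v)\, d\mu_X(v) = \int_X d_X^2(f(x), f(u))\, d\mu_X(u) = \int_X d_X^2(x, u)\, d\mu_X(u) = A(x).
\end{equation*}
So $A(\cdot) \equiv A$ is constant. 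By symmetry of $d_X$, the same holds for $B(x') := \int_X d_X^2(u, x')\, d\mu_X(u)$, and $B \equiv A$.

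To identify the value of the constant, integrate $A$ against $\mu_X$ (which is a probability measure, so $\int_X A\, d\mu_X = A$) and use Fubini:
\begin{equation*}
A = \int_X A(x)\, d\mu_X(x) = \iint_{X\times X} d_X^2(x,v)\, d\mu_X(v)\, d\mu_X(x) = (\diamna_2(\mX))^2,
\end{equation*}
by the definition of $\diamna_2$ in equation (\ref{eq:diam-p}). Substituting $A = B = (\diamna_2(\mX))^2$ into the definition of $K_\mX$ yields the claim. I do not anticipate any real obstacle; the argument is just a clean application of the invariance encoded in two-point homogeneity followed by Fubini.
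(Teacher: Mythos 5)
Your proof is correct and follows essentially the same route as the paper's: both use the measure-preserving isometry from two-point homogeneity to show that $x\mapsto\int_X d_X^2(x,v)\,d\mu_X(v)$ is constant, and then identify the constant with its average $\big(\diamna_2(\mX)\big)^2$ via Fubini. Your write-up merely makes explicit the change-of-variables computation that the paper leaves as ``easy to check.''
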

\begin{proof}
Fix arbitrary $x,x'\in X$. Since $\mX$ is two-point homogeneous, there exist an isometry $f$ such that $f(x)=x'$ and $f_*\mu_X=\mu_X$. Then, it is easy to check that $\int_X d_X^2(x,u)\,d\mu_X(u)=\int_X d_X^2(x',u)\,d\mu_X(u)$. This implies that the map $x\mapsto\int_X d_X^2(x,u)\,d\mu_X(u)$ is constant, and therefore the constant is equal to its average $\big(\mathrm{diam}_2(\mX)\big)^2$. Hence, the claim follows.
\end{proof}

\begin{corollary}\label{cor:twohomoeigenproblem}
Suppose $\mX=(X,d_X,\mu_X)\in\mathcal{M}_w$ is two-point homogeneous. If $\lambda$ is a nonzero eigenvalue of $\cmdsop_\mX$ with an associated eigenfunction $\phi\in L^2(\mu_X)$, then the following two equalities hold:

\begin{enumerate}
    \item $-\frac{1}{2}\int_X d_X^2(x,x')\phi(x')\,d\mu_X(x')=\lambda\phi(x)$ for $\mu_X$-a.e $x\in X$, and
    
   \item $\int_X\phi(x)\,d\mu_X(x)=0.$
\end{enumerate}
\end{corollary}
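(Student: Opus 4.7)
The plan is to reduce the corollary to two earlier facts already in place: Lemma \ref{lemma:simpletechgen}(5), which asserts that eigenfunctions of $\cmdsop_\mX$ associated to nonzero eigenvalues integrate to zero over $\mu_X$, and Proposition \ref{prop:homogeneousspace}, which gives a simple closed form for $K_\mX$ in the two-point homogeneous case. Neither of these requires any new idea, so the corollary should essentially be a one-line substitution.

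First, I would establish (2) immediately: since $\phi$ is an eigenfunction of $\cmdsop_\mX$ for the nonzero eigenvalue $\lambda$, Lemma \ref{lemma:simpletechgen}(5) yields $\int_X \phi(x)\,d\mu_X(x) = 0$. This is independent of the two-point homogeneous hypothesis, so it costs nothing.

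Next, for (1), I would use Proposition \ref{prop:homogeneousspace} to rewrite the kernel as $K_\mX(x,x') = -\tfrac12\bigl(d_X^2(x,x') - (\diamna_2(\mX))^2\bigr)$. Plugging this into the defining integral for $\cmdsop_\mX$ and using the eigenfunction equation $\cmdsop_\mX\phi(x) = \lambda\phi(x)$ for $\mu_X$-a.e.\ $x$, I get
\[
\lambda\phi(x) = -\frac{1}{2}\int_X d_X^2(x,x')\phi(x')\,d\mu_X(x') + \frac{1}{2}\big(\diamna_2(\mX)\big)^2\int_X \phi(x')\,d\mu_X(x').
\]
By part (2) the second term vanishes, giving (1) for $\mu_X$-a.e.\ $x$.

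There is no real obstacle here: the only subtlety is that we need (2) in order to dispose of the constant term before concluding (1), so the logical order of the two items in the proof is opposite to the order in which they are stated. The two-point homogeneity hypothesis is used exclusively via Proposition \ref{prop:homogeneousspace}; the nonzero eigenvalue hypothesis is used exclusively via Lemma \ref{lemma:simpletechgen}(5).
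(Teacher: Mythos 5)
Your proposal is correct and matches the paper's proof, which is exactly the one-line citation ``Apply Proposition \ref{prop:homogeneousspace} and item (5) of Lemma \ref{lemma:simpletechgen}''; you have merely written out the substitution that the paper leaves implicit. Your observation that item (2) must be established first so that the constant term $\frac{1}{2}\big(\diamna_2(\mX)\big)^2\int_X\phi\,d\mu_X$ vanishes is precisely the intended mechanism.
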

\begin{proof}
Apply Proposition \ref{prop:homogeneousspace} and item (5) of Lemma \ref{lemma:simpletechgen}.
\end{proof}

The above corollary will be useful when  studying the spectrum of $\cmdsop_{\Sp^{d-1}}$ (see \S\ref{sec:sec:S1} and \S\ref{sec:Spd-1}).

The following example describes the generalized cMDS kernel and operator for one of the simplest cases: the case when $\mX$ is a compact subset of a Euclidean space.

\begin{example}\label{ex:rankEucoper}
For a compact subset $X$ of $\R^k$ with $m_k(X)>0$, we consider $\mX=(X,\Vert\cdot\Vert,\overline{m}_k)\in\mathcal{M}_w$
where $\Vert\cdot\Vert$ indicates the metric on $X$ induced by the Euclidean norm, and $\overline{m}_k:=\frac{1}{m_k(X)}m_k$ is the normalized $k$-dimensional Lebesgue measure on $X$. Then, one can easily show that
$$K_\mX(x,x')=\left\langle x-\mathrm{cm}(\mX),x'-\mathrm{cm}(\mX)\right\rangle$$
for any $x,x'\in X$ where $\mathrm{cm}(\mX):=\int_X x\,d\overline{m}_k(x)\in\R^k$ is the center of mass of $\mX$. This means that $\cmdsop_\mX$ is positive semi-definite (cf. Definition \ref{def:psdop}), since
$$\langle\cmdsop_\mX\phi,\phi \rangle=\left\Vert\int_X (x-\mathrm{cm}(\mX))\phi(x)\,d\overline{m}_k(x)\right\Vert^2\geq 0$$
for any $\phi\in L^2(\overline{m}_k)$. Moreover, observe that $\cmdsop_\mX=\cmdsop_2\circ \cmdsop_1$ where
$$ \cmdsop_1:L^2(\overline{m}_k)\longrightarrow\R^k\text{  s.t.  } \phi\longmapsto\int_X (x-\mathrm{cm}(\mX))\,\phi(x)\,d\overline{m}_k(x), \text{ and}
$$ 
$$
    \cmdsop_2:\R^k\longrightarrow L^2(\overline{m}_k)\text{  s.t.  }
    x\longmapsto\langle \cdot-\mathrm{cm}(\mX),x \rangle.
$$

In particular, this implies that the rank of $\cmdsop_\mX$ is at most $k$ since the domain of $\cmdsop_2$ is $\R^k$. 
\end{example}

\subsection{The cMDS embedding for metric measure spaces}\label{sec:sec:cmdsembdmm}
Since by items (1) and (2) of Lemma \ref{lemma:simpletechgen} the generalized cMDS operator $\cmdsop_\mX$ is compact and self-adjoint, standard results from functional analysis guarantee that $\cmdsop_\mX$ has discrete spectrum consisting of real eigenvalues (cf. Theorem \ref{thm"selfadjtspectrmprpty}, Theorem \ref{thm:RieszSchauder} and Theorem \ref{thm:HilbertSchmidt}).

\begin{definition}
Let $\mathfrak{A}$ be a compact and self-adjoint operator on a Hilbert space $\mathcal{H}$. Then,
\begin{align*}
    &\pr(\mathfrak{A}):=\text{cardinality of the set of positive eigenvalues of }\mathfrak{A}\text{ counted with multiplicity,}\\
    &\nr(\mathfrak{A}):=\text{cardinality of the set of negative eigenvalues of }\mathfrak{A}\text{ counted with multiplicity.}
\end{align*}
\end{definition}

The cMDS procedure for finite metric spaces (cf. Algorithm \ref{alg:cMDSfin}) will be now  generalized to general mm-spaces in the following way: For a given $\mX\in\mw$, choose $k\leq \pr(\cmdsop_\mX)$. We build a generalized cMDS embedding $$\Phi_{\mX\!\!,k}:X\rightarrow \R^k$$ together with its induced mm-space
$$\widehat{\mX}_k:=(\Phi_{\mX\!\!,k}(X),\Vert\cdot\Vert,(\Phi_{\mX\!\!,k})_\ast\mu_X)\in\mathcal{M}_{w}$$
obtained by eliminating all negative eigenvalues and by keeping the top $k$ largest (positive) eigenvalues   of $\cmdsop_\mX$ and their corresponding orthonormal eigenfunctions.

\begin{algorithm}
\caption{cMDS embeding into $\R^k$ for mm-spaces}\label{alg:gencMDSRk}
\begin{algorithmic}[1]
    \State \textbf{Input}: $\mX=(X,d_X,\mu_X)\in\mathcal{M}_w$ and a positive integer $k\leq \pr(\cmdsop_\mX)$.
    
    \State Construct $K_\mX$ and $\cmdsop_\mX$ as before.
    
    \State Compute the positive eigenvalues $\lambda_1\geq\dots\geq\lambda_{\mathrm{pr}(\cmdsop_\mX)}>0$ ($\mathrm{pr}(\cmdsop_\mX)$ can be infinite) and corresponding orthonormal eigenfunctions $\{\phi_i\}_{i=1}^{\mathrm{pr}(\cmdsop_\mX)}$ of $\cmdsop_\mX$.
    
    \State $\Phi_{\mX\!\!,k}(x):=\big(\lambda_1^{\frac{1}{2}}\phi_1(x),\lambda_2^{\frac{1}{2}}\phi_2(x),\dots,\lambda_k^{\frac{1}{2}}\phi_k(x)\big)\in\R^k$ for each $x\in X$.
    
    \State \textbf{Output}: $\Phi_{\mX\!\!,k}:X\rightarrow \R^k$ and $\widehat{\mX}_k$.
\end{algorithmic}
\end{algorithm}

\begin{remark}
By item (6) of Lemma \ref{lemma:simpletechgen}, one can assume that $\Phi_{\mX\!\!,k}$ is continuous.
\end{remark}

\begin{definition}[generalized cMDS into $\R^k$]\label{def:cmdsrk}
Given a mm-space $\mX=(X,d_X,\mu_X)\in\mw$ and orthonormal eigenfunctions $a=\{\phi_i\}_{i=1}^{\pr(\cmdsop_\mX)}$ of $\cmdsop_\mX$ for the corresponding positive eigenvalues $\lambda_1\geq\lambda_2\geq\cdots>0$, one defines the following map:
\begin{align*}
    \Phi_{\mX\!\!,k}^a:X&\longrightarrow \R^k\\
    x&\longmapsto\big(\lambda_1^{\frac{1}{2}}\phi_1(x),\lambda_2^{\frac{1}{2}}\phi_2(x),\dots,\lambda_k^{\frac{1}{2}}\phi_k(x)\big).
\end{align*}
This map $\Phi_{\mX\!\!,k}^a$ is called \textbf{generalized cMDS embedding into $\R^k$} of $\mX$, and the resulting mm-space is denoted by
$$\widehat{\mX}_k:=(\Phi_{\mX\!\!,k}^a(X),\Vert\cdot\Vert,(\Phi_{\mX\!\!,k}^a)_\ast\mu_X)\in\mathcal{M}_{w}.$$
In practice, the superscript $a$ will be omitted in many cases for the simplicity, unless we have to specify the set of orthonormal eigenfunctions.
\end{definition}

\subsubsection{The cMDS embedding into $\ell^2$.}
One critical difference between the cMDS method for finite metric spaces (see \S\ref{sec:fms}) and the  generalized cMDS method is the possibility that there can exist infinitely many positive eigenvalues of $\cmdsop_\mX$. $\Sp^{d-1}$ ($d\geq 2$) and $\mathbb{T}^N$ ($N\geq 2$) are actual examples with such property, and they are studied in \S \ref{sec:sec:S1}, \S \ref{sec:sec:TN}, and \S \ref{sec:Spd-1}. To encompass such cases where $\pr(\cmdsop_\mX)=\infty$, we consider a variant of the cMDS method where the target space is infinite dimensional, namely $\ell^2$. As a result of applying the generalized cMDS method to a given mm-space $\mX=(X,d_X,\mu_X)$ into $\ell^2$, we formally expect to be able to build the following embedding: 
$$\Phi_\mX^a:X\longrightarrow\ell^2\text{  s.t.  }x\longmapsto \left(\lambda_1^{\frac{1}{2}}\phi_1(x),\lambda_2^{\frac{1}{2}}\phi_2(x),\dots\right)$$
for a fixed family of orthonormal eigenfunctions $a=\{\phi_i\}_{i=1}^{\pr(\cmdsop_\mX)}$.\medskip

However,  to guarantee that (as defined above) $\Phi_\mX^a(x)$ is a point in $\ell^2$, the square summability of the sequence $\left(\lambda_1^{\frac{1}{2}}\phi_1(x),\lambda_2^{\frac{1}{2}}\phi_2(x),\dots\right)$ is required. This can be ensured $\mu_X$-a.e (see Proposition \ref{prop:traceclasscmds}) by assuming that the cMDS operator $\cmdsop_\mX$ is \textbf{trace class} (cf. Definition \ref{def:trace-norm}).

\subsubsection{Traceable metric measure spaces.}\label{sec:sec:sec:trblmmspace}

\begin{framed}
\noindent
\textbf{Traceability of $\cmdsop_\mX$.} Here we provide an abridged explanation of what it means for the generalized cMDS operator to be trace class (or traceable). 

Recall that, for a given $\mX\in\mw$, since $\cmdsop_\mX$ is a compact and self-adjoint operator (cf. Lemma \ref{lemma:simpletechgen}),  $\cmdsop_\mX$ has a discrete set of real eigenvalues.

Under these conditions, we define the \emph{trace norm} of $\cmdsop_\mX$ (denoted by $\Vert \cmdsop_\mX \Vert_1$) to be the sum of absolute values of all eigenvalues of $\cmdsop_\mX$ (counted with their multiplicity). Now, we say $\cmdsop_\mX$ is trace class (equivalently, $\cmdsop_\mX$ is \emph{traceable}) if $\Vert \cmdsop_\mX \Vert_1<\infty$ (cf. Theorem \ref{thm:tracesingvaluecndtn}). Moreover, if $\cmdsop_\mX$ is trace class, then the trace of $\cmdsop_\mX$ (denoted by $\tr(\cmdsop_\mX)$) is well-defined and it coincides with the sum of all eigenvalues of $\cmdsop_\mX$ (counted with their multiplicity).

Readers may check Definition \ref{def:trace-norm} in Appendix \S\ref{sec:sec:functanal} to see the detailed general definition of \textbf{trace class} operators.
\end{framed}

We will define the above version of the cMDS  embedding into $\ell^2$ under the assumption that  $\cmdsop_\mX$ is trace class (see Algorithm \ref{algorithm:gencmdsl2} and Definition \ref{def:cmdsl2}).

\begin{definition}
The collection of all mm-spaces $\mX=(X,d_X,\mu_X)\in\mathcal{M}_w$ such that $\cmdsop_\mX:L^2(\mu_X)\rightarrow L^2(\mu_X)$ is trace class is said to be the collection of all \textbf{traceable mm-spaces} and is denoted by $\mathcal{M}_w^{tr}$.
\end{definition}

Naturally, establishing canonical examples of mm-spaces which belong to $\mwtr$ will be an important topic of the later part of this paper. However, there exist (non-compact) mm-spaces which \emph{do not} induce traceable cMDS operators --- a construction of one such pathological mm-space will be described in \S\ref{sec:non-trace-class}.

\medskip

Before we formally introduce a generalized cMDS formulation for mm-spaces in $\mwtr$ which produces embeddings into $\ell^2$, it is  worth  mentioning that there is a \emph{coordinate-free} interpretation of generalized cMDS which we will describe first. Even though in that scenario we lose precise coordinatizations , this alternative formulation is far more general in the sense that it is well defined even in the case when the cMDS operator is not trace class. Moreover, the stability of cMDS with respect to the Gromov-Wasserstein distance holds for this coordinate-free interpretation (Theorem \ref{thm:stability}) without having to assume the traceability of the cMDS operator.

\subsubsection{Coordinate-free interpretation of generalized cMDS.}\label{sec:sec:sec:altcmds}
Fix an arbitrary $\mX\in\mw$. Let $L^2_{\mathrm{sym}}(\mu_X\otimes\mu_X)$ denote the subset of symmetric kernels in $L^2(\mu_X\otimes\mu_X)$ (cf. Definition \ref{def:symker}). Observe that $K_\mX\in L^2_{\mathrm{sym}}(\mu_X\otimes\mu_X)$. Also, let $L^2_{\succeq 0,\mathrm{sym}}(\mu_X\otimes\mu_X)$ denote the set of symmetric kernels whose associated integral operator is positive semi-definite. Note that $L^2_{\succeq 0,\mathrm{sym}}(\mu_X\otimes\mu_X)$ is a convex subset of $L^2_{\mathrm{sym}}(\mu_X\otimes\mu_X)$.

Finally, let $P_{\mu_X}$ be the nearest point projection (with respect to the $L^2(\mu_X\otimes\mu_X)$-norm) from $L^2_{\mathrm{sym}}(\mu_X\otimes\mu_X)$ to the closure of $L^2_{\succeq 0,\mathrm{sym}}(\mu_X\otimes\mu_X)$. Let
$$K_\mX^+:=P_{\mu_X}(K_\mX)$$
be the nearest point projection of the cMDS operator $\cmdsop_\mX$ (cf. Theorem \ref{thm:optimalthmgen}). Then, the  map
$K_\mX\longmapsto K_\mX^+$
can be regarded as an alternative, coordinate free, interpretation of cMDS. In \S\ref{sec:sec:sec:coordinate-dependent} we will define for each  $\mX\in\mwtr$ the embedding $\Phi_\mX:X\rightarrow\ell^2$ together with its induced mm-space $\widehat{\mX}$. Moreover, we will prove that $K_{\widehat{\mX}}=K_\mX^+$ (cf. Proposition \ref{prop:twoviewsame}) when $\mX\in\mwtr$.


\subsubsection{Coordinate-dependent interpretation of generalized cMDS into $\ell^2$.}\label{sec:sec:sec:coordinate-dependent}

Now we introduce the coordinate-dependent version of cMDS mapping into $\ell^2$ that we promised. 

The following proposition shows that the traceability of $\cmdsop_\mX$ guarantees that the $\mu_X$-almost everywhere square summability of the sequence $\left(\lambda_1^{\frac{1}{2}}\phi_1(x),\lambda_2^{\frac{1}{2}}\phi_2(x),\dots\right)$.

\begin{proposition}\label{prop:traceclasscmds}
Let $\mX=(X,d_X,\mu_X)\in\mwtr$. Then, $\sum_{i=1}^{\mathrm{pr}(\cmdsop_\mX)} \lambda_i(\phi_i(x))^2<\infty$
for $\mu_X$-almost every $x\in X$ where $\lambda_1\geq\dots\geq\lambda_{\mathrm{pr}(\cmdsop_\mX)}>0$ are all positive eigenvalues and $\{\phi_i\}_{i=1}^{\mathrm{pr}(\cmdsop_\mX)}$ are corresponding orthonormal eigenfunctions of $\cmdsop_\mX$.
\end{proposition}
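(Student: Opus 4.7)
The plan is to integrate the proposed sum against $\mu_X$ and show that the resulting integral is finite; since the integrand is non-negative and measurable, its integrability will force it to be finite $\mu_X$-almost everywhere. All the necessary ingredients are already in hand: the $\phi_i$'s are orthonormal in $L^2(\mu_X)$, the $\lambda_i$'s are positive, and the trace class hypothesis controls $\sum_i \lambda_i$.

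Concretely, I would proceed as follows. First, define the non-negative measurable function
\[
F(x) := \sum_{i=1}^{\pr(\cmdsop_\mX)} \lambda_i \, (\phi_i(x))^2,
\]
viewed as a (possibly infinite-valued) limit of the partial sums, which are all measurable. By Tonelli's theorem (applicable since every term is non-negative and measurable), one may interchange the sum and the integral:
\[
\int_X F(x)\,d\mu_X(x) = \sum_{i=1}^{\pr(\cmdsop_\mX)} \lambda_i \int_X (\phi_i(x))^2\,d\mu_X(x).
\]
Because $\{\phi_i\}$ is orthonormal in $L^2(\mu_X)$, each integral on the right equals $1$, so the right-hand side reduces to $\sum_{i=1}^{\pr(\cmdsop_\mX)} \lambda_i$.

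Next, I would invoke the assumption $\mX\in\mwtr$: by definition this means $\cmdsop_\mX$ is trace class, i.e.\ $\Vert \cmdsop_\mX\Vert_1 = \sum_{j}\vert\mu_j\vert<\infty$, where the $\mu_j$'s range over all eigenvalues of $\cmdsop_\mX$ counted with multiplicity (cf.\ the framed discussion on traceability of $\cmdsop_\mX$ just before Definition of $\mwtr$). The $\lambda_i$'s are exactly the positive eigenvalues of $\cmdsop_\mX$ (counted with multiplicity), so
\[
\sum_{i=1}^{\pr(\cmdsop_\mX)} \lambda_i \;\leq\; \sum_{j}\vert\mu_j\vert \;=\; \Vert \cmdsop_\mX\Vert_1 \;<\;\infty.
\]
Consequently $F\in L^1(\mu_X)$, which forces $F(x)<\infty$ for $\mu_X$-a.e.\ $x\in X$, giving the claim.

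I do not expect any significant obstacle: the proof is essentially an application of Tonelli combined with orthonormality and the definition of trace class. The only point that warrants a moment of care is the legitimacy of swapping the (possibly infinite) sum with the integral, which is handled cleanly by non-negativity of the summands; no domination or uniform convergence hypothesis is required.
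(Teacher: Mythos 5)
Your proof is correct and takes essentially the same route as the paper's: the paper likewise integrates $\sum_i \lambda_i(\phi_i(x))^2$ term by term (via the monotone convergence theorem, equivalent here to your Tonelli argument), uses orthonormality to reduce the integral to $\sum_i \lambda_i \leq \Vert\cmdsop_\mX\Vert_1 < \infty$, and concludes finiteness $\mu_X$-a.e. The only cosmetic difference is in justifying measurability of the limit function: the paper cites the continuity of eigenfunctions (item (6) of Lemma \ref{lemma:simpletechgen}), while you use measurability of the partial sums, which works just as well.
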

\begin{proof}
If $\mathrm{pr}(\cmdsop_\mX)<\infty$, the claim is obvious. Consider the $\mathrm{pr}(\cmdsop_\mX)=\infty$ case. We define the map $f:X\longrightarrow [0,\infty]$ by $x\longmapsto\sum_{i=1}^\infty \lambda_i(\phi_i(x))^2$. Note that $f$ is measurable since it is the pointwise limit of a sequence of continuous functions (cf. item (6) of Lemma \ref{lemma:simpletechgen}).

Therefore, by the monotone convergence theorem,
\begin{align*}
    \int_X f\,d\mu_X(x)&=\sum_{i=1}^\infty \lambda_i\int_X (\phi_i(x))^2\,d\mu_X(x)=\sum_{i=1}^\infty \lambda_i\leq\Vert\cmdsop_\mX\Vert_1<\infty.
\end{align*}
Hence, $\sum_{i=1}^\infty \lambda_i(\phi_i(x))^2<\infty$ for $\mu_X$-almost every $x\in X$.
\end{proof}

Hence, for any $\mX\in\mwtr$, we consider the following type of cMDS embedding where the target space $\ell^2$ is infinite dimensional: 

\begin{algorithm}
\caption{cMDS embeding into $\ell^2$ for traceable mm-spaces}\label{algorithm:gencmdsl2}
\begin{algorithmic}[1]
    \State \textbf{Input}: $\mX=(X,d_X,\mu_X)\in\mwtr$.
    
    \State Construct $K_\mX$ and $\cmdsop_\mX$ as before.
    
    \State Compute the positive eigenvalues $\lambda_1\geq\lambda_2\geq\dots>0$ and corresponding orthonormal eigenfunctions $\{\phi_i\}_{i=1}^{\pr(\cmdsop_\mX)}$ of $\cmdsop_\mX$.
    
    \State Define $\Phi_\mX(x):=\begin{cases}\big(\lambda_1^{\frac{1}{2}}\phi_1(x),\lambda_2^{\frac{1}{2}}\phi_2(x),\dots,\lambda_k^{\frac{1}{2}}\phi_{\pr(\cmdsop_\mX)}(x),0,0,\dots\big)&\text{if }\pr(\cmdsop_\mX)<\infty\\ \big(\lambda_1^{\frac{1}{2}}\phi_1(x),\lambda_2^{\frac{1}{2}}\phi_2(x),\dots\big)&\text{if }\pr(\cmdsop_\mX)=\infty \end{cases}$

    for each $x\in X$. Note that $\Phi_\mX(x)\in\ell^2$ $\mu_X$-a.e. by Proposition \ref{prop:traceclasscmds}.
    
    \State \textbf{Output}: $\Phi_\mX:X\rightarrow \ell^2$ and $\widehat{\mX}$.
\end{algorithmic}
\end{algorithm}

\begin{definition}[generalized cMDS into $\ell^2$]\label{def:cmdsl2}
Given a mm-space $\mX=(X,d_X,\mu_X)\in\mwtr$ and orthonormal eigenfunctions $a=\{\phi_i\}_{i=1}^{\pr(\cmdsop_\mX)}$ of $\cmdsop_\mX$ for the corresponding positive eigenvalues $\lambda_1\geq\lambda_2\geq\cdots>0$, one defines the following map: 
\begin{align*}
    \Phi_\mX^a:X&\longrightarrow \ell^2\\
    x&\longmapsto\begin{cases}\big(\lambda_1^{\frac{1}{2}}\phi_1(x),\lambda_2^{\frac{1}{2}}\phi_2(x),\dots,\lambda_k^{\frac{1}{2}}\phi_{\pr(\cmdsop_\mX)}(x),0,0,\dots\big)&\text{if }\pr(\cmdsop_\mX)<\infty\\ \big(\lambda_1^{\frac{1}{2}}\phi_1(x),\lambda_2^{\frac{1}{2}}\phi_2(x),\dots\big)&\text{if }\pr(\cmdsop_\mX)=\infty \end{cases}
\end{align*}
This map $\Phi_\mX^a$ is called \textbf{generalized cMDS embedding into $\ell^2$} of $\mX$, and the resulting mm-space is denoted by 
$$\widehat{\mX}^a:=(\Phi_\mX^a(X),\Vert\cdot\Vert,(\Phi_\mX^a)_\ast\mu_X).$$

Unless it is necessary to specify the set $a$ of orthonormal eigenfunctions, the superscript $a$ will be omitted, for simplicity, and in such cases we will write $\Phi_{\mX}$ and $\widehat{\mX}$.\footnote{Note that when $a$ and $a'$ are two different sets of orthonormal eigenfunctions, the mm-spaces $\widehat{\mX}^a$ and $\widehat{\mX}^{a'}$ are isomorphic.} See Algorithm \ref{algorithm:gencmdsl2}.
\end{definition}

The following proposition shows that the coordinate-free interpretation agrees with the coordinate-dependent interpretation  when the traceability of the cMDS operator is assumed.

\begin{proposition}\label{prop:twoviewsame}
Let $\mX\in\mwtr$. Then, $K_\mX^+=K_{\widehat{\mX}}$.
\end{proposition}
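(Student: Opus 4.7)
The plan is to compute both $K_\mX^+$ and $K_{\widehat{\mX}}$ via the spectral decomposition of $\cmdsop_\mX$ and observe that they coincide with the ``positive part'' of this decomposition. Let $\{\phi_i\}$ together with $\{\psi_j\}$ be an orthonormal basis of $L^2(\mu_X)$ consisting of eigenfunctions of $\cmdsop_\mX$, where $\phi_i$ corresponds to positive eigenvalues $\lambda_i > 0$ and $\psi_j$ corresponds to non-positive eigenvalues $\mu_j \leq 0$. By the Hilbert--Schmidt spectral theorem applied to the compact self-adjoint operator $\cmdsop_\mX$, we have the $L^2(\mu_X \otimes \mu_X)$-convergent expansion
\[
K_\mX(x,x') = \sum_i \lambda_i\, \phi_i(x)\phi_i(x') + \sum_j \mu_j\, \psi_j(x)\psi_j(x').
\]
Let $K_+$ and $K_-$ denote the two sums respectively, so that $K_\mX = K_+ + K_-$, with $K_+ \in L^2_{\succeq 0,\mathrm{sym}}(\mu_X \otimes \mu_X)$ and $-K_-$ in the same set.

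First I would verify that the pullback of $K_{\widehat{\mX}}$ via $\Phi_\mX$ equals $K_+$. Since $\widehat{\mX}$ is a compact subset of $\ell^2$, the natural $\ell^2$-analogue of Remark \ref{rmk:rankEucoper} gives $K_{\widehat{\mX}}(y,y') = \langle y - \mathrm{cm}(\widehat{\mX}),\, y' - \mathrm{cm}(\widehat{\mX})\rangle_{\ell^2}$; the traceability of $\cmdsop_\mX$ (via $\sum_i \lambda_i \int \phi_i^2\, d\mu_X = \sum_i \lambda_i < \infty$) ensures that this center of mass is well-defined, and Lemma \ref{lemma:simpletechgen}(5) forces each coordinate $\int \lambda_i^{1/2} \phi_i\, d\mu_X$ to vanish, so $\mathrm{cm}(\widehat{\mX}) = 0$. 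Hence
\[
K_{\widehat{\mX}}(\Phi_\mX(x), \Phi_\mX(x')) = \langle \Phi_\mX(x), \Phi_\mX(x')\rangle_{\ell^2} = \sum_i \lambda_i\, \phi_i(x)\phi_i(x') = K_+(x,x'),
\]
with convergence in $L^2(\mu_X \otimes \mu_X)$ secured by the Hilbert--Schmidt property $\sum_i \lambda_i^2 < \infty$.

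Next I would show that $K_+$ is the nearest point projection of $K_\mX$ onto the closure of $L^2_{\succeq 0,\mathrm{sym}}(\mu_X \otimes \mu_X)$, i.e.~that $K_+ = K_\mX^+$. Recall that for symmetric Hilbert--Schmidt kernels $K, L$ with associated operators $\mathfrak{K}, \mathfrak{L}$, one has $\langle K, L\rangle_{L^2(\mu_X \otimes \mu_X)} = \tr(\mathfrak{K}\mathfrak{L})$. For any symmetric kernel $A$ with PSD associated operator $\mathfrak{A}$, the orthogonality of the eigenspaces gives $\langle K_+, K_-\rangle = 0$, so
\[
\|K_\mX - A\|_{L^2}^2 = \|K_+ - A\|_{L^2}^2 + \|K_-\|_{L^2}^2 - 2\langle A, K_-\rangle_{L^2}.
\]
Writing $\mathfrak{A} = \mathfrak{B}^2$ with $\mathfrak{B}$ PSD yields $\tr(\mathfrak{A}\mathfrak{K}_-) = \tr(\mathfrak{B}\mathfrak{K}_-\mathfrak{B})$, and $\mathfrak{B}\mathfrak{K}_-\mathfrak{B}$ is NSD, so $\langle A, K_-\rangle_{L^2} \leq 0$. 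Therefore $\|K_\mX - A\|_{L^2}^2 \geq \|K_-\|_{L^2}^2 = \|K_\mX - K_+\|_{L^2}^2$, with equality at $A = K_+$; a density argument then extends this minimality to the closure and identifies $K_\mX^+ = K_+$. Combining the two computations gives $K_{\widehat{\mX}} = K_+ = K_\mX^+$.

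The main technical obstacle is the manipulation of infinite series in Step 1 (convergence of $\mathrm{cm}(\widehat{\mX})$ in $\ell^2$ and of the resulting kernel in $L^2(\mu_X \otimes \mu_X)$) and the careful use of the trace/inner-product pairing in Step 3, including the sign argument $\tr(\mathfrak{A}\mathfrak{K}_-) \leq 0$. Both rely on the traceability hypothesis $\mX \in \mwtr$ (directly for the center of mass, and implicitly through the fact that products of Hilbert--Schmidt operators are trace class, which legitimizes the trace identities used).
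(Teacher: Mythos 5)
Your proof is correct, but it takes a genuinely different route from the paper's. In the paper, Proposition \ref{prop:twoviewsame} is a two-line corollary: item (2) of Theorem \ref{thm:optimalthmgen} already asserts that $K_{\widehat{\mX}}$ minimizes $\Vert K_\mX-K\Vert_{L^2(\mu_X\otimes\mu_X)}$ over $L^2_{\succeq 0,\mathrm{sym}}(\mu_X\otimes\mu_X)$, and uniqueness of the nearest-point projection onto a closed convex subset of a Hilbert space then forces $K_\mX^+=K_{\widehat{\mX}}$; the actual work is deferred to Appendix \S\ref{sec:sec:genoptimal}, where optimality is deduced from the trace inequality of Lemma \ref{lemma:TRdiagineq}, whose proof invokes the rearrangement inequality and the Birkhoff--von Neumann theorem. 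You instead re-prove the unconstrained optimality from scratch: you identify $K_{\widehat{\mX}}$ with the positive part $K_+$ of the spectral expansion of $K_\mX$ (an identity the paper records, without your centroid derivation, in the remark following Definition \ref{def:cmdsl2}), and you then obtain $K_+=K_\mX^+$ from the decomposition $\Vert K_\mX-A\Vert^2=\Vert K_+-A\Vert^2+\Vert K_-\Vert^2-2\langle A,K_-\rangle$ together with the sign estimate $\langle A,K_-\rangle=\tr(\mathfrak{A}\mathfrak{K}_-)\leq 0$, where $\mathfrak{K}_-$ is the operator with kernel $K_-$. What your route buys is a self-contained argument avoiding the combinatorial machinery of the appendix, which is really only needed for the rank-constrained statement, item (1) of Theorem \ref{thm:optimalthmgen}; conversely, your argument does not yield that finite-rank version, which the paper's route delivers simultaneously. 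Two execution remarks. First, your cyclicity step $\tr(\mathfrak{A}\mathfrak{K}_-)=\tr(\mathfrak{B}\mathfrak{K}_-\mathfrak{B})$ is legitimate here because under $\mX\in\mwtr$ the operator $\mathfrak{K}_-$ is trace class, so $\mathfrak{B}\mathfrak{K}_-$ is trace class and a bounded factor may be cycled (\cite[Theorem VI.25]{reed2012methods}); note that $\mathfrak{B}$, the square root of the Hilbert--Schmidt operator $\mathfrak{A}$, need not itself be Hilbert--Schmidt, so item (4) of Lemma \ref{lemma:propstrHB} alone would not suffice. Second, you can dispense with the square root altogether: computing the trace in an orthonormal basis of eigenvectors of $\mathfrak{K}_-$ gives $\tr(\mathfrak{A}\mathfrak{K}_-)=\sum_j\mu_j\langle\psi_j,\mathfrak{A}\psi_j\rangle\leq 0$ directly, and this version uses only the Hilbert--Schmidt property, so your projection identity $K_\mX^+=K_+$ in fact holds without traceability; the hypothesis $\mX\in\mwtr$ then enters only through your Step 1, namely the $\mu_X$-a.e.\ definition of $\Phi_\mX$, the $\ell^2$-valued centroid, and the identification $K_{\widehat{\mX}}=K_+$.
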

\begin{proof}
Theorem \ref{thm:optimalthmgen} below establishes that $K_{\widehat{\mX}}$ is the nearest point of $K_\mX$ to $L^2_{\succeq 0,\mathrm{sym}}(\mu_X\otimes\mu_X)$. Since there is the unique nearest point to a closed convex subset, we can conclude that  $K_\mX^+=K_{\widehat{\mX}}$ in this case.
\end{proof}

\begin{remark}
Suppose $\mX=(X,d_X,\mu_X)\in\mw$ and positive integer $k\leq\pr(\cmdsop_\mX)$ are given. Then,
$$\Vert \Phi_{\mX\!\!,k}(x)-\Phi_{\mX\!\!,k}(x')\Vert^2=\sum_{i=1}^k\lambda_i(\phi_i(x)-\phi_i(x'))^2$$
for any $x,x'\in X$ where $\lambda_1\geq\lambda_2\geq\dots>0$ are positive eigenvalues of $\cmdsop_\mX$, and $\{\phi_i\}_{i=1}^{\pr(\cmdsop_\mX)}$ are their corresponding orthonormal eigenfunctions. Then, by  definition, it follows that 
$$K_{\widehat{\mX}_k}(x,x')=\sum_{i=1}^k\lambda_i\,\phi_i(x)\,\phi_i(x')$$
for any $x,x'\in X$. Similarly, if $\mX\in\mwtr$, we have both
$$\Vert \Phi_{\mX}(x)-\Phi_{\mX}(x')\Vert^2=\sum_{i=1}^{\pr(\cmdsop_\mX)}\lambda_i(\phi_i(x)-\phi_i(x'))^2 \mbox{   and   } K_{\widehat{\mX}}(x,x')=\sum_{i=1}^{\pr(\cmdsop_\mX)}\lambda_i\,\phi_i(x)\,\phi_i(x').$$
\end{remark}

\begin{remark}\label{rmk:tracediam2}
If $\mX=(X,d_X,\mu_X)\in\mwtr$, by Theorem \ref{thm:trintegdiag} and the definition of $K_\mX$, we have
$$\tr(\cmdsop_\mX)=\int_X K_\mX(x,x)\,d\mu_X(x)=\frac{1}{2}\iint_{X\times X} d_X^2(u,v)\,d\mu_X(u)\,d\mu_X(v)=\frac{1}{2}\big(\diamna_2(\mX)\big)^2.$$

This implies that, unless $\diamna_2(\mX)=0$ (i.e. $\mX$ is the one point mm-space), there must be at least one positive eigenvalue of $\cmdsop_\mX$.
\end{remark}

Now, let us collect some simple examples of spaces which belong to $\mwtr$. More advanced examples of mm-space in $\mwtr$ can be found in \S \ref{sec:sec:S1}, \S \ref{sec:sec:TN}, and \S \ref{sec:Spd-1}.

\begin{remark}
If $\mX=(X,d_X,\mu_X)\in\mathcal{M}_w$ is a finite mm-space, then $\mX$ belongs to $\mathcal{M}_w^{tr}$ since there are only finitely many eigenvalues.
\end{remark}

\begin{remark}
For a compact subset $X$ of $\R^k$ with $m_k(X)>0$, we consider $$\mX=(X,\Vert\cdot\Vert,\overline{m}_k)\in\mathcal{M}_w$$
as in Example \ref{ex:rankEucoper}. Then, there are only finitely many eigenvalues of $\cmdsop_\mX$ since $\mathrm{rank}(\cmdsop_\mX)\leq k$. Hence, $\mX\in\mwtr$.
\end{remark}

\begin{example}
Consider  the interval $\mX=(X,\Vert\cdot\Vert,\overline{m}_1)\in\mathcal{M}_w$ where $X:=[-1,1]$ and $\overline{m}_1:=\frac{1}{2}m_1$. Then, by Example \ref{ex:rankEucoper}, $K_\mX(x,x')=xx'$ for any $x,x'\in X$ since the center of mass of $\mX$ is the origin. Then, observe that
\begin{align*}
    \cmdsop_\mX\mathrm{Id}_{[-1,1]}(x)=\int_{[-1,1]} x(x')^2\,d\overline{m}_1(x')=\frac{1}{2}\int_{[-1,1]} x(x')^2\,dm_1(x')
    =\frac{1}{3}x=\frac{1}{3}\mathrm{Id}_{[-1,1]}(x)
\end{align*}

for any $x\in X$. Therefore, the identity map is an eigenfunction of $\cmdsop_\mX$ for the eigenvalue $\frac{1}{3}$, which is the unique nonzero eigenvalue of $\cmdsop_\mX$.
\end{example}

\subsection{A non-compact metric measure space with non-traceable cMDS operator} \label{sec:non-trace-class}

We adapt a construction due to Lyons \cite{lyons2018errata,lyons2013distance} which arose in the context of the notion of distance covariance between metric spaces.

\emph{The construction.} For each prime $q$ that is congruent to $1$ modulo $4$, let $G_q$ be the Paley graph consisting of $q$ vertices (see Example \ref{ex:PaleyL2natural} to review the definition and properties of Paley graphs).

Now, let $\mathcal{G}_q:=(V(G_q),d_{G_q},\mu_{G_q})$ be the mm-space such that $V(G_q)$ is the set of vertices of $G_q$, $d_{G_q}$ is the shortest path metric, and $\mu_{G_q}$ is the uniform probability measure.

Taking into account the spectral properties of the adjacency matrix of $G_q$ explained in Example \ref{ex:PaleyL2natural}, Remark \ref{rmk:finmmcompare}, and Lemma \ref{lemma:simplereigenproblem}, one can easily check that (1) the eigenvalues of $\cmdsop_{\mathcal{G}_q}$ are $0$ (with multiplicity $1$) and $\lambda_q^\pm:=\frac{5\pm 3\sqrt{q}}{4q}$ (each with multiplicity $\frac{q-1}{2}$) and (2) eigenfunctions of $\cmdsop_{\mathcal{G}_q}$ are either constant functions or orthogonal to constant functions: Constant eigenfunctions correspond to the eigenvalue $0$, and the eigenfunctions orthogonal to constant functions correspond to the eigenvalue pair $\lambda_q^\pm$. At any rate, note that the trace norm (cf. Definition \ref{def:trace-norm}) $\Vert\cmdsop_{\mathcal{G}_{q}}\Vert_1$ is approximately $\frac{3\sqrt{q}}{4}$ as $q\rightarrow\infty$.

Now, choose a sequence $(q_n)_n$ of primes $q_n\equiv 1 (\mathrm{mod}\,4)$ so that $\sum_{n} \frac{1}{\sqrt{q_n}}=:\frac{1}{c}<\infty$. We  let $\mX:=(X,d_X,\mu_X)$ be the mm-space defined as follows:
\begin{itemize}
    \item $X:=\sqcup_{n}V(G_{q_n})$,
    
    \item $d_X|_{V(G_{q_n})\times V(G_{q_n})} = d_{G_{q_n}}$ for each $n$, whereas, for $n\neq m$, $d_X|_{V(G_{q_n})\times V(G_{q_m})}=1$. 
    
    \item  $\mu_X$ is the probability measure on $X$ which assigns mass $c\,q_n^{-3/2}$ to each point of $V(G_{q_n})$.
\end{itemize}
    We will prove that $\cmdsop_\mX$ is not trace class.\medskip

A calculation shows that for any eigenvector $v_n\perp\mathbf{1}$ of $\cmdsop_{\mathcal{G}_{q_n}}$ (necessarily) with eigenvalue $\lambda =\lambda_{q_n}^\pm$, the function $f_n:X\rightarrow \R$ defined by
\begin{align*}
    f_n(x):=\begin{cases}v_n(x)&\text{if }x\in G_{q_n}\\ 0&\text{otherwise}\end{cases}
\end{align*}
is an eigenfunction of $\cmdsop_\mX$ for the eigenvalue $\frac{\lambda\cdot c}{\sqrt{q_n}}$ by Lemma \ref{lemma:simplereigenproblem}. Therefore, for each $n$, $\lambda_{q_n}^\pm\cdot\frac{c}{\sqrt{q_n}}$ is an eigenvalue of $\cmdsop_\mX$ with multiplicity at least $\frac{q_n-1}{2}$. This implies that the trace norm of $\cmdsop_\mX$ satisfies
$$\Vert\cmdsop_\mX\Vert_1\geq \sum_n \Vert\cmdsop_{\mathcal{G}_{q_n}}\Vert_1\cdot\frac{c}{\sqrt{q_n}}.$$ Hence, $\Vert\cmdsop_\mX\Vert_1$ cannot be finite since $$\lim_{n\rightarrow\infty}\Vert\cmdsop_{\mathcal{G}_{q_n}}\Vert_1\cdot\frac{c}{\sqrt{q_n}}=\lim_{n\rightarrow\infty}\frac{3\sqrt{q_n}}{4}\cdot\frac{c}{\sqrt{q_n}}=\frac{3c}{4}> 0.$$

\medskip
Therefore, $\cmdsop_\mX$ is \emph{not} trace class.\medskip

We point out that the space $\mX$ constructed above is not compact. This motivates:

\begin{restatable}{question}{cmtr}
\label{q:cmmtr}
Is it true that every compact mm-space induces a cMDS operator which is trace class?
\end{restatable}
and also the more circumspect
\begin{restatable}{question}{cmtrman}
\label{q:cmmtrman}
Is it true that every compact Riemannian manifold induces a cMDS operator which is trace class?
\end{restatable}

\begin{framed}
\noindent\textbf{Note}: After this paper was accepted, we learned that this question was recently answered to the negative by Ma and Stepanov; see  \cite{ma2024eigenvalues} where the authors prove that for $n\geq 1$ the spaces $\mathbb{RP}^{2n+1}$ induce non-traceable cMDS operators.  These recent developments motivate
the following loosely defined challenge:\begin{restatable}{challenge}{cmtrmannew}
\label{c:cmmtrmannew}
Find as-general-as-possible geometric conditions guaranteeing that a compact Riemannian manifold induces a trace class cMDS operator.
\end{restatable}
\end{framed}

\subsection{Distortion incurred by the cMDS operator}\label{sec:sec:cmdsdistmm}

We define the following notions in order to measure the metric distortion incurred by the generalized cMDS method.

\begin{definition}[negative trace]\label{def:ngtr}
Let $\mathfrak{A}$ be a compact and self-adjoint operator on a Hilbert space $\mathcal{H}$. Then, the \emph{negative trace} of $\mathfrak{A}$ is defined in the following way:

$$\trng(\mathfrak{A}):=\sum_{i=1}^{\mathrm{nr}(\mathfrak{A})} \vert \zeta_i \vert$$
where $\zeta_1\leq\dots\leq\zeta_{\nr(\cmdsop_\mX)}<0$ are the negative eigenvalues of $\mathfrak{A}$.
\end{definition}

\begin{definition}
Let $\mX=(X,d_X,\mu_X)\in\mwtr$. For each $k\leq \pr(\cmdsop_\mX)$, we define:
$$\dis_k(\mathcal{X}):=\left(\int_X\int_X\big\vert\Vert \Phi_{\mX\!\!,k}(x)-\Phi_{\mX\!\!,k}(x')\Vert^2-d_X^2(x,x')\big\vert\,d\mu_X(x')\,d\mu_X(x)\right)^{\frac{1}{2}},$$
and
$$\dis(\mathcal{X}):=\left(\int_X\int_X\big\vert\Vert \Phi_\mX(x)-\Phi_\mX(x')\Vert^2-d_X^2(x,x')\big\vert\,d\mu_X(x')\,d\mu_X(x)\right)^{\frac{1}{2}}.$$
\end{definition}

\begin{proposition}[generalized cMDS bounds]\label{prop:generrorbdd}
Let $\mX=(X,d_X,\mu_X)\in{\mathcal M}_w^{tr}$. Then the following bounds hold: 
\begin{enumerate}
	\item (\textbf{cMDS is non-contracting}) For $\mu_X\otimes\mu_X$-almost every $(x,x')\in X\times X$, 
	
	$$d_X(x,x')\leq \Vert \Phi_\mX(x)-\Phi_\mX(x') \Vert$$
	and for any integer $k\leq\mathrm{pr}(\cmdsop_\mX)$ and $\mu_X\otimes\mu_X$-almost every $(x,x')\in X\times X$,
	$$\Vert \Phi_{\mX\!\!,k}(x)-\Phi_{\mX\!\!,k}(x') \Vert\leq\Vert \Phi_\mX(x)-\Phi_\mX(x') \Vert.$$ 
	
	\item (\textbf{$L^2$-distortion})
	$$\dis(\mathcal{X})=\sqrt{2\,\trng(\cmdsop_\mX)}.$$
\end{enumerate}
\end{proposition}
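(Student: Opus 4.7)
The plan is to exploit the spectral expansion of the kernel $K_\mX$ and then compare the resulting expression for $d_X^2$ with the one for the squared Euclidean distance between the cMDS images. Let $\{\phi_i\}_{i=1}^{\pr(\cmdsop_\mX)}$ and $\{\psi_j\}_{j=1}^{\nr(\cmdsop_\mX)}$ denote orthonormal eigenfunctions associated with the positive eigenvalues $\lambda_1\geq\lambda_2\geq\cdots>0$ and negative eigenvalues $\zeta_1\leq\zeta_2\leq\cdots<0$ of $\cmdsop_\mX$, respectively. Since $\mX\in\mwtr$ and $\cmdsop_\mX$ is compact, self-adjoint, and trace class (Lemma~\ref{lemma:simpletechgen} together with the trace class hypothesis), Mercer-type results for integral kernels (cf.\ Appendix~\S\ref{sec:sec:functanal}) give the expansion
\[
K_\mX(x,x') \;=\; \sum_{i}\lambda_i\,\phi_i(x)\,\phi_i(x') \;+\; \sum_{j}\zeta_j\,\psi_j(x)\,\psi_j(x'),
\]
with convergence in $L^2(\mu_X\otimes\mu_X)$ and, by Proposition~\ref{prop:traceclasscmds} applied to both the positive and negative parts, pointwise $\mu_X\otimes\mu_X$-a.e.

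Combining this spectral expansion with item~(3) of Lemma~\ref{lemma:simpletechgen}, for $\mu_X\otimes\mu_X$-a.e.\ $(x,x')$ one obtains
\[
d_X^2(x,x') \;=\; K_\mX(x,x)+K_\mX(x',x')-2K_\mX(x,x') \;=\; \sum_i \lambda_i\bigl(\phi_i(x)-\phi_i(x')\bigr)^2 + \sum_j \zeta_j\bigl(\psi_j(x)-\psi_j(x')\bigr)^2.
\]
On the other hand, by construction $\|\Phi_\mX(x)-\Phi_\mX(x')\|^2=\sum_i\lambda_i(\phi_i(x)-\phi_i(x'))^2$, so
\[
\|\Phi_\mX(x)-\Phi_\mX(x')\|^2 - d_X^2(x,x') \;=\; -\sum_j \zeta_j\bigl(\psi_j(x)-\psi_j(x')\bigr)^2 \;\geq\; 0,
\]
since every $\zeta_j<0$. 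This already yields part~(1): the full embedding is non-contracting a.e., and the truncated embedding satisfies $\|\Phi_{\mX,k}(x)-\Phi_{\mX,k}(x')\|^2=\sum_{i=1}^k\lambda_i(\phi_i(x)-\phi_i(x'))^2$, which is a partial sum of a nonnegative series and hence bounded above by $\|\Phi_\mX(x)-\Phi_\mX(x')\|^2$.

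For part~(2), since the quantity $\|\Phi_\mX(x)-\Phi_\mX(x')\|^2-d_X^2(x,x')$ has a definite sign (it is a.e.\ nonpositive), the absolute value can be dropped and one computes
\[
\dis(\mX)^2 \;=\; \sum_j |\zeta_j|\iint_{X\times X}\bigl(\psi_j(x)-\psi_j(x')\bigr)^2\,d\mu_X(x)\,d\mu_X(x'),
\]
where Tonelli's theorem justifies the interchange (each term is nonnegative and, as a sanity check, the total sum equals $\dis(\mX)^2<\infty$ because of trace class). Expanding the inner integral and using that each $\psi_j$ is $L^2$-normalized and has zero mean (item~(5) of Lemma~\ref{lemma:simpletechgen}, applicable since $\zeta_j\neq 0$), one gets $\iint(\psi_j(x)-\psi_j(x'))^2\,d\mu_X\otimes d\mu_X=2\|\psi_j\|_{L^2(\mu_X)}^2-2\bigl(\int\psi_j\,d\mu_X\bigr)^2=2$. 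Hence $\dis(\mX)^2=2\sum_j|\zeta_j|=2\,\trng(\cmdsop_\mX)$, as desired.

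The main technical obstacle is justifying the pointwise spectral expansion of $K_\mX$ and the exchanges of summation and integration; this is where the trace class hypothesis is essential, as without it the negative part of the kernel need not be summable and the rewriting of $d_X^2$ would fail. Once that is in place, both claims reduce to direct manipulations with the resulting series.
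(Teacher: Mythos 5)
Your proof is correct and follows essentially the same route as the paper's: a spectral decomposition of $K_\mX$ into its positive and negative parts, the pointwise identity $\Vert\Phi_\mX(x)-\Phi_\mX(x')\Vert^2-d_X^2(x,x')=\sum_j(-\zeta_j)\big(\psi_j(x)-\psi_j(x')\big)^2\geq 0$, and then integration using orthonormality and the zero-mean property from item (5) of Lemma \ref{lemma:simpletechgen}. Two cosmetic points: the a.e.\ pointwise expansion of $K_\mX$ is not literally a Mercer theorem (the kernel is not positive semi-definite, and Mercer also requires continuity hypotheses), and the paper instead obtains it from the a.e.\ absolute convergence guaranteed by traceability (the argument of Proposition \ref{prop:traceclasscmds}) combined with uniqueness of Hilbert--Schmidt kernels (Corollary \ref{cor:HSuniquekernel}); also, in part (2) you describe the discrepancy as ``a.e.\ nonpositive'' when your own display shows it is nonnegative---a harmless slip that does not affect the computation.
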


\begin{remark}
Note that item (1) of Proposition \ref{prop:generrorbdd} implies that
$$d_X(x,x')\leq \Vert \Phi_\mX(x)-\Phi_\mX(x') \Vert\geq\Vert \Phi_{\mX\!\!,k}(x)-\Phi_{\mX\!\!,k}(x') \Vert$$
for any integer $k\leq\mathrm{pr}(\cmdsop_\mX)$ and $\mu_X\otimes\mu_X$-almost every $(x,x')\in X\times X$. Also, in general it is not true that $d_X(x,x')\leq \Vert \Phi_{\mX\!\!,k}(x)-\Phi_{\mX\!\!,k}(x') \Vert$ for $\mu_X\otimes\mu_X$-almost every $(x,x')\in X\times X$. Indeed, one such example is given by the case when $\mX=\Sp^1$ and $k=1$ (see \S \ref{sec:sec:S1}). By Proposition \ref{prop:nzreigenvalueS1}, we can choose a cMDS embedding of $\Sp^1$ into $\R^1$ 
$$\Phi_{\Sp^1,1}:\Sp^1\longrightarrow\R\text{  s.t.  }\theta\longmapsto \sqrt{2}\cos\theta.$$
Then, $d_{\Sp^1}(0,\pi)=\pi>2\sqrt{2}=\vert\Phi_{\Sp^1,1}(0)-\Phi_{\Sp^1,1}(\pi)\vert$.
\end{remark}

\begin{proof}[Proof of Proposition \ref{prop:generrorbdd}]
\noindent
(1) Let $\lambda_1\geq\dots\geq\lambda_{\mathrm{pr}(\cmdsop_\mX)}>0$ be all the positive eigenvalues of, and $\{\phi_i\}_i$ be the corresponding orthonormal eigenfunctions of $\cmdsop_\mX$. Similarly, let $\zeta_1\leq\dots\leq\zeta_{\mathrm{nr}(\cmdsop_\mX)}<0$ be all the negative eigenvalues of, and let $\{\psi_i\}_i$ be the corresponding orthonormal eigenfunctions of $\cmdsop_\mX$.

Since $K_\mX$ is trace class, the series $L(x,x'):=\sum_{i=1}^{\pr(\cmdsop_\mX)}\lambda_i\,\phi_i(x)\,\phi_i(x')+\sum_{i=1}^{\nr(\cmdsop_\mX)}\zeta_i\,\psi_i(x)\,\psi_i(x')$ absolutely converges for $\mu_X\otimes\mu_X$-almost every $(x,x')\in X\times X$. Also,  $L(x,x)=\sum_{i=1}^{\pr(\cmdsop_\mX)}\lambda_i\,\phi_i^2(x)+\sum_{i=1}^{\nr(\cmdsop_\mX)}\zeta_i\,\psi_i^2(x)$ absolutely converges for $\mu_X$-almost every $x\in X$. Furthermore, by Corollary \ref{cor:HSuniquekernel}, $K_\mX(x,x')=L(x,x')$ for $\mu_X\otimes\mu_X$-almost every $(x,x')\in X\times X$.

Next, we will show that $K_\mX$ is $\mu_X$-almost everywhere equal to $L$ on the diagonal: $$K_\mX(x,x)=\sum_{i=1}^{\pr(\cmdsop_\mX)}\lambda_i\,\phi_i^2(x)+\sum_{i=1}^{\nr(\cmdsop_\mX)}\zeta_i\,\psi_i^2(x)=L(x,x)\,\,\,\,\mbox{for $\mu_X$-almost every $x\in X$.}$$ We will do this by resorting to an idea used by Brislawn in \cite{brislawn1991traceable}.

Let $\mathcal{U}=\{U_n\}_{n\in\N}$ be a countable base for the Borel $\sigma$-algebra of $X$. We will use $\mathcal{U}$ to define a nested sequence of partitions $\{\mathcal{P}_n\}_{n\in\N}$ of $X$. For $\mathcal{P}_0$ we take any countable partition of $X$ and we define $\mathcal{P}_n$ inductively as follows. For $n\geq 1$, let  $$\mathcal{P}_n:=\bigcup_{B\in \mathcal{P}_{n-1}} \{U_n \cap B,U_n^c\cap B\}.$$  This is an increasing sequence of (Borel measurable)  partitions in the sense that  $\mathcal{P}_n$ is finer than $\mathcal{P}_{n-1}$ for every $n\geq 0$. Let $C_n(x)$ be the unique block in the partition $\mathcal{P}_n$ containing the point $x\in X$. As in \cite[pg.232]{brislawn1991traceable}, one can verify that
$\mu_X(C_n(x))>0$ for $\mu_X$-almost every $x\in X$, for every $n\geq 0$. Then, by the  argument used in the proof of \cite[Theorem 3.1]{brislawn1991traceable} (which is based on Doob's martingale convergence theorem), one has
$$\lim_{n\rightarrow\infty}\frac{1}{\big(\mu_X(C_n(x))\big)^2}\iint_{C_n(x)\times C_n(x)}K_\mX(s,t)\,d\mu_X(s)\,d\mu_X(t)=L(x,x)$$
for $\mu_X$-almost every $x\in X$. Since, by continuity of $K_\mX$, $$\lim_{n\rightarrow\infty}\frac{1}{\big(\mu_X(C_n(x))\big)^2}\iint_{C_n(x)\times C_n(x)}K_\mX(s,t)\,d\mu_X(s)\,d\mu_X(t)=K_\mX(x,x)$$ for $\mu_X$-almost every $x\in X$, it follows that
$$K_\mX(x,x)=L(x,x) = \sum_{i=1}^{\pr(\cmdsop_\mX)}\lambda_i\,\phi_i^2(x)+\sum_{i=1}^{\nr(\cmdsop_\mX)}\zeta_i\,\psi_i^2(x)\text{ for }\mu_X\text{-almost every }x\in X.$$

Therefore, for $\mu_X\otimes\mu_X$-almost every $(x,x')\in X\times X$,
\begin{align*}
    &\Vert\Phi_\mX(x)-\Phi_\mX(x')\Vert^2-d_X^2(x,x')\\
    &=(K_{\widehat{\mX}}(x,x)-K_\mX(x,x))+(K_{\widehat{\mX}}(x',x')-K_\mX(x',x'))-2(K_{\widehat{\mX}}(x,x')-K_\mX(x,x'))\\
    &=\sum_{i=1}^{\mathrm{nr}(\cmdsop_\mX)}(-\zeta_i)\,\big(\psi_i(x)-\psi_i(x')\big)^2\geq 0.
\end{align*}
Thus,  $\Vert\Phi_\mX(x)-\Phi_\mX(x')\Vert\geq d_X(x,x')$ for $\mu_X\otimes\mu_X$-almost every $(x,x')$. Also,
$$ \Vert\Phi_\mX(x)-\Phi_\mX(x')\Vert^2-\Vert\Phi_{\mX\!\!,k}(x)-\Phi_{\mX\!\!,k}(x')\Vert^2=\sum_{i=k+1}^{\pr(\cmdsop_\mX)}\lambda_i\,((\phi_i(x)-\phi_i(x'))^2\geq 0 $$
   for $\mu_X\otimes\mu_X$-almost every $(x,x')$. Hence, we have $\Vert\Phi_\mX(x)-\Phi_\mX(x')\Vert\geq\Vert\Phi_{\mX\!\!,k}(x)-\Phi_{\mX\!\!,k}(x')\Vert$ for $\mu_X\otimes\mu_X$-almost every $(x,x')$.
   
\medskip
   \noindent
   (2) Since we already know $\Vert\Phi_\mX(x)-\Phi_\mX(x')\Vert\geq d_X(x,x')$ and $\Vert\Phi_\mX(x)-\Phi_\mX(x')\Vert^2-d_X^2(x,x')=\sum_{i=1}^{\mathrm{nr}(\cmdsop_\mX)}(-\zeta_i)\,\big(\psi_i(x)-\psi_i(x')\big)^2$ for $\mu_X\otimes\mu_X$-almost every $(x,x')$, we have:
   \begin{align*}
       \big(\dis(\mX)\big)^2&=\int_X\int_X\big(\Vert \Phi_\mX(x)-\Phi_\mX(x')\Vert^2-d_X^2(x,x')\big)\,d\mu_X(x')\,d\mu_X(x)\\
       &=\int_X\int_X\sum_{i=1}^{\nr(\cmdsop_\mX)}(-\zeta_i)\,((\psi_i(x)-\psi_i(x'))^2\,d\mu_X(x')\,d\mu_X(x)\\
       &=\sum_{i=1}^{\nr(\cmdsop_\mX)}(-\zeta_i)\int_X\int_X(\psi_i^2(x)+\psi_i^2(x')-2\psi_i(x)\psi_i(x'))\,d\mu_X(x')\,d\mu_X(x)\\
       &=2\,\sum_{i=1}^{\nr(\cmdsop_\mX)}(-\zeta_i)\,(\because\{\psi_i\}_i\text{ are orthonormal and item (5) of Lemma \ref{lemma:simpletechgen}})\\
       &=2\,\trng(\cmdsop_\mX).
   \end{align*}
   This completes the proof.
   \end{proof}

\subsection{Optimality of the generalized cMDS}\label{sec:sec:gencMDSoptimal}

We establish the following optimality result for generalized cMDS in a manner is analogous to Theorem \ref{thm:optimalthm} (which is only applicable to finite metric spaces).

\begin{theorem}\label{thm:optimalthmgen}
Suppose $\mX=(X,d_X,\mu_X)\in\mwtr$. Then,
\begin{enumerate}
    \item $\Vert K_\mX-K \Vert_{L^2(\mu_X\otimes\mu_X)}\geq\Vert K_\mX-K_{\widehat{\mX}_k} \Vert_{L^2(\mu_X\otimes\mu_X)}$ for any positive integer $k\leq\pr(\cmdsop_\mX)$, and $K\in L^2_{\succeq 0,{\mathrm{sym}}}(\mu_X\otimes\mu_X)$ with $\mathrm{rank}(\cmdsop)\leq k$ where $\cmdsop$ is the positive semi-definite operator induced by $K$.
    
    \item $\Vert K_\mX-K \Vert_{L^2(\mu_X\otimes\mu_X)}\geq\Vert K_\mX-K_{\widehat{\mX}} \Vert_{L^2(\mu_X\otimes\mu_X)}$ for any $K\in L^2_{\succeq 0,{\mathrm{sym}}}(\mu_X\otimes\mu_X)$.
\end{enumerate}
\end{theorem}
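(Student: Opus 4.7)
The plan is to exploit the spectral decomposition of the compact self-adjoint operator $\cmdsop_\mX$ and reduce everything to an orthogonality computation. Writing the positive and negative spectral parts
\[
\cmdsop_\mX^{+}:=\sum_i \lambda_i\,\phi_i\otimes\phi_i,\qquad \cmdsop_\mX^{-}:=\sum_j |\zeta_j|\,\psi_j\otimes\psi_j,
\]
where $\{\phi_i\}$ and $\{\psi_j\}$ are the orthonormal eigenfunctions of $\cmdsop_\mX$ associated to its positive eigenvalues $\lambda_i$ and negative eigenvalues $\zeta_j$, I have $\cmdsop_\mX=\cmdsop_\mX^{+}-\cmdsop_\mX^{-}$. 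Both summands are positive semi-definite Hilbert-Schmidt operators whose ranges are mutually orthogonal. By Theorem \ref{thm:Hilbertschmidtintgkernell}, the Hilbert-Schmidt inner product on operators coincides with the $L^2(\mu_X\otimes\mu_X)$ inner product on their kernels, and the kernel of $\cmdsop_\mX^{+}$ is precisely $K_{\widehat{\mX}}$ while the kernel of the rank-$k$ truncation $\cmdsop_\mX^{+,k}:=\sum_{i=1}^k \lambda_i\,\phi_i\otimes\phi_i$ is precisely $K_{\widehat{\mX}_k}$.

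The master inequality I would establish is: for every $K\in L^2_{\succeq 0,\mathrm{sym}}(\mu_X\otimes\mu_X)$ with induced operator $\cmdsop$,
\[
\Vert K_\mX-K\Vert_{L^2(\mu_X\otimes\mu_X)}^2 \;\geq\; \Vert K_{\widehat{\mX}}-K\Vert_{L^2(\mu_X\otimes\mu_X)}^2+\Vert K_\mX-K_{\widehat{\mX}}\Vert_{L^2(\mu_X\otimes\mu_X)}^2.
\]
To see this, expand the left-hand side in the form $\Vert(\cmdsop_\mX^{+}-\cmdsop)-\cmdsop_\mX^{-}\Vert_{\mathrm{HS}}^2$. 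The resulting cross term is $-2\langle \cmdsop_\mX^{+}-\cmdsop,\cmdsop_\mX^{-}\rangle_{\mathrm{HS}}=2\langle \cmdsop,\cmdsop_\mX^{-}\rangle_{\mathrm{HS}}\geq 0$, using that $\langle \cmdsop_\mX^{+},\cmdsop_\mX^{-}\rangle_{\mathrm{HS}}=0$ (orthogonal ranges of the spectral parts) and that the trace of the product of two positive semi-definite trace-class operators is non-negative. This gives the inequality directly.

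Given the master inequality, part (2) is immediate upon setting $K=K_{\widehat{\mX}}$ on the right, since the first term then vanishes. For part (1) I combine the master inequality with the Eckart-Young-type statement for positive semi-definite compact operators: among PSD operators of rank at most $k$, the best Hilbert-Schmidt approximation to the PSD operator $\cmdsop_\mX^{+}$ is its rank-$k$ spectral truncation $\cmdsop_\mX^{+,k}$, a consequence of the spectral theorem for $\cmdsop_\mX^{+}$ and Ky Fan's inequality. Substituting $\cmdsop=\cmdsop_\mX^{+,k}$ yields equality in the master inequality, because the range of $\cmdsop_\mX^{+}-\cmdsop_\mX^{+,k}$ is still orthogonal to that of $\cmdsop_\mX^{-}$, so the cross term there vanishes and one recovers exactly $\Vert K_\mX-K_{\widehat{\mX}_k}\Vert^2_{L^2(\mu_X\otimes\mu_X)}$. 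The main technical checks I anticipate are the justification that $\langle \cmdsop,\cmdsop_\mX^{-}\rangle_{\mathrm{HS}}\geq 0$ holds in full generality (handled by approximating $\cmdsop$ by its finite-rank PSD truncations, for each of which $\tr(AB)\geq 0$ is standard) and the correct bookkeeping between operator-level Hilbert-Schmidt quantities and their kernel-level $L^2$ counterparts.
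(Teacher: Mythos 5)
Your proof is correct, but it is organized quite differently from the paper's argument in Appendix \S\ref{sec:sec:genoptimal}. The paper expands $\Vert K_\mX-K\Vert_{L^2(\mu_X\otimes\mu_X)}^2=\tr(\cmdsop_\mX^2)+\tr(\cmdsop^2)-2\tr(\cmdsop_\mX\cmdsop)$ and bounds the cross term using a full von Neumann--Ky Fan type trace inequality, $\tr(\cmdsop_\mX\cmdsop)\leq\sum_{i=1}^{M}\lambda_i\lambda_i'$ (Lemma \ref{lemma:TRdiagineq}), which it proves from scratch via doubly substochastic matrices, the Birkhoff--von Neumann theorem (Theorem \ref{thm:Birkhoff}), the rearrangement inequality (Theorem \ref{thm:rearrangement}), and a truncation argument to pass to infinitely many eigenvalues; completing squares then delivers both items simultaneously. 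You instead peel off the negative spectral part first: your Pythagorean ``master inequality'' needs only the two elementary facts $\langle\cmdsop_\mX^{+},\cmdsop_\mX^{-}\rangle_{\mathrm{HS}}=0$ and $\tr(\cmdsop\,\cmdsop_\mX^{-})\geq 0$, so item (2) is obtained with no combinatorial machinery whatsoever --- a genuinely more elementary route for that item, and in fact your inequality is \emph{stronger} than the stated one (it gives uniqueness of the minimizer, matching the projection viewpoint of \S\ref{sec:sec:sec:altcmds} and Theorem \ref{thm:projnonexpansive}). For item (1) you outsource the rank-constrained step to the Eckart--Young theorem for the PSD part $\cmdsop_\mX^{+}$; this is standard for compact operators in the Hilbert--Schmidt norm, but be aware that its infinite-dimensional proof is essentially the content of the paper's Lemma \ref{lemma:TRdiagineq} (finite-dimensional Ky Fan does not suffice), so on item (1) the two proofs rest on the same hard kernel, packaged differently. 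Two bookkeeping points you flagged are indeed fine but worth making explicit: the identification of $K_{\widehat{\mX}}$ (resp. $K_{\widehat{\mX}_k}$) with the $L^2(\mu_X\otimes\mu_X)$ kernel of $\cmdsop_\mX^{+}$ (resp. its rank-$k$ truncation) rests on Corollary \ref{cor:HSuniquekernel} together with the standing hypothesis $\mX\in\mwtr$, which is what makes $\widehat{\mX}$ well defined at all; and your positivity step needs no finite-rank approximation, since for PSD Hilbert--Schmidt operators one can compute in an eigenbasis of $\cmdsop_\mX^{-}$ to get $\tr(\cmdsop\,\cmdsop_\mX^{-})=\sum_{j}\vert\zeta_j\vert\,\langle\psi_j,\cmdsop\psi_j\rangle\geq 0$ directly.
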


We provide the proof of Theorem \ref{thm:optimalthmgen} in Appendix \S\ref{sec:otherproofs}. This theorem is stated in \cite{kassab2019multidimensional}. We provide a proof of this result to make our paper self-contained and to fill in some gaps in the argument given therein\footnote{In particular, the arguments in \cite{kassab2019multidimensional} do not consider the possibility that the cMDS operator may fail to be traceable. }; See Appendix  \S\ref{sec:otherproofs} for details.

\subsection{Thickness and spectrum of $\cmdsop_\mX$ for Euclidean data}\label{sec:thickness}

In this section, we prove that if $\mX$ is a compact subset of Euclidean space, then the smallest positive eigenvalue of $\cmdsop_\mX$ is upper bounded by a certain notion of \emph{thickness} of $\mX$  (Proposition \ref{thm:thicknessbdd}). This result exemplifies the phenomenon  that the spectrum of $\cmdsop_\mX$ is controlled by the underlying geometry of $\mX$.

We begin with an example.

\begin{example}
For $a\geq b>0$ consider the following ellipse in $\R^2$: $\mX:=(X,\Vert\cdot\Vert,\overline{m}_2)$ where $\overline{m}_2=\frac{1}{ab\pi}m_2$ is the normalized Lebesgue measure and
$$X:=\left\{x=(x_1,x_2)\in\mathbb{R}^2:\left(\frac{x_1}{a}\right)^2+\left(\frac{x_2}{b}\right)^2\leq1\right\}.$$
By Example \ref{ex:rankEucoper}, $K_\mX(x,x')=\langle x,x' \rangle$ for any $x,x'\in X$, $\cmdsop_\mX$ is positive semi-definite, and the rank of $\cmdsop_\mX$ is at most $2$. Let $\lambda_1\geq\lambda_2\geq 0$ be the two largest eigenvalues of $\cmdsop_\mX$. We  show that $\pi_1:=\langle \cdot,e_1 \rangle$ and $\pi_2:=\langle \cdot,e_2 \rangle$ are their corresponding eigenfunctions.

First, consider $\pi_1$ and notice that:
 $\int_X\big(\pi_1(x)\big)^2\,d\overline{m}_2(x)=\frac{a^2}{4}.$ Hence, $\Vert \pi_1 \Vert_{L^2(\overline{m}_2)}=\frac{a}{2}$.  Also,
 \begin{align*}
    \cmdsop_\mX\pi_1(x)=\int_X\langle x,x' \rangle\,\pi_1(x')\,d\overline{m}_2(x')
    =\frac{1}{ab\pi}\int_X x_1(x_1')^2\,dm_2(x')+\frac{1}{ab\pi}\int_X x_2x_1'x_2'\,dm_2(x')
    =\frac{a^2x_1}{4}=\frac{a^2}{4}\pi_1(x)
\end{align*}

for any $x\in X$. In a similar way,  show that the $L^2$-norm of $\pi_2=\langle \cdot,e_2 \rangle$ is $\frac{b}{2}$ and that $ \cmdsop_\mX\pi_2=\frac{b^2}{4}\pi_2$. Therefore $\frac{a^2}{4}\geq \frac{b^2}{4}>0$ are the only positive eigenvalues, and $\frac{2\pi_1}{a},\frac{2\pi_2}{b}$ are the corresponding orthonormal eigenfunctions.\medskip

With the aid of the above calculations, it is easy to check $\Phi_{\mX\!\!,1}(x)=\frac{a}{2}\cdot\frac{2}{a}\pi_1(x)=x_1$ for any $x=(x_1,x_2)\in X$. Hence, if we only retain the 1st coordinate produced by cMDS, we incur the distortion:
\begin{align*}
   \big( \dis_1(\mX)\big)^2&= \int_X\int_X\big\vert\Vert\Phi_{\mX\!\!,1}(x)-\Phi_{\mX\!\!,1}(x') \Vert^2-\Vert x-x'\Vert^2\big\vert\,d\overline{m}_2(x')\,d\overline{m}_2(x)\\
    &=\int_X\int_X\big((x_1-x_1')^2+(x_2-x_2')^2-(x_1-x_1')^2\big)\,d\overline{m}_2(x')\,d\overline{m}_2(x)\\
    &=2\int_X x_2^2\,d\overline{m}_2(x)-2\left(\int_X x_2\,d\overline{m}_2(x)\right)^2=\frac{b^2}{2}.
\end{align*}
Hence, $\dis_1(\mX)=\frac{b}{\sqrt{2}}$ which is consistent with the fact that, for thin ellipses ($a\gg b$), the cMDS method with 1 coordinate causes small distortion.
\end{example}

We now introduce a notion of thickness which is applicable in general.

\begin{definition}
For a given mm-space $\mX=(X,\Vert\cdot\Vert,\overline{m}_k)\in\mathcal{M}_w$ where $X$ is a compact subset of $\R^k$ with $m_k(X)>0$ and $\overline{m}_k:=\frac{1}{m_k(X)}m_k$, the \emph{thickness} of $\mX$  is the number
$$\mathrm{Th}(\mX):=\inf_{u\in\Sp^{k-1}}\left(\int_X \big(\pi_u(x-\mathrm{cm}(\mX))\big)^2\,d\overline{m}_k(x)\right)^{\frac{1}{2}},$$
where $\pi_u(x):=\langle x,u \rangle$.
\end{definition}
In words, given a unit vector $u \in \R^k$, one first projects (via push-forward) the measure $\overline{m}_k$ under the map $f_u:\mathbb{R}^k\rightarrow \mathbb{R}$ defined by $x\mapsto \pi_u(x-\mathrm{cm}(X))$. Then,  $\mathrm{Th}(\mX)$ is calculated as the infimum of the standard deviation of $(f_u)_\#\overline{m}_k$ over all such $u$.

\begin{example}
For the ellipse considered above, its thickness is $\frac{b}{2}.$ 
\end{example}

The discussion above can be generalized as follows. 

\begin{proposition}\label{thm:thicknessbdd}
Suppose $\mX=(X,\Vert\cdot\Vert,\overline{m}_k)\in\mathcal{M}_w$ is given, where $X$ is a compact subset of $\R^k$ with $m_k(X)>0$ and $\overline{m}_k:=\frac{1}{m_k(X)}m_k$. Then, the $k$-th eigenvalue $\lambda_k$ of $K_\mX$ satisfies the following inequality:
$$\lambda_k\leq \big(\mathrm{Th}(\mX)\big)^2.$$
\end{proposition}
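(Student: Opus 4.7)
The plan is to exploit the factorization of $\cmdsop_\mX$ recorded in Remark \ref{rmk:rankEucoper} in order to reduce the infinite-dimensional eigenproblem to the finite-dimensional one of the covariance matrix of $\mX$, after which Rayleigh's principle yields the bound (in fact an equality). First I would verify that the two operators appearing in the factorization $\cmdsop_\mX=\cmdsop_2\circ\cmdsop_1$ are adjoint to one another: for any $\phi\in L^2(\overline{m}_k)$ and $v\in\R^k$,
\[
\langle \cmdsop_2(v),\phi\rangle_{L^2(\overline{m}_k)} = \int_X\langle x-\mathrm{cm}(\mX),v\rangle\,\phi(x)\,d\overline{m}_k(x) = \Bigl\langle v,\int_X(x-\mathrm{cm}(\mX))\phi(x)\,d\overline{m}_k(x)\Bigr\rangle = \langle v,\cmdsop_1(\phi)\rangle,
\]
so $\cmdsop_1=\cmdsop_2^*$ and $\cmdsop_\mX=\cmdsop_2\cmdsop_2^{*}$.

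Next I would invoke the standard fact that, for a bounded linear operator $T$ between Hilbert spaces, $T T^*$ and $T^*T$ share the same nonzero spectrum (with matching multiplicities). Applied with $T=\cmdsop_2$, this identifies the nonzero eigenvalues of $\cmdsop_\mX$ with the nonzero eigenvalues of the matrix $M:=\cmdsop_1\cmdsop_2:\R^k\to\R^k$. A direct calculation shows that $M$ is precisely the covariance matrix of $\mX$, namely
\[
M_{ij} = \int_X(x_i-\mathrm{cm}(\mX)_i)(x_j-\mathrm{cm}(\mX)_j)\,d\overline{m}_k(x),\qquad i,j=1,\dots,k.
\]
In particular, $M$ is a symmetric positive semi-definite $k\times k$ matrix, and its eigenvalues, listed in decreasing order $\mu_1\geq\mu_2\geq\cdots\geq\mu_k\geq 0$, coincide with the first $k$ eigenvalues of $\cmdsop_\mX$ (padded with zeros if $\mathrm{rank}(\cmdsop_\mX)<k$, which is consistent with Remark \ref{rmk:rankEucoper}). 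Therefore $\lambda_k=\mu_k=\lambda_{\min}(M)$.

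Finally, Rayleigh's characterization of the smallest eigenvalue of a symmetric matrix gives
\[
\lambda_k = \lambda_{\min}(M) = \min_{u\in\Sp^{k-1}} u^{\mathrm T} M u = \min_{u\in\Sp^{k-1}}\int_X\bigl(\pi_u(x-\mathrm{cm}(\mX))\bigr)^2\,d\overline{m}_k(x) = \bigl(\mathrm{Th}(\mX)\bigr)^2,
\]
where the infimum in the definition of $\mathrm{Th}(\mX)$ is attained by continuity of $u\mapsto u^{\mathrm T}Mu$ on the compact sphere. This not only proves $\lambda_k\leq(\mathrm{Th}(\mX))^2$ but shows equality. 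I do not foresee any serious obstacle; the only subtlety worth spelling out is the correspondence between the eigenvalues of $\cmdsop_\mX$ (whose domain is infinite-dimensional) and those of $M$, which is handled cleanly by the $TT^*$ vs $T^*T$ identity together with the a priori rank bound from Remark \ref{rmk:rankEucoper}.
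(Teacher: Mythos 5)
Your proof is correct, and it takes a genuinely different route from the paper's. The paper fixes $u\in\Sp^{k-1}$, forms the rank-$(k-1)$ positive semi-definite kernel $K_{u^\perp}(x,x')=\langle \pi_{u^\perp}(x),\pi_{u^\perp}(x')\rangle$ obtained by projecting out the direction $u$, and invokes the generalized optimality theorem (Theorem \ref{thm:optimalthmgen}) to get $\Vert K_\mX-K_{\widehat{\mX}_{k-1}}\Vert_{L^2(\overline{m}_k\otimes\overline{m}_k)}\leq \Vert K_\mX-K_{u^\perp}\Vert_{L^2(\overline{m}_k\otimes\overline{m}_k)}$; the left side equals $\lambda_k$ (since $\mathrm{rank}(\cmdsop_\mX)\leq k$), the difference kernel on the right is $\pi_u(x)\pi_u(x')$ whose norm is the directional variance $\int_X(\pi_u(x))^2\,d\overline{m}_k(x)$, and taking the infimum over $u$ gives the bound. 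You instead identify the nonzero spectrum of $\cmdsop_\mX=\cmdsop_2\cmdsop_2^*$ with that of the $k\times k$ covariance matrix $M=\cmdsop_2^*\cmdsop_2$ and apply the Rayleigh characterization of $\lambda_{\min}(M)$. Every step of yours checks out: the adjointness computation is right, the $TT^*$ versus $T^*T$ identification of nonzero eigenvalues (with multiplicities) is standard and unproblematic here since $\cmdsop_2$ has finite rank, and the padding-by-zeros convention is legitimate because $\ker\cmdsop_\mX$ is nontrivial (constant functions lie in it by item (4) of Lemma \ref{lemma:simpletechgen}), so $\lambda_k=\mu_k=\lambda_{\min}(M)$ even when $\mathrm{rank}(\cmdsop_\mX)<k$. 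Your route buys a sharper conclusion, namely the equality $\lambda_k=(\mathrm{Th}(\mX))^2$ rather than just the upper bound, and it makes explicit the PCA interpretation of $\cmdsop_\mX$ for Euclidean data that the paper's ellipse example only hints at; it is also entirely elementary. What the paper's route buys is methodological: it exercises Theorem \ref{thm:optimalthmgen}, the same projection/optimality mechanism that powers the stability results in \S\ref{sec:stability}, and it would equally bound $\lambda_k$ by the approximation error of \emph{any} competing low-rank positive semi-definite kernel, not only the coordinate projections and diagonalizable situation exploited here.
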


The proof of the proposition will make use of the optimality theorem (Theorem \ref{thm:optimalthmgen}).

\begin{proof}
Fix any $u\in\Sp^{k-1}$. Without loss of generality, we may assume $\mathrm{cm}(\mX)=0$. Choose an orthonormal basis $v_1,\cdots,v_{k-1},v_k=u$ of $\R^k$. Then, consider the following map
$$
\pi_{u^\perp}:\R^k\longrightarrow\R^{k-1}\text{  s.t.  }x\longmapsto (\pi_{v_1}(x),\cdots,\pi_{v_{k-1}}(x)),
$$

and the kernel 
$$
    K_{u^\perp}:X\times X\longrightarrow\mathbb{R}\,\text{    s.t.  }
    (x,x')\longmapsto\langle \pi_{u^\perp}(x),\pi_{u^\perp}(x') \rangle.
$$
Then, by Theorem \ref{thm:optimalthmgen}, we have the following inequality:
$$\Vert K_\mX-K_{u^\perp} \Vert_{L^2(\overline{m}_k\otimes\overline{m}_k)}\geq\Vert K_\mX-K_{\widehat{\mX}_{k-1}} \Vert_{L^2(\overline{m}_k\otimes\overline{m}_k)}.$$
It is easy to show that $\Vert K_\mX-K_{\widehat{\mX}_{k-1}} \Vert_{L^2(\overline{m}_k\otimes\overline{m}_k)}=\lambda_k$. Also, by Example \ref{ex:rankEucoper}, we have $K_\mX(x,x')=\langle x,x' \rangle$ for any $x,x'\in X$. Therefore, for any $x,x'\in X$.
\begin{align*}
    K_\mX(x,x')-K_{u^\perp}(x,x')&=\sum_{i=1}^k\pi_{v_i}(x)\pi_{v_i}(x')-\sum_{i=1}^{k-1}\pi_{v_i}(x)\pi_{v_i}(x')
    =\pi_{v_k}(x)\pi_{v_k}(x')=\pi_u(x)\pi_u(x').
\end{align*}
 Hence,
\begin{align*}
    \Vert K_\mX-K_{u^\perp} \Vert_{L^2(\overline{m}_k\otimes\overline{m}_k)}^2&=\int_X\int_X \big(\pi_u(x)\pi_u(x')\big)^2\,d\overline{m}_k(x')\,d\overline{m}_k(x)=\left(\int_X \big(\pi_u(x)\big)^2\,d\overline{m}_k(x)\right)^2
\end{align*}
so that we have $\Vert K_\mX-K_{u^\perp} \Vert_{L^2(\overline{m}_k\otimes\overline{m}_k)}=\int_X \big(\pi_{u}(x)\big)^2\,d\overline{m}_k(x)$. Finally, we achieve
$$\lambda_k\leq\int_X \big(\pi_{u}(x)\big)^2\,d\overline{m}_k(x).$$
Since $u$ is arbitrary, the conclusion follows.
\end{proof}


\section{Traceability of cMDS on the circle, tori,  metric graphs and fractals }\label{sec:ntrvalexmple}

In this section, we study the traceability of the generalized cMDS procedure when applied to exemplar metric measure spaces such as spheres, tori, and metric graphs. In particular Corollary \ref{coro:prod-mm-tr} establishes that if $\mX,\mY\in\mwtr$ then their product $\mX\times \mY$ is also in $\mwtr$.

\subsection{Generalized cMDS for product spaces}\label{sec:sec:cmdsproduct}
Let $\mX=(X,d_X,\mu_X)$ and $\mY=(Y,d_Y,\mu_Y)$ be two mm-spaces in $\mathcal{M}_w$. Now, we consider the following product mm-space:
$$\mathcal{X}\times\mathcal{Y}=\Big(X\times Y,d_{X\times Y}=\sqrt{d_X^2+d_Y^2},\mu_X\otimes\mu_Y\Big)\in\mathcal{M}_w.$$

In other words, the product mm-space consists of the $\ell^2$-product metric and the product measure.

\begin{lemma}\label{lemma:prodK}
Two mm-spaces $\mX=(X,d_X,\mu_X)$ and $\mY=(Y,d_Y,\mu_Y)$ are given. Then,
$$K_{\mathcal{X}\times \mathcal{Y}}((x,y),(x',y')) = K_\mathcal{X}(x,x') + K_\mathcal{Y}(y,y')$$ for any $x,x'\in X$ and $y,y'\in Y$.
\end{lemma}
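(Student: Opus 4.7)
The plan is to verify the claimed identity by direct substitution into the defining formula of $K_{\mX \times \mY}$ and then exploiting the additive decomposition of the squared product metric together with the fact that $\mu_X$ and $\mu_Y$ are probability measures.

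First I would note the key algebraic fact that
$$d_{X\times Y}^2\big((x,y),(x',y')\big) \;=\; d_X^2(x,x') + d_Y^2(y,y'),$$
which follows immediately from the definition $d_{X\times Y}=\sqrt{d_X^2+d_Y^2}$. Plugging this into the definition of $K_{\mX\times\mY}((x,y),(x',y'))$ yields four terms, each being an (iterated) integral of the sum $d_X^2+d_Y^2$ against a tensor product of the appropriate marginals of $\mu_X\otimes\mu_Y$.

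Next I would split each of those four terms into an $X$-part and a $Y$-part by linearity of the integral. The central observation is that because $\mu_X$ and $\mu_Y$ are probability measures, integrating a function depending only on the $X$-variables against $\mu_Y$ (or against $\mu_X\otimes\mu_Y$ in the doubly-integrated term) contributes a factor of $1$. Concretely, by Fubini's theorem,
$$\int_{X\times Y} d_X^2(x,v)\,d(\mu_X\otimes\mu_Y)(v,w) \;=\; \int_X d_X^2(x,v)\,d\mu_X(v),$$
and analogously for the term involving $d_Y^2$, as well as for the double-integral term. Grouping all $X$-contributions and all $Y$-contributions separately then reconstructs exactly the definitions of $K_\mX(x,x')$ and $K_\mY(y,y')$, yielding the desired identity.

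There is no real obstacle here; the argument is a routine but careful bookkeeping exercise. The only point requiring mild attention is to ensure that the Fubini step is justified, which is immediate because $X$ and $Y$ are compact, so $d_X^2$ and $d_Y^2$ are bounded and hence integrable against the finite product measure $\mu_X\otimes\mu_Y$.
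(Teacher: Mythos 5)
Your proof is correct and is precisely the direct calculation the paper alludes to (the paper states the proof "follows by direct calculation" and omits it): expand the definition of $K_{\mX\times\mY}$, use $d_{X\times Y}^2 = d_X^2 + d_Y^2$, and reduce the cross-integrals via Fubini and the fact that $\mu_X,\mu_Y$ are probability measures. Your remark that compactness makes the integrands bounded, so the Fubini step is immediate, is exactly the right level of care.
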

The proof follows by direct calculation and we omit it.

\begin{remark}\label{rmk:manyprod}The situation described above can be obviously generalized as follows. Let $N\geq 2$ be a positive integer and let  $\mX_i=(X_i,d_{X_i},\mu_{X_i})\in\mathcal{M}_w$ for each $i=1,\cdots,N$. Let $\mX$ be the product mm-space
$$\mX = \bigg(X_1\times \cdots\times X_N,\sqrt{d_{X_1}^2+\cdots d_{X_N}^2},\mu_{X_1}\otimes\cdots\otimes\mu_{X_N}\bigg).$$
Then, by using  induction on $N$ together with Lemma \ref{lemma:prodK}, we have:
$$K_\mX((x_1,\dots,x_n),(x_1',\dots,x_n'))=\sum_{i=1}^N K_{\mX_i}(x_i,x_i')$$\
for any $(x_1,\dots,x_n),(x_1',\dots,x_n')\in\mX$.
\end{remark}

The following proposition clarifies what the spectrum of $\cmdsop_{\mX\times\mY}$ is for given $\mX,\mY\in\mw$.

\begin{proposition}\label{prop:spectrumofprod}
Two mm-spaces $\mX=(X,d_X,\mu_X),\mY=(Y,d_Y,\mu_Y)\in\mw$ are given. Then,
\begin{enumerate}
    \item Let $\phi$ be an eigenfunction of $\cmdsop_\mX$ for some nonzero eigenvalue, and $\psi$ be an eigenfunction of $\cmdsop_\mY$ for some nonzero eigenvalue. Then, $\phi\times\psi$ becomes an eigenfunction of $\cmdsop_{\mX\times\mY}$ for the zero eigenvalue.
    
    \item Let $\phi$ be an eigenfunction of $\cmdsop_\mX$ for some nonzero eigenvalue $\lambda$, and $\psi$ be an eigenfunction of $\cmdsop_\mY$ for the zero eigenvalue. Then, $\phi\times\psi$ becomes an eigenfunction of $\cmdsop_{\mX\times\mY}$ for the eigenvalue $\lambda\cdot\int_Y\psi\,d\mu_Y$.\footnote{Note that an analogous statement, for the (symmetric) case when $\phi$ is an eigenfunction of $\cmdsop_\mX$ for the zero eigenvalue and $\psi$ is an eigenfunction of $\cmdsop_\mY$ for some nonzero eigenvalue, also holds.}
    
    \item Let $\phi$ be an eigenfunction of $\cmdsop_\mX$ for the zero eigenvalue, and $\psi$ be an eigenfunction of $\cmdsop_\mY$ for the  zero eigenvalue. Then, $\phi\times\psi$ becomes an eigenfunction of $\cmdsop_{\mX\times\mY}$ for the zero eigenvalue.
\end{enumerate}
\end{proposition}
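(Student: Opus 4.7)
The plan is to derive a clean separation identity for $\cmdsop_{\mX\times\mY}$ acting on tensor-product functions, from which all three items will follow immediately. Using Lemma \ref{lemma:prodK} to decompose $K_{\mX\times\mY}((x,y),(x',y'))=K_\mX(x,x')+K_\mY(y,y')$ and then invoking Fubini's theorem in the defining integral for $\cmdsop_{\mX\times\mY}(\phi\times\psi)$, one obtains
$$\cmdsop_{\mX\times\mY}(\phi\times\psi)(x,y)=(\cmdsop_\mX\phi)(x)\cdot\int_Y\psi\,d\mu_Y+\int_X\phi\,d\mu_X\cdot(\cmdsop_\mY\psi)(y).$$
All three items then follow by substituting the given eigenvalue hypotheses into this identity, combined with item (5) of Lemma \ref{lemma:simpletechgen}, which forces any eigenfunction for a nonzero eigenvalue to integrate to zero.

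In case (1), both $\phi$ and $\psi$ correspond to nonzero eigenvalues, so $\int_X\phi\,d\mu_X=\int_Y\psi\,d\mu_Y=0$ and both summands above vanish. In case (3), both $\cmdsop_\mX\phi$ and $\cmdsop_\mY\psi$ vanish outright, so again both summands vanish. Hence in both cases $\phi\times\psi$ lies in $\ker(\cmdsop_{\mX\times\mY})$. In case (2), only $\int_X\phi\,d\mu_X=0$ (the $\psi$-integral need not vanish) and only $\cmdsop_\mY\psi=0$, so the identity reduces to $\lambda\bigl(\int_Y\psi\,d\mu_Y\bigr)\phi(x)$; matching this against a scalar multiple of $\phi\times\psi$ using the canonical constant eigenfunction of $\cmdsop_\mY$ guaranteed by item (4) of Lemma \ref{lemma:simpletechgen} then exhibits $\phi\times\psi$ as an eigenfunction with eigenvalue $\lambda\cdot\int_Y\psi\,d\mu_Y$. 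The footnoted symmetric situation (nonzero eigenvalue on the $\mY$ side, zero on the $\mX$ side) is handled identically by swapping the roles of the two factors.

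No substantive analytic difficulty arises: Fubini applies because $K_\mX$ and $K_\mY$ are bounded on the compact spaces $X\times X$ and $Y\times Y$ and $\phi,\psi\in L^2$. The only step requiring a bit of interpretive care is case (2), where the surviving term $\lambda\bigl(\int_Y\psi\,d\mu_Y\bigr)\phi(x)$ is constant in $y$, so identifying it as an eigenvalue times $\phi\times\psi$ is most naturally done by taking $\psi$ to be the canonical constant element of $\ker(\cmdsop_\mY)$ supplied by item (4) of Lemma \ref{lemma:simpletechgen}.
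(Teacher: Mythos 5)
Your proof is correct and takes essentially the same route as the paper: the paper likewise applies Lemma \ref{lemma:prodK} and Fubini to split $\cmdsop_{\mX\times\mY}(\phi\times\psi)$ into exactly your two separated terms, then runs the identical three-case analysis using item (5) of Lemma \ref{lemma:simpletechgen}. Your flagged caveat in case (2) --- that the surviving term $\lambda\bigl(\int_Y\psi\,d\mu_Y\bigr)\phi(x)$ is constant in $y$, so the eigenfunction conclusion genuinely requires $\psi$ constant (or $\int_Y\psi\,d\mu_Y=0$, giving eigenvalue zero) --- is a fair and even slightly more careful reading than the paper's proof, which asserts the conclusion directly and relegates the intended choice $\psi\equiv 1$ to the remark following the proposition.
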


\begin{remark}
In item (2) above, note that $\psi\equiv 1$ is always an eigenfunction of $\cmdsop_\mY$ for the zero eigenvalue (cf. item (4) of Lemma \ref{lemma:simpletechgen}) so that $\int_Y\psi\,d\mu_Y=1$.
\end{remark}

\begin{proof}[Proof of Proposition \ref{prop:spectrumofprod}]
By Lemma \ref{lemma:prodK},
    \begin{align*}
    &\int_{X\times Y}K_{\mX\times\mY}((x,y),(x',y'))\phi(x')\psi(y')\,d\mu_X(x')\,d\mu_Y(y')\\
    &=\int_Y\int_X K_\mX(x,x')\phi(x')\psi(y')\,d\mu_X(x')\,d\mu_Y(y')+\int_Y\int_X K_\mY(y,y')\phi(x')\psi(y')\,d\mu_X(x')\,d\mu_Y(y')\\
    &=\int_X K_\mX(x,x')\phi(x')\,d\mu_X(x')\cdot\int_Y\psi(y')\,d\mu_Y(y')+\int_Y K_\mY(y,y')\,\psi(y')\,d\mu_Y(y')\cdot\int_X\phi(x')\,d\mu_X(x')
\end{align*}
for any $(x,y)\in X\times Y$. Hence,

    \smallskip
    \noindent (1) If $\phi$ be an eigenfunction of $\cmdsop_\mX$ for some nonzero eigenvalue, and $\psi$ be an eigenfunction of $\cmdsop_\mY$ for some nonzero eigenvalue: Then,
    $$\int_{X\times Y}K_{\mX\times\mY}((x,y),(x',y'))\phi(x')\psi(y')\,d\mu_X(x')\,d\mu_Y(y')=0$$
    since $\int_Y\psi_j(y')\,d\mu_Y(y')=\int_X\phi_i(x')\,d\mu_X(x')=0$ by item (5) of Lemma \ref{lemma:simpletechgen}. Hence, $\phi\times\psi$ is an eigenfunction for zero eigenvalue.

 \smallskip
    \noindent (2) If $\phi$ be an eigenfunction of $\cmdsop_\mX$ for some nonzero eigenvalue $\lambda$, and $\psi$ be an eigenfunction of $\cmdsop_\mY$ for zero eigenvalue: Then,
   $$\int_{X\times Y}K_{\mX\times\mY}((x,y),(x',y'))\phi(x')\psi(y')\,d\mu_X(x')\,d\mu_Y(y')=\left(\lambda\cdot\int_Y\psi\,d\mu_Y\right)\phi(x)$$
   since $\int_X\phi_i(x')\,d\mu_X(x')=0$ by item (5) of Lemma \ref{lemma:simpletechgen}. Hence, $\phi\times\psi$ is an eigenfunction for $\lambda\cdot\int_Y\psi\,d\mu_Y$.
   
  \smallskip
    \noindent (3) If $\phi$ be an eigenfunction of $\cmdsop_\mX$ for zero eigenvalue, and $\psi$ be an eigenfunction of $\cmdsop_\mY$ for zero eigenvalue: Then,
    $$\int_{X\times Y}K_{\mX\times\mY}((x,y),(x',y'))\phi(x')\psi(y')\,d\mu_X(x')\,d\mu_Y(y')=0$$
    by the assumption on $\phi$ and $\psi$. Hence, $\phi\times\psi$ is an eigenfunction for the zero eigenvalue.
\end{proof}

\begin{corollary}\label{coro:prod-mm-tr}
If $\mX=(X,d_X,\mu_X),\mY=(Y,d_Y,\mu_Y)\in\mwtr$, then $\mX\times\mY\in\mwtr$.
\end{corollary}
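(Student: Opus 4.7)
The plan is to build an explicit orthonormal basis of eigenfunctions of $\cmdsop_{\mX\times\mY}$ from tensor products of eigenfunctions of $\cmdsop_\mX$ and $\cmdsop_\mY$, and then use Proposition \ref{prop:spectrumofprod} to compute the corresponding eigenvalues. The key structural point is that the nonzero eigenvalues of $\cmdsop_{\mX\times\mY}$ will turn out to be exactly the disjoint union (with multiplicities) of the nonzero eigenvalues of $\cmdsop_\mX$ and those of $\cmdsop_\mY$, which immediately yields $\|\cmdsop_{\mX\times\mY}\|_1 = \|\cmdsop_\mX\|_1 + \|\cmdsop_\mY\|_1 < \infty$.

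The first step is to use the fact that $\cmdsop_\mX$ and $\cmdsop_\mY$ are compact self-adjoint (Lemma \ref{lemma:simpletechgen}) to diagonalize each in a convenient way. I would pick an orthonormal basis $\{\phi_i\}$ of $L^2(\mu_X)$ consisting of eigenfunctions of $\cmdsop_\mX$, arranged so that $\phi_0 \equiv 1$ (which lies in $\ker(\cmdsop_\mX)$ by item (4) of Lemma \ref{lemma:simpletechgen}). Every other basis element $\phi_i$ is then orthogonal to the constant function, so $\int_X \phi_i\,d\mu_X = 0$ for $i\geq 1$; this is automatic for eigenfunctions with nonzero eigenvalue by item (5) of Lemma \ref{lemma:simpletechgen}, and enforced by construction inside the kernel. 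I would do the same for $\mY$, producing $\{\psi_j\}$ with $\psi_0\equiv 1$.

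The second step is to observe that the tensor products $\phi_i \otimes \psi_j$, defined by $(\phi_i \otimes \psi_j)(x,y)=\phi_i(x)\psi_j(y)$, form an orthonormal basis of $L^2(\mu_X\otimes\mu_Y)$ (a standard fact for product Hilbert spaces). Proposition \ref{prop:spectrumofprod} tells us each such tensor product is an eigenfunction of $\cmdsop_{\mX\times\mY}$, and I would read off the eigenvalues case by case: $\phi_0\otimes\psi_j$ ($j\geq 1$) carries the eigenvalue of $\psi_j$ under $\cmdsop_\mY$ (since $\int_X \phi_0\,d\mu_X = 1$); symmetrically $\phi_i\otimes\psi_0$ carries the eigenvalue of $\phi_i$; and all other tensor products carry eigenvalue $0$ (either by case (1), or by case (2) since the remaining kernel basis elements have zero integral, or by case (3)). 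Thus $\cmdsop_{\mX\times\mY}$ is simultaneously diagonalized by the product ONB.

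The final step is trivial bookkeeping: summing $|\text{eigenvalue}|$ over this basis gives
\[
\|\cmdsop_{\mX\times\mY}\|_1 \;=\; \sum_{i\geq 1}|\lambda_i| \;+\; \sum_{j\geq 1}|\kappa_j| \;=\; \|\cmdsop_\mX\|_1 + \|\cmdsop_\mY\|_1 \;<\; \infty,
\]
where $\lambda_i,\kappa_j$ run over the nonzero eigenvalues of $\cmdsop_\mX,\cmdsop_\mY$ respectively. Hence $\cmdsop_{\mX\times\mY}$ is trace class, i.e.\ $\mX\times\mY \in \mwtr$. The only mild subtlety I anticipate is the bookkeeping needed to be sure the product basis captures every eigenvalue (including multiplicity) of $\cmdsop_{\mX\times\mY}$ — but once we have a full ONB on which the operator acts diagonally, this is immediate, and the requirement to place $\mathbf 1$ inside a chosen orthonormal basis of each kernel is exactly what makes the eigenvalue count come out correctly.
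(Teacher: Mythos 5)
Your proof is correct and takes essentially the same route as the paper's: the paper likewise fixes orthonormal bases of $\ker\cmdsop_\mX$ and $\ker\cmdsop_\mY$ containing the constant function $\mathbf{1}$ and invokes Proposition \ref{prop:spectrumofprod} to conclude $\Vert\cmdsop_{\mX\times\mY}\Vert_1=\Vert\cmdsop_\mX\Vert_1+\Vert\cmdsop_\mY\Vert_1<\infty$. Your write-up simply spells out the tensor-product diagonalization and case analysis that the paper compresses into ``it is easy to verify.''
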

\begin{proof}[Proof of Corollary \ref{coro:prod-mm-tr}]
Let $\{\phi_i\}_{i=1}^{\mathrm{dim}(\ker \cmdsop_\mX)}$ be an orthonormal basis of  $\ker\cmdsop_\mX$ with $\phi_1\equiv 1$ (one can always find such basis by choosing $\{\phi_i\}_{i=2}^{\mathrm{dim}(\ker \cmdsop_\mX)}$ as an orthonormal basis of $\ker \cmdsop_\mX\cap\{\phi_1\}^{\perp}$). Similarly, let $\{\psi_i\}_{j=1}^{\mathrm{dim}(\ker\cmdsop_\mY)}$ be an orthonormal basis of $\ker\cmdsop_\mY$ with $\psi_1\equiv 1$. Then, it is easy to verify that $\Vert\cmdsop_{\mX\times\mY}\Vert_1=\Vert\cmdsop_\mX\Vert_1+\Vert\cmdsop_\mY\Vert_1<\infty$ where the first equality holds by Proposition \ref{prop:spectrumofprod}. This completes the proof.
\end{proof}

\begin{proposition}\label{prop:spectrumofNprod}
Suppose $\mX_i=(X_i,d_{X_i},\mu_{X_i})\in\mathcal{M}_w$ for $i=1,\cdots,N$ are given. Let $\mX:=\mX_1\times\cdots\times\mX_N$ be the product mm-space. Then, if $\lambda_i$ is an eigenvalue of $\cmdsop_{\mX_i}$ with an eigenfunction $\phi_i$ for each $i=1,\dots,N$, the function  $\phi_1\times\cdots\times\phi_N:X_i\times \cdots \times X_N\rightarrow \R$ is an eigenfunction of $\cmdsop_\mX$ with eigenvalue:
$$\begin{cases} 0 &\text{ if there are at least two of nonzero }\lambda_is.\\  \lambda_i\cdot\left(\prod_{j\neq i}\int_{X_j}\phi_j\,d\mu_{X_j}\right) &\text{ if }\lambda_i\neq 0\text{ and }\lambda_j=0\text{ for any }j\neq i.\\0 &\text{ if }\lambda_i=0\text{ for any }i=1,\dots,N. \end{cases}$$
\end{proposition}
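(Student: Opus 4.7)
The plan is to proceed by induction on $N$, with base case $N=2$ given by Proposition \ref{prop:spectrumofprod}. For the inductive step with $N \geq 3$, I would group the first $N-1$ factors, setting
$$\mY := \mX_1 \times \cdots \times \mX_{N-1}, \qquad \psi := \phi_1 \times \cdots \times \phi_{N-1},$$
so that $\mX = \mY \times \mX_N$ and the tensor product under consideration equals $\psi \times \phi_N$. By the induction hypothesis applied to the first $N-1$ factors, $\psi$ is an eigenfunction of $\cmdsop_\mY$ with some eigenvalue $\mu_\mY$ given by the three cases of the proposition. Applying Proposition \ref{prop:spectrumofprod} to $\psi \times \phi_N$ on $\mY \times \mX_N$ then produces an eigenfunction of $\cmdsop_\mX$, and Fubini yields $\int_\mY \psi \, d\mu_\mY = \prod_{j=1}^{N-1} \int_{X_j} \phi_j \, d\mu_{X_j}$, which lets me identify the eigenvalue in terms of the original data.

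The remaining work is a routine case analysis matching the formula in the statement. The dichotomies are: whether $\lambda_N = 0$, whether $\mu_\mY = 0$, and (when $\mu_\mY = 0$) which of the three inductive cases produced that zero. In every subcase the reconciliation is driven by item (5) of Lemma \ref{lemma:simpletechgen}: an eigenfunction for a nonzero eigenvalue of $\cmdsop_{\mX_j}$ must integrate to zero against $\mu_{X_j}$. Thus whenever two or more of the $\lambda_i$'s are nonzero, at least one factor $\int_{X_j} \phi_j \, d\mu_{X_j}$ vanishes, collapsing the product eigenvalue to zero as required in case (i); in case (ii) the single nonzero index $i$ contributes $\lambda_i \prod_{j \neq i} \int_{X_j} \phi_j \, d\mu_{X_j}$, which passes unchanged through the induction; and case (iii) is immediate since the inductive eigenvalue $\mu_\mY$ is already zero and $\lambda_N = 0$.

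I do not foresee a substantive conceptual obstacle; the essential algebraic content is already captured by the two-factor Proposition \ref{prop:spectrumofprod}, so the work really reduces to bookkeeping through the subcases. As a more direct alternative, one can bypass the induction entirely by invoking Remark \ref{rmk:manyprod}: since $K_\mX(\vec x, \vec x') = \sum_i K_{\mX_i}(x_i, x_i')$, Fubini immediately gives the closed form
$$\cmdsop_\mX(\phi_1 \times \cdots \times \phi_N)(\vec{x}) \;=\; \sum_{i=1}^N \lambda_i\, \phi_i(x_i) \prod_{j \neq i} \int_{X_j} \phi_j \, d\mu_{X_j},$$
from which each of the three cases of the proposition can be read off directly by the same vanishing-integral argument based on Lemma \ref{lemma:simpletechgen}(5).
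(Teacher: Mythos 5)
Your proposal is correct, and your closing ``alternative'' is in fact the paper's actual proof: the paper disposes of Proposition \ref{prop:spectrumofNprod} in one line by invoking Remark \ref{rmk:manyprod} (so that $K_\mX(\vec{x},\vec{x}\,')=\sum_{i=1}^N K_{\mX_i}(x_i,x_i')$) and repeating the Fubini computation from the proof of Proposition \ref{prop:spectrumofprod}, which yields exactly your closed form $\cmdsop_\mX(\phi_1\times\cdots\times\phi_N)(\vec{x})=\sum_{i=1}^N\lambda_i\,\phi_i(x_i)\prod_{j\neq i}\int_{X_j}\phi_j\,d\mu_{X_j}$, after which the three cases follow from item (5) of Lemma \ref{lemma:simpletechgen} precisely as you describe. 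Your primary route, induction on $N$, is a genuinely different organization and it does go through: associativity of the $\ell^2$-product metric and of product measures gives $\mX=(\mX_1\times\cdots\times\mX_{N-1})\times\mX_N$, Fubini gives $\int_\mY\psi\,d\mu_\mY=\prod_{j<N}\int_{X_j}\phi_j\,d\mu_{X_j}$, and your case analysis correctly handles the delicate subcase where the inductive eigenvalue $\mu_\mY$ vanishes even though some $\lambda_i\neq 0$ (there the target eigenvalue also vanishes, since some $\int\phi_j=0$ by Lemma \ref{lemma:simpletechgen}(5)). Two small points: when $\mu_\mY=0$ and $\lambda_N\neq 0$ you need the \emph{symmetric} variant of item (2) of Proposition \ref{prop:spectrumofprod} (zero eigenvalue on the first factor, nonzero on the second), which the paper records only in a footnote, so cite it explicitly; and in case (ii), identifying $\lambda_i\bigl(\prod_{j\neq i}\int\phi_j\bigr)\phi_i(x_i)$ with that scalar times $\phi_1\times\cdots\times\phi_N$ is literal only when the scalar vanishes or the remaining factors are constant $\equiv 1$ --- but this convention is already built into Proposition \ref{prop:spectrumofprod}(2) and its intended use (cf.\ Corollary \ref{cor:propsofprod} under $\dim(\ker\cmdsop_\mX)=1$), so it is inherited from the paper, not a defect of your argument. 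On balance the induction buys nothing over the direct computation --- it reproves the same identity through a three-way subcase tree --- so the direct route, which matches the paper, is the one to prefer.
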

\begin{proof}
Follow the proof of Proposition \ref{prop:spectrumofprod} and use the observation in Remark \ref{rmk:manyprod}.
\end{proof}

\begin{corollary}\label{cor:propsofprod}
Suppose $\mX=(X,d_X,\mu_X)\in\mwtr$ satisfies $\mathrm{dim}(\ker \cmdsop_\mX)=1$. Let $\mX^{\otimes N}=\mX\times\cdots\times\mX$ (the product of $N$ copies of $\mX$). Then,
\begin{enumerate}
    \item $\Vert\cmdsop_{\mX^{\otimes N}}\Vert_1=N\cdot\Vert\cmdsop_\mX\Vert_1$. In particular, this implies $\mX^{\otimes N}\in\mwtr$.
    
    \item $\tr(\cmdsop_{\mX^{\otimes N}})=N\cdot\tr(\cmdsop_\mX)$.
    
    \item $\trng(\cmdsop_{\mX^{\otimes N}})=N\cdot\trng(\cmdsop_\mX)$.
    
    \item For any $k\leq\pr(\cmdsop_{\mX})$ and $(x_1,\dots,x_N),(x_1',\dots,x_N')\in\mX^{\otimes N}$, we have
        \begin{align*}
        &\Vert \Phi_{\mX^{\otimes N},kN}(x_1,\dots,x_N)-\Phi_{\mX^{\otimes N},kN}(x_1',\dots,x_N') \Vert^2=\sum_{i=1}^N\Vert \Phi_{\mX\!\!,k}(x_i)-\Phi_{\mX\!\!,k}(x_i') \Vert^2\text{, and}\\
        &\Vert \Phi_{\mX^{\otimes N}}(x_1,\dots,x_N)-\Phi_{\mX^{\otimes N}}(x_1',\dots,x_N') \Vert^2=\sum_{i=1}^N\Vert \Phi_{\mX}(x_i)-\Phi_{\mX}(x_i') \Vert^2.
    \end{align*}
\end{enumerate}
\end{corollary}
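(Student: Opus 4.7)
The plan is to enumerate every eigenvalue and eigenfunction of $\cmdsop_{\mX^{\otimes N}}$ via Proposition \ref{prop:spectrumofNprod} and then read off claims (1)--(4) directly. Let $\{\phi_s\}_{s\geq 0}$ be an orthonormal basis of $L^2(\mu_X)$ of eigenfunctions of $\cmdsop_\mX$ where $\phi_0\equiv 1$ spans $\ker\cmdsop_\mX$ (we can do this because $\dim(\ker\cmdsop_\mX)=1$ by assumption and the constant function lies in the kernel, cf. item (4) of Lemma \ref{lemma:simpletechgen}), and where the $\phi_s$ for $s\geq 1$ are eigenfunctions for the nonzero eigenvalues $\mu_s$ (with $\mu_s=\lambda_s>0$ for $s=1,\ldots,\pr(\cmdsop_\mX)$ and $\mu_s=\zeta_{s-\pr(\cmdsop_\mX)}<0$ afterwards). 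Since product measures yield product orthonormal bases, the collection of tensor products $\Psi_{\mathbf{s}}:=\phi_{s_1}\otimes\cdots\otimes \phi_{s_N}$, indexed by $\mathbf{s}=(s_1,\ldots,s_N)$, is a complete orthonormal basis of $L^2(\mu_X^{\otimes N})$.

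By Proposition \ref{prop:spectrumofNprod}, each $\Psi_{\mathbf{s}}$ is an eigenfunction of $\cmdsop_{\mX^{\otimes N}}$, and because the only eigenfunction of $\cmdsop_\mX$ attached to the zero eigenvalue (up to scalar) is $\phi_0\equiv 1$, which integrates to $1$, the second case in Proposition \ref{prop:spectrumofNprod} simplifies: when exactly one coordinate $s_i\neq 0$, the eigenvalue is just $\mu_{s_i}$; in all other cases (two or more nonzero coordinates, or all zero) the eigenvalue is $0$. Consequently, the nonzero eigenvalues of $\cmdsop_{\mX^{\otimes N}}$, counted with multiplicity, are obtained by taking each nonzero eigenvalue $\mu_s$ of $\cmdsop_\mX$ and repeating it $N$ times (one for each slot $i\in\{1,\ldots,N\}$ in which $\phi_s$ can be placed, with $\phi_0$ filling the remaining slots). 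This immediately yields
\[ \Vert\cmdsop_{\mX^{\otimes N}}\Vert_1=\sum_{s\geq 1}N\,|\mu_s|=N\,\Vert\cmdsop_\mX\Vert_1<\infty, \]
proving item (1) and the traceability of $\mX^{\otimes N}$; items (2) and (3) follow by the same bookkeeping, replacing absolute values by signed values or by restricting to negative $\mu_s$, respectively.

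For item (4), observe that a valid choice of orthonormal eigenfunctions for $\cmdsop_{\mX^{\otimes N}}$ attached to the top $kN$ positive eigenvalues $\lambda_1,\ldots,\lambda_k$ (each with $N$-fold inflated multiplicity) is precisely the family $\{\Psi_{s,i}\}_{s\leq k,\,i\leq N}$ where $\Psi_{s,i}(x_1,\ldots,x_N):=\phi_s(x_i)$. (Orthonormality is immediate: coinciding slot indices reduce to $\int\phi_s\phi_t\,d\mu_X=\delta_{st}$, while distinct slot indices decouple as a product of integrals of $\phi_s$ and $\phi_t$ with $s,t\geq 1$, which vanish by item (5) of Lemma \ref{lemma:simpletechgen}.) With this choice, the cMDS embedding $\Phi_{\mX^{\otimes N},kN}(x_1,\ldots,x_N)$ consists, up to permutation of coordinates, of the concatenation of $\Phi_{\mX,k}(x_1),\ldots,\Phi_{\mX,k}(x_N)$, so the first identity follows from the Pythagorean decomposition of the squared Euclidean norm (which is invariant under coordinate permutation). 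The $\ell^2$ identity in item (4) is proved identically, letting $k\to\pr(\cmdsop_\mX)$ and using dominated convergence (or the fact that for $\mX\in\mwtr$ the full embedding $\Phi_\mX$ is defined $\mu_X$-a.e.\ by Proposition \ref{prop:traceclasscmds}, hence $\Phi_{\mX^{\otimes N}}$ is defined $\mu_X^{\otimes N}$-a.e.\ by Fubini).

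The only subtle point is the second case of Proposition \ref{prop:spectrumofNprod}, which a priori produces eigenvalues of the form $\lambda\cdot\prod_{j\neq i}\int_{X}\phi_j\,d\mu_X$ with $\phi_j\in\ker\cmdsop_\mX$; it is precisely the hypothesis $\dim(\ker\cmdsop_\mX)=1$ that forces each such $\phi_j$ to be a multiple of the constant $1$, collapsing the product to $1$ and preserving the eigenvalue $\lambda$ without shrinkage. Without this hypothesis, one could pick $\phi_j\in\ker\cmdsop_\mX$ with $\int\phi_j\,d\mu_X=0$, which would spuriously lower the apparent contribution; the completeness of the tensor basis would still give the correct total trace norm, but the clean per-eigenvalue accounting of item (3) in particular would break down.
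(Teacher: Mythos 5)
Your proposal is correct and follows essentially the same route as the paper: the paper's proof likewise observes that $\mathrm{dim}(\ker \cmdsop_\mX)=1$ forces every zero-eigenvalue eigenfunction to be constant (hence of unit integral, so the second case of Proposition \ref{prop:spectrumofNprod} preserves each nonzero eigenvalue), and then reads the corollary off directly. Your write-up simply makes explicit the details the paper leaves implicit, namely the completeness of the tensor-product eigenbasis, the $N$-fold multiplicity bookkeeping for items (1)--(3), and the concatenated choice of orthonormal eigenfunctions underlying item (4).
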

\begin{proof}
Observe that the assumption $\mathrm{dim}(\ker \cmdsop_\mX)=1$ implies that constant functions are the only eigenfunctions for the zero eigenvalue. Then, this corollary is a direct result of Proposition \ref{prop:spectrumofNprod}.
\end{proof}

\subsection{The case of $\Sp^1$}\label{sec:sec:S1}
We will compute eigenvalues and corresponding eigenfunctions of the cMDS operator on $\Sp^1$ (with geodesic metric) and apply generalized cMDS. In this subsection, we will view $\Sp^1$ as $\R/2\pi$.

\medskip
The following formulas can be obtained through direct calculation.
\begin{lemma}\label{lemma:u2cossin}
For any $n\in\Z\backslash\{0\}$ we have:
\begin{enumerate}
    \item $\int_{-\pi}^\pi u^2\cos(nu)\,du=\frac{(-1)^n 4\pi}{n^2}$, and
    
    \item $\int_{-\pi}^\pi u^2\sin(nu)\,du=0$.
\end{enumerate}
\end{lemma}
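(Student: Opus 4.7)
The plan is to handle the two integrals separately, each by elementary means.

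For part (2), the key observation is parity: the function $u \mapsto u^2\sin(nu)$ is odd because $u^2$ is even and $\sin(nu)$ is odd, so integrating over the symmetric interval $[-\pi,\pi]$ immediately yields $0$.

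For part (1), I would apply integration by parts twice. First, with $f = u^2$ and $dg = \cos(nu)\,du$, producing
\[
\int u^2\cos(nu)\,du = \frac{u^2\sin(nu)}{n} - \frac{2}{n}\int u\sin(nu)\,du.
\]
A second integration by parts on $\int u\sin(nu)\,du$ (with $f=u$, $dg=\sin(nu)\,du$) gives
\[
\int u\sin(nu)\,du = -\frac{u\cos(nu)}{n} + \frac{\sin(nu)}{n^2}.
\]
Combining yields the antiderivative
\[
\frac{u^2\sin(nu)}{n} + \frac{2u\cos(nu)}{n^2} - \frac{2\sin(nu)}{n^3}.
\]
Evaluated at $u = \pm\pi$, all terms involving $\sin(n\pi)$ vanish (since $n\in\Z$), while $\cos(\pm n\pi) = (-1)^n$. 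Thus only the middle term contributes, giving
\[
\left[\frac{2u\cos(nu)}{n^2}\right]_{-\pi}^{\pi} = \frac{2\pi(-1)^n}{n^2} - \frac{-2\pi(-1)^n}{n^2} = \frac{4\pi(-1)^n}{n^2},
\]
as claimed.

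There is no real obstacle here — both statements reduce to routine calculus, and the only point requiring any care is tracking signs in the second integration by parts and recalling $\sin(n\pi)=0$, $\cos(n\pi) = (-1)^n$ for $n\in\Z$.
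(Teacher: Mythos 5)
Your proof is correct, and it is exactly the ``direct calculation'' the paper invokes without writing out: parity for the sine integral and two integrations by parts, with $\sin(n\pi)=0$ and $\cos(n\pi)=(-1)^n$, for the cosine integral. Nothing further is needed.
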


The following result has been described in \cite[Proposition 5.1]{adams2020multidimensional}. However, we provide a proof here in order to give a concise standalone treatment which will be contrasted with the much harder case of $\Sp^{d-1}$ for $d\geq 3$  (which will be dealt with in \S\ref{sec:Spd-1}).

\begin{proposition}\label{prop:nzreigenvalueS1}
The multiset of  eigenvalues of $\cmdsop_{\Sp^1}$ consists of $0$ (with multiplicity $1$) and $\left\{\frac{(-1)^{n+1}}{n^2}\right\}_{n=1}^{\infty}$ (each with multiplicity $2$). Moreover, the eigenspace corresponding to $0$ is spanned by a nonzero constant function and the eigenspace corresponding $\frac{(-1)^{n+1}}{n^2}$ is spanned by the functions $\theta\mapsto\cos(n\theta)$ and $\theta\mapsto\sin(n\theta)$, for each $n\geq 1$. Therefore, $\left\{\sqrt{2}\cos(n\theta),\sqrt{2}\sin(n\theta)\right\}_{n=1}^\infty$ together with $\phi_0\equiv 1$ form an orthonormal basis of $L^2(\nvol_{\Sp^1})$ consisting of eigenfunctions of $\cmdsop_{\Sp^1}$.
\end{proposition}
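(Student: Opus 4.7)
The plan is to exhibit the given functions as eigenfunctions with the stated eigenvalues via direct computation, then invoke completeness of the trigonometric system to conclude that no other eigenvalues exist.

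First I would exploit the fact that $\Sp^1$ (with geodesic distance and normalized length) is two-point homogeneous, so by Corollary \ref{cor:twohomoeigenproblem} a function $\phi \in L^2(\nvol_{\Sp^1})$ with $\int \phi \, d\nvol_{\Sp^1} = 0$ is an eigenfunction of $\cmdsop_{\Sp^1}$ for a nonzero eigenvalue $\lambda$ if and only if
\[
-\tfrac{1}{2}\int_{\Sp^1} d_{\Sp^1}^2(\theta,\theta')\,\phi(\theta')\,d\nvol_{\Sp^1}(\theta') \;=\; \lambda\,\phi(\theta)
\]
for $\nvol_{\Sp^1}$-a.e.\ $\theta$. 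Since $\int_{-\pi}^\pi \cos(n\theta)\,d\theta = \int_{-\pi}^\pi \sin(n\theta)\,d\theta = 0$ for every $n \geq 1$, the mean-zero hypothesis of Lemma \ref{lemma:simplereigenproblem} holds for all the candidate eigenfunctions.

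Next, for each $n \geq 1$ I would compute the integral above for $\phi(\theta') = \cos(n\theta')$ using the substitution $u = \theta' - \theta$. Since $d_{\Sp^1}(\theta,\theta') = |u|$ on $[-\pi,\pi]$ and $\cos(n(u+\theta)) = \cos(nu)\cos(n\theta) - \sin(nu)\sin(n\theta)$, the integral splits into two pieces. By Lemma \ref{lemma:u2cossin}, the $\sin(nu)$ piece vanishes and the $\cos(nu)$ piece evaluates to $(-1)^n\,4\pi/n^2$. Dividing by $2\pi$ (normalization) and by the factor $-\tfrac{1}{2}$ outside, this yields eigenvalue $\frac{(-1)^{n+1}}{n^2}$ attached to $\cos(n\theta)$. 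An identical calculation with $\sin(n(u+\theta))$ expanded gives the same eigenvalue for $\sin(n\theta)$. Lemma \ref{lemma:simplereigenproblem} then certifies that these are eigenfunctions of $\cmdsop_{\Sp^1}$ with the claimed eigenvalues, and of course the constant function $\phi_0 \equiv 1$ lies in $\ker \cmdsop_{\Sp^1}$ by item (4) of Lemma \ref{lemma:simpletechgen}.

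To pass from ``these functions are eigenfunctions'' to ``these are all the eigenvalues and eigenfunctions,'' I would invoke the classical fact that the system $\{1,\cos(n\theta),\sin(n\theta)\}_{n\geq 1}$ is an orthogonal basis of $L^2(\Sp^1)$; with $\|1\|_{L^2(\nvol_{\Sp^1})} = 1$ and $\|\cos(n\cdot)\|_{L^2(\nvol_{\Sp^1})} = \|\sin(n\cdot)\|_{L^2(\nvol_{\Sp^1})} = 1/\sqrt{2}$, this produces the claimed orthonormal basis. Since $\cmdsop_{\Sp^1}$ is compact and self-adjoint (Lemma \ref{lemma:simpletechgen}) and we have exhibited an orthonormal basis of $L^2(\nvol_{\Sp^1})$ consisting of its eigenfunctions, the spectral theorem ensures there are no further eigenvalues and pins down the multiplicities: $0$ has multiplicity $1$ (spanned by $\phi_0$), and $\frac{(-1)^{n+1}}{n^2}$ has multiplicity exactly $2$ (spanned by $\cos(n\theta)$ and $\sin(n\theta)$) --- noting that these values are all distinct from one another and from $0$, so no unexpected mergers of eigenspaces occur.

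The only mildly delicate step is the trigonometric substitution and the careful bookkeeping of the $\frac{1}{2\pi}$ normalization so that the sign $(-1)^{n+1}$ and the denominator $n^2$ come out exactly right; everything else is a direct appeal to results already established in the excerpt.
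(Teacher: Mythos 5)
Your proposal is correct and follows essentially the same route as the paper's proof: the same substitution $u=\theta'-\theta$ combined with Lemma \ref{lemma:u2cossin} and Lemma \ref{lemma:simplereigenproblem} to certify the eigenpairs, and completeness of the trigonometric system to rule out further eigenvalues. Your appeal to the spectral theorem in the last step is just a repackaging of the paper's coefficient-comparison argument (Claim \ref{claim:S1eigenmultiplicity}), since expanding a putative eigenfunction in the exhibited orthonormal basis and matching coefficients is exactly how one sees that eigenfunctions for eigenvalue $\lambda$ must lie in the span of basis vectors with eigenvalue $\lambda$.
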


\begin{remark}
For any $n\geq 3$, recall that $\mathcal{C}_n$ is a metric space consisting of the vertices of the regular $n$-gon inscribed in $\Sp^1$ equipped with the metric inherited from $\Sp^1$. In Example \ref{ex:reg-polys}, we studied $\lambda_k(n)$, the $k$-th eigenvalue of the matrix $K_{\mathcal{C}_n}$ for $0\leq k\leq n-1$, and $\trng(K_{\mathcal{C}_n})$. If we view $\mathcal{C}_n$ as a mm-space equipped with the uniform probability measure, then it is easy to verify that $\widetilde{\lambda}_k(n):=\frac{\lambda_k(n)}{n}$ is an eigenvalule of $\cmdsop_{\mathcal{C}_n}$ for each $0\leq k\leq n-1$. Moreover, in \cite[Corollary 7.2.2]{kassab2019multidimensional},\footnote{This also follows more explicitly from equation (\ref{eq:ncycleeigenvalue2}).} it is proved that $\widetilde{\lambda}_k(n)$ converges to $\frac{(-1)^{k+1}}{k^2}$ which, as we established above, coincides with the $k$-th eigenvalue of $\cmdsop_{\Sp^1}$ as $n\rightarrow\infty$ for any $k\geq 1$.
\end{remark}

\begin{proof}[Proof of Proposition \ref{prop:nzreigenvalueS1}].
We already know that any nonzero constant function is an eigenfunction of $\cmdsop_{\Sp^1}$ associated to the eigenvalue $0$ (cf. item (4) of Lemma \ref{lemma:simpletechgen}).

\begin{claim}\label{claim:cossineigenS1}
For each $n\geq 1$, $\theta\mapsto\cos(n\theta)$ and $\theta\mapsto\sin(n\theta)$ are eigenfunctions of $\cmdsop_{\Sp^1}$ associated to the eigenvalue $\frac{(-1)^{n+1}}{n^2}$.
\end{claim}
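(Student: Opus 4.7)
The plan is to exploit the fact that $\Sp^1$ (with geodesic distance and normalized arc length) is two-point homogeneous, so Corollary \ref{cor:twohomoeigenproblem} together with Lemma \ref{lemma:simplereigenproblem} tells us that a candidate function $\phi\in L^2(\nvol_{\Sp^1})$ is an eigenfunction of $\cmdsop_{\Sp^1}$ for a nonzero eigenvalue $\lambda$ \textbf{if and only if} $\phi$ has zero mean and $-\tfrac12\int_{\Sp^1}d_{\Sp^1}^2(\theta,\theta')\phi(\theta')\,d\nvol_{\Sp^1}(\theta')=\lambda\,\phi(\theta)$ for $\nvol_{\Sp^1}$-a.e.\ $\theta$. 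Thus I only need to verify these two conditions for $\phi(\theta)=\cos(n\theta)$ and $\phi(\theta)=\sin(n\theta)$ with $\lambda=(-1)^{n+1}/n^2$; the mean-zero condition is immediate for $n\geq 1$.

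For the integral equation, I identify $\Sp^1\cong\R/2\pi\Z$ and use the substitution $u=\theta'-\theta$, observing that $d_{\Sp^1}(\theta,\theta+u)=|u|$ when $u\in[-\pi,\pi]$, together with $d\nvol_{\Sp^1}=\tfrac{1}{2\pi}\,du$. For $\phi(\theta)=\cos(n\theta)$ the angle-addition formula $\cos\bigl(n(\theta+u)\bigr)=\cos(n\theta)\cos(nu)-\sin(n\theta)\sin(nu)$ reduces the integral to
\[
-\frac{1}{4\pi}\bigg(\cos(n\theta)\int_{-\pi}^{\pi}u^2\cos(nu)\,du-\sin(n\theta)\int_{-\pi}^{\pi}u^2\sin(nu)\,du\bigg).
\]
Applying Lemma \ref{lemma:u2cossin} immediately yields $\tfrac{(-1)^{n+1}}{n^2}\cos(n\theta)$, as desired. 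The case $\phi(\theta)=\sin(n\theta)$ is handled identically, using $\sin\bigl(n(\theta+u)\bigr)=\sin(n\theta)\cos(nu)+\cos(n\theta)\sin(nu)$; the $\cos(n\theta)$ cross-term vanishes by item (2) of Lemma \ref{lemma:u2cossin}, while the surviving term produces the same eigenvalue $(-1)^{n+1}/n^2$.

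There is essentially no obstacle here beyond bookkeeping: the whole argument is a two-line reduction (via two-point homogeneity) followed by a direct application of Lemma \ref{lemma:u2cossin} and elementary trigonometric identities. The only point that deserves a brief comment is that the substitution $\theta'\mapsto\theta'-\theta$ is well-defined on $\Sp^1=\R/2\pi\Z$ precisely because the integrand is $2\pi$-periodic, which is what makes the computation independent of the choice of fundamental domain $[-\pi,\pi]$.
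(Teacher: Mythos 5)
Your proposal is correct and follows essentially the same route as the paper: the substitution $u=\theta'-\theta$, the angle-addition identities, Lemma \ref{lemma:u2cossin}, and the sufficiency direction of Lemma \ref{lemma:simplereigenproblem} applied to the mean-zero functions $\cos(n\theta)$ and $\sin(n\theta)$. The only (harmless) difference is that your appeal to two-point homogeneity via Corollary \ref{cor:twohomoeigenproblem} is unnecessary here, since the claim only requires the ``if'' direction, which Lemma \ref{lemma:simplereigenproblem} already provides without any homogeneity hypothesis.
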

\begin{proof}[Proof of Claim \ref{claim:cossineigenS1}]
Observe that, for any $\theta\in\Sp^1$,
\begin{align*}
    -\frac{1}{2}\int_{\Sp^1}d_{\Sp^1}^2(\theta,\theta')\cos(n\theta')\,d\theta'&=-\frac{1}{2}\cdot\frac{1}{2\pi}\int_{\theta-\pi}^{\theta+\pi}(\theta'-\theta)^2\cos(n\theta')\,d\theta'\\
    &=-\frac{1}{4\pi}\int_{-\pi}^\pi u^2\cos(nu+n\theta)\,du\quad(\text{by substituting } u=\theta'-\theta)\\
    &=-\cos(n\theta)\cdot\left(\frac{1}{4\pi}\int_{-\pi}^\pi u^2\cos(nu)\,du\right)+\sin(n\theta)\cdot\left(\frac{1}{4\pi}\int_{-\pi}^\pi u^2\sin(nu)\,du\right)\\
    &=\frac{(-1)^{n+1}}{n^2}\cos(n\theta)\quad(\because\text{ Lemma \ref{lemma:u2cossin}}).
\end{align*}
Also,
\begin{align*}
    -\frac{1}{2}\int_{\Sp^1}d_{\Sp^1}^2(\theta,\theta')\sin(n\theta')\,d\theta'&=-\frac{1}{2}\cdot\frac{1}{2\pi}\int_{\theta-\pi}^{\theta+\pi}(\theta'-\theta)^2\sin(n\theta')\,d\theta'\\
    &=-\frac{1}{4\pi}\int_{-\pi}^\pi u^2\sin(nu+n\theta)\,du\quad(\text{by substituting } u=\theta'-\theta)\\
    &=-\cos(n\theta)\cdot\left(\frac{1}{4\pi}\int_{-\pi}^\pi u^2\sin(nu)\,du\right)-\sin(n\theta)\cdot\left(\frac{1}{4\pi}\int_{-\pi}^\pi u^2\cos(nu)\,du\right)\\
    &=\frac{(-1)^{n+1}}{n^2}\sin(n\theta)\quad(\because\text{ Lemma \ref{lemma:u2cossin}}).
\end{align*}
Moreover, since $\int_{-\pi}^{\pi}\cos(n\theta)\,d\theta=\int_{-\pi}^{\pi}\sin(n\theta)\,d\theta=0$, the proof of this claim is completed by invoking Lemma \ref{lemma:simplereigenproblem}.
\end{proof}

\begin{claim}\label{claim:S1eigenmultiplicity}
If $\lambda$ is an arbitrary eigenvalue of $\cmdsop_{\Sp^1}$ and $\phi$ is a corresponding eigenfunction, then $\lambda=0$ and $\phi$ is a nonzero constant function, or, $\lambda=\frac{(-1)^{n+1}}{n^2}$ and $\phi$ is spanned by $\theta\mapsto\cos(n\theta)$ and $\theta\mapsto\sin(n\theta)$ for some $n\geq 1$.
\end{claim}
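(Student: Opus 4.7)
The plan is to leverage the completeness of the Fourier basis in $L^2(\nvol_{\Sp^1})$ together with Claim \ref{claim:cossineigenS1}. Recall from classical Fourier analysis that $\{1,\sqrt{2}\cos(n\theta),\sqrt{2}\sin(n\theta)\}_{n\geq 1}$ is a complete orthonormal basis of $L^2(\nvol_{\Sp^1})$. By Claim \ref{claim:cossineigenS1} and item (4) of Lemma \ref{lemma:simpletechgen}, every element of this basis is an eigenfunction of $\cmdsop_{\Sp^1}$: the constant $1$ has eigenvalue $0$, and for each $n\geq 1$ both $\cos(n\theta)$ and $\sin(n\theta)$ have eigenvalue $\frac{(-1)^{n+1}}{n^2}$.

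Given an eigenfunction $\phi\in L^2(\nvol_{\Sp^1})$ of $\cmdsop_{\Sp^1}$ with eigenvalue $\lambda$, I would first expand $\phi$ in its Fourier series
$$\phi(\theta)=a_0+\sum_{n=1}^\infty\bigl(a_n\cos(n\theta)+b_n\sin(n\theta)\bigr),$$
with convergence in $L^2(\nvol_{\Sp^1})$. Since $\cmdsop_{\Sp^1}$ is a bounded (indeed compact, by Lemma \ref{lemma:simpletechgen}) linear operator on $L^2(\nvol_{\Sp^1})$, it may be applied term by term to give
$$\lambda\phi=\cmdsop_{\Sp^1}\phi=\sum_{n=1}^\infty\frac{(-1)^{n+1}}{n^2}\bigl(a_n\cos(n\theta)+b_n\sin(n\theta)\bigr).$$
Using the uniqueness of Fourier coefficients, I would then match coefficients on both sides to obtain $\lambda a_0=0$ and $\lambda a_n=\frac{(-1)^{n+1}}{n^2}a_n$, $\lambda b_n=\frac{(-1)^{n+1}}{n^2}b_n$ for every $n\geq 1$.

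The final step exploits the fact that the sequence $\{\frac{(-1)^{n+1}}{n^2}\}_{n\geq 1}$ consists of pairwise distinct values (since $n\mapsto 1/n^2$ is strictly decreasing), and none of these values is $0$. Therefore, if $\lambda=0$ then $a_n=b_n=0$ for every $n\geq 1$ and $\phi\equiv a_0$ is a constant function; if $\lambda=\frac{(-1)^{n_0+1}}{n_0^2}$ for some (unique) $n_0\geq 1$ then $a_0=0$ and $a_n=b_n=0$ whenever $n\neq n_0$, so that $\phi=a_{n_0}\cos(n_0\theta)+b_{n_0}\sin(n_0\theta)$; and if $\lambda$ is any other real number, then every Fourier coefficient vanishes and $\phi=0$, contradicting the assumption that $\phi$ is an eigenfunction. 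This exhausts the cases and proves the claim. The main (and essentially only) conceptual ingredient is the completeness of the trigonometric system; no further analytic obstacle is expected.
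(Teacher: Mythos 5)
Your proof is correct and follows essentially the same route as the paper's: expand $\phi$ in the trigonometric orthonormal basis, apply $\cmdsop_{\Sp^1}$ term by term, and match Fourier coefficients, using that the eigenvalues $\frac{(-1)^{n+1}}{n^2}$ are pairwise distinct and nonzero. Your explicit appeals to the boundedness of the operator (to justify termwise application) and to the distinctness of the eigenvalues are welcome refinements of details the paper leaves implicit, but they do not constitute a different argument.
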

\begin{proof}[Proof of Claim \ref{claim:S1eigenmultiplicity}]
It is well-known that $\left\{\cos(n\theta),\sin(n\theta)\right\}_{n=1}^\infty$ together with a constant function form an orthogonal basis of $L^2(\nvol_{\Sp^1})$. Hence, $\phi(\theta)=a_0+\sum_{m=1}^\infty (a_m\cos(m\theta)+b_m\sin(m\theta))$ for some $\{a_m\}_{m=0}^\infty,\{b_m\}_{m=1}^\infty\subset\R$. Therefore, we have

\begin{equation}\label{eq:S1basisepxpression}
    \cmdsop_{\Sp_1}\phi(\theta)=\sum_{m=1}^\infty \frac{(-1)^{m+1}}{m^2}(a_m\cos(m\theta)+b_m\sin(m\theta))=\lambda a_0+\sum_{m=1}^\infty \lambda(a_m\cos(m\theta)+b_m\sin(m\theta)).
\end{equation}

Since the coefficients of a vector with respect to a given basis are uniquely determined, we conclude that $\lambda=0$ and $\phi$ is a nonzero constant function, or, $\lambda=\frac{(-1)^{n+1}}{n^2}$ and $\phi=a_n\cos(n\theta)+b_n\sin(n\theta)$ for some $n\geq 1$.
\end{proof}

This completes the proof.
\end{proof}

\begin{corollary}\label{cor:propsofS1}
\begin{enumerate}
    \item $\pr(\cmdsop_{\Sp^1})=\infty$ and $\nr(\cmdsop_{\Sp^1})=\infty$.
    
    \item $\Vert\cmdsop_{\Sp^1}\Vert_1=2\cdot\sum_{n=1}^\infty\frac{1}{n^2}=\frac{\pi^2}{3}$. In particular, $\Sp^1\in\mwtr$.
    
    \item $\tr(\cmdsop_{\Sp^1})=2\cdot\left(\sum_{\text{odd }n\geq 1}\frac{1}{n^2}-\sum_{\text{even }n\geq 1}\frac{1}{n^2}\right)=\frac{\pi^2}{6}$.
    
    \item $\trng(\cmdsop_{\Sp^1})=2\cdot\sum_{\text{even }n\geq 1}\frac{1}{n^2}=\frac{1}{2}\sum_{m=1}^\infty\frac{1}{m^2}=\frac{\pi^2}{12}$. As a consequence, $\dis(\Sp^1) = \frac{\pi}{\sqrt{6}}.$
    
    \item For all $\theta,\theta'\in \Sp^1$,
$$\Vert \Phi_{\Sp^1}(\theta)-\Phi_{\Sp^1}(\theta')\Vert=\sqrt{\pi}\big(d_{\Sp^1}(\theta,\theta')\big)^{\frac{1}{2}}.$$
\end{enumerate}
\end{corollary}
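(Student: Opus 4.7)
For items (1)--(4) the plan is essentially to read off the arithmetic from the eigenvalue list in Proposition \ref{prop:nzreigenvalueS1}: the nonzero eigenvalues of $\cmdsop_{\Sp^1}$ are $(-1)^{n+1}/n^2$ for $n\geq 1$, each with multiplicity $2$. Odd $n$ contribute the positive eigenvalues and even $n$ the negative ones, each infinite in count, which gives (1). For (2), the Basel sum gives $\Vert\cmdsop_{\Sp^1}\Vert_1 = 2\sum_{n\geq 1} 1/n^2 = \pi^2/3 < \infty$, so $\Sp^1\in\mwtr$. Splitting $\sum_{n\geq 1} 1/n^2 = \pi^2/6$ by parity via $\sum_{n\geq 1} 1/(2n)^2 = \pi^2/24$ yields $\sum_{\text{odd}} 1/n^2 = \pi^2/8$, from which (3) reads $\tr(\cmdsop_{\Sp^1}) = 2(\pi^2/8 - \pi^2/24) = \pi^2/6$, and (4) reads $\trng(\cmdsop_{\Sp^1}) = 2\cdot\pi^2/24 = \pi^2/12$. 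Then $\dis(\Sp^1) = \sqrt{2\,\trng(\cmdsop_{\Sp^1})} = \pi/\sqrt{6}$ follows from Proposition \ref{prop:generrorbdd}(2). (As a cross-check, (3) also matches $\tr(\cmdsop_{\Sp^1}) = \tfrac12 (\diamna_2(\Sp^1))^2 = \pi^2/6$ via Remark \ref{rmk:tracediam2}.)

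The substantive work is in item (5). I would start by plugging the explicit orthonormal eigenbasis from Proposition \ref{prop:nzreigenvalueS1} into Definition \ref{def:cmdsl2}: the positive eigenvalues $1/n^2$ for odd $n$ carry the orthonormal eigenfunctions $\sqrt{2}\cos(n\theta)$ and $\sqrt{2}\sin(n\theta)$. With the abbreviation $\alpha := \theta - \theta'$, direct expansion and the identity $(\cos A - \cos B)^2 + (\sin A - \sin B)^2 = 2 - 2\cos(A-B)$ give
\begin{align*}
\Vert \Phi_{\Sp^1}(\theta)-\Phi_{\Sp^1}(\theta')\Vert^2
&= \sum_{\text{odd }n\geq 1} \frac{2}{n^2}\bigl[(\cos n\theta - \cos n\theta')^2 + (\sin n\theta - \sin n\theta')^2\bigr] \\
&= 4\sum_{\text{odd }n\geq 1} \frac{1-\cos(n\alpha)}{n^2}.
\end{align*}
So the claim reduces to establishing the identity $4\sum_{\text{odd }n\geq 1}\frac{1-\cos(n\alpha)}{n^2} = \pi|\alpha|$ for $|\alpha|\leq\pi$; note that for $\theta,\theta'\in\Sp^1=\R/2\pi\Z$ a representative $\alpha\in[-\pi,\pi]$ can always be chosen and then $d_{\Sp^1}(\theta,\theta')=|\alpha|$, while the series is $2\pi$-periodic in $\alpha$.

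The plan for closing this identity is to invoke the classical Fourier cosine expansion of the $2\pi$-periodic function $\alpha\mapsto|\alpha|$, namely
\[
|\alpha| \;=\; \frac{\pi}{2} \;-\; \frac{4}{\pi}\sum_{\text{odd }n\geq 1}\frac{\cos(n\alpha)}{n^2},\qquad |\alpha|\leq\pi,
\]
which I would verify by a routine integration by parts for the cosine coefficients and by appealing to Dirichlet's theorem for pointwise convergence (the function is continuous and piecewise $C^1$). Rearranging with $\sum_{\text{odd}}1/n^2 = \pi^2/8$ yields $\sum_{\text{odd}}\frac{1-\cos(n\alpha)}{n^2} = \pi|\alpha|/4$, which multiplied by $4$ and square-rooted produces the result. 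I do not expect any serious obstacle; the main point requiring care is the parity restriction to odd $n$, which is exactly the signature of having discarded the (even $n$) negative spectrum in forming $\Phi_{\Sp^1}$.
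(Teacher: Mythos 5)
Your proof is correct and follows essentially the same route as the paper: items (1)--(4) are the same spectral arithmetic read off from Proposition \ref{prop:nzreigenvalueS1} (with $\dis(\Sp^1)$ obtained via item (2) of Proposition \ref{prop:generrorbdd}, legitimately, since traceability is secured first), and for item (5) you reduce to the same series $4\sum_{\text{odd }n\geq 1}\frac{1-\cos(n\alpha)}{n^2}$ after the same trigonometric simplification. The only cosmetic difference is that you close the key identity by deriving the Fourier cosine expansion of $\vert\alpha\vert$ yourself, whereas the paper cites the equivalent closed form for $\sum_{\text{odd }n\geq 1}\frac{\cos(n\alpha)}{n^2}$ from a handbook.
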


Item (5) provides a conclusive answer to the discussion at the end of  \cite[Section 5]{adams2020multidimensional}. 

\begin{proof}[Proof of Corollary \ref{cor:propsofS1}]
Items (1),(2),(3), and (4) are achieved from the well-known fact $\sum_{n=1}^\infty\frac{1}{n^2}=\frac{\pi^2}{6}$ and Proposition \ref{prop:nzreigenvalueS1}. Let's prove item (5).

Without loss of generality, we may assume that $\theta\geq\theta'$ and $2\pi\geq \theta-\theta'\geq 0$. Also,
\begin{align*}
    \Vert \Phi_{\Sp^1}(\theta)-\Phi_{\Sp^1}(\theta')\Vert^2&=\sum_{\text{odd }n\geq 1}\frac{1}{n^2}\big(2(\cos(n\theta)-\cos(n\theta'))^2+2(\sin(n\theta)-\sin(n\theta'))^2\big)\\
    &=\sum_{\text{odd }n\geq 1}\frac{4}{n^2}\big(1-\cos(n(\theta'-\theta))\big)\\
    &=4\sum_{\text{odd }n\geq 1}\frac{1}{n^2}-4\sum_{\text{odd }n\geq 1}\frac{\cos(n(\theta'-\theta))}{n^2}
\end{align*}

Here, by \cite[Section 21.6]{bronshtein2013handbook}, we have
$$\sum_{\text{odd }n\geq 1}\frac{\cos(n(\theta'-\theta))}{n^2}=\begin{cases}\frac{\pi}{4}(\frac{\pi}{2}-(\theta'-\theta))&\text{ if }0\leq\theta'-\theta\leq\pi \\ \frac{\pi}{4}((\theta'-\theta)-\frac{3\pi}{2})&\text{ if }\pi<\theta'-\theta\leq2\pi. \end{cases}$$

Hence, combining the above with the fact $\sum_{\text{odd }n\geq 1}\frac{1}{n^2}=\frac{\pi^2}{8}$, one  concludes
$$
    \Vert \Phi_{\Sp^1}(\theta)-\Phi_{\Sp^1}(\theta')\Vert^2=4\left(\frac{\pi^2}{8}-\left(\frac{\pi^2}{8}-\frac{\pi}{4}\,d_{\Sp^1}(\theta,\theta')\right)\right)=\pi\,d_{\Sp^1}(\theta,\theta').
$$

\end{proof}

Item (5) of Corollary \ref{cor:propsofS1} motivates the following conjecture which we arrived at via numerical computations.

\begin{conjecture}\label{conj:sd}
For any integer $n\geq 1$, and for all $\theta,\theta'\in\Sp^n$ it holds that $$\Vert \Phi_{\Sp^n}(\theta)-\Phi_{\Sp^n}(\theta')\Vert=\sqrt{\pi}\big(d_{\Sp^n}(\theta,\theta')\big)^{\frac{1}{2}}.$$
\end{conjecture}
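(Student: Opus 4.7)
The case $n=1$ is already established in item (5) of Corollary \ref{cor:propsofS1}, so the focus will be on $n\ge 2$. The overall strategy is to diagonalize $\cmdsop_{\Sp^n}$ in the spherical-harmonic basis (using that its kernel depends only on the inner product $\langle\theta,\theta'\rangle$), sum the positive part of the spectral decomposition in closed form via the addition theorem, and recognize the resulting kernel as $A-\tfrac{\pi}{2}\,d_{\Sp^n}(\theta,\theta')$ for an explicit constant $A$.

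First I will exploit two-point homogeneity: by Proposition \ref{prop:homogeneousspace} one has $K_{\Sp^n}(\theta,\theta') = -\tfrac{1}{2}\big(d_{\Sp^n}^2(\theta,\theta')-c_n\big)$ with $c_n:=\big(\diamna_2(\Sp^n)\big)^2$, which is a function of $\langle\theta,\theta'\rangle$ only. Hence $\cmdsop_{\Sp^n}$ commutes with the natural $O(n+1)$-action on $L^2(\nvol_{\Sp^n})$, so by Schur's lemma (equivalently via Corollary \ref{cor:twohomoeigenproblem}) each spherical-harmonic subspace $\mathcal{H}_k$ of degree $k\ge 0$ sits inside a single eigenspace. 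The Funk--Hecke formula then produces the eigenvalue on $\mathcal{H}_k$:
\[
\mu_k \;=\; \frac{|\Sp^{n-1}|}{|\Sp^n|}\int_{-1}^{1}\Big(\!-\tfrac{1}{2}\arccos^2(t)+\tfrac{c_n}{2}\Big)\,\frac{C_k^{(n-1)/2}(t)}{C_k^{(n-1)/2}(1)}\,(1-t^2)^{(n-2)/2}\,dt,
\]
for $k\ge 1$, with $\mu_0=0$ (by the centering $\int K_{\Sp^n}(\theta,\cdot)\,d\nvol_{\Sp^n} \equiv 0$). I will evaluate these integrals (for instance by integration by parts against the Rodrigues formula for Gegenbauer polynomials), pin down the sign pattern of the $\mu_k$, and verify the summability $\sum_k |\mu_k|\dim(\mathcal{H}_k)<\infty$; this last estimate places $\Sp^n\in\mwtr$ and, via Proposition \ref{prop:traceclasscmds}, makes the embedding $\Phi_{\Sp^n}$ of Definition \ref{def:cmdsl2} well defined.

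Next, the addition theorem $\sum_j Y_{k,j}(\theta)Y_{k,j}(\theta') = \frac{\dim\mathcal{H}_k}{|\Sp^n|}\cdot\frac{C_k^{(n-1)/2}(\langle\theta,\theta'\rangle)}{C_k^{(n-1)/2}(1)}$ (with $\{Y_{k,j}\}_j$ an $L^2$-orthonormal basis of $\mathcal{H}_k$) collapses the positive part of the spectral decomposition into
\[
K_{\widehat{\Sp^n}}(\theta,\theta') \;=\; \sum_{k\,:\,\mu_k>0}\mu_k\,\frac{\dim\mathcal{H}_k}{|\Sp^n|}\cdot\frac{C_k^{(n-1)/2}(\langle\theta,\theta'\rangle)}{C_k^{(n-1)/2}(1)}.
\]
Since $K_{\widehat{\Sp^n}}(\theta,\theta)$ is constant in $\theta$ by homogeneity, item~(3) of Lemma \ref{lemma:simpletechgen} reduces the conjecture to the pointwise identity $K_{\widehat{\Sp^n}}(\theta,\theta') = A - \tfrac{\pi}{2}\,d_{\Sp^n}(\theta,\theta')$ for some constant $A$. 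Matching coefficients, this reduces further to a specific Gegenbauer expansion of $\arccos$ in the basis $\{C_k^{(n-1)/2}(t)/C_k^{(n-1)/2}(1)\}_{k\ge 1}$ with weight $(1-t^2)^{(n-2)/2}$: its $k$-th coefficient must equal $\tfrac{2}{\pi}\,\mu_k\,\dim\mathcal{H}_k/|\Sp^n|$ (up to a uniform sign absorbed in the restriction to $\mu_k>0$).

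\textbf{Main obstacle.} The hard part will be both halves of this coefficient match: the explicit evaluation of the Funk--Hecke integral for $\mu_k$, and the matching Gegenbauer expansion of $\arccos$. In the $n=1$ case the two tasks collapse into the classical Fourier identity $\sum_{k\,\text{odd}}k^{-2}\cos(k\vartheta)=\tfrac{\pi}{4}\big(\tfrac{\pi}{2}-|\vartheta|\big)$, which has no routine analogue at higher $n$; one genuinely needs a hypergeometric/fractional-calculus identity for the Gegenbauer moments of $\arccos$, along with a sharp decay estimate of the form $|\mu_k|\dim\mathcal{H}_k = O(k^{-2})$ to guarantee traceability. A dimension-reducing trick—for instance using the contiguous relations between $C_k^{\alpha}$ and $C_k^{\alpha+1}$, or a recursion on $n$ via differentiation in $t$—appears to be the most promising way to transfer the $n=1$ identity upward and simultaneously control the tail of the spectrum.
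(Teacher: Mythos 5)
Your framework---Funk--Hecke diagonalization over the spherical-harmonic spaces, the addition theorem to collapse the positive part of the spectrum, and the reduction of the conjecture to a kernel identity $K_{\widehat{\Sp}^n}(\theta,\theta')=A-\tfrac{\pi}{2}\,d_{\Sp^n}(\theta,\theta')$---is exactly the setup of the paper's proof (Corollary \ref{cor:eigensphere} and Theorem \ref{thm:cMDSSd-1dist}; your normalized Gegenbauer polynomials $C_k^{(n-1)/2}(t)/C_k^{(n-1)/2}(1)$ are the Legendre polynomials $P_{k,d}$ with $d=n+1$, and the $n=1$ base case is indeed Corollary \ref{cor:propsofS1}). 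But the proposal stops precisely where the content lies: you defer the coefficient match between the Gegenbauer moments of $\arccos^2$ (the eigenvalues $\mu_k$) and those of $\arccos$ (the expansion of $d_{\Sp^n}$) to an unproven ``hypergeometric/fractional-calculus identity,'' and you defer traceability to an unproven decay estimate $|\mu_k|\dim\mathcal{H}_k=O(k^{-2})$. As written, nothing is established for $n\geq 2$; naming the hard step and speculating about contiguous relations or a recursion in dimension does not prove it.

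The missing idea is disarmingly elementary and requires no explicit evaluation at all. Write $w(t)=(1-t^2)^{(d-3)/2}$, split $\int_{-1}^1 P_{k,d}(t)\arccos^2(t)\,w(t)\,dt$ at $t=0$, substitute $t\mapsto -t$ on the negative half, and use $\arccos(-t)=\pi-\arccos(t)$ together with the parity $P_{k,d}(-t)=(-1)^kP_{k,d}(t)$ and the orthogonality $\int_{-1}^1 P_{k,d}\,w\,dt=0$ for $k\geq 1$. For odd $k$ the squared terms cancel and one obtains $\int_{-1}^1 P_{k,d}\arccos^2 w\,dt=\pi\int_{-1}^1 P_{k,d}\arccos\,w\,dt$, i.e.\ $\lambda_{k,d}=-\tfrac{\pi}{2}\eta_{k,d}$ where $\eta_{k,d}$ is the $k$-th coefficient of the kernel $(x,x')\mapsto d_{\Sp^{d-1}}(x,x')$ (Proposition \ref{prop:lbdeta}); for even $k\geq 1$ the same substitution gives $\eta_{k,d}=0$ (Lemma \ref{lemma:etandeven0}). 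Combined with the sign pattern $\lambda_{k,d}>0$ iff $k$ odd (Proposition \ref{prop:taylorargument}, proved from the Taylor series of $\arccos^2$ and parity of $P_{k,d}$), the positive part of the spectral kernel is therefore $-\tfrac{\pi}{2}$ times the nonconstant part of the expansion of $d_{\Sp^{d-1}}$, and evaluating that expansion at $\theta=\theta'$ via the addition theorem fixes your constant $A$---so the identity drops out without ever computing a Gegenbauer expansion of $\arccos$ or a single Funk--Hecke integral in closed form. Traceability likewise does not come from per-mode decay (no closed form for the even, negative eigenvalues is readily available---cf.\ Remark \ref{rmk:evaldifficult}); the paper instead bounds $\sum_k|\lambda_{k,d}|N_{k,d}\,r^k$ uniformly in $r\in(0,1)$ via the Poisson identity for Legendre polynomials and concludes by an Abel-type contradiction argument (Theorem \ref{thm:Sd-1tr}). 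You should replace your two deferred steps with these symmetry and Poisson-identity arguments; the upward transfer from $n=1$ that you envisage is both unproven and substantially harder than what is needed.
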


It turns out that, the above conjecture is true. See Theorem \ref{thm:cMDSSd-1dist}.

\begin{lemma}\label{lemma:S1ker}
The dimension of $\ker (\cmdsop_{\Sp^1})$ is exactly $1$. In other words, there are no eigenfunctions for zero eigenvalue other than  constant functions (see item (4) of Lemma \ref{lemma:simpletechgen}). 
\end{lemma}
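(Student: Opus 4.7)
The plan is to deduce this immediately from Proposition \ref{prop:nzreigenvalueS1}, specifically from the Fourier expansion argument used in the proof of Claim \ref{claim:S1eigenmultiplicity}. By that proposition, the family $\{\phi_0\equiv 1\}\cup\{\sqrt{2}\cos(n\theta),\sqrt{2}\sin(n\theta)\}_{n\geq 1}$ is an orthonormal basis of $L^2(\nvol_{\Sp^1})$ consisting of eigenfunctions of $\cmdsop_{\Sp^1}$, where $\cos(n\theta)$ and $\sin(n\theta)$ belong to the eigenvalue $\frac{(-1)^{n+1}}{n^2}$, and this eigenvalue is strictly nonzero for every $n\geq 1$.

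Given any $\phi\in\ker(\cmdsop_{\Sp^1})$, I would expand $\phi$ in this orthonormal basis as $\phi(\theta)=a_0+\sum_{n\geq 1}(a_n\cos(n\theta)+b_n\sin(n\theta))$, where convergence is in $L^2(\nvol_{\Sp^1})$. Since $\cmdsop_{\Sp^1}$ is a bounded (in fact compact) operator, it commutes with $L^2$-convergence, so
$$0=\cmdsop_{\Sp^1}\phi=\sum_{n\geq 1}\frac{(-1)^{n+1}}{n^2}\big(a_n\cos(n\theta)+b_n\sin(n\theta)\big).$$
By uniqueness of Fourier coefficients with respect to the orthonormal basis, and the fact that each coefficient $\frac{(-1)^{n+1}}{n^2}$ is nonzero, we obtain $a_n=b_n=0$ for every $n\geq 1$. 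Hence $\phi=a_0$ is constant, and $\dim(\ker\cmdsop_{\Sp^1})=1$.

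There is no real obstacle here: the work has already been done inside the proof of Claim \ref{claim:S1eigenmultiplicity}, and the present lemma is just the specialization of that statement to the eigenvalue $\lambda=0$. The only minor point one should mention explicitly is that the nonzero eigenvalues $\frac{(-1)^{n+1}}{n^2}$ never vanish, which is what forces all nonconstant Fourier modes of an element of $\ker\cmdsop_{\Sp^1}$ to vanish.
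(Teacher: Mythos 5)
Your argument is correct and is essentially the paper's own proof: the paper likewise observes that $\{1\}\cup\{\sqrt{2}\cos(n\theta),\sqrt{2}\sin(n\theta)\}_{n\geq 1}$ is a complete orthonormal eigenbasis of $L^2(\nvol_{\Sp^1})$ in which every nonconstant basis element has the nonzero eigenvalue $\frac{(-1)^{n+1}}{n^2}$, so the kernel is exactly the constants. Your version merely makes explicit the expansion and the termwise application of $\cmdsop_{\Sp^1}$ that the paper leaves implicit (and that already appears in the proof of Claim \ref{claim:S1eigenmultiplicity}).
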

\begin{proof}
Observe that $\phi_0\equiv 1$, $\phi_n(\theta)=\sqrt{2}\cos(n\theta)$, and $\psi_n(\theta)=\sqrt{2}\sin(n\theta)$ for $n\geq 1$ form a complete orthonormal basis of $L^2(\nvol_{\Sp^1})$. Also, $\phi_n$ and $\psi_n$ are the eigenfunctions for a nonzero eigenvalue $\frac{(-1)^{n+1}}{n^2}$ for each $n\geq 1$. This completes the proof.
\end{proof}

\subsection{The case of $\mathbb{T}^N =  (\Sp^1)^N$}\label{sec:sec:TN}

Applying the results about product spaces (see \S\ref{sec:sec:cmdsproduct}) we have the following result for tori.

\begin{corollary}\label{cor:NtoruscMDS}
Let $\mathbb{T}^N =  (\Sp^1)^N$ be the $N$-torus. Then,
\begin{enumerate}
    \item $\Vert\cmdsop_{\mathbb{T}^N}\Vert_1=\frac{N\pi^2}{3}$. In particular, $\mathbb{T}^N\in\mwtr$.
    
    \item $\tr(\cmdsop_{\mathbb{T}^N})=\frac{N\pi^2}{6}$.
    
    \item $\trng(\mathbb{T}^N)=\frac{N\pi^2}{12}$.
    
    \item For any positive integer $k$ and $(\alpha_1,\dots,\alpha_N),(\beta_1,\dots,\beta_N)\in \mathbb{T}^N$, we have
        \begin{align*}
        &\Vert \Phi_{\mathbb{T}^N,kN}(\alpha_1,\dots,\alpha_N)-\Phi_{\mathbb{T}^N,kN}(\beta_1,\dots,\beta_N) \Vert^2=\sum_{i=1}^N \Vert \Phi_{\Sp^1,k}(\alpha_i)-\Phi_{\Sp^1,k}(\beta_i) \Vert^2\text{, and}\\
        &\Vert \Phi_{\mathbb{T}^N}(\alpha_1,\dots,\alpha_N)-\Phi_{\mathbb{T}^N}(\beta_1,\dots,\beta_N) \Vert^2=\sum_{i=1}^N\Vert \Phi_{\Sp^1}(\alpha_i)-\Phi_{\Sp^1}(\beta_i) \Vert^2=\pi\sum_{i=1}^N d_{\Sp^1}(\alpha_i,\beta_i).
    \end{align*}
\end{enumerate}
\end{corollary}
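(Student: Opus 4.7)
The plan is to recognize $\mathbb{T}^N$ as the $N$-fold $\ell^2$-product $(\Sp^1)^{\otimes N}$ in the sense of Section \ref{sec:sec:cmdsproduct}, and then to apply Corollary \ref{cor:propsofprod} verbatim with $\mX = \Sp^1$. The hypotheses of that corollary are met: by item (2) of Corollary \ref{cor:propsofS1} we already know $\Sp^1 \in \mwtr$, and by Lemma \ref{lemma:S1ker} we have $\mathrm{dim}(\ker\,\cmdsop_{\Sp^1}) = 1$. There is essentially no obstacle beyond noting that the product metric on the torus is indeed the $\ell^2$-product of the geodesic metric on each factor, which is the standard convention consistent with Section \ref{sec:sec:cmdsproduct}.

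For items (1), (2), and (3), I would simply plug the numerical values computed in Corollary \ref{cor:propsofS1} into the multiplicativity relations of Corollary \ref{cor:propsofprod}. Concretely, items (1)--(3) of Corollary \ref{cor:propsofprod} yield
\begin{align*}
    \Vert \cmdsop_{\mathbb{T}^N} \Vert_1 &= N \cdot \Vert \cmdsop_{\Sp^1} \Vert_1 = N \cdot \tfrac{\pi^2}{3}, \\
    \tr(\cmdsop_{\mathbb{T}^N}) &= N \cdot \tr(\cmdsop_{\Sp^1}) = N \cdot \tfrac{\pi^2}{6}, \\
    \trng(\cmdsop_{\mathbb{T}^N}) &= N \cdot \trng(\cmdsop_{\Sp^1}) = N \cdot \tfrac{\pi^2}{12},
\end{align*}
and in particular the finiteness of the trace norm confirms $\mathbb{T}^N \in \mwtr$.

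For item (4), the first two equalities (for the truncated embedding $\Phi_{\mathbb{T}^N,kN}$ and the full embedding $\Phi_{\mathbb{T}^N}$) are immediate restatements of item (4) of Corollary \ref{cor:propsofprod} with $\mX = \Sp^1$. The third equality, asserting that $\sum_{i=1}^N \Vert\Phi_{\Sp^1}(\alpha_i)-\Phi_{\Sp^1}(\beta_i)\Vert^2 = \pi \sum_{i=1}^N d_{\Sp^1}(\alpha_i,\beta_i)$, follows by substituting into each summand the identity $\Vert \Phi_{\Sp^1}(\theta)-\Phi_{\Sp^1}(\theta')\Vert^2 = \pi \cdot d_{\Sp^1}(\theta,\theta')$ established in item (5) of Corollary \ref{cor:propsofS1}. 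This completes the argument; the entire proof is a bookkeeping exercise once Corollary \ref{cor:propsofprod} and Corollary \ref{cor:propsofS1} are in hand.
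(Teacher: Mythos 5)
Your proof is correct and is exactly the paper's argument: the paper's proof of Corollary \ref{cor:NtoruscMDS} consists precisely of applying Corollary \ref{cor:propsofprod}, Corollary \ref{cor:propsofS1}, and Lemma \ref{lemma:S1ker}, which are the same three ingredients you use in the same way. Your additional check of the hypothesis $\mathrm{dim}(\ker\,\cmdsop_{\Sp^1})=1$ and the remark on the $\ell^2$-product metric convention are appropriate and consistent with the paper.
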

\begin{proof}
Apply Corollary \ref{cor:propsofprod}, Corollary \ref{cor:propsofS1}, and Lemma \ref{lemma:S1ker}.
\end{proof}

\begin{remark}[cMDS distortion and sectional curvature]\label{rmk:curvdist}
Consider the case when $\mX\in\mwtr$ is a compact Riemannian manifold. A basic fact in Riemannian geometry is that the sectional curvature of $\mX$ measures the deviation of the  geometry of $\mX$ from being locally Euclidean. Also, as we've already seen in item (2) of Proposition \ref{prop:generrorbdd}, $\sqrt{2\,\trng(\mX)}$ equals the distortion  incurred by the cMDS embedding of  $\mX$  into the Euclidean space $\ell^2$. Therefore, it is natural to ask what is the precise relationship between the sectional curvature and the negative trace of $\mX$.

The first observation is that, even if $\mX$ has the zero sectional curvature everywhere, one cannot expect that in general $\trng(\mX)=0$. This is because the negative trace of $\mX$ is determined by the \emph{global} geometry of $\mX$, whereas  sectional curvature reflects  local information. 

Here is a specific example. Note that the mm-space $\mathbb{T}^N=\Sp^1\times\cdots\times \Sp^1$ ($N$ times) defined as in \S\ref{sec:sec:cmdsproduct} coincides with the $N$-flat torus, as a Riemannian manifold (as defined in \cite[Example 2.7]{do1992riemannian}) because the $L^2$-product of distances is generated by the product Riemannian metric.\footnote{This is a standard fact which can be easily checked.}  In other words, the sectional curvature of $\mathbb{T}^N$ is identically zero everywhere whereas $\trng(\mathbb{T}^N)=\frac{N\pi^2}{12}\rightarrow\infty$ as $N\rightarrow\infty$ by item (3) of Corollary \ref{cor:NtoruscMDS}.

On the contrary, if $\trng(\mX)=0$, then $\mX$ can be isometrically embedded in Euclidean space which  forces the sectional curvatures of $\mX$ to be zero everywhere. Hence, we think the following question is intriguing:

\begin{restatable}{question}{curvdist}
\label{q:curvdist}
When $\mX$ is a compact Riemannian manifold, is it possible to uniformly upper bound the absolute magnitude of its sectional curvature  with respect to $\trng(\mX)$?
\end{restatable}
\end{remark}

\subsection{Metric graphs and fractal spaces}\label{sec:sec:metgraphs}
In this section we consider a fairly large class of one-dimensional metric spaces called \emph{metric graphs} (see Figure \ref{fig:mg}) and prove that they all induce traceable cMDS operators.

The literature on metric graphs in the computational geometry and computational topology  keeps growing. In particular, metric graphs are one of the important tools for the shape comparison and the topological data analysis \cite{hilaga2001topology,shinagawa1991constructing,bauer2021reeb} and in the study of quantum graphs \cite{quantum}. Furthermore, the class of metric graphs also includes metric trees, which feature prominently in phylogenetics. Roughly speaking, a metric graph is a one-dimensional geodesic metric space. The precise definition of metric graph is as follows:

\begin{definition}[{\cite[Definition 3.2.11]{burago2001course}}]\label{def:mg}
A metric graph is the result of gluing of a disjoint collection of segments $\{E_i\}$ and points $\{v_j\}$ (regarded with the length metric on the disjoint union) along with an equivalence relation $R$ defined on the union of the set $\{v_j\}$ and the set of the endpoints of the segments.
\end{definition}

Note that any compact geodesic metric space can be approximated by finite graphs with respect to the Gromov-Hausdorff distance \cite[Theorem 7.5.5]{burago2001course}.\medskip

\begin{figure}[h]
\centering
\includegraphics[width=.7\textwidth]{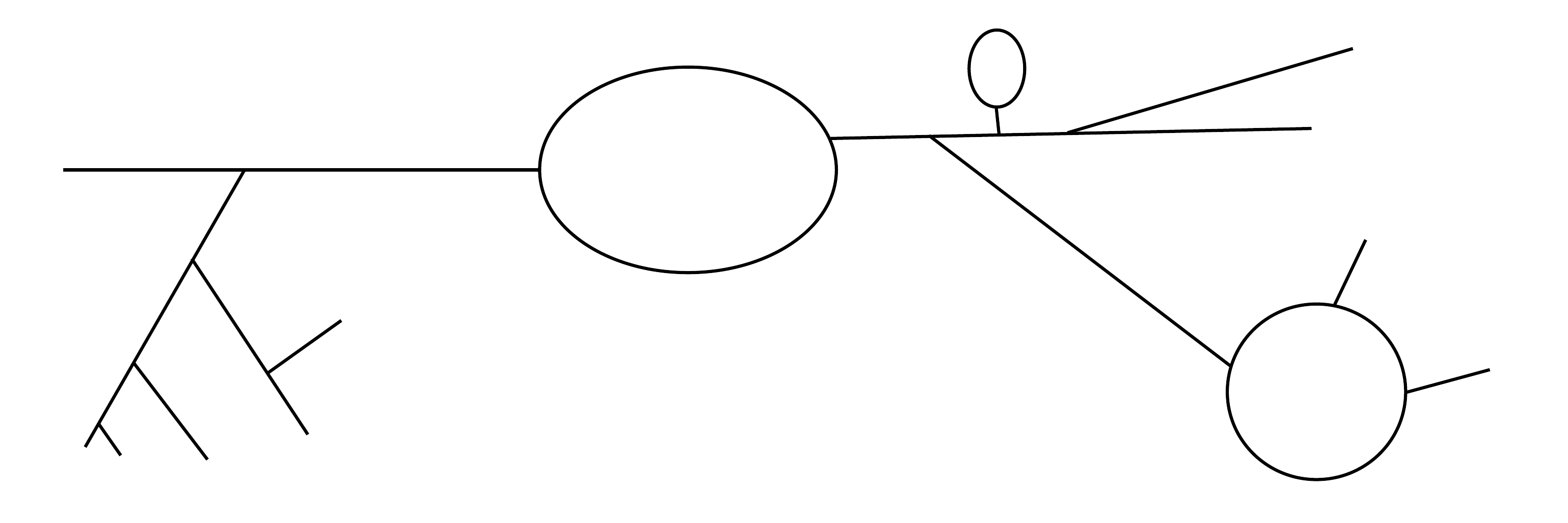}
\caption{A metric graph is, roughly speaking, a 1-dimensional geodesic metric space; see Definition \ref{def:mg}.}
\label{fig:mg}
\end{figure}

Recall that a subset $Y$ of a metric space $(X,d_X)$ is said to be a $\varepsilon$-net if $X=\cup_{y\in Y}B_\varepsilon(y,X)$. See the standard Metric Geometry reference \cite[Definition 1.6.1]{burago2001course}.

\begin{definition}[covering number]
Let $(X,d_X)$ be a totally bounded metric space. Then, given $\varepsilon>0$, the \textbf{$\varepsilon$-covering number} $N(\varepsilon,X)$ of $X$ is the smallest $n$ such that there exists a $\varepsilon$-net $\{x_1,\dots,x_n\}$ for $(X,d_X)$.
\end{definition}

Then, we have the following result.

\begin{theorem}\label{thm:metenttrace}
If $\mX=(X,d_X,\mu_X)\in\mathcal{M}_w$ satisfies $N(\varepsilon,X)\leq C\,\varepsilon^{-d}$ for some $d\in(0,2)$, $C>0$, and for all $\varepsilon\in(0,1)$, then $\cmdsop_\mX$ is trace class. Hence, $\mX\in\mathcal{M}_w^{tr}$.
\end{theorem}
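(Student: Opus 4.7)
The plan is to decompose $\cmdsop_\mX$ into a finite-rank correction plus a ``core'' integral operator whose kernel is $-\tfrac{1}{2}d_X^2$, and then bound the trace norm of the core via an $\varepsilon$-net approximation combined with the Hilbert--Schmidt optimality of truncated singular value decompositions. Writing $g(x):=\int_X d_X^2(x,v)\,d\mu_X(v)$ and $C:=\iint_{X\times X}d_X^2\,d\mu_X\otimes d\mu_X$, the kernel $K_\mX$ splits as $-\tfrac{1}{2}d_X^2(x,y)+\tfrac{1}{2}(g(x)+g(y)-C)$; the second summand defines an integral operator whose image lies in the two-dimensional span of $\{g,\mathbf{1}\}$ and is therefore automatically trace class. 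It thus suffices to show that the operator $T$ with kernel $K(x,y):=-\tfrac{1}{2}d_X^2(x,y)$ is trace class.

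Next, for each $\varepsilon\in(0,1)$, I will pick an $\varepsilon$-net $\{x_1,\dots,x_{N(\varepsilon,X)}\}$ in $X$ together with an associated Voronoi partition $\{V_i\}$ (so that $V_i\subset B_\varepsilon(x_i,X)$), and define the piecewise-constant kernel $K_\varepsilon(x,y):=-\tfrac{1}{2}d_X^2(x_i,x_j)$ on $V_i\times V_j$. The induced operator $T_\varepsilon$ has range contained in $\mathrm{span}\{\mathbf{1}_{V_i}\}$, so $\mathrm{rank}(T_\varepsilon)\leq N(\varepsilon,X)$. The factorization $|a^2-b^2|=|a-b|(a+b)$ together with the triangle inequality for $d_X$ yields the pointwise bound $|K(x,y)-K_\varepsilon(x,y)|\leq 2\diamna(X)\,\varepsilon$, and since $\mu_X$ is a probability measure this gives the Hilbert--Schmidt estimate $\|T-T_\varepsilon\|_{\mathrm{HS}}\leq 2\diamna(X)\,\varepsilon$.

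The decisive step is to convert this Hilbert--Schmidt estimate into a pointwise decay on the singular values of $T$. By the optimality of truncated SVD in the Hilbert--Schmidt norm (cf.\ Theorem \ref{thm:HilbertSchmidt}), one has $\sum_{n>N}\sigma_n(T)^2\leq \|T-F\|_{\mathrm{HS}}^2$ for every rank-$N$ operator $F$; applied with $F=T_\varepsilon$ and combined with $N(\varepsilon,X)\leq C\varepsilon^{-d}$, this yields $\sum_{n>N(\varepsilon,X)}\sigma_n(T)^2\leq 4\diamna(X)^2\varepsilon^2$. Given $n\geq 2$, I will set $M:=\lfloor n/2\rfloor$ and choose $\varepsilon:=(C/M)^{1/d}$ so that $N(\varepsilon,X)\leq M$; the monotonicity of the singular values then implies
$$\tfrac{n}{2}\,\sigma_n(T)^2\leq (n-M)\,\sigma_n(T)^2\leq \sum_{j>M}\sigma_j(T)^2\leq 4\diamna(X)^2 (C/M)^{2/d},$$
which gives the pointwise decay $\sigma_n(T)\leq C'\,n^{-(1/d+1/2)}$. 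The hypothesis $d<2$ is precisely equivalent to $1/d+1/2>1$, so $\sum_n\sigma_n(T)<\infty$, $T$ is trace class, and adding back the rank-two correction shows that $\cmdsop_\mX$ itself is trace class, hence $\mX\in\mwtr$.

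The main obstacle is the sharpness of the decay rate in the third step: a direct approximation of $T$ in operator norm would only give $\sigma_n(T)\lesssim n^{-1/d}$, which is summable only for $d<1$. Upgrading to the Hilbert--Schmidt norm via the Eckart--Young theorem and then averaging the tail bound to a pointwise one is exactly what supplies the additional factor $n^{-1/2}$ and extends the argument to the full range $d\in(0,2)$.
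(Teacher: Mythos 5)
Your proof is correct, but it takes a genuinely different route from the paper. The paper's proof treats the trace-class criterion as a black box: it shows that $x'\mapsto K_\mX(\cdot,x')$ is $(2\,\diamna(X))$-Lipschitz from $X$ into $L^2(\mu_X)$ (via the same factorization $|a^2-b^2|=(a+b)|a-b|$ you use), so an $\varepsilon$-net of $X$ transfers to a $(2\,\diamna(X)\,\varepsilon)$-net of the kernel family $K_\mX(\cdot,X)\subseteq L^2(\mu_X)$, and then it invokes the metric-entropy theorem of Gonz\'alez-Barrios and Dudley (Theorem \ref{thm:metricentropyTC}) to conclude traceability. You instead re-derive that criterion from scratch: after a rank-two reduction to the kernel $-\tfrac{1}{2}d_X^2$, you build Voronoi piecewise-constant approximants $T_\varepsilon$ of rank at most $N(\varepsilon,X)$ with $\Vert T-T_\varepsilon\Vert_{\mathrm{HS}}\leq 2\,\diamna(X)\,\varepsilon$, apply the Hilbert--Schmidt optimality of truncated SVD to get the tail bound $\sum_{n>N(\varepsilon,X)}\sigma_n(T)^2\leq 4\,\diamna(X)^2\varepsilon^2$, and convert this to the pointwise decay $\sigma_n(T)=O(n^{-(1/d+1/2)})$ by the averaging trick with $M=\lfloor n/2\rfloor$. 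What each approach buys: the paper's argument is shorter but opaque about where the threshold $d<2$ comes from; yours is self-contained (essentially a proof of the cited theorem in this special case) and makes the threshold transparent --- approximating in operator norm only yields $\sigma_n\lesssim n^{-1/d}$ (so $d<1$), and the passage to the Hilbert--Schmidt norm is exactly what supplies the extra factor $n^{-1/2}$. Two cosmetic points to tidy up: your choice $\varepsilon=(C/M)^{1/d}$ lies in $(0,1)$, as the hypothesis requires, only once $M>C$, i.e.\ for all sufficiently large $n$ --- which suffices for summability but should be said; and the tail inequality $\sum_{n>N}\sigma_n(T)^2\leq\Vert T-F\Vert_{\mathrm{HS}}^2$ for arbitrary rank-$N$ $F$ is not literally Theorem \ref{thm:HilbertSchmidt} but follows from the standard singular-value inequality $\sigma_{N+k}(T)\leq\sigma_k(T-F)$, which deserves an explicit citation or one-line proof. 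Also note the rank-two correction is not strictly needed: $K_\mX$ itself is Lipschitz in each variable, so the piecewise-constant approximation could be applied to it directly.
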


We now apply this result to metric graphs. Similar consequences follow for certain fractal mm-spaces such as those considered in \cite{landry2021metric}.

\begin{corollary}
If $\mX=(X,d_X,\mu_X)\in\mw$ is a metric graph with  uniform probability measure, then $\mX$ belongs to $\mwtr$.
\end{corollary}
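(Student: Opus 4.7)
The plan is to deduce the corollary from Theorem \ref{thm:metenttrace} by exhibiting a metric entropy bound of the form $N(\varepsilon,X)\leq C\,\varepsilon^{-1}$, which fits the hypothesis with $d=1\in(0,2)$. Note that the uniform probability measure plays no role in the argument: once the entropy bound on $(X,d_X)$ is established, Theorem \ref{thm:metenttrace} can be applied regardless of the choice of $\mu_X$.

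First I would unpack the assumption that $\mX\in\mw$. Since $X$ is compact and is obtained as a gluing of segments $\{E_i\}$ and points $\{v_j\}$, compactness forces the resulting graph to have finite total length $L:=\sum_i |E_i|<\infty$ (otherwise one could extract a divergent geodesic or an accumulation of edges violating compactness; in the standard setting assumed throughout the paper, this amounts to $\mX$ being a finite metric graph in the usual sense).

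Next I would construct an explicit $\varepsilon$-net. On each edge $E_i$, which is isometric to an interval of length $|E_i|$, placing equally spaced points at separation $\varepsilon$ yields an $\varepsilon$-net of $E_i$ of cardinality at most $\lceil |E_i|/\varepsilon\rceil +1$. Taking the union of these nets over all edges (together with the vertices $\{v_j\}$, of which there are finitely many) gives an $\varepsilon$-net of $X$, and hence
\begin{equation*}
N(\varepsilon,X)\ \leq\ \sum_i\left(\frac{|E_i|}{\varepsilon}+1\right)+|\{v_j\}|\ \leq\ \frac{L}{\varepsilon}+C_0
\end{equation*}
for a constant $C_0$ depending only on the combinatorial data of the graph. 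For $\varepsilon\in(0,1)$ this can be absorbed into a single bound $N(\varepsilon,X)\leq C\,\varepsilon^{-1}$.

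Finally, I would invoke Theorem \ref{thm:metenttrace} with $d=1$ and the constant $C$ just obtained to conclude that $\cmdsop_\mX$ is trace class, i.e.\ $\mX\in\mwtr$. The only real subtlety in the argument is the justification that a compact metric graph has finitely many edges and finite total length; once that is granted, the entropy bound is elementary and the conclusion is immediate from the cited theorem.
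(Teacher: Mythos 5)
Your overall route is exactly the paper's: establish the entropy bound $N(\varepsilon,X)\leq C\,\varepsilon^{-1}$ and feed it to Theorem \ref{thm:metenttrace} with $d=1$ (the paper's proof is in fact just this one-line deduction, asserting the entropy bound without proof). So the core of your argument is correct and matches the paper.

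However, two of your auxiliary claims are wrong as stated, and together they constitute a genuine gap if one takes the paper's Definition of a metric graph (a gluing of a possibly infinite disjoint collection of segments) at face value. First, compactness alone does \emph{not} force finitely many edges or finite total length: the wedge of countably many circles of circumference $1/n$ glued at a single point, with the length metric, is a compact metric graph whose total length $\sum_n 1/n$ diverges. Second, and relatedly, the uniform probability measure is \emph{not} irrelevant --- it is precisely the hypothesis that rescues the argument, since the normalized length measure $\mathcal{H}^1/\mathcal{H}^1(X)$ only exists when $L:=\mathcal{H}^1(X)<\infty$. Finally, even granting $L<\infty$, your explicit net count $\sum_i\bigl(\lceil |E_i|/\varepsilon\rceil+1\bigr)$ diverges when there are infinitely many edges, because of the ``$+1$'' per edge. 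A clean repair that avoids all edge-counting: take a maximal $\varepsilon$-separated set $\{x_1,\dots,x_N\}$ in $X$ (automatically an $\varepsilon$-net); since $X$ is geodesic and connected, each ball $B(x_i,\varepsilon/2)$ contains an initial segment of a geodesic of length at least $\varepsilon/2$ (for $\varepsilon\leq\mathrm{diam}(X)$), so $\mathcal{H}^1(B(x_i,\varepsilon/2))\geq\varepsilon/2$, and disjointness of these balls gives $N(\varepsilon,X)\leq 2L/\varepsilon$. With this substitute for your second paragraph, the proof goes through; if instead one reads ``metric graph'' as implicitly finite (as the paper arguably intends), your argument is fine as written.
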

\begin{proof}
Since $\mX$ is a metric graph with the uniform measure, $N(\varepsilon,X)\leq C\,\varepsilon^{-1}$ for some $c>0$. Hence, if we apply Theorem \ref{thm:metenttrace}, we are able to completes the proof.
\end{proof}

In order to prove the above theorem, we need the following theorem from \cite{gonzalez1993metric} relating traceability with the covering number.

\begin{theorem}[{\cite[Theorem 1]{gonzalez1993metric}}]\label{thm:metricentropyTC}
Let $\mX=(X,d_X,\mu_X),\mY=(Y,d_Y,\mu_Y)\in\mathcal{M}_w$ be two mm-spaces, $K\in L^2(X\times Y,\mu_X\otimes\mu_Y)$ where $$\cmdsop:L^2(\mu_Y)\rightarrow L^2(\mu_X)$$ is the integral operator generated by $K$, and $K_Y:=\{K(\cdot,y):y\in Y\}\subseteq L^2(\mu_X)$. Suppose $K$ is such that for some $M<\infty$ and $r<2$, we have $N(\varepsilon,K_Y)\leq M\varepsilon^{-r}$ for any $0<\varepsilon<1$. Then $\cmdsop$ is trace class.
\end{theorem}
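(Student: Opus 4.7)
The plan is to apply Theorem \ref{thm:metricentropyTC} with $Y=X$, $\mu_Y=\mu_X$, and $K=K_\mX$. Since $X$ is compact, the pointwise bound $|K_\mX(x,x')|\leq 2(\diamna(X))^2$ from the proof of Lemma \ref{lemma:simpletechgen}(2) already shows $K_\mX\in L^2(\mu_X\otimes\mu_X)$, and the integral operator it generates is $\cmdsop_\mX$ by definition. Thus the task reduces to producing, for some $r<2$ and $M<\infty$, the metric-entropy bound
$$N(\varepsilon,K_X)\leq M\,\varepsilon^{-r},\qquad K_X:=\{K_\mX(\cdot,x'):x'\in X\}\subseteq L^2(\mu_X),$$
for every $\varepsilon\in(0,1)$.

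The main step is to show that the evaluation map $\Psi:X\to L^2(\mu_X)$, $x'\mapsto K_\mX(\cdot,x')$, is Lipschitz. Noting that the terms $-\int_X d_X^2(x,v)\,d\mu_X(v)$ and $\iint d_X^2$ in the definition of $K_\mX$ do not depend on the second argument, a direct subtraction yields
$$K_\mX(x,x_1')-K_\mX(x,x_2')=-\tfrac{1}{2}\Big(d_X^2(x,x_1')-d_X^2(x,x_2')-\int_X\big(d_X^2(u,x_1')-d_X^2(u,x_2')\big)\,d\mu_X(u)\Big).$$
Combining this with the elementary estimate $|d_X^2(a,b)-d_X^2(a,c)|\leq 2\diamna(X)\,d_X(b,c)$ gives the uniform pointwise bound $|K_\mX(x,x_1')-K_\mX(x,x_2')|\leq 2\diamna(X)\,d_X(x_1',x_2')$, and since $\mu_X$ is a probability measure this promotes to $\|\Psi(x_1')-\Psi(x_2')\|_{L^2(\mu_X)}\leq 2\diamna(X)\,d_X(x_1',x_2')$.

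Lipschitz maps push entropy bounds forward: any $\varepsilon$-net $\{x_1',\dots,x_N'\}\subseteq X$ produces, via $\Psi$, a $2\diamna(X)\,\varepsilon$-net for $K_X$. Therefore, writing $L:=2\diamna(X)$,
$$N(\delta,K_X)\leq N(\delta/L,X)\leq C L^d\,\delta^{-d}$$
for all sufficiently small $\delta$, with a harmless adjustment of the constant to cover $\delta$ up to $1$. Setting $M:=CL^d$ (adjusted) and $r:=d<2$, the hypothesis of Theorem \ref{thm:metricentropyTC} is satisfied, and the conclusion is that $\cmdsop_\mX$ is trace class, i.e.\ $\mX\in\mwtr$. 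I do not anticipate a serious obstacle here: the proof is essentially the Lipschitz estimate above plus bookkeeping, and the slack $d<2$ in the hypothesis is exactly what is needed to invoke the González-Ruiz criterion as a black box.
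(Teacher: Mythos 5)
Your proposal does not prove the stated theorem; it is circular with respect to it. The statement you were asked to prove is Theorem \ref{thm:metricentropyTC} itself, i.e.\ the entropy criterion of Gonz\'alez-Barrios and Dudley: a polynomial bound $N(\varepsilon,K_Y)\leq M\varepsilon^{-r}$ with $r<2$ on the metric entropy of the set of kernel sections forces the integral operator to be trace class. Your argument instead \emph{assumes} this criterion as a black box (``apply Theorem \ref{thm:metricentropyTC} with $Y=X$\dots'') and deduces from it that $\cmdsop_\mX$ is trace class whenever $N(\varepsilon,X)\leq C\varepsilon^{-d}$ with $d<2$. That deduction is a different result --- it is precisely the paper's Theorem \ref{thm:metenttrace} --- and indeed your Lipschitz estimate $\vert K_\mX(x,x')-K_\mX(x,x'')\vert\leq 2\,\mathrm{diam}(X)\,d_X(x',x'')$, its promotion to an $L^2(\mu_X)$ bound, and the entropy pushforward $N(\varepsilon,K_X)\leq N\bigl(\varepsilon/(2\,\mathrm{diam}(X)),X\bigr)$ reproduce the paper's proof of Theorem \ref{thm:metenttrace} essentially verbatim. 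But none of this touches the target statement: the paper does not prove Theorem \ref{thm:metricentropyTC} at all, citing it from \cite{gonzalez1993metric}, and nothing in your proposal supplies the missing argument.

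Concretely, what a proof of Theorem \ref{thm:metricentropyTC} requires is of a different nature: one must convert the entropy bound on $K_Y\subseteq L^2(\mu_X)$ into quantitative control of the singular values (approximation numbers) of $\cmdsop$, e.g.\ by using $\varepsilon$-nets at dyadic scales $\varepsilon_k=2^{-k}$ to build finite-rank approximants of the kernel (replacing each section $K(\cdot,y)$ by its nearest net element, which yields an operator of rank at most $N(\varepsilon_k,K_Y)$ within Hilbert--Schmidt, hence operator-norm, distance controlled by $\varepsilon_k$), and then verifying that the resulting decay of singular values is summable; the hypothesis $r<2$ is exactly what makes the resulting series for the trace norm converge. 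Your closing remark that the slack $d<2$ ``is exactly what is needed to invoke the Gonz\'alez-Ruiz criterion as a black box'' (the authors are Gonz\'alez-Barrios and Dudley, incidentally) makes the circularity explicit: the point of the exercise was to open that black box, not to invoke it.
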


\begin{proof}[Proof of Theorem \ref{thm:metenttrace}]
First, observe that for any $x,x',x''\in X$
\begin{align*}
    K_\mX(x,x')-K_\mX(x,x'')
    =-\frac{1}{2}\left(d_X^2(x,x')-d_X^2(x,x'')-\int_X d_X^2(u,x')\,d\mu_X(u)+\int_X d_X^2(u,x'')\,d\mu_X(u)\right).
\end{align*}
 Hence, because of the triangle inequality and the factorization $a^2-b^2=(a+b)(a-b)$,
$$\vert K_\mX(x,x')-K_\mX(x,x'') \vert\leq 2\,\mathrm{diam}(X)\,d_X(x',x'')$$
for any $x,x',x''\in X$. Hence, $K_\mX$ is a $(2\,\mathrm{diam}(X))$-Lipschitz function with respect to the second variable. Now, fix arbitrary $\varepsilon\in(0,1)$. Then, from the assumption, we are able to choose $\{x_1,\dots,x_N\}\subseteq X$ which is an $\varepsilon$-net of $X$ and $N\leq C\,\varepsilon^{-d}$.

Now, choose arbitrary $x'\in X$. Then, there exist $x_i$ such that $d_X(x',x_i)\leq \varepsilon$. Therefore,
\begin{align*}
    \Vert K_\mX(\cdot,x')-K_\mX(\cdot,x_i) \Vert_{L^2(\mu_X)}^2&=\int_X \vert K_\mX(x,x')-K_\mX(x,x_i) \vert^2\,d\mu_X(x)
    \leq 4\,\mathrm{diam}(X)^2(d_X(x',x_i))^2.
\end{align*}
Hence,
$\Vert K_\mX(\cdot,x')-K_\mX(\cdot,x_i) \Vert_{L^2(\mu_X)}\leq 2\,\mathrm{diam}(X)\,d_X(x',x_i)\leq 2\,\mathrm{diam}(X)\,\varepsilon$
which implies that
$N(2\,\mathrm{diam}(X)\,\varepsilon,K_\mX(\cdot,X))\leq N(\varepsilon,X)\leq C\cdot\varepsilon^{-d}.$
Hence, finally,
$$N(\varepsilon,K_\mX(\cdot,X))\leq N\left(\frac{\varepsilon}{2\,\mathrm{diam}(X)},X\right)\leq C\cdot (2\,\mathrm{diam}(X))^d\cdot\varepsilon^{-d}.$$
So, by Theorem \ref{thm:metricentropyTC}, $\cmdsop_\mX$ is trace class.
\end{proof}


\section{Traceability of cMDS on higher dimensional spheres}\label{sec:Spd-1}

We will solve the  eigenvalue problem for $\cmdsop_{\Sp^{d-1}}$ when $d\geq 3$. For this, some standard results about spherical harmonics will be employed; these are collected in Appendix \S\ref{sec:sphehar}.\medskip

Since $\Sp^{d-1}$ is two-point homogeneous, by Lemma \ref{lemma:simplereigenproblem} and Corollary \ref{cor:twohomoeigenproblem}, the eigenvalue problem mentioned above is equivalent to solving the following simpler eigenvalue problem:

\begin{enumerate}
    \item[(A)] $-\frac{1}{2}\int_{\Sp^{d-1}} d_{\Sp^{d-1}}^2(u,v)\phi(v)\,d\nvol_{\Sp^{d-1}}(v)=\lambda\phi(u)$ for any $u\in\Sp^{d-1}$, and
    
    \item[(B)] $\int_{\Sp^{d-1}}\phi(u)\,d\nvol(u)=0$.
\end{enumerate}

This simplified problem can be completely solved via the Funk-Hecke theorem (cf. Theorem \ref{thm:FunkHecke}), as we describe below. We now introduce some terminology and concepts related to Harmonic analysis on spheres that will be heavily used in the rest of the paper: \label{page:multi}
\begin{itemize}
\item Below $\mathbb{Y}_n^d$ denotes the space of \emph{spherical harmonics} of order $n$ in $d$ dimensions. Any $Y_n^d\in\mathbb{Y}_n^d$ is a real valued function defined on the sphere $\Sp^{d-1}$; see Definition \ref{def:spheharmon}. Also, the dimension of $\mathbb{Y}_n^d$ is denoted by $N_{n,d}$ which equals $\frac{(2n+d-2)(n+d-3)!}{n!(d-2)!}=O(n^{d-2})$ (see \cite[(2.10),(2.12)]{atkinson2012spherical}). 
\item Below $P_{n,d}(t)$ denotes the \emph{Legendre polynomial} of degree $n$ in $d$ dimensions; see Definition \ref{def:Legendrepol}. $P_{n,d}(t)$ is characterized by the fact that the map $(x_1,\dots,x_d)\mapsto P_{n,d}(x_d)$ becomes a specific spherical harmonic of order $n$ often called \emph{Legendre harmonic} \cite[Section 2.1.2]{atkinson2012spherical}.
\end{itemize}

We are ready to state our main tool for studying the  output of the cMDS procedure on $\Sp^{d-1}$.

\begin{theorem}[Funk-Hecke theorem {\cite[Theorem 2.22]{atkinson2012spherical}}]\label{thm:FunkHecke}
Let $d\geq 3$ and $n\geq 0$ be given. If a real-valued measurable map $f$ on $[-1,1]$ satisfies
$\int_{-1}^1\vert f(t) \vert\,(1-t^2)^{\frac{d-3}{2}}\,dt<\infty$, then the following Funk-Hecke formula holds
$$\int_{S^{d-1}}f(\langle u, v\rangle)\,Y_n^d(v)\,d\vol_{S^{d-1}}(v)=\kappa_{n,d,f}\,Y_n^d(u)$$
 for any $u\in \Sp^{d-1}$ and $Y_n^d\in\mathbb{Y}_n^d$ where the real number $\kappa_{n,d,f}$ is defined by
$$\kappa_{n,d,f}:=\int_{\Sp^{d-1}} P_{n,d}(\langle u,v\rangle) f(\langle u,v \rangle)\,d\vol_{\Sp^{d-1}}(v)=\vert \Sp^{d-2}\vert \int_{-1}^1 P_{n,d}(t)f(t)(1-t^2)^{\frac{d-3}{2}}\,dt.$$
\end{theorem}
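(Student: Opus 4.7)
The plan is to exploit the $\mathrm{SO}(d)$-equivariance of the integral operator defined by the kernel $(u,v)\mapsto f(\langle u,v\rangle)$ together with the irreducibility of each space $\mathbb{Y}_n^d$ as an $\mathrm{SO}(d)$-representation. Concretely, I would introduce the operator
\[
T_f\phi(u):=\int_{\Sp^{d-1}}f(\langle u,v\rangle)\,\phi(v)\,d\vol_{\Sp^{d-1}}(v).
\]
First I would verify that $T_f$ is well-defined on each $\mathbb{Y}_n^d$: fix $u\in\Sp^{d-1}$ and reduce the integral $\int_{\Sp^{d-1}}|f(\langle u,v\rangle)|\,d\vol_{\Sp^{d-1}}(v)$ to a one-dimensional integral by sphericalizing around $u$ (see Step 4), giving the bound $|\Sp^{d-2}|\int_{-1}^1|f(t)|(1-t^2)^{(d-3)/2}\,dt<\infty$ by hypothesis; since $Y_n^d$ is continuous on a compact set, $T_f Y_n^d$ is pointwise finite and bounded.

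Next, I would show that $T_f$ commutes with the natural $\mathrm{SO}(d)$-action: for any $R\in\mathrm{SO}(d)$ and any $\phi$, substituting $v=Rw$ and using $\langle Ru,Rw\rangle=\langle u,w\rangle$ together with the rotation invariance of $\vol_{\Sp^{d-1}}$ yields
\[
(T_f\phi)(Ru)=\int_{\Sp^{d-1}}f(\langle u,w\rangle)\,\phi(Rw)\,d\vol_{\Sp^{d-1}}(w)=T_f(\phi\circ R)(u).
\]
Combined with the fact that each $\mathbb{Y}_n^d$ is invariant under rotations, this shows that $T_f$ restricts to an $\mathrm{SO}(d)$-intertwining operator on $\mathbb{Y}_n^d$. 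Invoking the classical fact that $\mathbb{Y}_n^d$ is a (finite-dimensional) irreducible $\mathrm{SO}(d)$-representation, Schur's lemma implies $T_f\big|_{\mathbb{Y}_n^d}=\kappa_{n,d,f}\cdot\mathrm{Id}$ for some scalar $\kappa_{n,d,f}\in\R$.

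To identify this scalar, I would test the equality against the Legendre harmonic $Z_u^n(v):=P_{n,d}(\langle u,v\rangle)$ evaluated at $v=u$: since $P_{n,d}(1)=1$ by the standard normalization, $Z_u^n(u)=1$, and hence
\[
\kappa_{n,d,f}=\int_{\Sp^{d-1}}f(\langle u,v\rangle)\,P_{n,d}(\langle u,v\rangle)\,d\vol_{\Sp^{d-1}}(v).
\]
To obtain the one-dimensional form, parametrize $\Sp^{d-1}$ by latitude about $u$: write $v=tu+\sqrt{1-t^2}\,w$ with $t=\langle u,v\rangle\in[-1,1]$ and $w$ ranging over the equatorial $\Sp^{d-2}\subset u^\perp$. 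The volume element becomes $(1-t^2)^{(d-3)/2}\,dt\,d\vol_{\Sp^{d-2}}(w)$, and since the integrand depends only on $t$, integrating out $w$ produces the factor $|\Sp^{d-2}|$, yielding the stated expression for $\kappa_{n,d,f}$.

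The main obstacle is the Schur-lemma step, which rests on the (standard but non-trivial) irreducibility of $\mathbb{Y}_n^d$ as an $\mathrm{SO}(d)$-module. I would import this from the companion appendix on spherical harmonics rather than reprove it; alternatively, one can bypass representation theory entirely by expanding the zonal kernel $v\mapsto f(\langle u,v\rangle)$ into Legendre polynomials and invoking the addition theorem $\sum_j Y_{n,j}^d(u)Y_{n,j}^d(v)=\frac{N_{n,d}}{|\Sp^{d-1}|}P_{n,d}(\langle u,v\rangle)$, but this requires an a priori convergence argument for the Legendre expansion under the weak integrability assumption, which is precisely what the $\mathrm{SO}(d)$-equivariance route avoids.
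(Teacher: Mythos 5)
You should first note that the paper itself does not prove this theorem: it is imported verbatim from \cite[Theorem 2.22]{atkinson2012spherical}, so there is no in-paper argument to compare against, and the relevant comparison is with the standard textbook proof. That proof derives Funk--Hecke from the averaging identity $\int_{\Sp^{d-2}} Y_n^d\big(tu+\sqrt{1-t^2}\,w\big)\,d\vol_{\Sp^{d-2}}(w)=\vert\Sp^{d-2}\vert\,P_{n,d}(t)\,Y_n^d(u)$ (a consequence of the addition theorem, Theorem \ref{thm:addition}), after which the latitude decomposition and Fubini yield the formula immediately, for $f$ merely integrable against $(1-t^2)^{\frac{d-3}{2}}$, with no expansion-convergence issues. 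Your route --- the zonal operator $T_f$ commutes with the $\mathrm{SO}(d)$ action, hence acts as a scalar on each irreducible $\mathbb{Y}_n^d$ by Schur's lemma, and the scalar is identified by evaluating the eigenvalue equation for the Legendre harmonic $v\mapsto P_{n,d}(\langle u,v\rangle)$ at $v=u$ using $P_{n,d}(1)=1$ --- is a well-known alternative. Your equivariance computation, the identification of $\kappa_{n,d,f}$, and the latitude parametrization producing the weight $(1-t^2)^{\frac{d-3}{2}}$ and the factor $\vert\Sp^{d-2}\vert$ are all correct.

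There is, however, one genuine gap: the sentence claiming that equivariance ``combined with the fact that each $\mathbb{Y}_n^d$ is invariant under rotations'' shows that $T_f$ restricts to an intertwiner on $\mathbb{Y}_n^d$ is a non sequitur. That $T_f$ commutes with rotations and that $\mathbb{Y}_n^d$ is rotation-invariant do not by themselves imply $T_f(\mathbb{Y}_n^d)\subseteq\mathbb{Y}_n^d$; a priori $T_fY_n^d$ could have components in every $\mathbb{Y}_m^d$. To close this you need three standard ingredients: (i) $T_f$ is bounded on $L^2(\nvol_{\Sp^{d-1}})$, which follows from the Schur test since $\sup_u\int_{\Sp^{d-1}}\vert f(\langle u,v\rangle)\vert\,d\vol_{\Sp^{d-1}}(v)=\vert\Sp^{d-2}\vert\int_{-1}^1\vert f(t)\vert(1-t^2)^{\frac{d-3}{2}}\,dt<\infty$ (your Step 1 bound, used symmetrically in both variables); (ii) the multiplicity-free decomposition $L^2=\overline{\bigoplus_m\mathbb{Y}_m^d}$ into \emph{pairwise inequivalent} irreducibles --- inequivalent, e.g., because the dimensions $N_{m,d}$ are strictly increasing in $m$ for $d\geq 3$, or because the $\mathbb{Y}_m^d$ are eigenspaces of the Laplace--Beltrami operator with distinct eigenvalues --- so that the full Schur lemma kills every component map $\mathbb{Y}_n^d\rightarrow\mathbb{Y}_m^d$ with $m\neq n$ and forces $T_f(\mathbb{Y}_n^d)\subseteq\mathbb{Y}_n^d$; and (iii) a continuity remark upgrading the resulting $L^2$ (a.e.) identity to the pointwise statement ``for any $u\in\Sp^{d-1}$'' (the latitude decomposition together with dominated convergence shows $u\mapsto T_fY_n^d(u)$ is continuous, and elements of $\mathbb{Y}_n^d$ are continuous). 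Note in particular that irreducibility of $\mathbb{Y}_n^d$, the only representation-theoretic input you flag, is not sufficient: it is the pairwise inequivalence across different orders $n$ that prevents leakage between harmonic subspaces. With these ingredients supplied, your proof is complete.
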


In other words, $\kappa_{n,d,f}$ is an eigenvalue and any $Y_n^d \in \mathbb{Y}_n^d$ is a corresponding eigenfunction for the kernel $(u,v)\mapsto f(\langle u,v\rangle)$. In particular, by choosing $f(t)=\arccos^p(t)$ for a given $p\in [1,\infty)$ we obtain the following result   (w.r.t the normalized volume measure):

\begin{proposition}\label{prop:eigenpgeodesic}
Let $d\geq 3$, $n\geq 0$, and $p\geq 1$ be given. Then, for any $u\in \Sp^{d-1}$ and $Y_n^d\in\mathbb{Y}_n^d$, the following equality holds
$$\int_{S^{d-1}}d_{\Sp^{d-1}}^p(u,v)\,Y_n^d(v)\,d\nvol_{S^{d-1}}(v)=\tau_{n,d,p}\,Y_n^d(u)$$
 where $\tau_{n,d,p}:=\frac{\vert \Sp^{d-2}\vert}{\vert \Sp^{d-1}\vert} \int_{-1}^1 P_{n,d}(t)\arccos^p(t)(1-t^2)^{\frac{d-3}{2}}\,dt.$
\end{proposition}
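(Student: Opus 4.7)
The plan is to deduce this proposition as a direct application of the Funk-Hecke theorem (Theorem \ref{thm:FunkHecke}) to the specific function $f(t) := \arccos^p(t)$, together with the geometric fact that geodesic distance on the sphere is expressible through the standard inner product.

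First I would recall that for any $u, v \in \Sp^{d-1}$, one has the identity $d_{\Sp^{d-1}}(u,v) = \arccos(\langle u, v\rangle)$, so that
\begin{equation*}
d_{\Sp^{d-1}}^p(u,v) = f(\langle u, v\rangle) \quad \text{with} \quad f(t) = \arccos^p(t).
\end{equation*}
Thus the integral on the left-hand side of the claim is precisely of the form covered by Funk-Hecke, modulo the normalization factor $\tfrac{1}{|\Sp^{d-1}|}$ coming from $\nvol_{\Sp^{d-1}}$.

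Next I would verify the hypothesis of Theorem \ref{thm:FunkHecke}, namely that $\int_{-1}^1 |f(t)|(1-t^2)^{(d-3)/2}\,dt < \infty$. This is straightforward: since $\arccos(t) \in [0,\pi]$ for $t \in [-1,1]$, we have the uniform bound $|f(t)| \leq \pi^p$, and for $d \geq 3$ the weight $(1-t^2)^{(d-3)/2}$ is integrable on $[-1,1]$ (indeed, it is continuous and bounded), so the integral is finite.

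With the hypothesis verified, Funk-Hecke yields, for every $u \in \Sp^{d-1}$ and every $Y_n^d \in \mathbb{Y}_n^d$,
\begin{equation*}
\int_{\Sp^{d-1}} d_{\Sp^{d-1}}^p(u,v)\, Y_n^d(v)\, d\vol_{\Sp^{d-1}}(v) = \kappa_{n,d,f}\, Y_n^d(u),
\end{equation*}
where $\kappa_{n,d,f} = |\Sp^{d-2}| \int_{-1}^1 P_{n,d}(t)\arccos^p(t)(1-t^2)^{(d-3)/2}\,dt$. Dividing both sides by $|\Sp^{d-1}|$ converts $d\vol_{\Sp^{d-1}}$ into $d\nvol_{\Sp^{d-1}}$ and the eigenvalue $\kappa_{n,d,f}$ into exactly $\tau_{n,d,p}$ as defined in the statement. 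There is no serious obstacle here; the entire content of the proposition is the recognition that $d_{\Sp^{d-1}}^p$ is a function of $\langle u, v\rangle$ alone, so that the Funk-Hecke machinery applies verbatim once the mild integrability check is dispatched.
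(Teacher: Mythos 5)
Your proposal is correct and follows exactly the paper's route: the paper derives this proposition as an immediate consequence of the Funk--Hecke theorem applied to $f(t)=\arccos^p(t)$, using $d_{\Sp^{d-1}}(u,v)=\arccos(\langle u,v\rangle)$ and the normalization $\tau_{n,d,p}=\frac{1}{\vert\Sp^{d-1}\vert}\kappa_{n,d,\arccos^p(\cdot)}$. Your explicit verification of the integrability hypothesis (bounding $\arccos^p$ by $\pi^p$ and noting the weight is bounded for $d\geq 3$) is a small step the paper leaves implicit, but the argument is the same.
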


Note that $\tau_{n,d,p}=\frac{1}{\vert\Sp^{d-1}\vert}\kappa_{n,d,\arccos^p(\cdot)}$.
As a corollary to Proposition \ref{prop:eigenpgeodesic}, and because of  (A) and (B) above, we  obtain a formula for the nonzero eigenvalues of the cMDS operator $\cmdsop_{\Sp^{d-1}}$:

\begin{corollary}\label{cor:eigensphere}
Let $d\geq 3$ be given. Then, the multiset of the eigenvalues of $\cmdsop_{\Sp^{d-1}}$ consists of $0$ (with multiplicity $1$) and $\{\lambda_{n,d}\}_{n=1}^\infty$ (each with multiplicity $N_{n,d}$) where
$$\lambda_{n,d}:=-\frac{\tau_{n,d,2}}{2}=-\frac{\vert\Sp^{d-2}\vert}{2\vert\Sp^{d-1}\vert}\int_{-1}^1 P_{n,d}(t)\arccos^2(t) (1-t^2)^{\frac{d-3}{2}}\,dt\text{ for each }n\geq 1.$$

Moreover, any spherical harmonic $Y_n^d\in\mathbb{Y}_n^d$ is an eigenfunction of $\cmdsop_{\Sp^{d-1}}$ associated to the eigenvalue $\lambda_{n,d}$ for each $n\geq 1$.
\end{corollary}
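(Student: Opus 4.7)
Since $\Sp^{d-1}$ is two-point homogeneous, the discussion preceding the corollary reduces the eigenvalue problem for $\cmdsop_{\Sp^{d-1}}$ (with nonzero eigenvalue) to the simpler system (A)--(B). Proposition \ref{prop:eigenpgeodesic} with $p=2$ is tailor-made for (A): applied to any $Y_n^d\in\mathbb{Y}_n^d$, it yields
$$-\tfrac{1}{2}\int_{\Sp^{d-1}}d_{\Sp^{d-1}}^2(u,v)\,Y_n^d(v)\,d\nvol_{\Sp^{d-1}}(v)=-\tfrac{\tau_{n,d,2}}{2}\,Y_n^d(u)=\lambda_{n,d}\,Y_n^d(u),$$
which is precisely (A). For $n\geq 1$ condition (B) holds automatically, since spherical harmonics of positive order are $L^2$-orthogonal to the constant function $Y_0^d\equiv 1$ (standard; see Appendix \S\ref{sec:sphehar}). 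Lemma \ref{lemma:simplereigenproblem} then certifies that each such $Y_n^d$ is an eigenfunction of $\cmdsop_{\Sp^{d-1}}$ with eigenvalue $\lambda_{n,d}$, contributing at least $N_{n,d}=\dim\mathbb{Y}_n^d$ to its multiplicity. Together with item (4) of Lemma \ref{lemma:simpletechgen}, which supplies the constant eigenfunction with eigenvalue $0$, this produces every eigenpair claimed.

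To see that no further eigenvalues exist, I will invoke the classical completeness theorem (cf. Appendix \S\ref{sec:sphehar})
$$L^2(\nvol_{\Sp^{d-1}})=\bigoplus_{n=0}^{\infty}\mathbb{Y}_n^d,$$
regarded as an orthogonal Hilbert-space direct sum. Setting $\lambda_{0,d}:=0$, the first step shows that $\cmdsop_{\Sp^{d-1}}$ acts on each $\mathbb{Y}_n^d$ as the scalar $\lambda_{n,d}\cdot\mathrm{Id}$. Expanding any $\phi\in L^2(\nvol_{\Sp^{d-1}})$ in this basis therefore diagonalizes $\cmdsop_{\Sp^{d-1}}$, and a uniqueness-of-coefficients argument entirely parallel to that of Claim \ref{claim:S1eigenmultiplicity} shows that any eigenvalue of $\cmdsop_{\Sp^{d-1}}$ must coincide with some $\lambda_{n,d}$, with eigenspace exactly $\bigoplus_{m\,:\,\lambda_{m,d}=\lambda_{n,d}}\mathbb{Y}_m^d$.

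The main obstacle is justifying that the eigenvalue $0$ has multiplicity exactly $1$, i.e. that $\lambda_{n,d}\neq 0$ for every $n\geq 1$. Equivalently, one needs
$$\int_{-1}^{1}P_{n,d}(t)\,\arccos^2(t)\,(1-t^2)^{(d-3)/2}\,dt\neq 0\quad\text{for every }n\geq 1.$$
A natural route is to compute this integral in closed form via the Rodrigues-type formula for $P_{n,d}$ combined with repeated integration by parts, converting the integrand into one involving derivatives of $\arccos^2(t)$ on $(-1,1)$ and exploiting the known structure of Gegenbauer polynomials. Such a computation seems unavoidable and is presumably carried out in the detailed analysis of \S\ref{sec:Spd-1}; the resulting explicit formula should make nonvanishing manifest, while simultaneously enabling the traceability of $\cmdsop_{\Sp^{d-1}}$ and the distance identity of Theorem \ref{thm:cMDSSd-1dist}.
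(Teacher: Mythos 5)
Your main line of argument coincides with the paper's own proof of Corollary \ref{cor:eigensphere}: apply Proposition \ref{prop:eigenpgeodesic} with $p=2$ to get condition (A) with eigenvalue $\lambda_{n,d}=-\tau_{n,d,2}/2$, note that condition (B) holds for $n\geq 1$ by orthogonality of $\mathbb{Y}_n^d$ to the constants (Corollary \ref{cor:L1Ynzero}), invoke Lemma \ref{lemma:simplereigenproblem} to certify the eigenpairs, and then use completeness of $\{\mathbb{Y}_n^d\}_{n\geq 0}$ in $L^2(\nvol_{\Sp^{d-1}})$ together with a uniqueness-of-coefficients argument modeled on Claim \ref{claim:S1eigenmultiplicity} to rule out any further spectrum. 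Your remark that the eigenspace of a value $\mu$ is $\bigoplus_{m\,:\,\lambda_{m,d}=\mu}\mathbb{Y}_m^d$ is the correct reading of the multiset statement, and it rightly avoids any unproven claim that the $\lambda_{n,d}$ are pairwise distinct.

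The genuine gap is the step you flag but do not carry out: the claim that $0$ has multiplicity exactly $1$ requires $\lambda_{n,d}\neq 0$ for every $n\geq 1$, and deferring to a computation ``presumably carried out'' elsewhere is not a proof. The paper settles this in Proposition \ref{prop:taylorargument}: one expands $\arccos^2(t)=\frac{\pi^2}{4}-\pi\sum_{i}a_it^{2i+1}+\sum_{i,j}a_ia_jt^{2(i+j+1)}$ (Lemma \ref{lemma:taylorarccos}; the series converges uniformly on $[-1,1]$, which justifies term-by-term integration) and integrates against $P_{n,d}(t)(1-t^2)^{\frac{d-3}{2}}$. The parity and degree-orthogonality parts of Lemma \ref{lemma:tylrparity} annihilate all but one of the three families of moments, and the surviving moments $\int_{-1}^1P_{n,d}(t)\,t^{n+2k}(1-t^2)^{\frac{d-3}{2}}\,dt$ are evaluated --- precisely by the Rodrigues representation plus $n$-fold integration by parts that you propose --- as manifestly positive Gamma-quotients. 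Consequently $\lambda_{n,d}$ is a sum of strictly positive terms for odd $n$ and of strictly negative terms for even $n$, hence never zero. Two corrections to your sketch: (i) for even $n$ no single closed form emerges; nonvanishing comes from sign-definiteness of the resulting series, not from an explicit evaluation (the closed form of Corollary \ref{cor:labdandodd} exists only for odd $n$ and is obtained by a different route, via $\eta_{n,d}$ and Bessel-function integrals); (ii) in fairness, the paper's own proof of the corollary also asserts the multiplicity-one claim and only establishes the sign result immediately afterwards in \S\ref{sec:Spd-1}, but a self-contained proof of the statement as phrased must include it.
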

\begin{proof}
Fix $n\geq 1$ and choose an arbitrary spherical harmonic $Y_n^d\in\mathbb{Y}_n^d$. 

Observe that $\int_{\Sp^{d-1}}Y_n^d(u)\,d\nvol_{\Sp^{d-1}}(u)=0$ (cf. Corollary \ref{cor:L1Ynzero}) and $$-\frac{1}{2}\int_{S^{d-1}}d_{\Sp^{d-1}}^2(\cdot,v)\,Y_n^d(v)\,d\nvol_{S^{d-1}}(v)=-\frac{\tau_{n,d,2}}{2}\,Y_n^d(\cdot)$$
holds by Proposition \ref{prop:eigenpgeodesic}. By Lemma \ref{lemma:simplereigenproblem} this concludes that $Y_n^d$ is an eigenfunction of $\cmdsop_{\Sp^{d-1}}$ associated to the eigenvalue $\lambda_{n,d}$.

It is well known that $\bigcup_{n\geq 1}\{Y_{n,j}^d\}_{j=1}^{N_{n,d}}$ together with $\phi_0\equiv 1$ form an orthonormal basis of $L^2(\nvol_{\Sp^{d-1}})$ where for each $n\geq 1$ $\{Y_{n,j}^d\}_{j=1}^{N_{n,d}}$ is an orthonormal basis of $\mathbb{Y}_n^d$. With this fact and an argument similar to the one used in the proof of Proposition \ref{prop:nzreigenvalueS1} (cf. equation (\ref{eq:S1basisepxpression})), one can easily verify that the multiplicity of the eigenvalue $0$ is exactly $1$, the multiplicity of the eigenvalue $\lambda_{n,d}$ is exactly $N_{n,d}$, and these are all possible eigenvalues of $\cmdsop_{\Sp^{d-1}}$.
\end{proof}

In the following example
we explicitly compute $\lambda_{n,3}$ for small $n$ via Corollary \ref{cor:eigensphere}.

\begin{example}\label{ex:compuS2eigen}
When $d=3$, by Corollary \ref{cor:eigensphere}, the eigenvalues $\lambda_{n,3}$ of $\cmdsop_{\Sp^2}$ have the  expression:
$$\lambda_{n,3}=-\frac{1}{4}\int_{-1}^1 P_{n,3}(t)\arccos^2(t)\,dt.$$
Since $P_{1,3}(t)=t$, $P_{2,3}(t) = \frac{3}{2}t^2-\frac{1}{2}$, $P_{3,3}(t) = \frac{5}{2}t^3-\frac{3}{2}t$, and $P_{4,3}(t) = \frac{35}{8}t^4-\frac{15}{4}t^2+\frac{3}{8}$, etc we have $$\mbox{$\lambda_{1,3} = \frac{\pi^2}{16}$, $\lambda_{2,3} = -\frac{1}{9}$, $\lambda_{3,3} = \frac{\pi^2}{256}$,  $\lambda_{4,3} = -\frac{4}{225},$ $\lambda_{5,3}=\frac{\pi^2}{1024}$ and $\lambda_{6,3}=-\frac{64}{11025}$}.$$

From this calculation, one might conjecture that $\lambda_{n,3}$ is positive for odd $n$ and negative for even $n$. This expectation turns out to be true, even for an arbitrary $d\geq 3$ (cf. Proposition \ref{prop:taylorargument}).
\end{example}

\subsection{Explicit formula for the eigenvalues of $\cmdsop_{\Sp^{d-1}}$ and consequences}\label{sec:sec:exformSd-1}

In this section, we provide two results for $\lambda_{n,d}$, the eigenvalues of $\cmdsop_{\Sp^{d-1}}$. These will help both establishing the traceability of $\cmdsop_{\Sp^{d-1}}$ and answering Conjecture \ref{conj:sd}.

First, Proposition \ref{prop:taylorargument} provides an expression of $\lambda_{n,d}$ any $n>0$ and any $d\geq 3$. Although this expression is rather complicated, since it has a form of series, from  it we are able to deduce the useful fact that $\lambda_{n,d}$ is positive for odd $n$ and negative for even $n$. This information about the sign of the eigenvalues is then crucially used to prove the traceability of $\cmdsop_{\Sp^{d-1}}$ (cf. Theorem \ref{thm:Sd-1tr}). 

\begin{proposition}\label{prop:taylorargument}
For any $n> 0$ and $d\geq 3$,
$$\lambda_{n,d}=\begin{cases}\frac{\pi\vert\Sp^{d-2}\vert}{2\vert\Sp^{d-1}\vert}\sum_{k=0}^\infty a_{\frac{n+2k-1}{2}}\cdot\frac{(n+2k)!}{2^n(2k)!}\frac{\Gamma(k+\frac{1}{2})\Gamma(\frac{d-1}{2})}{\Gamma(k+n+\frac{d}{2})}&\text{if }n\text{ is odd}\\ -\frac{\vert\Sp^{d-2}\vert}{2\vert\Sp^{d-1}\vert}\sum_{k=0}^\infty\left(\sum_{i,j:2(i+j+1)=n+2k}a_ia_j\right)\frac{(n+2k)!}{2^n(2k)!}\frac{\Gamma(k+\frac{1}{2})\Gamma(\frac{d-1}{2})}{\Gamma(k+n+\frac{d}{2})}&\text{if }n\text{ is even}
\end{cases}$$
where $a_i:=\frac{(2i)!}{2^{2i}(i!)^2(2i+1)}$ for integers $i\geq 0$. Thus, $\lambda_{n,d}>0$ if $n$ is odd and $\lambda_{n,d}<0$ if $n$ is even. 
\end{proposition}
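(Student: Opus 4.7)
The strategy is to expand $\arccos^2(t)$ as a power series, use the orthogonality and parity properties of $P_{n,d}$ to kill all but one type of surviving monomial integral, and evaluate that integral in closed form via the Rodrigues formula and a Beta integral.

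First, I would recall the standard power series
\begin{equation*}
\arcsin(t)=\sum_{i=0}^{\infty}a_i\,t^{2i+1},\qquad a_i=\frac{(2i)!}{2^{2i}(i!)^2(2i+1)},
\end{equation*}
which converges absolutely and uniformly on $[-1,1]$ since $a_i>0$ and $\sum_i a_i=\arcsin(1)=\pi/2<\infty$. Substituting $\arccos(t)=\tfrac{\pi}{2}-\arcsin(t)$ and squaring yields
\begin{equation*}
\arccos^2(t)=\frac{\pi^2}{4}-\pi\sum_{i=0}^{\infty}a_i\,t^{2i+1}+\sum_{k=0}^{\infty}\Big(\sum_{i+j=k}a_ia_j\Big)\,t^{2k+2},
\end{equation*}
each series converging uniformly on $[-1,1]$, so term-by-term integration against $P_{n,d}(t)(1-t^2)^{(d-3)/2}$ is legitimate.

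Since $P_{n,d}$ has parity $(-1)^n$ and is orthogonal (under the weight $(1-t^2)^{(d-3)/2}$) to every polynomial of degree $<n$, the only possibly non-vanishing monomial integrals are
\begin{equation*}
I_{n,k,d}:=\int_{-1}^{1}P_{n,d}(t)\,t^{n+2k}(1-t^2)^{(d-3)/2}\,dt,\qquad k\geq 0.
\end{equation*}
For \emph{odd} $n$ the $\pi^2/4$ piece vanishes by orthogonality to constants, the $\arcsin^2$ piece vanishes by parity, and the $-\pi\arcsin(t)$ piece survives only at the indices $2i+1=n+2k$, i.e.\ $i=(n+2k-1)/2$. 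For \emph{even} $n$ the constant and $\arcsin(t)$ pieces vanish, and the $\arcsin^2$ piece survives only at the indices $2(i+j+1)=n+2k$.

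The heart of the argument is the evaluation of $I_{n,k,d}$. I would apply the Rodrigues formula
\begin{equation*}
P_{n,d}(t)(1-t^2)^{(d-3)/2}=\frac{(-1)^n\,\Gamma(\tfrac{d-1}{2})}{2^n\,\Gamma(n+\tfrac{d-1}{2})}\,\frac{d^n}{dt^n}(1-t^2)^{n+(d-3)/2},
\end{equation*}
then integrate by parts $n$ times (the boundary terms vanish because, after $j\leq n-1$ derivatives, the factor $(1-t^2)^{n+(d-3)/2-j}$ still has a zero of order $\geq (d-1)/2\geq 1$ at $t=\pm 1$), and use $\frac{d^n}{dt^n}t^{n+2k}=\frac{(n+2k)!}{(2k)!}t^{2k}$ to reduce $I_{n,k,d}$ to $\frac{(n+2k)!\,\Gamma(\tfrac{d-1}{2})}{2^n(2k)!\,\Gamma(n+\tfrac{d-1}{2})}\int_{-1}^{1}t^{2k}(1-t^2)^{n+(d-3)/2}\,dt$. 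The latter integral equals $B\!\big(k+\tfrac{1}{2},n+\tfrac{d-1}{2}\big)=\Gamma(k+\tfrac{1}{2})\Gamma(n+\tfrac{d-1}{2})/\Gamma(k+n+\tfrac{d}{2})$, and after cancelling $\Gamma(n+\tfrac{d-1}{2})$ one obtains
\begin{equation*}
I_{n,k,d}=\frac{(n+2k)!}{2^n(2k)!}\cdot\frac{\Gamma(k+\tfrac{1}{2})\,\Gamma(\tfrac{d-1}{2})}{\Gamma(k+n+\tfrac{d}{2})},
\end{equation*}
precisely the factor appearing in the statement. Plugging this into the two surviving sums and incorporating the overall $-\tfrac{|\Sp^{d-2}|}{2|\Sp^{d-1}|}$ (together with the extra $-\pi$ from the odd case) produces both pieces of the proposition. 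The sign claim is then immediate: every $a_i$, factorial, and Gamma value in $I_{n,k,d}$ and in the index sums is strictly positive, so the odd-$n$ formula carries the positive prefactor $\pi/2\cdot\tfrac{|\Sp^{d-2}|}{|\Sp^{d-1}|}$ and the even-$n$ formula carries the negative prefactor $-\tfrac{1}{2}\cdot\tfrac{|\Sp^{d-2}|}{|\Sp^{d-1}|}$. The only genuinely delicate point is the Rodrigues-formula bookkeeping (tracking the two $(-1)^n$ signs and arranging the cancellation of $\Gamma(n+\tfrac{d-1}{2})$); everything else follows from the uniform convergence of the $\arcsin$-series on $[-1,1]$ together with parity.
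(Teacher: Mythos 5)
Your proposal is correct and follows essentially the same route as the paper's proof: the paper likewise expands $\arccos^2(t)$ via the $\arcsin$ power series (its Lemma on the Taylor expansion of $\arccos$), kills terms by the parity and orthogonality of $P_{n,d}$, and evaluates the surviving monomial integrals $\int_{-1}^1 P_{n,d}(t)\,t^{n+2k}(1-t^2)^{\frac{d-3}{2}}\,dt$ through the Rodrigues formula, $n$-fold integration by parts, and the Beta integral, arriving at exactly your $I_{n,k,d}$. Your explicit justification of the term-by-term integration via uniform convergence of the series on $[-1,1]$, and of the vanishing boundary terms in the integration by parts, are points the paper leaves implicit, but they do not change the argument.
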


Secondly, Proposition \ref{prop:lbdeta} establishes a precise relationship between $\lambda_{n,d}$ and $\eta_{n,d}$ for odd $n$ where $\{\eta_{n,d}\}_{n\geq 0}$ are the eigenvalues of the integral operator on $L^2(\nvol_{\Sp^{d-1}})$ generated by the kernel $(x,x')\mapsto d_{\Sp^{d-1}}(x,x')$. Thanks to this connection, we can deduce (a) the precise relationship between the geodesic metric $d_{\Sp^{d-1}}$ and the Euclidean metric obtained after applying the cMDS embedding of $\Sp^{d-1}$ into $\ell^2$ (cf. Theorem \ref{thm:cMDSSd-1dist}) and (b) a closed-form expression for $\lambda_{n,d}$ for odd $n$ (cf. Corollary \ref{cor:labdandodd}) by employing an explicit formula for $\eta_{n,d}$ for $n$ odd (cf. Lemma \ref{lemma:russianguys}).

\begin{proposition}\label{prop:lbdeta}
For any $d\geq 3$ and odd $n\geq 1$, 
$\lambda_{n,d}=-\frac{\pi}{2}\eta_{n,d}.$
\end{proposition}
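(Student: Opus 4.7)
The plan is to express both $\lambda_{n,d}$ and $\eta_{n,d}$ as integrals over $[0,\pi]$ via the substitution $t=\cos\theta$, and then exploit the symmetry $\theta\mapsto\pi-\theta$ together with the parity of the Legendre polynomial $P_{n,d}$ for odd $n$.

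First I would apply Corollary \ref{cor:eigensphere} to write
\[
\lambda_{n,d} = -\frac{|\Sp^{d-2}|}{2|\Sp^{d-1}|}\int_{-1}^{1} P_{n,d}(t)\,\arccos^{2}(t)\,(1-t^2)^{\frac{d-3}{2}}\,dt,
\]
and, in the same way (via Proposition \ref{prop:eigenpgeodesic} with $p=1$),
\[
\eta_{n,d} = \frac{|\Sp^{d-2}|}{|\Sp^{d-1}|}\int_{-1}^{1} P_{n,d}(t)\,\arccos(t)\,(1-t^2)^{\frac{d-3}{2}}\,dt.
\]
Hence the claim $\lambda_{n,d}=-\tfrac{\pi}{2}\eta_{n,d}$ reduces to showing $I_2 = \pi\,I_1$, where
\[
I_p := \int_{0}^{\pi} P_{n,d}(\cos\theta)\,\theta^{p}\,\sin^{d-2}\theta\,d\theta,
\]
obtained via the substitution $t=\cos\theta$.

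The key step is the symmetry $\theta\mapsto\pi-\theta$. Since $P_{n,d}$ has parity $(-1)^n$, for odd $n$ we have $P_{n,d}(\cos(\pi-\theta)) = P_{n,d}(-\cos\theta) = -P_{n,d}(\cos\theta)$, while $\sin^{d-2}$ is invariant under this substitution. Applying this change of variables to $I_p$ and averaging yields
\[
2 I_p = \int_{0}^{\pi} P_{n,d}(\cos\theta)\bigl[\theta^{p}-(\pi-\theta)^{p}\bigr]\sin^{d-2}\theta\,d\theta.
\]
For $p=2$ one has $\theta^{2}-(\pi-\theta)^{2} = 2\pi\theta - \pi^{2}$, so
\[
2 I_2 \;=\; 2\pi\int_{0}^{\pi} P_{n,d}(\cos\theta)\,\theta\,\sin^{d-2}\theta\,d\theta \;-\; \pi^{2}\int_{0}^{\pi} P_{n,d}(\cos\theta)\,\sin^{d-2}\theta\,d\theta.
\]

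The last ingredient is that the constant-free integral vanishes: $\int_{0}^{\pi} P_{n,d}(\cos\theta)\,\sin^{d-2}\theta\,d\theta = 0$ for $n\ge 1$, which follows from the orthogonality of $P_{n,d}$ against $P_{0,d}\equiv 1$ in the weighted $L^2$ inner product on $[-1,1]$ with weight $(1-t^2)^{(d-3)/2}$ (equivalently, from $\int_{\Sp^{d-1}} Y_n^d\,d\mathrm{vol} = 0$ as used in Corollary \ref{cor:L1Ynzero}). Combining these gives $2 I_2 = 2\pi I_1$, hence $I_2 = \pi I_1$, which is exactly the desired identity. There is no real obstacle here: the only subtlety is making sure the parity argument is invoked with $n$ odd (for even $n$ the analogous manipulation gives $0=0$, which is why the statement is restricted to odd $n$).
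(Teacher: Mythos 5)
Your proof is correct and takes essentially the same route as the paper: both arguments rest on the reflection symmetry $t\mapsto -t$ (your $\theta\mapsto\pi-\theta$ after setting $t=\cos\theta$), the odd parity $P_{n,d}(-t)=-P_{n,d}(t)$, and the identity $\arccos(-t)=\pi-\arccos(t)$, reducing the claim to $\int_{-1}^1 P_{n,d}(t)\arccos^2(t)(1-t^2)^{\frac{d-3}{2}}\,dt=\pi\int_{-1}^1 P_{n,d}(t)\arccos(t)(1-t^2)^{\frac{d-3}{2}}\,dt$. The only (cosmetic) difference is bookkeeping: the paper splits into half-interval integrals over $[0,1]$ and cancels the extraneous term by combining the two resulting identities, while your symmetrization kills it via the orthogonality fact $\int_{-1}^1 P_{n,d}(t)(1-t^2)^{\frac{d-3}{2}}\,dt=0$ (item (6) of Lemma \ref{lemma:Legdrprop}).
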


Observe that $\eta_{n,d}$ is equal to the value $\tau_{n,d,1}$ given by Proposition \ref{prop:eigenpgeodesic}. Hence,

\begin{equation}\label{eq:etand}
    \eta_{n,d}=\frac{\vert \Sp^{d-2}\vert}{\vert\Sp^{d-1}\vert}\int_{-1}^1 P_{n,d}(t)\arccos(t)(1-t^2)^{\frac{d-3}{2}}\,dt
\end{equation}
and that any $Y_n^d\in\mathbb{Y}_n^d$ is an eigenfunction corresponding to $\eta_{n,d}$. Therefore, we have the following expression for $d_{\Sp^{d-1}}$:
\begin{equation}\label{eq:ds1fourier}
    d_{\Sp^{d-1}}(x,x')=\sum_{n\geq 0}\eta_{n,d}\left(\sum_{j=1}^{N_{n,d}}Y_{n,j}^d(x)\,Y_{n,j}^d(x')\right)\text{ for any }x,x'\in\Sp^{d-1}
\end{equation}
where $\{Y_{n,j}^d\}_{j=1}^{N_{n,d}}$ is any orthonormal basis of $\mathbb{Y}_n^d$. Actually, $\eta_{n,d}=0$ for even $n\geq 1$.

\begin{lemma}\label{lemma:etandeven0}
For any $d\geq 3$ and even $n\geq 1$, we have $\eta_{n,d}=0$
\end{lemma}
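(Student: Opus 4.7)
The plan is to exploit two elementary parity/orthogonality facts about Legendre polynomials together with the decomposition
\[
\arccos(t)=\frac{\pi}{2}-\arcsin(t),
\]
where $\arcsin$ is an odd function on $[-1,1]$. Splitting the integral in equation (\ref{eq:etand}) accordingly, one obtains
\[
\int_{-1}^{1}P_{n,d}(t)\arccos(t)(1-t^2)^{\frac{d-3}{2}}\,dt
=\frac{\pi}{2}\int_{-1}^{1}P_{n,d}(t)(1-t^2)^{\frac{d-3}{2}}\,dt
-\int_{-1}^{1}P_{n,d}(t)\arcsin(t)(1-t^2)^{\frac{d-3}{2}}\,dt,
\]
and the claim reduces to showing both summands vanish for even $n\geq 1$.

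For the first summand, I would recall that $P_{0,d}\equiv 1$ and that $\{P_{n,d}\}_{n\geq 0}$ is an orthogonal family on $[-1,1]$ with respect to the weight $(1-t^2)^{\frac{d-3}{2}}$ (a standard fact from the theory of spherical harmonics; see Appendix \S\ref{sec:sphehar}). Therefore
\[
\int_{-1}^{1}P_{n,d}(t)(1-t^2)^{\frac{d-3}{2}}\,dt=\int_{-1}^{1}P_{n,d}(t)P_{0,d}(t)(1-t^2)^{\frac{d-3}{2}}\,dt=0
\]
for every $n\geq 1$, regardless of parity.

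For the second summand, I would use the well-known parity property $P_{n,d}(-t)=(-1)^n P_{n,d}(t)$, which implies that when $n$ is even the map $t\mapsto P_{n,d}(t)(1-t^2)^{\frac{d-3}{2}}$ is even. Multiplying by the odd function $\arcsin(t)$ yields an odd integrand on the symmetric interval $[-1,1]$, so its integral vanishes. Combining the two vanishing statements with equation (\ref{eq:etand}) gives $\eta_{n,d}=0$ for every even $n\geq 1$.

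There is no serious obstacle here: the argument is a short symmetry/orthogonality calculation once the decomposition of $\arccos$ is introduced. The only point that deserves care is invoking the correct parity $P_{n,d}(-t)=(-1)^n P_{n,d}(t)$ and the correct orthogonality weight $(1-t^2)^{\frac{d-3}{2}}$; both of these are recorded in the appendix on spherical harmonics and are consistent with the special cases computed explicitly in Example \ref{ex:compuS2eigen}.
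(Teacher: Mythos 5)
Your proof is correct and rests on exactly the same ingredients as the paper's: the parity $P_{n,d}(-t)=(-1)^nP_{n,d}(t)$ (item (4) of Lemma \ref{lemma:Legdrprop}), the symmetry $\arccos(t)+\arccos(-t)=\pi$ (which you package as $\arccos(t)=\frac{\pi}{2}-\arcsin(t)$ with $\arcsin$ odd), and the orthogonality of $P_{n,d}$ to constants with respect to the weight $(1-t^2)^{\frac{d-3}{2}}$ (item (6), or equivalently item (5)). The paper organizes the same argument via the substitution $u=-t$ to conclude that the integral equals its own negative, so your explicit even/odd decomposition is only a cosmetic rearrangement of the paper's proof.
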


The proof of Lemma \ref{lemma:etandeven0} is relegated to Appendix \S\ref{sec:otherproofs}. Proposition \ref{prop:lbdeta} and Lemma \ref{lemma:etandeven0} help in settling Conjecture \ref{conj:sd}.

\begin{theorem}\label{thm:cMDSSd-1dist}
For any $d\geq 3$, and $u,v\in\Sp^{d-1}$,
$$\Vert\Phi_{\Sp^{d-1}}(u)-\Phi_{\Sp^{d-1}}(v)\Vert^2=\pi\cdot d_{\Sp^{d-1}}(u,v).$$
\end{theorem}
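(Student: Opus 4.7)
The plan is to expand the left-hand side in the spherical-harmonic basis and reduce the identity to the Fourier series (\ref{eq:ds1fourier}) for $d_{\Sp^{d-1}}$. Fix an $L^2(\nvol_{\Sp^{d-1}})$-orthonormal basis $\{Y_{n,j}^d\}_{j=1}^{N_{n,d}}$ of $\mathbb{Y}_n^d$ for each $n\geq 1$. By Corollary \ref{cor:eigensphere} together with the sign determination in Proposition \ref{prop:taylorargument}, the positive eigenvalues of $\cmdsop_{\Sp^{d-1}}$ are exactly the $\lambda_{n,d}$ for odd $n\geq 1$, so
\[
\bigl\Vert \Phi_{\Sp^{d-1}}(u)-\Phi_{\Sp^{d-1}}(v)\bigr\Vert^2 \;=\; \sum_{n\text{ odd}}\lambda_{n,d}\sum_{j=1}^{N_{n,d}}\bigl(Y_{n,j}^d(u)-Y_{n,j}^d(v)\bigr)^2.
\]

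First I would apply the addition theorem for spherical harmonics (Appendix \S\ref{sec:sphehar}) to collapse the $j$-sum: for the normalized measure $\nvol_{\Sp^{d-1}}$ this reads $\sum_j Y_{n,j}^d(u)\,Y_{n,j}^d(v) = N_{n,d}\,P_{n,d}(\langle u,v\rangle)$, so the inner sum becomes $2N_{n,d}\bigl(1-P_{n,d}(\langle u,v\rangle)\bigr)$. Substituting $\lambda_{n,d}=-\tfrac{\pi}{2}\eta_{n,d}$ (Proposition \ref{prop:lbdeta}) then rewrites the expression as $-\pi\sum_{n\text{ odd}}\eta_{n,d}\,N_{n,d}\bigl(1-P_{n,d}(\langle u,v\rangle)\bigr)$. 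The task is now entirely reduced to a sum involving the $\eta_{n,d}$'s.

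Second, I would invoke equation (\ref{eq:ds1fourier}) together with Lemma \ref{lemma:etandeven0} (which kills all even-$n$, $n\geq 2$, terms in the Fourier expansion of $d_{\Sp^{d-1}}$) and the addition theorem to obtain the pointwise identity $d_{\Sp^{d-1}}(u,v) = \eta_{0,d} + \sum_{n\text{ odd}}\eta_{n,d}\,N_{n,d}\,P_{n,d}(\langle u,v\rangle)$. Specializing to $u=v$ (where $P_{n,d}(1)=1$ and $d_{\Sp^{d-1}}(u,u)=0$) yields $\sum_{n\text{ odd}}\eta_{n,d}\,N_{n,d} = -\eta_{0,d}$; subtracting this from the general formula gives $\sum_{n\text{ odd}}\eta_{n,d}\,N_{n,d}\bigl(1-P_{n,d}(\langle u,v\rangle)\bigr) = -d_{\Sp^{d-1}}(u,v)$, and combining with the expression from the previous paragraph produces $\Vert\Phi_{\Sp^{d-1}}(u)-\Phi_{\Sp^{d-1}}(v)\Vert^2 = \pi\, d_{\Sp^{d-1}}(u,v)$.

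The main obstacle is justifying that all series above converge \emph{pointwise} (not merely in $L^2$ or $\mu_X\otimes\mu_X$-almost everywhere), so that the substitutions and the $u=v$ specialization are legitimate for every pair $(u,v)\in\Sp^{d-1}\times\Sp^{d-1}$. This is handled by the trace-class property of $\cmdsop_{\Sp^{d-1}}$ (Theorem \ref{thm:Sd-1tr}, established earlier in this section), which gives $\sum_{n\geq 1}|\lambda_{n,d}|\,N_{n,d}<\infty$; via Proposition \ref{prop:lbdeta} this yields $\sum_{n\text{ odd}}|\eta_{n,d}|\,N_{n,d}<\infty$, and since $|P_{n,d}(t)|\leq 1$ on $[-1,1]$ every series involved converges absolutely and uniformly on $\Sp^{d-1}\times\Sp^{d-1}$, legitimizing term-by-term rearrangement and yielding continuous representatives of both sides.
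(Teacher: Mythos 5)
Your proposal is correct and follows essentially the same route as the paper's proof: expand $\Vert\Phi_{\Sp^{d-1}}(u)-\Phi_{\Sp^{d-1}}(v)\Vert^2$ over the positive eigenvalues (odd $n$, by Proposition \ref{prop:taylorargument}), substitute $\lambda_{n,d}=-\tfrac{\pi}{2}\eta_{n,d}$ from Proposition \ref{prop:lbdeta}, and match against the expansion (\ref{eq:ds1fourier2}) of $d_{\Sp^{d-1}}$ using Lemma \ref{lemma:etandeven0}, the addition theorem, and the $u=v$ specialization. The only differences are cosmetic bookkeeping (you collapse the $j$-sums into $N_{n,d}\,P_{n,d}(\langle u,v\rangle)$ throughout, while the paper keeps the harmonic sums and invokes the addition theorem only at $u=v$) plus your explicit uniform-convergence justification via Theorem \ref{thm:Sd-1tr} and $\vert P_{n,d}\vert\leq 1$, which the paper leaves implicit.
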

\begin{proof}
Observe that, by Lemma \ref{lemma:etandeven0} and equation (\ref{eq:ds1fourier}),
\begin{equation}\label{eq:ds1fourier2}
    d_{\Sp^{d-1}}(u,v)=\eta_{0,d}+\sum_{\text{odd }n\geq 1}\eta_{n,d}\left(\sum_{j=1}^{N_{n,d}}Y_{n,j}^d(u)\,Y_{n,j}^d(v)\right)\text{ for any }u,v\in \Sp^{d-1}
\end{equation}
where $\{Y_{n,j}^d\}_{j=1}^{N_{n,d}}$ is an orthonormal basis of $\mathbb{Y}_n^d$ and $N_{n,d} = \mathrm{dim}(\mathbb{Y}_n^d)$. In particular, if we choose $u=v$, we obtain
\begin{align}\label{eq:inter}
    0=\eta_{0,d}+\sum_{\text{odd }n\geq 1}\eta_{n,d}\left(\sum_{j=1}^{N_{n,d}}(Y_{n,j}^d(u))^2\right)=\eta_{0,d}+\sum_{\text{odd }n\geq 1}\eta_{n,d}N_{n,d}
\end{align}
since $\left(\sum_{j=1}^{N_{n,d}}(Y_{n,j}^d(u))^2\right)=N_{n,d}$ by the addition theorem (Theorem \ref{thm:addition}).

Also, recall that
$$\Vert\Phi_{\Sp^{d-1}}(u)-\Phi_{\Sp^{d-1}}(v)\Vert^2=K_{\widehat{\Sp}^{d-1}}(u,u)+K_{\widehat{\Sp}^{d-1}}(v,v)-2K_{\widehat{\Sp}^{d-1}}(u,v)$$
for any $u,v\in\Sp^{d-1}$. Moreover, since $\lambda_{n,d}>0$ only for odd $n$ by Proposition \ref{prop:taylorargument},
\begin{align*}
    K_{\widehat{\Sp}^{d-1}}(u,u)&=\sum_{\text{odd }n\geq 1}\lambda_{n,d}\left(\sum_{j=1}^{N_{n,d}}(Y_{n,j}^d(u))^2\right)=\sum_{\text{odd }n\geq 1}\lambda_{n,d}N_{n,d}\\
    &=\sum_{\text{odd }n\geq 1}\left(-\frac{\pi}{2}\eta_{n,d}\right)N_{n,d}=-\frac{\pi}{2}\sum_{\text{odd }n\geq 1}\eta_{n,d}N_{n,d}.
\end{align*}
where the the third equality holds by Proposition \ref{prop:lbdeta}.

Similarly, $K_{\widehat{\Sp}^{d-1}}(v,v)=-\frac{\pi}{2}\sum_{\text{odd }n\geq 1}\eta_{n,d}N_{n,d}$. Hence, via equation (\ref{eq:inter}) we find:
$$K_{\widehat{\Sp}^{d-1}}(u,u)+K_{\widehat{\Sp}^{d-1}}(v,v)=\pi\left(-\sum_{n\geq 1,\mathrm{odd}}\eta_{n,d}N_{n,d}\right)=\pi\cdot\eta_{0,d}.$$
Also,
\begin{align*}
    K_{\widehat{\Sp}^{d-1}}(u,v)=\sum_{\text{odd }n\geq 1}\lambda_{n,d}\left(\sum_{j=1}^{N_{n,d}}Y_{n,j}^d(u)Y_{n,j}^d(v)\right)
    =-\frac{\pi}{2}\sum_{\text{odd }n\geq 1}\eta_{n,d}\left(\sum_{j=1}^{N_{n,d}}Y_{n,j}^d(u)Y_{n,j}^d(v)\right).
\end{align*}
Finally, one  concludes that
\begin{align*}
    \Vert\Phi_{\Sp^{d-1}}(u)-\Phi_{\Sp^{d-1}}(v)\Vert^2&=\pi\left(\eta_{0,d}+\sum_{n\geq 1,\mathrm{odd}}\eta_{n,d}\left(\sum_{j=1}^{N_{n,d}}Y_{n,j}^d(u)Y_{n,j}^d(v)\right)\right)=\pi \, d_{\Sp^{d-1}}(u,v)
\end{align*}
where the last equality holds because of equation (\ref{eq:ds1fourier2}).
\end{proof}

\begin{remark}\label{rmk:Sd-1distortion}
Note that Theorem \ref{thm:cMDSSd-1dist} implies that
$$\delta(d-1):=\dis(\Sp^{d-1}) = \left(\pi\diamna_1(\Sp^{d-1})-\big(\diamna_2(\Sp^{d-1})\big)^2\right)^{\frac{1}{2}}.$$
Also, observe that $\diamna_1(\Sp^{d-1})=\frac{\pi}{2}$ (cf. \cite[Example 5.7]{memoli2011gromov}) and $\diamna_2(\Sp^{d-1})\rightarrow \diamna_1(\Sp^{d-1})=\frac{\pi}{2}$ as $d\uparrow \infty$ via concentration of measure considerations. This suggests that $\delta(\infty)=\frac{\pi}{2}$.
\end{remark}

\subsubsection{Proof of Proposition \ref{prop:taylorargument}.} 

In order to prove Proposition \ref{prop:taylorargument}, we need following lemmas.

\begin{lemma}\label{lemma:taylorarccos}
For any $t\in [-1,1]$,
$\arccos(t)=\frac{\pi}{2}-\sum_{n=0}^\infty a_nx^{2n+1}$
where $a_n:=\frac{(2n)!}{2^{2n}(n!)^2(2n+1)}$. This implies,
$\arccos^2(t)=\frac{\pi^2}{4}-\pi\sum_{n=0}^\infty a_nx^{2n+1}+\sum_{n,m=0}^\infty a_n a_mx^{2(n+m+1)}.$
\end{lemma}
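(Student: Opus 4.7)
The plan is to derive the first identity from the standard Taylor series of $\arcsin$ via the identity $\arccos(t)=\tfrac{\pi}{2}-\arcsin(t)$, and then obtain the second identity by squaring.

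First I would start from the binomial expansion
\[
(1-s)^{-1/2}=\sum_{n=0}^\infty \binom{-1/2}{n}(-s)^n=\sum_{n=0}^\infty \frac{(2n)!}{2^{2n}(n!)^2}\,s^n,
\]
valid for $|s|<1$. Substituting $s=t^2$ and integrating termwise from $0$ to $t$ (legitimate on any closed subinterval of $(-1,1)$ by uniform convergence on compacta) yields
\[
\arcsin(t)=\sum_{n=0}^\infty \frac{(2n)!}{2^{2n}(n!)^2(2n+1)}\,t^{2n+1}=\sum_{n=0}^\infty a_n\,t^{2n+1}\quad(|t|<1).
\]
Combined with $\arccos(t)=\tfrac{\pi}{2}-\arcsin(t)$, this gives the claimed formula for $|t|<1$.

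Next I would extend the identity to the closed interval $[-1,1]$. Using Stirling's formula one has $a_n\sim \tfrac{1}{\sqrt{\pi}\,n^{3/2}}$, so $\sum_{n\ge 0}a_n<\infty$. Therefore $\sum_n a_n t^{2n+1}$ converges uniformly on $[-1,1]$ by the Weierstrass $M$-test, and Abel's theorem on continuity of power series at the boundary shows that the identity $\arccos(t)=\tfrac{\pi}{2}-\sum_n a_n t^{2n+1}$ persists at $t=\pm 1$ (one may also verify this directly from $\arccos(1)=0$ and $\arccos(-1)=\pi$, since $\sum_n a_n=\tfrac{\pi}{2}$).

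Finally I would square the series to obtain the second identity. Writing $S(t):=\sum_{n=0}^\infty a_n t^{2n+1}$, expansion gives
\[
\arccos^2(t)=\Bigl(\tfrac{\pi}{2}-S(t)\Bigr)^2=\tfrac{\pi^2}{4}-\pi\,S(t)+S(t)^2,
\]
and the Cauchy product (absolutely convergent on $[-1,1]$ thanks to summability of the $a_n$) yields
\[
S(t)^2=\sum_{n,m=0}^\infty a_n a_m\,t^{2(n+m+1)}.
\]
Substituting back produces the stated formula. The only nontrivial point in the argument is the justification of termwise operations at $t=\pm 1$, which is handled cleanly by the Weierstrass $M$-test and Abel's theorem; the rest is bookkeeping.
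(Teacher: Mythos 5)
Your proof is correct. The paper actually states Lemma \ref{lemma:taylorarccos} without proof, treating it as a standard Taylor-series fact used as input to Proposition \ref{prop:taylorargument}, so there is no argument in the paper to compare against; your write-up supplies exactly the standard justification one would expect: the binomial series for $(1-s)^{-1/2}$, termwise integration to get the $\arcsin$ series on $(-1,1)$, the identity $\arccos(t)=\tfrac{\pi}{2}-\arcsin(t)$, extension to the closed interval via summability of the coefficients together with Abel's theorem, and finally the Cauchy product to square the series. All of these steps are sound, and you correctly identify the boundary behavior at $t=\pm 1$ as the only point needing care (note also that your argument silently fixes the typo in the statement, where the variable appears as $x$ on the right-hand side but $t$ on the left). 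One small slip: Stirling gives $\binom{2n}{n}4^{-n}\sim (\pi n)^{-1/2}$, hence $a_n\sim \tfrac{1}{2\sqrt{\pi}\,n^{3/2}}$ rather than $\tfrac{1}{\sqrt{\pi}\,n^{3/2}}$; the missing factor of $2$ is harmless, since only the summability $\sum_n a_n<\infty$ (indeed $\sum_n a_n = \arcsin(1)=\tfrac{\pi}{2}$, as you observe) is used.
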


The proof of Lemma \ref{lemma:taylorarccos} is elementary, so we omit it.

\begin{lemma}\label{lemma:tylrparity}
For any $n,m>0$, $k\geq 0$, and $d\geq 3$,
$$\int_{-1}^1 P_{n,d}(t)\,t^m(1-t^2)^{\frac{d-3}{2}}\,dt=\begin{cases}0&\text{if the parities of }n,m\text{ are different}\\
0&\text{if the parities of }n,m\text{ are the same, and }n>m\\
\frac{(n+2k)!}{2^n(2k)!}\frac{\Gamma(k+\frac{1}{2})\Gamma(\frac{d-1}{2})}{\Gamma(k+n+\frac{d}{2})}&\text{if the parity of }n,m\text{ are the same, and }m=n+2k.\end{cases}$$
\end{lemma}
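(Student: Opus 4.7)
My plan is to handle the three cases by combining parity, orthogonality of the Legendre polynomials $P_{n,d}$, and a direct calculation via Rodrigues' formula, respectively.

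For the first case, I would observe that $P_{n,d}(-t) = (-1)^n P_{n,d}(t)$ (a standard parity property of $d$-dimensional Legendre polynomials, cf.\ the appendix on spherical harmonics), while $(1-t^2)^{(d-3)/2}$ is even. Therefore the integrand has parity $(-1)^{n+m}$, and if $n$ and $m$ have opposite parities, the integral over $[-1,1]$ vanishes by symmetry.

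For the second case (same parity, $n > m$), I would invoke the fact that $\{P_{j,d}\}_{j \geq 0}$ is orthogonal in $L^2([-1,1], (1-t^2)^{(d-3)/2}\,dt)$: specifically, $P_{n,d}$ is orthogonal to every polynomial of degree strictly less than $n$ with respect to this weight. Since $t^m$ has degree $m < n$, the integral is $0$.

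For the third case, the plan is a direct computation via Rodrigues' formula, which in $d$ dimensions reads
\[
P_{n,d}(t) = \frac{(-1)^n \, \Gamma(\tfrac{d-1}{2})}{2^n\, \Gamma(n+\tfrac{d-1}{2})}\, (1-t^2)^{-\frac{d-3}{2}} \frac{d^n}{dt^n}(1-t^2)^{n+\frac{d-3}{2}}.
\]
Substituting into the integral and integrating by parts $n$ times will kill all boundary terms, since $(1-t^2)^{n+(d-3)/2}$ and its first $n{-}1$ derivatives vanish at $t=\pm 1$ (here $d\geq 3$ is essential, as it guarantees the exponent is at least $n$). This converts the integral into
\[
\frac{1}{2^n}\, \frac{\Gamma(\tfrac{d-1}{2})}{\Gamma(n+\tfrac{d-1}{2})}\, \frac{m!}{(m-n)!} \int_{-1}^{1} t^{\,m-n}(1-t^2)^{n+\frac{d-3}{2}}\,dt,
\]
and with $m = n+2k$ the remaining integral is a standard Beta integral:
\[
\int_{-1}^{1} t^{2k}(1-t^2)^{n+\frac{d-3}{2}}\,dt \;=\; \frac{\Gamma(k+\tfrac{1}{2})\,\Gamma(n+\tfrac{d-1}{2})}{\Gamma(k+n+\tfrac{d}{2})}.
\]
Multiplying the pieces, the factor $\Gamma(n+\tfrac{d-1}{2})$ cancels and one is left precisely with $\tfrac{(n+2k)!}{2^n (2k)!}\cdot \tfrac{\Gamma(k+\tfrac12)\Gamma(\tfrac{d-1}{2})}{\Gamma(k+n+\tfrac{d}{2})}$, as claimed.

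The only subtle point is the bookkeeping in the third case: one has to be careful with the sign coming from the $n$ integrations by parts (which contributes $(-1)^n$ and cancels the $(-1)^n$ in Rodrigues' formula) and with the differentiation $\tfrac{d^n}{dt^n} t^m = \tfrac{m!}{(m-n)!}t^{m-n}$, which is valid because $m \geq n$ in this case. I do not expect any genuine obstacle beyond this bookkeeping; the parity and orthogonality arguments for cases (1) and (2) are essentially immediate.
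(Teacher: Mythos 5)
Your proposal is correct and follows essentially the same route as the paper's proof: the parity of $P_{n,d}$ for the first case, orthogonality of $P_{n,d}$ against polynomials of degree $m<n$ (the paper phrases this by expanding $t^m=\sum_{k=0}^m b_k P_{k,d}(t)$, which is the same content) for the second, and the Rodrigues formula with $n$ integrations by parts reducing to the Beta integral $B\left(k+\tfrac{1}{2},\,n+\tfrac{d-1}{2}\right)$ for the third. Your bookkeeping of the sign $(-1)^n$, the factor $\tfrac{(n+2k)!}{(2k)!}$, and the cancellation of $\Gamma\left(n+\tfrac{d-1}{2}\right)$ against $R_{n,d}$ matches the paper exactly.
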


The proof of Lemma \ref{lemma:tylrparity} is relegated to Appendix \S\ref{sec:otherproofs}. We are now ready to prove Proposition \ref{prop:taylorargument}.

\begin{proof}[Proof of Proposition \ref{prop:taylorargument}]
By Corollary \ref{cor:eigensphere} and Lemma \ref{lemma:taylorarccos},
\begin{align*}
    \lambda_{n,d}&=-\frac{\vert\Sp^{d-2}\vert}{2\vert\Sp^{d-1}\vert}\int_{-1}^1 P_{n,d}(t)\arccos^2(t) (1-t^2)^{\frac{d-3}{2}}\,dt\\
    &=-\frac{\vert\Sp^{d-2}\vert}{2\vert\Sp^{d-1}\vert}\Big(\frac{\pi^2}{4}\int_{-1}^1 P_{n,d}(t)(1-t^2)^{\frac{d-3}{2}}\,dt-\pi\sum_{i=0}^\infty a_i\int_{-1}^1 P_{n,d}(t) t^{2i+1}(1-t^2)^{\frac{d-3}{2}}\,dt\\
    &\quad+\sum_{i,j=0}^\infty a_i a_j \int_{-1}^1 P_{n,d}(t)t^{2(i+j+1)}(1-t^2)^{\frac{d-3}{2}}\,dt\Big).
\end{align*}

Also, $\int_{-1}^1 P_{n,d}(t)(1-t^2)^{\frac{d-3}{2}}\,dt=0$ for any $n>0$ by item (5) of Lemma \ref{lemma:Legdrprop}, which implies that the first term vanishes.

If $n$ is odd, $\int_{-1}^1 P_{n,d}(t)t^{2(i+j+1)}(1-t^2)^{\frac{d-3}{2}}\,dt=0$ and $\int_{-1}^1 P_{n,d}(t) t^{2i+1}(1-t^2)^{\frac{d-3}{2}}\,dt=0$ if $2i+1<n$ by Lemma \ref{lemma:tylrparity}. Also, for any $k\geq 0$,
$$\int_{-1}^1 P_{n,d}(t) t^{n+2k}(1-t^2)^{\frac{d-3}{2}}\,dt=\frac{(n+2k)!}{2^n(2k)!}\frac{\Gamma(k+\frac{1}{2})\Gamma(\frac{d-1}{2})}{\Gamma(k+n+\frac{d}{2})}$$
by Lemma \ref{lemma:tylrparity}. Therefore,
$$
    \lambda_{n,d}=\frac{\pi\vert\Sp^{d-2}\vert}{2\vert\Sp^{d-1}\vert}\sum_{k=0}^\infty a_{\frac{n+2k-1}{2}}\cdot\frac{(n+2k)!}{2^n(2k)!}\frac{\Gamma(k+\frac{1}{2})\Gamma(\frac{d-1}{2})}{\Gamma(k+n+\frac{d}{2})}>0.$$

If $n$ is even, $\int_{-1}^1 P_{n,d}(t) t^{2i+1}(1-t^2)^{\frac{d-3}{2}}\,dt=0$ and $\int_{-1}^1 P_{n,d}(t)t^{2(i+j+1)}(1-t^2)^{\frac{d-3}{2}}\,dt=0$ if $2(i+j+1)<n$ by Lemma \ref{lemma:tylrparity}. Also, for any $k\geq 0$,
$$\int_{-1}^1 P_{n,d}(t) t^{n+2k}(1-t^2)^{\frac{d-3}{2}}\,dt=\frac{(n+2k)!}{2^n(2k)!}\frac{\Gamma(k+\frac{1}{2})\Gamma(\frac{d-1}{2})}{\Gamma(k+n+\frac{d}{2})}$$
by Lemma \ref{lemma:tylrparity}. Therefore,
$$
    \lambda_{n,d}=-\frac{\vert\Sp^{d-2}\vert}{2\vert\Sp^{d-1}\vert}\sum_{k=0}^\infty\left(\sum_{i,j:2(i+j+1)=n+2k}a_ia_j\right)\frac{(n+2k)!}{2^n(2k)!}\frac{\Gamma(k+\frac{1}{2})\Gamma(\frac{d-1}{2})}{\Gamma(k+n+\frac{d}{2})}<0.
$$
\end{proof}

\subsubsection{Proof of Proposition \ref{prop:lbdeta}.}

\begin{proof}[Proof of Proposition \ref{prop:lbdeta}]
Recall that, by Corollary \ref{cor:eigensphere} and equation (\ref{eq:etand}),
$$\lambda_{n,d}=-\frac{\vert\Sp^{d-2}\vert}{2\vert\Sp^{d-1}\vert}\int_{-1}^1 P_{n,d}(t)\arccos^2(t)(1-t^2)^{\frac{d-3}{2}}\,dt\text{ and }\eta_{n,d}=\frac{\vert \Sp^{d-2}\vert}{\vert\Sp^{d-1}\vert}\int_{-1}^1 P_{n,d}(t)\arccos(t)(1-t^2)^{\frac{d-3}{2}}\,dt.$$

Since $n$ is odd, $P_{n,d}(-t)=-P_{n,d}(t)$ (cf. item (4) of Lemma \ref{lemma:Legdrprop}) for  $t\in[-1,1]$. Hence,
\begin{align*}
    &\int_{-1}^1 P_{n,d}(t)\arccos^2(t)(1-t^2)^{\frac{d-3}{2}}\,dt\\
    &=\left(\int_0^1 P_{n,d}(t)\arccos^2(t)(1-t^2)^{\frac{d-3}{2}}\,dt-\int_0^1 P_{n,d}(t)\arccos^2(-t)(1-t^2)^{\frac{d-3}{2}}\,dt\right)\\
    &=\left(\int_0^1 P_{n,d}(t)\arccos^2(t)(1-t^2)^{\frac{d-3}{2}}\,dt-\int_0^1 P_{n,d}(t)(\pi-\arccos(t))^2(1-t^2)^{\frac{d-3}{2}}\,dt\right)\\
    &=-\pi^2\int_0^1 P_{n,d}(t)(1-t^2)^{\frac{d-3}{2}}\,dt+2\pi\int_0^1 P_{n,d}(t)\arccos(t)(1-t^2)^{\frac{d-3}{2}}\,dt.
\end{align*}

Also,
\begin{align*}
    &\int_{-1}^1 P_{n,d}(t)\arccos(t)(1-t^2)^{\frac{d-3}{2}}\,dt\\
    &\quad=\int_0^1 P_{n,d}(t)\arccos(t)(1-t^2)^{\frac{d-3}{2}}\,dt-\int_0^1 P_{n,d}(t)\arccos(-t)(1-t^2)^{\frac{d-3}{2}}\,dt\\
    &\quad=\int_0^1 P_{n,d}(t)\arccos(t)(1-t^2)^{\frac{d-3}{2}}\,dt-\int_0^1 P_{n,d}(t)(\pi-\arccos(-t))(1-t^2)^{\frac{d-3}{2}}\,dt\\
    &\quad =2\int_0^1 P_{n,d}(t)\arccos(t)(1-t^2)^{\frac{d-3}{2}}\,dt-\pi\int_0^1 P_{n,d}(t)(1-t^2)^{\frac{d-3}{2}}\,dt.
\end{align*}

Hence,
$$\int_0^1 P_{n,d}(t)\arccos(t)(1-t^2)^{\frac{d-3}{2}}\,dt=\frac{1}{2}\left(\int_{-1}^1 P_{n,d}(t)\arccos(t)(1-t^2)^{\frac{d-3}{2}}\,dt+\pi\int_0^1 P_{n,d}(t)(1-t^2)^{\frac{d-3}{2}}\,dt\right).$$
Therefore,
$$
    \int_{-1}^1 P_{n,d}(t)\arccos^2(t)(1-t^2)^{\frac{d-3}{2}}\,dt=\pi\int_{-1}^1 P_{n,d}(t)\arccos(t)(1-t^2)^{\frac{d-3}{2}}\,dt.
$$

This implies that $\lambda_{n,d}=-\frac{\pi}{2}\eta_{n,d}$, as we required.
\end{proof}

\subsubsection{A closed-form expression for the postive eigenvalues of $\cmdsop_{\Sp^{d-1}}$.}
As another application of Proposition \ref{prop:lbdeta}, we obtain a closed-form expression for $\lambda_{n,d}$ for odd $n\geq 1$. We need the following lemma whose proof is given in Appendix \S\ref{sec:otherproofs}.

\begin{lemma}\label{lemma:russianguys}
For any $d\geq 3$ and odd $n\geq 1$, we have
$\eta_{n,d}=-\frac{\pi^{\frac{d-1}{2}}n!!\Gamma(\frac{d}{2})\Gamma(\frac{n}{2})}{n2^{\frac{n-1}{2}}(\Gamma(\frac{n+d}{2}))^2\vert\Sp^{d-1}\vert}.$
\end{lemma}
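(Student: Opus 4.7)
The plan is to evaluate the integral
$$I_{n,d} := \int_{-1}^1 P_{n,d}(t)\arccos(t)(1-t^2)^{(d-3)/2}\,dt$$
from equation (\ref{eq:etand}) by combining the Taylor expansion from Lemma \ref{lemma:taylorarccos} with the moment formula of Lemma \ref{lemma:tylrparity}, and then recognizing the resulting series as a Gauss hypergeometric series at argument $1$ which collapses via the classical summation formula.

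First I would substitute $\arccos(t) = \pi/2 - \sum_{i\geq 0} a_i t^{2i+1}$ into $I_{n,d}$. The constant $\pi/2$ contributes nothing because $\int_{-1}^1 P_{n,d}(t)(1-t^2)^{(d-3)/2}\,dt = 0$ for $n \geq 1$ by item (5) of Lemma \ref{lemma:Legdrprop}. For odd $n$, Lemma \ref{lemma:tylrparity} keeps only indices $i$ with $2i+1 = n+2k$, $k\geq 0$; setting $i = (n-1)/2+k$ gives
$$I_{n,d} = -\sum_{k=0}^\infty a_{(n-1)/2+k}\cdot\frac{(n+2k)!}{2^n(2k)!}\cdot\frac{\Gamma(k+1/2)\,\Gamma((d-1)/2)}{\Gamma(k+n+d/2)}.$$

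Second, I would convert the factorials to Gamma functions using the Legendre duplication identity $(2j)! = 2^{2j} j!\,\Gamma(j+1/2)/\sqrt{\pi}$. Writing $n = 2m+1$, the explicit form of $a_{m+k}$ and straightforward algebraic cancellation recasts the sum as
$$I_{n,d} = -\frac{\Gamma((d-1)/2)}{2\sqrt{\pi}}\cdot\frac{(\Gamma(m+1/2))^2}{\Gamma(2m+1+d/2)}\sum_{k=0}^\infty \frac{((m+1/2)_k)^2}{k!\,(2m+1+d/2)_k},$$
where $(x)_k$ is the Pochhammer symbol. The series is precisely ${}_2F_1(m+1/2,\,m+1/2;\,2m+1+d/2;\,1)$.

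Third, since the parameters satisfy $c - a - b = d/2 > 0$, Gauss's summation formula ${}_2F_1(a,b;c;1) = \Gamma(c)\Gamma(c-a-b)/(\Gamma(c-a)\Gamma(c-b))$ immediately gives, after observing $m+1/2+d/2 = (n+d)/2$,
$$I_{n,d} = -\frac{\Gamma((d-1)/2)\,(\Gamma(n/2))^2\,\Gamma(d/2)}{2\sqrt{\pi}\,(\Gamma((n+d)/2))^2}.$$
Multiplying by the prefactor $|\Sp^{d-2}|/|\Sp^{d-1}| = \Gamma(d/2)/(\sqrt{\pi}\,\Gamma((d-1)/2))$ yields $\eta_{n,d} = -(\Gamma(d/2))^2(\Gamma(n/2))^2/(2\pi(\Gamma((n+d)/2))^2)$. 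The closed form in the statement is then obtained by invoking the elementary identity $\Gamma(n/2) = n!!\,\sqrt{\pi}/(n\cdot 2^{(n-1)/2})$ valid for odd $n$ (verifiable from $\Gamma(k+1/2) = (2k)!\sqrt{\pi}/(4^k k!)$) together with $|\Sp^{d-1}| = 2\pi^{d/2}/\Gamma(d/2)$ to rewrite the expression.

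The main obstacle is the bookkeeping in the second step: packaging the factorials that appear in Lemmas \ref{lemma:taylorarccos} and \ref{lemma:tylrparity} as Pochhammer symbols of the correct ${}_2F_1$ series. Once the hypergeometric form and parameters are identified, Gauss's summation does all the remaining work, and the final simplification is routine.
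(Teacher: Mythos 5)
Your proof is correct, but it takes a genuinely different route from the paper's. The paper (Appendix \S\ref{sec:sec:russianguys}) follows \cite{bogomolny2007distance}: it rewrites $\eta_{n,d}$ via Erdelyi's Fourier--Bessel representation as $\frac{i^n(2\pi)^{d/2}}{\vert\Sp^{d-1}\vert}\int t^{1-d/2}J_{n+d/2-1}(t)\hat f(t)\,dt$, computes $\hat f(t)=\frac{1}{2it}(e^{it}-J_0(t))$, discards the $e^{it}$ contribution, and finishes with the Weber--Schafheitlin-type integral $\int_0^\infty t^{-\rho}J_\mu(t)J_\nu(t)\,dt$ from Bateman. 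You instead start from the series that the paper itself already established: your first display is precisely the odd-$n$ case of Proposition \ref{prop:taylorargument} (equivalently, it follows from Lemmas \ref{lemma:taylorarccos} and \ref{lemma:tylrparity}, with termwise integration justified by uniform convergence of the $\arccos$ series on $[-1,1]$, exactly as in the paper's own proof of that proposition). Your Pochhammer bookkeeping checks out --- with $n=2m+1$, the duplication identity turns $a_{m+k}(n+2k)!/(2^n(2k)!)$ into $\frac{2^{2k-1}}{\pi}\bigl(\Gamma(m+k+\tfrac12)\bigr)^2\cdot\frac{\sqrt{\pi}}{2^{2k}k!\,}\cdot\Gamma(k+\tfrac12)^{-1}\Gamma(k+\tfrac12)$, yielding the stated ${}_2F_1(m+\tfrac12,m+\tfrac12;2m+1+\tfrac d2;1)$ --- and Gauss's theorem applies since $c-a-b=d/2>0$; I verified the endpoint algebra, including the identity $\Gamma(n/2)=n!!\sqrt{\pi}/(n\,2^{(n-1)/2})$ for odd $n$, and the sanity check $\eta_{1,3}=-\pi/8$ (consistent with $\lambda_{1,3}=\pi^2/16$ in Example \ref{ex:compuS2eigen}). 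What your approach buys: it is more self-contained, using only tools already in the paper plus one classical summation formula, avoiding Bessel functions and the somewhat terse appeals to Erdelyi and Bateman; pleasingly, it also shows that the ``intricate'' series of Proposition \ref{prop:taylorargument} collapses in closed form by direct evaluation, which is exactly what Remark \ref{rmk:evaldifficult} notes the paper's argument sidesteps. What the paper's route buys is the connection to Fourier-analytic machinery, which generalizes to other kernels $f(\langle u,v\rangle)$ whose one-dimensional Fourier transforms are tractable even when no hypergeometric collapse is available.
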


\begin{corollary}[formula for positive eigenvalues of $\cmdsop_{\Sp^{d-1}}$]\label{cor:labdandodd}
For any $d\geq 3$ and odd $n\geq 1$, we have 
\begin{equation}\label{eq:lambda-formula}
\lambda_{n,d}=\frac{\pi^{\frac{d+1}{2}}n!!\Gamma(\frac{d}{2})\Gamma(\frac{n}{2})}{n2^{\frac{n+1}{2}}(\Gamma(\frac{n+d}{2}))^2\vert\Sp^{d-1}\vert}.\end{equation}
\end{corollary}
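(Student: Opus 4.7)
The plan is to derive Corollary \ref{cor:labdandodd} as an immediate combination of the two preceding results, Proposition \ref{prop:lbdeta} and Lemma \ref{lemma:russianguys}. Since both results have already been stated (and, in the case of Proposition \ref{prop:lbdeta}, already proved in the excerpt), essentially no new work is required beyond a single algebraic substitution.

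The argument would proceed as follows. Fix $d \geq 3$ and an odd integer $n \geq 1$. First, I would invoke Proposition \ref{prop:lbdeta}, which asserts that
\[
\lambda_{n,d} = -\frac{\pi}{2}\,\eta_{n,d}.
\]
Then I would substitute the closed-form expression from Lemma \ref{lemma:russianguys},
\[
\eta_{n,d} = -\frac{\pi^{(d-1)/2}\, n!!\, \Gamma(d/2)\, \Gamma(n/2)}{n\, 2^{(n-1)/2}\, \big(\Gamma((n+d)/2)\big)^2\, |\mathbb{S}^{d-1}|},
\]
into the identity above. The two minus signs cancel, the factor of $\pi$ combines with $\pi^{(d-1)/2}$ to yield $\pi^{(d+1)/2}$, and the factor of $1/2$ combines with $2^{(n-1)/2}$ in the denominator to give $2^{(n+1)/2}$. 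All remaining factors are carried through unchanged, producing precisely the formula in equation (\ref{eq:lambda-formula}).

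There is no genuine obstacle in this step; the entire content of the corollary resides in the two lemmas it invokes. The real work has already been done: Proposition \ref{prop:lbdeta} exploits the odd parity of $n$ together with the antisymmetry $P_{n,d}(-t) = -P_{n,d}(t)$ and the identity $\arccos(-t) = \pi - \arccos(t)$ to reduce the quadratic-in-$\arccos$ integral defining $\lambda_{n,d}$ to the linear-in-$\arccos$ integral defining $\eta_{n,d}$, while Lemma \ref{lemma:russianguys} supplies the explicit evaluation of the latter (deferred to the appendix). Thus the proof I would write is a one-line substitution, perhaps accompanied by the brief remark that the sign $\lambda_{n,d} > 0$ produced by this formula is consistent with the positivity for odd $n$ established in Proposition \ref{prop:taylorargument}.
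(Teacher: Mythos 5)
Your proposal is correct and matches the paper's route exactly: the paper states Corollary \ref{cor:labdandodd} as an immediate consequence of Proposition \ref{prop:lbdeta} ($\lambda_{n,d}=-\frac{\pi}{2}\eta_{n,d}$) and Lemma \ref{lemma:russianguys}, giving no separate proof precisely because the substitution you describe is the whole argument. Your bookkeeping of the sign cancellation and of the factors $\pi\cdot\pi^{\frac{d-1}{2}}=\pi^{\frac{d+1}{2}}$ and $2\cdot 2^{\frac{n-1}{2}}=2^{\frac{n+1}{2}}$ is accurate.
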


\begin{remark}\label{rmk:evaldifficult}
Observe that although the formula for $\lambda_{n,d}$ given by Proposition \ref{prop:taylorargument} is very intricate, Corollary \ref{cor:labdandodd} provides an explicit expression $\lambda_{n,d}$ for odd $n$ without evaluating the formula.
\end{remark}

\begin{remark}
Observe that, by item (2) of Proposition \ref{prop:generrorbdd}, Remark \ref{rmk:Sd-1distortion}, and Theorem \ref{thm:cMDSSd-1dist},
$$\trng(\cmdsop_{\Sp^{d-1}})=\frac{\big(\dis({\Sp^{d-1}})\big)^2}{2}=\frac{\pi}{2}\diamna_1(\Sp^{d-1})-\frac{1}{2}\big(\diamna_2(\Sp^{d-1})\big)^2=\frac{\pi^2}{4}-\frac{1}{2}\big(\diamna_2(\Sp^{d-1})\big)^2.$$

As an application of the above fact, we can verify that, despite the apparent complexity of equation (\ref{eq:lambda-formula}), evaluating the series of $\lambda_{n,d}$ for odd $n\geq 1$ given in Corollary \ref{cor:labdandodd} becomes very simple; note that $\tr(\cmdsop_{\Sp^{d-1}})=\sum_{\text{odd }n\geq 1}\lambda_{n,d}-\trng(\cmdsop_{\Sp^{d-1}})=\frac{1}{2}\big(\diamna_2(\mX)\big)^2$ (cf. Remark \ref{rmk:tracediam2}). Hence, the sum of the positive eigenvalues of $\cmdsop_{\Sp^{d-1}}$ is,
$$\sum_{\text{odd }n\geq 1}\lambda_{n,d}=\frac{1}{2}\big(\diamna_2(\mX)\big)^2+\trng(\cmdsop_{\Sp^{d-1}})=\frac{\pi^2}{4}.$$
Moreover, with this we can also establish that $\diamna_2(\widehat{\Sp}^{d-1})$ does not depend on $d$:
$$\diamna_2(\widehat{\Sp}^{d-1})=\sqrt{2\,\tr(\cmdsop_{\widehat{\Sp}^{d-1}})}=\sqrt{2\,\sum_{\text{odd }n\geq 1}\lambda_{n,d}}=\frac{\pi}{\sqrt{2}}.$$
\end{remark}

\subsection{Traceability of $\cmdsop_{\Sp^{d-1}}$}\label{sec:sec:Sd-1traceable}

We already know that $\cmdsop_{\Sp^1}$ is trace class. In this section, we prove the following theorem which establishes the traceability of $\cmdsop_{\Sp^{d-1}}$ for any $d\geq 3$.

\begin{theorem}\label{thm:Sd-1tr}
For any $d\geq 3$, $\cmdsop_{\Sp^{d-1}}$ is trace class.
\end{theorem}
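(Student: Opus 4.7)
The plan is to control $\|\cmdsop_{\Sp^{d-1}}\|_1 = \sum_{n\geq 1} N_{n,d}\,|\lambda_{n,d}|$ by extracting it as the values of an explicit generating function built from the Poisson identity for Legendre polynomials. The Poisson identity (item (7) of Lemma \ref{lemma:Legdrprop}) provides the closed form
$$\sum_{n=0}^\infty N_{n,d}\,P_{n,d}(t)\, r^n \;=\; \frac{1-r^2}{(1+r^2-2rt)^{d/2}}, \qquad r\in(0,1),\ t\in[-1,1].$$
Fix $r\in(0,1)$, multiply both sides by the non-negative, integrable function $\arccos^2(t)(1-t^2)^{(d-3)/2}$, and integrate over $[-1,1]$. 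Absolute convergence on compact $r$-intervals justifies interchanging the sum and integral, and Corollary \ref{cor:eigensphere} then turns the left-hand side into the power series
$$F(r) \;:=\; \sum_{n\geq 0} N_{n,d}\,\widetilde{\lambda}_{n,d}\, r^n
\;=\; -\frac{|\Sp^{d-2}|}{2|\Sp^{d-1}|}\,(1-r^2)\!\int_{-1}^1 \frac{\arccos^2(t)(1-t^2)^{(d-3)/2}}{(1+r^2-2rt)^{d/2}}\,dt,$$
where $\widetilde{\lambda}_{n,d}=\lambda_{n,d}$ for $n\geq 1$ and $\widetilde{\lambda}_{0,d}$ denotes the analogous $n=0$ Funk--Hecke coefficient (a finite constant that is not an eigenvalue of $\cmdsop_{\Sp^{d-1}}$ since constants lie in $\ker\cmdsop_{\Sp^{d-1}}$).

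Next I would apply Lemma \ref{lemma:tlfrtrc} to the integral on the right (substituting $t\mapsto -t$ in the integrand to match the $(1+r^2+2rt)$ denominator in the hypothesis of that lemma; since $(\pi-\arccos t)^2\leq \pi^2$ on $[-1,1]$ this preserves the same type of $\arccos^2$-style bound, or alternatively Lemma \ref{lemma:tlfrtrc} can be read with either sign of $2rt$ by parity in $r$). This yields a uniform bound $|F(r)|\leq C_d$ for all $r\in(0,1)$, where $C_d = \pi^2 |\Sp^{d-2}|/|\Sp^{d-1}|$. The parity identity $P_{n,d}(-t)=(-1)^n P_{n,d}(t)$ shows that $F(-r)$ satisfies the same bound. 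Taking half-sum and half-difference separates the two parities:
$$\sum_{\text{even }n\geq 0} N_{n,d}\,\widetilde{\lambda}_{n,d}\, r^n \;=\; \tfrac{1}{2}\bigl(F(r)+F(-r)\bigr), \qquad
\sum_{\text{odd }n\geq 1} N_{n,d}\,\widetilde{\lambda}_{n,d}\, r^n \;=\; \tfrac{1}{2}\bigl(F(r)-F(-r)\bigr),$$
so each of these power series is bounded in absolute value by $C_d$ for every $r\in(0,1)$.

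By Proposition \ref{prop:taylorargument}, all the coefficients $\widetilde{\lambda}_{n,d}$ for odd $n$ are strictly positive, while those for even $n\geq 2$ are strictly negative (and $\widetilde{\lambda}_{0,d}<0$). Hence each of the two separated series has coefficients of a single sign, so I can invoke monotone convergence (equivalently Abel's theorem for positive series) to let $r\uparrow 1$ and conclude
$$\sum_{\text{odd }n\geq 1} N_{n,d}\,\lambda_{n,d} \;<\;\infty, \qquad \sum_{\text{even }n\geq 2} N_{n,d}\,|\lambda_{n,d}| \;<\;\infty.$$
Adding these gives $\|\cmdsop_{\Sp^{d-1}}\|_1<\infty$, establishing that $\cmdsop_{\Sp^{d-1}}$ is trace class.

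The main obstacle I anticipate is purely bookkeeping: aligning the sign convention of the Poisson kernel with the form of Lemma \ref{lemma:tlfrtrc}, and correctly isolating the $n=0$ term (which must be removed from $F$ before interpreting the power series as the trace norm, since $\widetilde{\lambda}_{0,d}$ is \emph{not} an eigenvalue of $\cmdsop_{\Sp^{d-1}}$). Both are resolved by one change of variable $t\mapsto -t$ and by subtracting the single finite constant $\widetilde{\lambda}_{0,d}$, after which the sign analysis coming from Proposition \ref{prop:taylorargument} does all the work.
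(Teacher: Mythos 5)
Your proposal is correct and follows essentially the same route as the paper's proof: both combine Corollary \ref{cor:eigensphere} with the Poisson identity (item (7) of Lemma \ref{lemma:Legdrprop}) and the uniform bound of Lemma \ref{lemma:tlfrtrc} to control the generating function $\sum_{n} N_{n,d}\,\lambda_{n,d}\,r^n$, use the sign information from Proposition \ref{prop:taylorargument}, and then let $r\uparrow 1$. The only cosmetic differences are that the paper absorbs all signs at once by evaluating the series at $-r$ (so Lemma \ref{lemma:tlfrtrc} applies verbatim, with no $t\mapsto -t$ adjustment) and closes with an explicit contradiction argument, whereas you split parities via $\tfrac{1}{2}\bigl(F(r)\pm F(-r)\bigr)$ and invoke monotone convergence directly.
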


Recall from page \pageref{sec:sec:sec:trblmmspace} that $\cmdsop_{\Sp^{d-1}}$ being trace class means that $\|\cmdsop_{\Sp^{d-1}}\|_1$, the sum of absolute values of all eigenvalues of $\cmdsop_{\Sp^{d-1}}$ (with multiplicity), is finite. Via Corollary \ref{cor:eigensphere}, this  leads to the condition $\sum_{n=1}^\infty \vert\lambda_{n,d}\vert N_{n,d}<\infty$. Therefore, in order to prove Theorem \ref{thm:Sd-1tr}, perhaps the most natural strategy would be to directly estimate the rate of decay of $\vert\lambda_{n,d}\vert$ to zero and combine this with the well known fact that the multiplicities satisfy $N_{n,d}=O(n^{d-2})$ (see page \pageref{page:multi}). More precisely,  in order to guarantee that the aforementioned series converges, it would be enough to show that $\vert\lambda_{n,d}\vert=O(n^{-d+1-\delta})$ for some $\delta>0$. Whereas this approach is successful for the case when $d=3$  (even when the metric on $\Sp^2$ is more general than the Euclidean or geodesic one; see \S\ref{sec:sec:Sd-1diffmet}), it turns out to be   difficult to implement for higher values of $d$.

However, a strategy exploiting the so called Poisson identity for Legendre polynomials (cf. item (7) of Lemma \ref{lemma:Legdrprop}) leads to a successful general approach. By following this strategy, in the proof of Theorem \ref{thm:Sd-1tr}, we will require the integrability of a certain function in order to apply the dominated convergence theorem. The following technical lemma will be useful for that; its proof is given in Appendix \S\ref{sec:otherproofs}.

\begin{lemma}\label{lemma:tlfrtrc}
Suppose a positive integer $d\geq 3$ is given. Then,
$$\int_{-1}^1\frac{\arccos^2(t)(1-t^2)^{\frac{d-3}{2}}}{(1+r^2+2rt)^{\frac{d}{2}}}\,dt\in \left[0,\frac{2\pi^2}{1-r^2}\right]
\,\,\mbox{for each $r\in (0,1)$.}$$
\end{lemma}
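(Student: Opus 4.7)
The lower bound is trivial: for $r\in(0,1)$ and $t\in[-1,1]$ we have $1+r^2+2rt\geq (1-r)^2>0$, and both $\arccos^2(t)$ and $(1-t^2)^{(d-3)/2}$ are nonnegative on $[-1,1]$, so the integrand is pointwise $\geq 0$.

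For the upper bound, the plan is to strip off the $\arccos^2(t)$ factor using the crude pointwise bound $\arccos^2(t)\leq \pi^2$ on $[-1,1]$, reducing the question to proving
\[
J_d(r) := \int_{-1}^1 \frac{(1-t^2)^{(d-3)/2}}{(1+r^2+2rt)^{d/2}}\,dt \;\leq\; \frac{2}{1-r^2}.
\]
The natural tool is the Poisson identity for Legendre polynomials (item (7) of Lemma \ref{lemma:Legdrprop}),
\[
\frac{1-r^2}{(1-2rt+r^2)^{d/2}} \;=\; \sum_{n=0}^{\infty} N_{n,d}\,P_{n,d}(t)\,r^n,
\]
which converges uniformly on $[-1,1]$ for each fixed $r\in(0,1)$. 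Substituting $r\mapsto -r$ yields the same identity with $(1+2rt+r^2)^{d/2}$ in the denominator and $(-r)^n$ in the sum.

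The key step is then to multiply through by $(1-t^2)^{(d-3)/2}$ and integrate term by term over $[-1,1]$. By the orthogonality of Legendre polynomials against the constant function $P_{0,d}\equiv 1$ (item (6) of Lemma \ref{lemma:Legdrprop}), every term with $n\geq 1$ vanishes, leaving only the $n=0$ contribution:
\[
(1-r^2)\,J_d(r) \;=\; N_{0,d}\int_{-1}^1 (1-t^2)^{(d-3)/2}\,dt \;=\; \int_{-1}^1 (1-t^2)^{(d-3)/2}\,dt \;=:\; c_d.
\]
Hence $J_d(r)=c_d/(1-r^2)$ exactly. The termwise integration is justified by uniform convergence of the Poisson series for fixed $r<1$, which should be the easiest part.

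The final check is to show $c_d\leq 2$ for all $d\geq 3$. For $d=3$, $(1-t^2)^{0}=1$ gives $c_3=2$, saturating the bound. For $d\geq 4$ we have $(1-t^2)^{(d-3)/2}\leq 1$ pointwise on $[-1,1]$, so $c_d\leq 2$ automatically. Combining these bounds,
\[
\int_{-1}^1 \frac{\arccos^2(t)(1-t^2)^{(d-3)/2}}{(1+r^2+2rt)^{d/2}}\,dt \;\leq\; \pi^2\, J_d(r) \;=\; \frac{\pi^2 c_d}{1-r^2} \;\leq\; \frac{2\pi^2}{1-r^2},
\]
which is the required estimate. The only subtle point is the appeal to uniform convergence of the Poisson series to exchange sum and integral; everything else is algebraic.
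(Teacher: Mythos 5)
Your proof is correct, but it takes a genuinely different route from the paper's. The paper stays entirely elementary: after the same reduction $\arccos^2(t)\leq\pi^2$, it uses the pointwise inequality $1-t^2\leq 1+r^2+2rt$ (equivalent to $(r+t)^2\geq 0$) to cancel the factor $(1-t^2)^{\frac{d-3}{2}}$ against part of the denominator, leaving $\pi^2\int_{-1}^1(1+r^2+2rt)^{-\frac{3}{2}}\,dt$, which a direct antiderivative computation shows equals exactly $\frac{2\pi^2}{1-r^2}$ for every $d$. You instead evaluate $J_d(r)$ \emph{exactly} via the Poisson identity (item (7) of Lemma \ref{lemma:Legdrprop}, with $r\mapsto -r$, or equivalently via item (4)) and the orthogonality relation (item (6)), obtaining $(1-r^2)J_d(r)=c_d$ where $c_d=\int_{-1}^1(1-t^2)^{\frac{d-3}{2}}\,dt=\frac{\vert\Sp^{d-1}\vert}{\vert\Sp^{d-2}\vert}$; the termwise integration is indeed licit since the series converges uniformly in $t$ for fixed $r$ and the extra factor is bounded by $1$. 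What each approach buys: the paper's argument avoids any series manipulation, while yours is sharper for $d\geq 4$ (where $c_d<2$ strictly) and yields a clean identity. In fact, if you feed your exact value into the proof of Theorem \ref{thm:Sd-1tr}, the prefactor $\frac{\vert\Sp^{d-2}\vert}{2\vert\Sp^{d-1}\vert}$ combines with $c_d$ to give $\frac{\vert\Sp^{d-2}\vert}{\vert\Sp^{d-1}\vert}\,c_d=1$, so the constant there improves to $C_d=\frac{\pi^2}{2}-\frac{1}{2}\big(\diamna_2(\Sp^{d-1})\big)^2$, independent of the ratio of sphere volumes --- a small but pleasant dividend; it is also stylistically consistent, since Theorem \ref{thm:Sd-1tr} itself runs on the Poisson identity. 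For $d=3$ your bound saturates at $c_3=2$ and the two proofs give the identical constant.
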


With this we can prove the following theorem.

\begin{proof}[Proof of Theorem \ref{thm:Sd-1tr}]
Proving that $\cmdsop_{\Sp^{d-1}}$ is trace class is equivalent to showing that  $\|\cmdsop_{\Sp^{d-1}}\|_1 = \sum_{n=1}^\infty \vert\lambda_{n,d}\vert N_{n,d}<\infty$ (see Theorem \ref{thm:tracesingvaluecndtn}). Fix arbitrary $r\in (0,1)$. Then,
\begin{align*}
    &\sum_{n=1}^\infty \vert\lambda_{n,d}\vert N_{n,d}r^n\\
    &=-\sum_{n=1}^\infty \lambda_{n,d}(-r)^n N_{n,d}\quad(\because\, \lambda_{n,d}>0\text{ for odd }n\text{ and }\lambda_{n,d}<0\text{ for even }n \text{ by Proposition \ref{prop:taylorargument}})\\
    &=\frac{\vert\Sp^{d-2}\vert}{2\vert\Sp^{d-1}\vert}\sum_{n=1}^\infty\int_{-1}^1 P_{n,d}(t)\arccos^2(t)(1-t^2)^{\frac{d-3}{2}}(-r)^n N_{n,d}\,dt\\
    &=\frac{\vert\Sp^{d-2}\vert}{2\vert\Sp^{d-1}\vert}\left(\sum_{n=0}^\infty\int_{-1}^1 P_{n,d}(t)\arccos^2(t)(1-t^2)^{\frac{d-3}{2}}(-r)^n N_{n,d}\,dt-\int_{-1}^1\arccos^2(t)(1-t^2)^{\frac{d-3}{2}}\,dt\right)\\
    &=\frac{\vert\Sp^{d-2}\vert}{2\vert\Sp^{d-1}\vert}\sum_{n=0}^\infty\int_{-1}^1 P_{n,d}(t)\arccos^2(t)(1-t^2)^{\frac{d-3}{2}}(-r)^n N_{n,d}\,dt-\frac{1}{2}\big(\diamna_2(\Sp^{d-1})\big)^2\\
    &=\frac{\vert\Sp^{d-2}\vert}{2\vert\Sp^{d-1}\vert}\int_{-1}^1\sum_{n=0}^\infty P_{n,d}(t)\arccos^2(t)(1-t^2)^{\frac{d-3}{2}}(-r)^n N_{n,d}\,dt-\frac{1}{2}\big(\diamna_2(\Sp^{d-1})\big)^2.
\end{align*}

Here, note that the last equality holds by the dominated convergence theorem since, by item (7) of Lemma \ref{lemma:Legdrprop} and Lemma \ref{lemma:tlfrtrc}, the series
$$\sum_{n=0}^k P_{n,d}(t)\arccos^2(t)(1-t^2)^{\frac{d-3}{2}}(-r)^n N_{n,d}\,\,\stackrel{k\rightarrow\infty}{\longrightarrow}\,\,\frac{(1-r^2)\arccos^2(t)(1-t^2)^{\frac{d-3}{2}}}{(1+r^2+2rt)^{\frac{d}{2}}}\,\,\,\,\mbox{uniformly}$$
where the limit function is non-negative and integrable. Then,
\begin{align*}
    &\frac{\vert\Sp^{d-2}\vert}{2\vert\Sp^{d-1}\vert}\int_{-1}^1\sum_{n=0}^\infty P_{n,d}(t)\arccos^2(t)(1-t^2)^{\frac{d-3}{2}}(-r)^n N_{n,d}\,dt-\frac{1}{2}\big(\diamna_2(\Sp^{d-1})\big)^2\\
    &=\frac{\vert\Sp^{d-2}\vert}{2\vert\Sp^{d-1}\vert}\int_{-1}^1\frac{(1-r^2)\arccos^2(t)(1-t^2)^{\frac{d-3}{2}}}{(1+r^2+2rt)^{\frac{d}{2}}}\,dt-\frac{1}{2}\big(\diamna_2(\Sp^{d-1})\big)^2\\
    &\leq\frac{\vert\Sp^{d-2}\vert\pi^2}{\vert\Sp^{d-1}\vert}-\frac{1}{2}\big(\diamna_2(\Sp^{d-1})\big)^2\quad(\because\text{ Lemma \ref{lemma:tlfrtrc}}).
\end{align*}
Let $C_d:=\frac{\vert\Sp^{d-2}\vert\pi^2}{\vert\Sp^{d-1}\vert}-\frac{1}{2}\big(\diamna_2(\Sp^{d-1})\big)^2>0$. We have  proved $\sum_{n=1}^\infty \vert\lambda_{n,d}\vert N_{n,d}r^n< C_d$ for every $r\in(0,1)$.
\medskip

Now, to obtain a contradiction, suppose $\sum_{n=1}^\infty \vert\lambda_{n,d}\vert N_{n,d}=\infty$ and  choose a positive integer $m>0$ such that
$\sum_{n=1}^m \vert\lambda_{n,d}\vert N_{n,d}>2C_d.$
Also, choose $r\in (0,1)$ so that $r>2^{-\frac{1}{m}}$. Then,
$$\sum_{n=1}^\infty \vert\lambda_{n,d}\vert N_{n,d}r^n\geq\sum_{n=1}^m \vert\lambda_{n,d}\vert N_{n,d}r^n\geq \left(\sum_{n=1}^m \vert\lambda_{n,d}\vert N_{n,d}\right)r^m>2C_dr^m>C_d,$$
which is a contradiction. Hence, $\sum_{n=1}^\infty \vert\lambda_{n,d}\vert N_{n,d}<\infty$
as we required.
\end{proof}

\subsection{Metric transforms of $\Sp^2$}\label{sec:sec:Sd-1diffmet}

In this section, we introduce a family of metrics on $\Sp^2$ all of which ensure the traceability of their respective cMDS operators.

Suppose $f:[-1,1]\rightarrow\R$ is an almost everywhere (with respect to the Lebesgue measure) smooth function such that
$$d_f:\Sp^2\times\Sp^2\longrightarrow\R\text{  s.t.  }(u,v)\longmapsto f(\langle u,v\rangle)$$
becomes a metric on $\Sp^2$ compatible with the standard topology of $\Sp^2$. Then, this leads to considering the mm-space $\Sp^2_f:=(\Sp^2,d_f,\nvol_{\Sp^2})$ where the metric on the sphere is now given by $d_f$ instead of the usual Euclidean or geodesic metrics. Let $M(\Sp^2)$ denote the set of all maps $f:[-1,1]\rightarrow\R$ satisfying the above properties. Some examples of $f$ in $M(\Sp^2)$ are given snowflake transforms of the Euclidean or geodesic distance on $\Sp^2$:
\begin{itemize}
    \item $f(t)=\big(\arccos (t)\big)^{\frac{1}{p}}$ for $p\geq 1$ (if $p=1$, $d_f$ is the geodesic metric on $\Sp^2$).
    
    \item $f(t)=\big(\sqrt{2-2t}\big)^{\frac{1}{p}}$ for $p\geq 1$ (if $p=1$, $d_f$ is the Euclidean metric on $\Sp^2$ inherited from $\R^d$ for any $d\geq 2$).
\end{itemize}

Moreover, as in Corollary \ref{cor:eigensphere}, we can verify that any spherical harmonic $Y_n^3\in\mathbb{Y}_n^3$ is an eigenfunction of $\cmdsop_{\Sp^2_f}$ for the eigenvalue
\begin{equation}\label{eq:lambdan2f}
    \lambda_{n,3}^f:=-\frac{\vert\Sp^1\vert}{2\vert\Sp^2\vert}\int_{-1}^1 P_{n,3}(t)f^2(t)\,dt.
\end{equation}

Theorem  \ref{thm:S2ftraceable} below follows by estimating the rate of convergence of $\vert\lambda_{n,3}^f\vert$ to $0$ with respect to $n$ (cf. Lemma \ref{lemma:lambdaf3dbdd}). For each $f\in M(\Sp^2)$, we define the following map:
$$ D_f:\Sp^2\longrightarrow\R\text{  s.t.  }u\longmapsto d_f^2(e_3,u)\text{ where }e_3:=(0,0,1).$$

Below, $\Delta$ denotes the Laplace-Beltrami operator on $\Sp^2$. We now reduce the question of traceability of $\cmdsop_{\Sp^2_f}$ to a certain integrability property of the Laplacian of the function $D_f$.

\begin{theorem}\label{thm:S2ftraceable}
Let $f\in M(\Sp^2)$ and $\delta\in (0,1)$ be s.t.  $\Vert\Delta D_f\Vert_{L^{1+\delta}(\vol_{\Sp^2})}<\infty$. Then $\Sp^2_f\in\mwtr$.
\end{theorem}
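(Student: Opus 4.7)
The plan is to obtain a sufficiently fast decay estimate for $\vert\lambda_{n,3}^f\vert$ by integrating by parts on $\Sp^2$ against the zonal eigenfunctions of the Laplace-Beltrami operator $\Delta$. Parametrizing $\Sp^2$ via $t = \langle e_3, u\rangle$ and an azimuthal angle $\phi \in [0, 2\pi)$, and noting that $D_f(u) = f^2(\langle e_3, u\rangle)$ is zonal, equation (\ref{eq:lambdan2f}) rewrites as
$$\lambda_{n,3}^f = -\frac{1}{2\vert\Sp^2\vert}\int_{\Sp^2} D_f(u)\, P_{n,3}(\langle e_3, u\rangle) \, d\vol_{\Sp^2}(u),$$
since integrating $u\mapsto P_{n,3}(\langle e_3, u\rangle)$ over $\phi$ contributes the factor $\vert\Sp^1\vert$ appearing in (\ref{eq:lambdan2f}).

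Next, since $u\mapsto P_{n,3}(\langle e_3, u\rangle)$ lies in $\mathbb{Y}_n^3$, it is an eigenfunction of $\Delta$ with eigenvalue $-n(n+1)$. Invoking the self-adjointness of $\Delta$ (to be justified by a mollification argument preserving the zonal structure of $D_f$, using the $L^{1+\delta}$ control on $\Delta D_f$), integration by parts gives
$$\lambda_{n,3}^f = \frac{1}{2\vert\Sp^2\vert\, n(n+1)} \int_{\Sp^2} \Delta D_f(u)\, P_{n,3}(\langle e_3, u\rangle)\, d\vol_{\Sp^2}(u).$$
Applying Hölder's inequality with conjugate exponents $1+\delta$ and $p := (1+\delta)/\delta$, we obtain
$$\vert\lambda_{n,3}^f\vert \leq \frac{\Vert \Delta D_f\Vert_{L^{1+\delta}(\vol_{\Sp^2})}}{2\vert\Sp^2\vert\, n(n+1)} \, \Vert P_{n,3}(\langle e_3, \cdot\rangle)\Vert_{L^p(\vol_{\Sp^2})}.$$

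The decay in $n$ comes from a standard $L^p$-interpolation bound for $P_{n,3}$: since $\delta<1$ gives $p>2$, the inequality $\Vert g\Vert_{L^p}\leq \Vert g\Vert_{\infty}^{(p-2)/p}\Vert g\Vert_{L^2}^{2/p}$ combined with $\vert P_{n,3}\vert \leq 1$ on $[-1,1]$ and the Legendre orthogonality relation $\Vert P_{n,3}(\langle e_3, \cdot\rangle)\Vert_{L^2(\vol_{\Sp^2})}^2 = \frac{4\pi}{2n+1}$ yields $\Vert P_{n,3}(\langle e_3, \cdot\rangle)\Vert_{L^p(\vol_{\Sp^2})} \leq C\, n^{-1/p}$. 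Hence $\vert\lambda_{n,3}^f\vert \leq C'\, n^{-2-\delta/(1+\delta)}$, and since $N_{n,3} = 2n+1$,
$$\sum_{n\geq 1} N_{n,3}\,\vert\lambda_{n,3}^f\vert \leq C'' \sum_{n\geq 1} n^{-1-\delta/(1+\delta)} < \infty.$$
Theorem \ref{thm:tracesingvaluecndtn} then yields $\cmdsop_{\Sp^2_f}\in$ trace class, that is $\Sp^2_f\in\mwtr$.

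The main technical hurdle I foresee is the rigorous justification of the integration by parts: the hypothesis only puts $\Delta D_f$ in $L^{1+\delta}$, so one must interpret $\Delta D_f$ in a weak/distributional sense and verify its pairing with the smooth zonal test functions $P_{n,3}(\langle e_3, \cdot\rangle)$ through an approximation scheme (for instance, mollifying $D_f$ along $\phi$, which preserves its zonal form and lets one pass to the limit using the $L^{1+\delta}$ bound). Everything else is a routine combination of the Funk--Hecke/zonal structure already recorded in the paper and a standard $L^p$ interpolation estimate for Legendre polynomials.
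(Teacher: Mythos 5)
Your proposal is correct and follows essentially the same route as the paper's own proof (Lemma \ref{lemma:lambdaf3dbdd} plus the summation step): write $\lambda_{n,3}^f$ as an integral of $D_f$ against the zonal Legendre harmonic, integrate by parts to pick up the factor $\frac{1}{n(n+1)}$ (the paper does this in two steps via items (8) and (9) of Lemma \ref{lemma:Legdrprop}, which is exactly your self-adjointness/eigenfunction argument), apply H\"older with exponents $1+\delta$ and $1+\frac{1}{\delta}$, and bound $\Vert P_{n,3}(\langle e_3,\cdot\rangle)\Vert_{L^{1+1/\delta}(\vol_{\Sp^2})}\leq C_\delta N_{n,3}^{-1/(1+1/\delta)}$ by the same $L^\infty$--$L^2$ interpolation, yielding the identical summable decay $n^{-1-\delta/(1+\delta)}$. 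One small remark: the paper justifies the by-parts step simply by the a.e.\ smoothness of $f\in M(\Sp^2)$ rather than by mollification, and your suggested mollification ``along $\phi$'' would not actually regularize the zonal function $D_f$ (it is constant in $\phi$); one should instead smooth in the polar variable $t$.
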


\begin{example}\label{ex:applmetransform}
As an application of Theorem \ref{thm:S2ftraceable}, we verify that $\Sp^2_f\in\mwtr$ when:
\begin{itemize}
\item $f(t)=\arccos(t)$ (so that $d_f=d_{\Sp^2}$ and we recover Theorem \ref{thm:Sd-1tr} for the case when $d=3$) and 
\item $f(t)=(2-2t)^{\frac{1}{4}}$ (so that $d_f$ is the square root of the Euclidean metric). 
\end{itemize}

Note that, in the latter case, $\Sp^2_f$ is a metric space of \emph{negative type} \cite[Section 6.1]{deza-laurent}. Indeed, this follows form the facts that (1) Any  metric space  $(X,d_X)$ admitting an isometric embedding into Euclidean space is of negative type \cite[Theorem 6.2.2]{deza-laurent} and (2) If $(Z,d_Z)$ is a metric space of negative type, then $(Z,d_Z^{1/2})$ is also of negative type \cite[Example 9.1.6]{deza-laurent}. As a consequence, any finite subset of $\Sp^2_f$ can be isometrically embedded into a finite dimensional Euclidean space and $\Sp^2_f$ itself can be isometrically embedded into Hilbert space (see \cite[Theorem 6.2.2]{deza-laurent}). Therefore, this suggests trying to find one such embedding via cMDS, which in turn requires verifying that $\cmdsop_{\Sp_f^2}$ is traceable.\footnote{Note that one does not, however, expect that the entire $\Sp^2_f$ admits a finite dimensional isometric embedding into $\ell^2$. In fact, one can prove that all (the infinitely many) eigenvalues of the cMDS operator associated to $\Sp^2_f$ are positive in this case.} See \S\ref{sec:sec:metransformexdet} below for details about this example. 
\end{example}

Note that the definitions of $d_f$, $\Sp^{d-1}_f$, and $M(\Sp^{d-1})$ can be extended to the case of an arbitrary $d\geq 3$. Unfortunately, though, for $d\geq 4$ the argument given in the proof of Lemma \ref{lemma:lambdaf3dbdd} does not provide a fast enough rate of convergence to $0$ which could help establish the traceability of the cMDS operator of  $\Sp^{d-1}_f$ for an arbitrary $f\in M(\Sp^{d-1})$. Moreover, the argument via the Poisson identity used in  Theorem \ref{thm:Sd-1tr} does not seem applicable to the study $\Sp^{d-1}_f$ for an arbitrary $f\in M(\Sp^{d-1})$ for $d\geq 4$. The general classification of $f$ such that $\Sp^{d-1}_f\in\mwtr$ is currently not known. Related questions can be traced back to some classical papers by Von Neumann and Schoenberg such as \cite{von1941fourier}.

\subsubsection{Details about Example \ref{ex:applmetransform}.}\label{sec:sec:metransformexdet}

The example hinges on the following two claims.

\begin{claim}\label{claim:farccos}
If $f(t)=\arccos(t)$, then $\Sp^2_f\in\mwtr$.
\end{claim}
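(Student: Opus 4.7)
The plan is to apply Theorem \ref{thm:S2ftraceable} directly. With $f(t)=\arccos(t)$ we have $d_f=d_{\Sp^2}$ and $D_f(u)=d_{\Sp^2}^2(e_3,u)$, so I will parametrize $\Sp^2$ by the colatitude $\theta=\arccos(\langle e_3,u\rangle)\in[0,\pi]$ together with a longitude, making $D_f$ a function of $\theta$ alone, namely $D_f=\theta^2$. The task then reduces to verifying that $\Vert\Delta D_f\Vert_{L^{1+\delta}(\vol_{\Sp^2})}<\infty$ for some $\delta\in(0,1)$.

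For a zonal function $g(\theta)$ on $\Sp^2$ the Laplace--Beltrami operator takes the standard form $\Delta g=\frac{1}{\sin\theta}\frac{d}{d\theta}\bigl(\sin\theta\,g'(\theta)\bigr)$. Plugging in $g(\theta)=\theta^2$ I get
\[
\Delta D_f(u)=2+2\theta\cot\theta,\qquad \theta=\arccos(\langle e_3,u\rangle).
\]
This expression is smooth away from the cut locus $\theta=\pi$, and near $\theta=0$ one has $\theta\cot\theta\to 1$, so the only potential singularity is at the antipode $-e_3$. Near $\theta=\pi$ I write $\theta=\pi-\varepsilon$ and obtain $\theta\cot\theta\sim-\pi/\varepsilon$, so $|\Delta D_f|\sim 2\pi/\varepsilon$.

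The volume element in these coordinates is $\sin\theta\,d\theta\,d\phi\sim\varepsilon\,d\varepsilon\,d\phi$ as $\theta\to\pi$, so locally the integrand in the $L^{1+\delta}$ norm behaves like $\varepsilon^{-(1+\delta)}\cdot\varepsilon=\varepsilon^{-\delta}$, which is integrable near $\varepsilon=0$ for every $\delta\in(0,1)$. On the rest of $\Sp^2$ the function $\Delta D_f$ is bounded, so the global $L^{1+\delta}$ norm is finite for any such $\delta$. Hence the hypothesis of Theorem \ref{thm:S2ftraceable} is met and $\Sp^2_f\in\mwtr$.

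The only step requiring any care is the asymptotic analysis near the antipode, where the geodesic distance squared fails to be smooth; but this is precisely compensated by the vanishing of $\sin\theta$ in the volume form, which is why allowing the $L^{1+\delta}$ flexibility (rather than demanding a bounded Laplacian) is essential. I do not expect any further obstacle.
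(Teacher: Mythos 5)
Your proof is correct and follows essentially the same route as the paper's: both verify the hypothesis $\Vert\Delta D_f\Vert_{L^{1+\delta}(\vol_{\Sp^2})}<\infty$ of Theorem \ref{thm:S2ftraceable} by computing $\Delta D_f$ explicitly (the paper via $\Delta d^2=2(\Vert\nabla d\Vert^2+d\,\Delta d)$ together with Lemma \ref{lemma:Laplaciandistance}, you via the zonal formula $\frac{1}{\sin\theta}\frac{d}{d\theta}(\sin\theta\,g')$) and then checking that the only singularity, of order $1/\varepsilon$ at the antipode, is absorbed by the factor $\sin\theta\sim\varepsilon$ in the volume element, which is exactly the content of the paper's explicit bound on $\int_0^\pi\theta^{1+\delta}\vert\cos\theta\vert^{1+\delta}(\sin\theta)^{-\delta}\,d\theta$. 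The only discrepancy --- your $2+2\theta\cot\theta$ versus the paper's effective $2(1-\theta\cot\theta)$ --- is an opposite sign convention for the Laplace--Beltrami operator and is immaterial, since only $\vert\Delta D_f\vert$ enters the estimate.
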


\begin{claim}\label{claim:fsqrtEuclid}
If $f(t)=(2-2t)^{\frac{1}{4}}$, then $\Sp^2_f\in\mwtr$.
\end{claim}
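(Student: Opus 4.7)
The plan is to invoke Theorem \ref{thm:S2ftraceable} with a suitable $\delta\in(0,1)$. First I would verify that $f(t)=(2-2t)^{1/4}$ belongs to $M(\Sp^2)$. The function $f$ is smooth on $(-1,1)$ and the induced distance satisfies $d_f(u,v)=(2-2\langle u,v\rangle)^{1/4}=\Vert u-v\Vert^{1/2}$ on $\Sp^2$, which is the classical snowflake of order $1/2$ of the Euclidean metric; this is well known to be a metric and clearly induces the standard topology on $\Sp^2$.

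Next, the map $D_f$ needed in Theorem \ref{thm:S2ftraceable} simplifies in a convenient way: $D_f(u)=d_f^2(e_3,u)=f^2(\langle e_3,u\rangle)=\sqrt{2-2\langle e_3,u\rangle}=\Vert e_3-u\Vert$. Parametrizing $\Sp^2$ by spherical coordinates $(\theta,\phi)$ with polar angle $\theta\in[0,\pi]$ measured from $e_3$, we have $\langle e_3,u\rangle=\cos\theta$, so that
$$D_f(\theta)=\sqrt{2-2\cos\theta}=2\sin(\theta/2).$$
Since $D_f$ is zonal (depends only on $\theta$), the Laplace--Beltrami operator reduces to $\Delta D_f=\frac{1}{\sin\theta}(\sin\theta\,D_f')'$. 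A direct computation using $D_f'(\theta)=\cos(\theta/2)$ and $\sin\theta=2\sin(\theta/2)\cos(\theta/2)$ yields
$$\Delta D_f(\theta)=\frac{\cos^2(\theta/2)-2\sin^2(\theta/2)}{2\sin(\theta/2)},$$
which is smooth on $(0,\pi]$ and has a singularity of order $1/\theta$ as $\theta\downarrow 0$.

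Finally, I would estimate the relevant $L^{1+\delta}$ norm. Using $d\vol_{\Sp^2}=\sin\theta\,d\theta\,d\phi$ and the asymptotic $\sin\theta\sim\theta$ near $0$, the integrand $|\Delta D_f(\theta)|^{1+\delta}\sin\theta$ behaves like $\theta^{-(1+\delta)}\cdot\theta=\theta^{-\delta}$ near $\theta=0$, which is integrable on $(0,\pi]$ for every $\delta\in(0,1)$. The integral is also harmless near $\theta=\pi$ since $\Delta D_f$ is smooth there. Choosing, for example, $\delta=1/2$, we obtain $\Vert\Delta D_f\Vert_{L^{1+\delta}(\vol_{\Sp^2})}<\infty$, and Theorem \ref{thm:S2ftraceable} then delivers $\Sp^2_f\in\mwtr$.

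The only step requiring a small amount of care is the explicit Laplace--Beltrami calculation: one must verify that the singularity of $\Delta D_f$ at the pole $e_3$ is no worse than $1/\theta$. Once this is confirmed, the $L^{1+\delta}$ bound is immediate from the $\sin\theta$ factor in the spherical volume element, which absorbs precisely one power of $\theta$ and makes any $\delta<1$ admissible.
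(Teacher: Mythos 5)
Your proof is correct and reaches the paper's conclusion via the same key tool, Theorem \ref{thm:S2ftraceable}, but through a cleaner computational route. The paper parametrizes $\Sp^2$ by $(\sqrt{1-t^2}\cos\theta,\sqrt{1-t^2}\sin\theta,t)$, computes $\Vert\nabla d_f(e_3,\cdot)\Vert$ and $\Delta d_f(e_3,\cdot)$ separately, assembles $\Delta D_f=2(\Vert\nabla d_f\Vert^2+d_f\,\Delta d_f)$, and then checks finiteness of the $L^{3/2}$-norm term by term with the fixed choice $\delta=\tfrac{1}{2}$. You instead observe that $D_f(u)=\Vert e_3-u\Vert=2\sin(\theta/2)$ is zonal and compute its Laplacian in one line with the radial formula $\frac{1}{\sin\theta}(\sin\theta\,D_f')'$; in the variable $t=\cos\theta$ your expression reads $\Delta D_f=\frac{3t-1}{2\sqrt{2-2t}}$ (the overall sign depends on the convention for $\Delta$, which is immaterial since only $\vert\Delta D_f\vert$ enters the norm). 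This shortcut buys two things: it replaces the paper's gradient-plus-Laplacian bookkeeping with a single elementary computation, and it yields the sharper observation that \emph{every} $\delta\in(0,1)$ is admissible, since the pole singularity is $O(1/\theta)$ while the volume element contributes $\sin\theta\sim\theta$, leaving an integrand of size $\theta^{-\delta}$ near the pole. Two small remarks: your formula agrees with what the paper's own intermediate expressions for $\Vert\nabla d_f\Vert$ and $\Delta d_f$ combine to, and differs by a factor of $2$ from the paper's final displayed identity $\frac{(1+t)}{4\sqrt{2-2t}}-\frac{\sqrt{2-2t}}{4}$ --- a harmless arithmetic slip in the paper, since only finiteness of the norm matters; and your preliminary verification that $d_f=\Vert u-v\Vert^{1/2}$ is a snowflake metric inducing the standard topology (so that $f\in M(\Sp^2)$) is a hypothesis the paper's proof leaves implicit, so making it explicit is a welcome addition.
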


\begin{lemma}\label{lemma:Laplaciandistance}
For any $d\geq 3$,
$\Delta\,d_{\Sp^{d-1}}(e_d,\cdot)(u)=-(d-2)\cdot\cot d_{\Sp^{d-1}}(e_d,u)$
for any $u=(u_1,\cdots,u_d)\in\Sp^{d-1}\backslash\{e_d,-e_d\}$ where $e_d=(0,\cdots,0,1)\in\Sp^{d-1}$ is the north pole of $\Sp^{d-1}$.
\end{lemma}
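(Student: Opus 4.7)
The plan is to compute the Laplace--Beltrami operator applied to $d_{\Sp^{d-1}}(e_d,\cdot)$ by reducing to a one-variable calculus problem via the observation that $d_{\Sp^{d-1}}(e_d,u)=\arccos(u_d)$ depends only on the height coordinate $u_d$. I will use the chain rule for the Laplacian of a composition, which states that for $f=g\circ h$ with $h:\Sp^{d-1}\to\R$ smooth and $g:\R\to\R$ smooth,
\[
\Delta f \;=\; g'(h)\,\Delta h \;-\; g''(h)\,\|\nabla h\|^2,
\]
where the overall signs track the convention $\Delta=-\mathrm{div}\,\mathrm{grad}$ used throughout the paper (this is the convention for which $\Delta Y_n^d=n(n+d-2)\,Y_n^d$, consistent with the appearance of $n(n+1)$ in the proof of Lemma~\ref{lemma:lambdaf3dbdd}).

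Set $h(u)=u_d=\langle e_d,u\rangle$ and $g(t)=\arccos(t)$. The function $h$ is the restriction of a linear functional, and it is standard that on $\Sp^{d-1}$ one has $\Delta h=(d-1)h$ (the eigenvalue of the first nontrivial spherical harmonic) and $\|\nabla h\|^2=1-h^2$; the latter follows from the identity $\nabla h(u)=e_d-u_d u$ obtained by projecting the ambient gradient $e_d$ onto $T_u\Sp^{d-1}$. A direct differentiation gives $g'(t)=-1/\sqrt{1-t^2}$ and $g''(t)=-t/(1-t^2)^{3/2}$.

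Substituting, and writing $\theta=\arccos(u_d)=d_{\Sp^{d-1}}(e_d,u)$ for brevity, I compute
\[
\Delta\bigl(\arccos(u_d)\bigr)
\;=\; -\frac{(d-1)u_d}{\sqrt{1-u_d^2}} \;+\; \frac{u_d}{(1-u_d^2)^{3/2}}(1-u_d^2)
\;=\; -\frac{(d-2)u_d}{\sqrt{1-u_d^2}}
\;=\; -(d-2)\cot\theta,
\]
which is precisely the desired identity, and is valid away from the poles $\{e_d,-e_d\}$ where $u_d=\pm1$ makes $\sqrt{1-u_d^2}$ vanish.

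The only place where care is needed is (i) the sign of the Laplacian convention and (ii) verifying the chain-rule formula with that sign. Both can be double-checked by the alternative proof using geodesic polar coordinates around $e_d$: the metric on $\Sp^{d-1}$ decomposes as $d\theta^2+\sin^2\theta\,g_{\Sp^{d-2}}$, so the Laplace--Beltrami operator applied to any radial function $f(\theta)$ equals $\mp\sin^{2-d}\theta\,(\sin^{d-2}\theta\,f'(\theta))'$ (sign depending on convention), which for $f(\theta)=\theta$ gives $\mp(d-2)\cot\theta$. This will serve as a sanity check rather than a separate proof.
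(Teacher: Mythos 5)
Your proof is correct, and it reaches the identity by a genuinely different route than the paper. The paper's proof is a direct computation with the angular-momentum representation $\Delta=-\sum_{1\leq j<i\leq d}\left(u_i\frac{\partial}{\partial u_j}-u_j\frac{\partial}{\partial u_i}\right)^2$ quoted from Atkinson--Han: since $\arccos(u_d)$ depends only on $u_d$, only the $d-1$ operators with $i=d$ act nontrivially, and the identity $\sum_{j=1}^{d-1}u_j^2=1-u_d^2$ collapses the resulting sum to $-(d-2)\,u_d/\sqrt{1-u_d^2}$. You instead apply the composition rule $\Delta(g\circ h)=g'(h)\,\Delta h-g''(h)\,\Vert\nabla h\Vert^2$ (correct under the convention $\Delta=-\mathrm{div}\,\mathrm{grad}$, which is indeed the one for which the lemma's stated sign holds) to $h(u)=\langle e_d,u\rangle$ and $g=\arccos$; your three ingredients --- $\Delta h=(d-1)h$ because $h$ is a first-order spherical harmonic, $\nabla h(u)=e_d-u_d u$ hence $\Vert\nabla h\Vert^2=1-u_d^2$, and the derivatives of $\arccos$ --- are all correct, and the algebra yields exactly $-(d-2)\cot d_{\Sp^{d-1}}(e_d,u)$. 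What your route buys: it is coordinate-free, requires no explicit second-order representation of the Laplace--Beltrami operator, and applies verbatim to any kernel of the form $g(\langle p,u\rangle)$ (in effect it re-derives the radial Laplacian that your polar-coordinate sanity check also produces); the paper's route is self-contained modulo one cited formula and involves no convention bookkeeping beyond it. One small caveat: your parenthetical calibration of the sign via the $n(n+1)$ factor in Lemma \ref{lemma:lambdaf3dbdd} is delicate, since the Green--Beltrami identity in item (8) of Lemma \ref{lemma:Legdrprop} is stated for the opposite convention and Lemma \ref{lemma:lambdaf3dbdd} only uses absolute values, so it cannot distinguish the two conventions; your polar-coordinate check, in which you explicitly carry the $\mp$, is the genuinely convention-safe verification, and the sign you obtain agrees with both the statement and the paper's own computation.
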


The proof of Lemma \ref{lemma:Laplaciandistance} is relegated to Appendix \S\ref{sec:otherproofs}.

\begin{proof}[Proof of Claim \ref{claim:farccos}]
Fix an arbitrary $\delta\in(0,1)$. Since $d_f=d_{\Sp^2}$, observe that
\begin{align*}
    \Delta D_f&=\Delta d_{\Sp^2}^2(e_3,\cdot)=\nabla\cdot\nabla d_{\Sp^2}^2(e_3,\cdot)=2\nabla\cdot(d_{\Sp^2}(e_3,\cdot)\nabla d_{\Sp^2}(e_3,\cdot))\\
    &=2(\Vert \nabla d_{\Sp^2}(e_3,\cdot) \Vert^2+d_{\Sp^2}(e_3,\cdot)\Delta d_{\Sp^2}(e_3,\cdot)).
\end{align*}

Also, since $d_{\Sp^2}(e_3,\cdot)$ is $1$-Lipschitz, $\Vert \nabla d_{\Sp^2}(e_3,\cdot) \Vert\leq 1$ a.e.. Moreover,
\begin{align*}
    \int_{\Sp^2}\vert d_{\Sp^2}(e_3,u)\Delta d_{\Sp^2}(e_3,u)\vert^{(1+\delta)}\,d\,\vol_{\Sp^2}(u)&=\int_{\Sp^2}\vert d_{\Sp^2}(e_3,u)\cot d_{\Sp^2}(e_3,u)\vert^{(1+\delta)}\,d\,\vol_{\Sp^2}(u)\quad(\because \text{Lemma \ref{lemma:Laplaciandistance}})\\
    &=\vert\Sp^1\vert\int_0^\pi \vert\theta\cot\theta \vert^{1+\delta}\sin\theta\,d\theta\\
    &=\vert\Sp^1\vert\int_0^\pi\frac{\theta^{1+\delta}\vert\cos\theta\vert^{1+\delta}}{(\sin\theta)^\delta}\,d\theta.
\end{align*}
Let's bound $\int_0^\pi\frac{\theta^{1+\delta}\vert\cos\theta\vert^{1+\delta}}{(\sin\theta)^\delta}\,d\theta$. Observe that,
\begin{align*}
    \int_0^{\frac{\pi}{2}}\frac{\theta^{1+\delta}\vert\cos\theta\vert^{1+\delta}}{(\sin\theta)^\delta}\,d\theta&\leq \int_0^{\frac{\pi}{2}}\frac{\theta^{1+\delta}}{(\sin\theta)^\delta}\,d\theta\quad(\because\cos\theta\in[-1,1])\\
    &\leq\int_0^{\frac{\pi}{2}}\theta\left(\frac{\pi}{2}\right)^\delta\,d\theta\quad\left(\because\frac{\theta}{\sin\theta}\leq\frac{\pi}{2}\right)<\infty,
\end{align*}
and
\begin{align*}
    \int_{\frac{\pi}{2}}^\pi \frac{\theta^{1+\delta}\vert\cos\theta\vert^{1+\delta}}{(\sin\theta)^\delta}\,d\theta&\leq\pi^{1+\delta}\int_{\frac{\pi}{2}}^\pi\frac{\vert\cos\theta\vert}{(\sin\theta)^\delta}\,d\theta\quad(\because\cos\theta\in[-1,1])\\
    &=-\pi^{1+\delta}\int_{\frac{\pi}{2}}^\pi\frac{\cos\theta}{(\sin\theta)^\delta}\,d\theta=\pi^{1+\delta}\int_0^1\frac{1}{t^\delta}\,dt=\frac{\pi^{1+\delta}}{1-\delta}<\infty.
\end{align*}

So, we conclude that $\Vert\Delta D_f \Vert_{L^{1+\delta}(\vol_{\Sp^2})}<\infty$. Hence, $\Sp^2_f\in\mwtr$ by Theorem \ref{thm:S2ftraceable}.
\end{proof}

\begin{proof}[Proof of Claim \ref{claim:fsqrtEuclid}]
In this case, we verify that $\Vert\Delta D_f\Vert_{L^{1+\delta}(\vol_{\Sp^2})}<\infty$ when $\delta=\frac{1}{2}$.

Indeed, it is easy to check that  in the coordinate system where $\Sp^2\ni u= (\sqrt{1-t^2}\cos\theta,\sqrt{1-t^2}\sin\theta,t)$,  $t\in[-1,1]$ and $\theta\in[0,2\pi]$ we have:
$    \Vert \nabla d_f(e_3,\cdot)(\sqrt{1-t^2}\cos\theta,\sqrt{1-t^2}\sin\theta,t) \Vert=\frac{\sqrt{1+t}}{2^{\frac{3}{2}}(2-2t)^{\frac{1}{4}}},
$ and $\Delta d_f(e_3,\cdot)(\sqrt{1-t^2}\cos\theta,\sqrt{1-t^2}\sin\theta,t)=-\frac{(2-2t)^{\frac{1}{4}}}{4}+\frac{(1+t)}{8(2-2t)^{\frac{3}{4}}}.$
Therefore,

\begin{align*}
    \Delta D_f(\sqrt{1-t^2}\cos\theta,\sqrt{1-t^2}\sin\theta,t)&=2(\Vert \nabla d_f(e_3,\cdot) \Vert^2+d_f(e_3,\cdot)\Delta d_f(e_3,\cdot))
    &=\frac{(1+t)}{4\sqrt{2-2t}}-\frac{\sqrt{2-2t}}{4}
\end{align*}

for any $\theta\in \R$ and $t\in (-1,1)$. The $L^{\frac{3}{2}}(\vol_{\Sp^2})$-norm of the second term is obviously finite since $\vert\frac{\sqrt{2-2t}}{4}\vert\leq\frac{1}{2}$. For the first term, observe that:
 \begin{align*}
    \int_{\Sp^2} \left\vert\frac{(1+t)}{4\sqrt{2-2t}}\right\vert^{\frac{3}{2}}\,d\vol_{\Sp^2}(\sqrt{1-t^2}\cos\theta,\sqrt{1-t^2}\sin\theta,t)&=  2\pi\int_{-1}^1 \left(\frac{(1+t)}{4\sqrt{2-2t}}\right)^{\frac{3}{2}}\,dt\\ 
    &\leq \frac{\pi}{\sqrt{2}}\int_{-1}^1 \frac{1}{(2-2t)^{\frac{3}{4}}}\,dt=2\pi<\infty.
\end{align*}

Therefore $\Vert\Delta D_f\Vert_{L^{\frac{3}{2}}(\vol_{\Sp^2})}<\infty$ as we required. Hence, $\Sp^2_f\in\mwtr$ by Theorem \ref{thm:S2ftraceable}.
\end{proof}

\subsubsection{Proof of Theorem \ref{thm:S2ftraceable}.}
In order to establish Theorem \ref{thm:S2ftraceable}, we need the following lemma which provides estimates for the rate of decay of all eigenvalues of $\cmdsop_{\Sp^2_f}$. The proof of Lemma \ref{lemma:lambdaf3dbdd} is relegated to Appendix \S\ref{sec:otherproofs}.

\begin{lemma}\label{lemma:lambdaf3dbdd}
Suppose $\Sp^2_f\in\mw$ is given for a map $f:[-1,1]\rightarrow\R$. Then, for any $\delta\in (0,1)$, there exists a constant $C_\delta'>0$ such that
$$\vert \lambda_{n,3}^f\vert\leq\frac{C_\delta' \Vert\Delta F(u) \Vert_{L^{1+\delta}(\vol_{\Sp^2})}}{n(n+1)}N_{n,3}^{-\frac{1}{1+\frac{1}{\delta}}}\,\,\mbox{
for all $n\geq 1$}$$
where
$$F:\Sp^2\longrightarrow\R\text{  s.t.  }u\longmapsto d_f^2(e_3,u).$$
\end{lemma}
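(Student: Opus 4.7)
My plan is to interpret $\lambda_{n,3}^f$ as (up to a constant) a single spherical-harmonic Fourier coefficient of the zonal function $F$, and then gain the factor $1/(n(n{+}1))$ by integrating by parts against the Laplace--Beltrami operator, since each spherical harmonic of order $n$ on $\Sp^2$ is an eigenfunction of $\Delta$ with eigenvalue $-n(n{+}1)$.

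The first step is to apply the Funk--Hecke theorem (Theorem \ref{thm:FunkHecke}) to the kernel $(u,v)\mapsto f^2(\langle u,v\rangle)$ with $d=3$. This gives the scalar $\kappa_{n,3,f^2}=\lvert\Sp^1\rvert\int_{-1}^1 P_{n,3}(t)f^2(t)\,dt$, which by \eqref{eq:lambdan2f} satisfies $\kappa_{n,3,f^2}=-2\lvert\Sp^2\rvert\,\lambda_{n,3}^f$. The Funk--Hecke identity, specialized to $F(u)=f^2(\langle e_3,u\rangle)$, then yields, for every $Y\in\mathbb{Y}_n^3$,
\begin{equation*}
\int_{\Sp^2}F(u)\,Y(u)\,d\vol_{\Sp^2}(u)=\kappa_{n,3,f^2}\,Y(e_3).
\end{equation*}
I would then choose $Y$ to be the Legendre harmonic $Y(u):=P_{n,3}(u_3)$, which belongs to $\mathbb{Y}_n^3$, satisfies $Y(e_3)=P_{n,3}(1)=1$, and has $\Delta Y=-n(n{+}1)Y$. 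Self-adjointness of $\Delta$ on $\Sp^2$ then gives
\begin{equation*}
\kappa_{n,3,f^2}=\int_{\Sp^2}F\cdot Y\,d\vol_{\Sp^2}=-\frac{1}{n(n+1)}\int_{\Sp^2}F\cdot \Delta Y\,d\vol_{\Sp^2}=-\frac{1}{n(n+1)}\int_{\Sp^2}\Delta F(u)\,P_{n,3}(u_3)\,d\vol_{\Sp^2}(u).
\end{equation*}

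Next, I would apply H\"older's inequality with conjugate exponents $p=1+\delta$ and $q=1+\tfrac{1}{\delta}$, and bound the $L^q$-norm of the Legendre harmonic by interpolation between its trivial $L^\infty$-bound $\lVert P_{n,3}(\cdot_3)\rVert_\infty\leq P_{n,3}(1)=1$ and its standard $L^2$-identity
\begin{equation*}
\lVert P_{n,3}(\cdot_3)\rVert_{L^2(\vol_{\Sp^2})}^2=\lvert\Sp^1\rvert\int_{-1}^1 P_{n,3}^2(t)\,dt=\frac{\lvert\Sp^2\rvert}{N_{n,3}},
\end{equation*}
which follows from orthogonality of the Legendre polynomials (item (6) of Lemma \ref{lemma:Legdrprop}) together with $N_{n,3}=2n+1$. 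Since $\delta\in(0,1)$ forces $q>2$, Riesz--Thorin interpolation gives $\lVert P_{n,3}(\cdot_3)\rVert_{L^q}\leq (\lvert\Sp^2\rvert/N_{n,3})^{1/q}$. Combining everything with $\lvert\lambda_{n,3}^f\rvert=\lvert\kappa_{n,3,f^2}\rvert/(2\lvert\Sp^2\rvert)$ yields the stated bound with $C_\delta':=\lvert\Sp^2\rvert^{1/q-1}/2$.

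The main obstacle I anticipate is the integration-by-parts step, since $F$ is only assumed almost-everywhere smooth with $\Delta F\in L^{1+\delta}$; I would handle this by approximating $F$ by smooth zonal functions (convolving the radial profile $f^2$ with a standard mollifier on $[-1,1]$), noting that the Funk--Hecke expression for $\kappa_{n,3,f^2}$ and the $L^{1+\delta}$-bound on $\Delta F$ both pass to the limit, or equivalently by interpreting the pairing $\langle \Delta F, Y\rangle$ distributionally against the smooth test function $Y$. Everything else in the argument is a direct consequence of Funk--Hecke, self-adjointness of $\Delta$, and standard interpolation.
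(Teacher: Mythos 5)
Your proof is correct and follows essentially the same route as the paper's: the paper likewise rewrites $\lambda_{n,3}^f$ as $-\frac{1}{2\vert\Sp^2\vert}\int_{\Sp^2}P_{n,3}(\langle e_3,u\rangle)F(u)\,d\vol_{\Sp^2}(u)$, gains the factor $\frac{1}{n(n+1)}$ by integrating by parts twice against the Laplace--Beltrami operator (via items (8) and (9) of Lemma \ref{lemma:Legdrprop}, which package exactly your eigenfunction-plus-self-adjointness step), and concludes with H\"older's inequality and the same $L^{1+\frac{1}{\delta}}$ bound on $P_{n,3}(\langle e_3,\cdot\rangle)$ obtained from $\vert P_{n,3}\vert\leq 1$ together with the $L^2$ identity. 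Your mollification remark even treats the almost-everywhere smoothness of $F$ more carefully than the paper, which simply notes that $\nabla F$ and $\Delta F$ are well-defined almost everywhere.
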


\begin{proof}[Proof of Theorem \ref{thm:S2ftraceable}]
Choose $\delta\in (0,1)$ such that $\Vert\Delta F\Vert_{L^{1+\delta}(\vol_{\Sp^2})}<\infty$. Since by Lemma \ref{lemma:lambdaf3dbdd}, we know $\vert \lambda_{n,3}^f\vert\leq\frac{C_\delta' \Vert\Delta F(u) \Vert_{L^{1+\delta}(\vol_{\Sp^2})}}{n(n+1)}N_{n,3}^{-\frac{1}{1+\frac{1}{\delta}}}$, we have
\begin{align*}
    \sum_{n=1}^\infty N_{n,3}\vert \lambda_{n,3}^f\vert\leq\sum_{n=1}^\infty \frac{C_\delta' \Vert\Delta F(u) \Vert_{L^{1+\delta}(\vol_{\Sp^2})}}{n(n+1)}N_{n,3}^{1-\frac{1}{1+\frac{1}{\delta}}}.
\end{align*}
But, since $N_{n,3}=2n+1$,
$\frac{N_{n,3}^{1-\frac{1}{1+1/\delta}}}{n(n+1)}=O(n^{\frac{1}{1+\delta}-2}).$
Since $\frac{1}{1+\delta}-2<-1$, we  conclude  that
$\sum_{n=1}^\infty N_{n,3}\vert \lambda_{n,3}^f\vert$ converges so that $\cmdsop_{\Sp^2_f}$ is trace class.
\end{proof}


\section{Stability of Generalized cMDS}\label{sec:stability}
In this section we establish the stability of the coordinate-free generalized cMDS procedure (cf. \S\ref{sec:sec:sec:altcmds}) in the sense of the Gromov-Wasserstein distance. See \S \ref{sec:sec:finstab} for a brief overview of  existing results about the stability of cMDS on finite metric spaces.

\subsection{The Gromov-Wasserstein distance}\label{sec:sec:dGW} 

In \cite{memoli2007use,memoli2011gromov}, the author defines the Gromov-Wasserstein distances combining ideas related to the Wasserstein distance and the Gromov-Hausdorff distance. Let $(X,d_X)$ be a compact metric space and let $\mathcal{P}(X)$ be the set of all Borel probability measures on $X$. Let us recall the definition of a coupling measure.

\begin{definition}
Let $(X,\Sigma_X,\mu_X)$ and $(Y,\Sigma_Y,\mu_Y)$ be two measure spaces. Then, we call a measure $\mu$ on $X\times Y$ a \emph{coupling measure} between $\mu_X$ and $\mu_Y$ if $\mu(A\times Y)=\mu_X(A)$ and $\mu(X\times B)=\mu_Y(B)$ whenever $A$ and $B$ are any measurable subsets of $X$ and $Y$, respectively. Also, we denote with $\Gamma(\mu_X,\mu_Y)$  the set of all coupling measures.
\end{definition}

\begin{remark}
 $\Gamma(\mu_X,\mu_Y)$ is  nonempty since this set always contains the product measure $\mu_X\otimes\mu_Y$.
\end{remark}

The Wasserstein distance  on $\mathcal{P}(X)\times\mathcal{P}(X)$ is defined as follows.

\begin{definition}[Wasserstein distance \cite{villani2003topics}]
Assume $(X,d_X)$ is a compact metric space. For each $p\in[1,\infty)$ and $\alpha,\beta\in\mathcal{P}(X)$,
$$d_{\mathrm{W},p}^X(\alpha,\beta):=\inf\limits_{\mu\in\Gamma(\alpha,\beta)}\left(\int_{X}d_X^p(x,x')d\mu(x,x')\right)^{\frac{1}{p}}.$$
\end{definition}

Now, consider the collection of all compact metric measure spaces (=mm-spaces), that is triples $\mX=(X,d_X,\mu_X)$ where $(X,d_X)$ is a compact metric space and $\mu_X$ is a fully supported Borel probability measure on $X$. This collection is denoted by $\mathcal{M}_w$.

\begin{definition}[Gromov-Wasserstein distances \cite{memoli2011gromov}]\label{GWdef}
For two mm-spaces $\mX=(X,d_X,\mu_X)$ and $\mY=(Y,d_Y,\mu_Y)$ and each $p\in[1,\infty)$,
$$d_{\mathrm{GW},p}(\mX,\mY):=\frac{1}{2}\inf\limits_{\mu\in\Gamma(\mu_X,\mu_Y)}\left(\int_{X\times Y\times X\times Y}\vert d_X(x,x')-d_Y(y,y')\vert^p d(\mu\otimes\mu)(x,y,x',y')\right)^{\frac{1}{p}}.$$
\end{definition}

The Gromov-Wasserstein distance $d_{\mathrm{GW},p}$ defines a pseudometric on $\mw$ and it can be extended to a proper metric on the collection of isomorphism classes of mm-spaces \cite{memoli2011gromov}. We say two mm-spaces $\mX=(X,d_X,\mu_X)$ and $\mY=(Y,d_Y,\mu_Y)$ are isomorphic if there is a bijective map $f\colon X\rightarrow Y$ satisfying $d_X(x,x')=d_Y(f(x),f(x'))$ for all $x,x'\in X$ and $\mu_Y(B)=\mu_X(f^{-1}(B))$ for any Borel set $B\subseteq Y$. i.e., $\mu_Y=f_{*}\mu_X$. We call this $\mu_Y$ the pushforward measure induced by $f$.

\begin{remark}[Sturm's $d_{\mathrm{GW},p,q}$]
In \cite{sturm2012space}, Sturm defined the $(p,q)$-Gromov-Wasserstein distance $d_{\mathrm{GW},p,q}$ for any $p,q\in [1,\infty)$ by substituting $d_X$ and $d_Y$ for $d_X^q$ and $d_Y^q$ in Definition \ref{GWdef}.  Obviously, we have $d_{\mathrm{GW},p,1}=d_{\mathrm{GW},p}$ for any $p\in [1,\infty)$. Sturm proved that $d_{\mathrm{GW},p,q}$  defines a pseudometric on $\mw$ for any $p,q\in [1,\infty)$. We point out that the stability result given in Theorem \ref{thm:stability} (which we will prove in the next section) still holds if we use $d_{\mathrm{GW},2,2}$ instead of $d_{\mathrm{GW},2}$.
\end{remark}

\begin{remark}\label{rmk:dgwpdiam}
Suppose $\mX,\mY\in\mw$ and $p\in[1,\infty)$ are given. The following properties of $d_{\mathrm{GW},p}$ and $\diamna_p$ are proved in \cite{memoli2011gromov}: 
\begin{enumerate}
    \item $\diamna_p(\mX)\leq\diamna(X)$.
    
    \item $d_{\mathrm{GW},p}(\mX,\star)=\frac{1}{2}\diamna_p(\mX)$.
    
    \item $\frac{1}{2}\vert \diamna_p(\mX)-\diamna_p(\mY)\vert\leq d_{\mathrm{GW},p}(\mX,\mY)\leq\frac{1}{2}(\diamna_p(\mX)+\diamna_p(\mY))$.
\end{enumerate}
\end{remark}

\subsection{Stability}\label{sec:sec:stability}
It is reasonable to expect that, the cMDS method is stable with respect to ``perturbations" of mm-spaces in the Gromov-Wasserstein sense. This expectation indeed turns out to be true for mm-spaces satisfying the ``Lebesgue differentiability property". 

\begin{definition}
A mm-space $\mX=(X,d_X,\mu_X)$ is said to satisfy the  \textbf{Lebesgue differentiability property} if for any $f\in L^1(\mu_X)$
$$\lim_{r\rightarrow 0}\frac{1}{\mu_X(B(x,r))}\int_{B(x,r)}f(x')\,d\mu_X(x')=f(x)\quad\quad\mu_X\text{-a.e.}$$
\end{definition}

\begin{remark}
Many types of spaces satisfy the Lebesgue differentiability property: it is known that $\mX\in\mw$ satisfies the Lebesgue differentiability property when $\mX$ is
\begin{enumerate}
    \item a finite mm-space.
    
    \item an ultrametric space with any Borel probability measure \cite[p.141-169]{federer2014geometric}, \cite[Theorem 9.1]{simmons2012conditional}.
    
    \item a Riemannian manifold with the normalized Riemannian volume measure \cite[p.141-169]{federer2014geometric}, \cite[Theorem 9.1]{simmons2012conditional}.
    
    \item a doubling mm-space \cite[Theorem 1.8]{heinonen2001lectures}.
\end{enumerate}
\end{remark}

\begin{theorem}[stability of generalized cMDS]\label{thm:stability}
For any $\mX=(X,d_X,\mu_X),\mY=(Y,d_Y,\mu_Y)\in\mw$ satisfying the Lebesgue differentiability property, we have 
$$\inf_{\mu\in\Gamma(\mu_X,\mu_Y)}\Vert K_\mX^+-K_\mY^+ \Vert_{L^2(\mu\otimes\mu)}\leq 8\max\{\diamna(X),\diamna(Y)\}\cdot d_{\mathrm{GW},2}(\mX,\mY).$$
\end{theorem}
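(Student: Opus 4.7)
The plan is to imitate the proof of Proposition \ref{prop:finstability} by reducing to the non-expansive property of the nearest-point projection onto a closed convex subset of a Hilbert space (Theorem \ref{thm:projnonexpansive}). The only real subtlety is that $K_\mX^+$ and $K_\mY^+$ live in different Hilbert spaces, so for each coupling $\mu\in\Gamma(\mu_X,\mu_Y)$ I would lift both to the common space $L^2(\mu\otimes\mu)$ by setting
\[\tilde K_\mX((x,y),(x',y')):=K_\mX(x,x'),\quad \tilde K_\mY((x,y),(x',y')):=K_\mY(y,y'),\]
and similarly for $\tilde K_\mX^+,\tilde K_\mY^+$. Since $\mu$ has marginals $\mu_X$ and $\mu_Y$, the lifts $L\mapsto \tilde L$ from $L^2(\mu_X\otimes\mu_X)$ and $L^2(\mu_Y\otimes\mu_Y)$ into $L^2(\mu\otimes\mu)$ are isometric linear embeddings, so it suffices to show that
\[\|\tilde K_\mX^+-\tilde K_\mY^+\|_{L^2(\mu\otimes\mu)} \leq 8\max\{\diamna(X),\diamna(Y)\}\cdot d_{\mathrm{GW},2}(\mX,\mY)\]
for some (not necessarily optimal) coupling.

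The second step is to bound $\|\tilde K_\mX-\tilde K_\mY\|_{L^2(\mu\otimes\mu)}$. Expanding the defining formulas for $K_\mX,K_\mY$ and rewriting all single and double integrals against $\mu_X$ or $\mu_Y$ as integrals against $\mu$ (using its marginals), one obtains four differences of the form $d_X^2(\cdot,\cdot)-d_Y^2(\cdot,\cdot)$. With $D:=\max\{\diamna(X),\diamna(Y)\}$ and the factorization $a^2-b^2=(a+b)(a-b)$ giving $|d_X^2-d_Y^2|\leq 2D\,|d_X-d_Y|$, Jensen's inequality on the averaged terms and the triangle inequality produce
\[\|\tilde K_\mX-\tilde K_\mY\|_{L^2(\mu\otimes\mu)}\leq 4D\,\|d_X-d_Y\|_{L^2(\mu\otimes\mu)}.\]
Taking the infimum over $\mu\in\Gamma(\mu_X,\mu_Y)$ and using the definition of $d_{\mathrm{GW},2}$ yields the desired bound $8D\,d_{\mathrm{GW},2}(\mX,\mY)$ on the right-hand side.

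The main obstacle — and the step where the Lebesgue differentiability hypothesis enters — is the claim that, for every coupling $\mu$, the lift $\tilde K_\mX^+$ is itself the nearest-point projection of $\tilde K_\mX$ onto $\overline{L^2_{\succeq 0,\mathrm{sym}}(\mu\otimes\mu)}$ inside $L^2(\mu\otimes\mu)$ (and similarly for $\mY$). Once this is granted, Theorem \ref{thm:projnonexpansive} gives
\[\|\tilde K_\mX^+-\tilde K_\mY^+\|_{L^2(\mu\otimes\mu)}\leq \|\tilde K_\mX-\tilde K_\mY\|_{L^2(\mu\otimes\mu)},\]
and the previous two steps finish the proof.

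To establish the projection identification I would first verify, via the disintegration $d\mu(x,y)=d\mu_X(x)\,d\mu_{Y|X}(y|x)$ (whose regularity is ensured by the Lebesgue differentiability property), that $\tilde K_\mX^+$ is indeed PSD in $L^2(\mu\otimes\mu)$. Next, for any $L\in L^2_{\succeq 0,\mathrm{sym}}(\mu\otimes\mu)$ I would define the conditional average
\[L_X(x,x'):=\iint L((x,y),(x',y'))\,d\mu_{Y|X=x}(y)\,d\mu_{Y|X=x'}(y'),\]
which is a PSD element of $L^2(\mu_X\otimes\mu_X)$ and whose lift $\widetilde{L_X}$ is the orthogonal projection of $L$ onto the closed subspace of $L^2(\mu\otimes\mu)$ consisting of kernels depending only on $(x,x')$. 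Since $\tilde K_\mX$ already lies in that subspace, Pythagoras and Theorem \ref{thm:optimalthmgen} give
\[\|\tilde K_\mX-L\|\geq \|\tilde K_\mX-\widetilde{L_X}\|=\|K_\mX-L_X\|_{L^2(\mu_X\otimes\mu_X)}\geq \|K_\mX-K_\mX^+\|_{L^2(\mu_X\otimes\mu_X)}=\|\tilde K_\mX-\tilde K_\mX^+\|_{L^2(\mu\otimes\mu)},\]
which together with the PSD-ness of $\tilde K_\mX^+$ characterizes it as the nearest point projection. The hardest part of the entire argument is keeping track of the disintegration carefully enough to justify that conditional averaging preserves positive semi-definiteness and coincides with orthogonal projection; LDP is used precisely so that the regular conditional probabilities behave well in this computation.
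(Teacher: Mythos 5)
Your proposal is correct in substance and follows the same overall architecture as the paper's proof (lift both kernels to the common space $L^2(\mu\otimes\mu)$, show the lifted $K_\mX^+,K_\mY^+$ are themselves nearest-point projections onto $\overline{L^2_{\succeq 0,\mathrm{sym}}(\mu\otimes\mu)}$, apply the nonexpansiveness of projection, and finish with an elementary bound of $\Vert K_\mX-K_\mY\Vert_{L^2(\mu\otimes\mu)}$ in terms of $\Vert d_X-d_Y\Vert_{L^2(\mu\otimes\mu)}$; your constant $4D\Vert d_X-d_Y\Vert$ and the paper's $D(3\Vert d_X-d_Y\Vert+2d_{\mathrm{GW},2})$ both yield the factor $8$). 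The genuine difference is in how the projection identification is implemented. The paper restricts to couplings $\mu\ll\mu_X\otimes\mu_Y$, works with the density $f=\frac{d\mu}{d(\mu_X\otimes\mu_Y)}$, defines the averaged kernel $\overline{L}(x,x')=\iint L\,f(x,y)f(x',y')\,d\mu_Y\,d\mu_Y$, and must then (a) invoke the Lebesgue differentiability property precisely to show $f(x,\cdot)\,d\mu_Y$ is a probability measure $\mu_X$-a.e., and (b) invoke \cite[Proposition 2.8]{eckstein2020robust} plus an $\varepsilon$-argument to replace a near-optimal coupling by an absolutely continuous one. You instead average against the regular conditional probabilities $\mu_{Y|X=x}$ and recognize $\widetilde{L_X}$ as the conditional expectation, i.e.\ the orthogonal projection of $L$ onto the subspace of kernels depending only on $(x,x')$, so Pythagoras does the work; since disintegration exists for \emph{every} coupling of Borel probability measures on compact (hence Polish) spaces, your route needs neither the absolute-continuity reduction nor, in fact, the Lebesgue differentiability hypothesis at all --- what it buys is a cleaner and formally more general argument, at the cost of carefully verifying that $\mu\otimes\mu$ disintegrates over $(x,x')$ as $\mu_{Y|X=x}\otimes\mu_{Y|X=x'}$ and that conditional averaging preserves positive semi-definiteness (both of which are straightforward, as you indicate).

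Two small corrections. First, your remark that the LDP ``ensures the regularity of the disintegration'' misattributes the role of that hypothesis: regular conditional probabilities exist on Polish spaces unconditionally, and the LDP enters only in the paper's density-based variant of the lemma. Second, the inequality $\Vert K_\mX-L_X\Vert_{L^2(\mu_X\otimes\mu_X)}\geq\Vert K_\mX-K_\mX^+\Vert_{L^2(\mu_X\otimes\mu_X)}$ should be justified by the \emph{definition} of $K_\mX^+$ as the nearest-point projection onto the closed convex set $\overline{L^2_{\succeq 0,\mathrm{sym}}(\mu_X\otimes\mu_X)}$ (after checking $L_X$ lies in that set), not by Theorem \ref{thm:optimalthmgen}, which is stated only for $\mX\in\mwtr$ while the present theorem concerns general $\mX\in\mw$.
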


\begin{remark}[high level overview of the proof of Theorem \ref{thm:stability}]\label{rmk:stbpfovvw}
\begin{figure}
    \centering
    \includegraphics[width=0.6\linewidth]{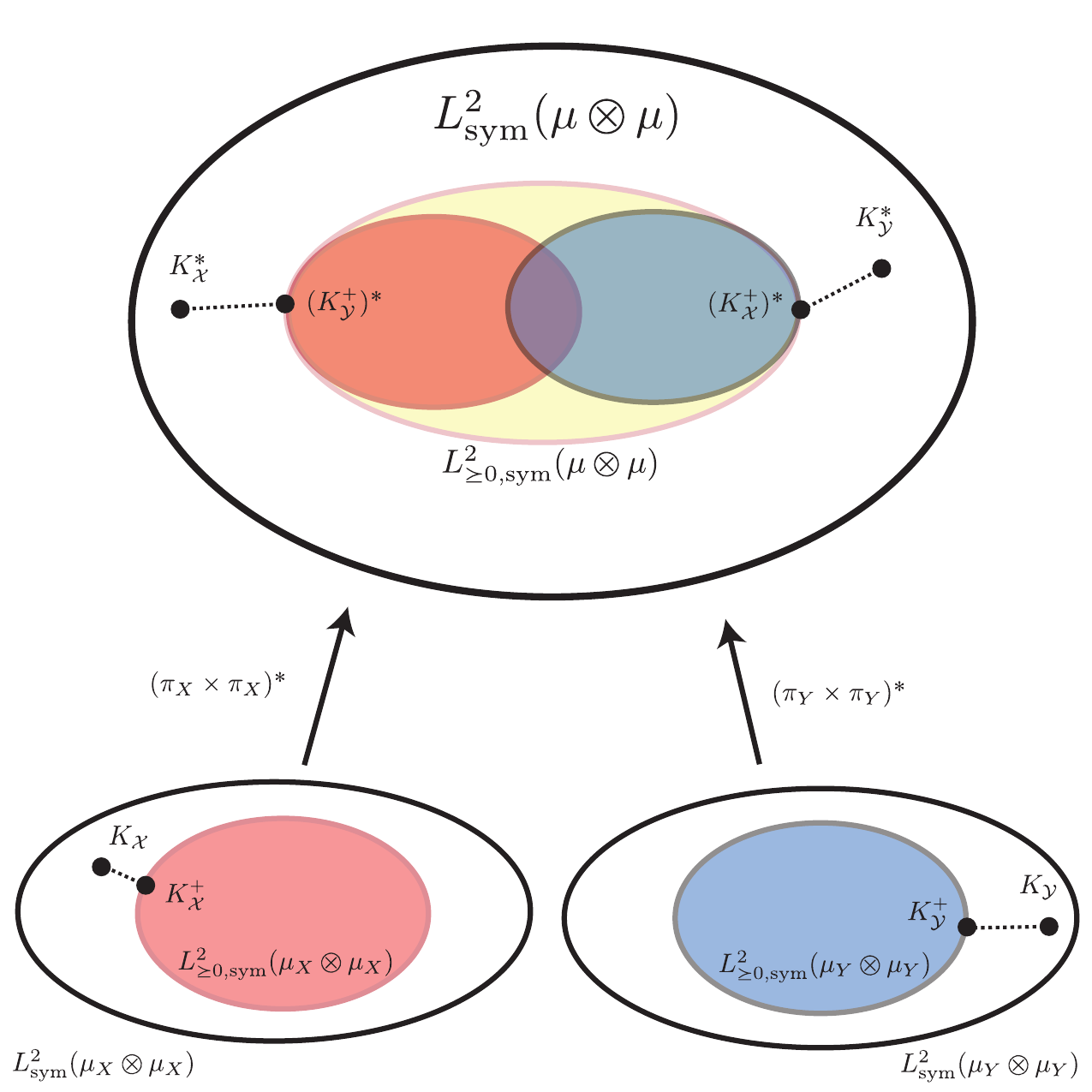}
    \caption{Description of how to upper bound $\Vert K_\mX^+-K_\mY^+\Vert_{L^2(\mu\otimes\mu)}$ by $\Vert K_\mX-K_\mY\Vert_{L^2(\mu\otimes\mu)}$ for a given coupling measure $\mu\in\Gamma(\mu_X,\mu_Y)$. Cf. Lemma \ref{lemma:pullbackkernel}.}
    \label{fig:stability}
\end{figure}

We emphasize that Theorem \ref{thm:stability} does not require the traceability of $\cmdsop_\mX$ and $\cmdsop_\mY$. The proof takes place purely at the level of kernel spaces, in other words: the cMDS embeddings into $\ell^2$ are not necessary. 

\smallskip
Let us overview the proof of Theorem \ref{thm:stability} at a high level; see Figure \ref{fig:stability}.

\smallskip
Recall that $K_\mX^+$ (resp. $K_\mY^+$) is the nearest point of $K_\mX$ (resp. $K_\mY$) onto the closure of the convex set consisting of all symmetric kernels associated to a positive semi-definite operator (cf. \S \ref{sec:sec:sec:altcmds}).

Roughly, there are two major steps in the proof: for a given coupling measure $\mu\in\Gamma(\mu_X,\mu_Y)$, 
\begin{enumerate}
    \item[(1)] showing $\Vert K_\mX^+-K_\mY^+\Vert_{L^2(\mu\otimes\mu)}$ is stable with respect to $\Vert K_\mX-K_\mY\Vert_{L^2(\mu\otimes\mu)}$ (cf.  item (3) of Lemma \ref{lemma:pullbackkernel}), and 
    \item[(2)] showing $\Vert K_\mX-K_\mY\Vert_{L^2(\mu\otimes\mu)}$ is stable with respect to $\Vert d_X-d_X\Vert_{L^2(\mu\otimes\mu)}$ and $d_{\mathrm{GW},2}(\mX,\mY)$ (cf. Lemma \ref{lemma:KXL2bdd}).
\end{enumerate}
The second step is relatively straightforward. So, we concentrate on explaining the idea behind the first step, which can be seen as a generalization of Proposition \ref{prop:finstability}. Note that $K_\mX,K_\mX^+$ and $K_\mY,K_\mY^+$ sit in different spaces ($L^2_{\mathrm{sym}}(\mu_X\otimes\mu_X)$ and $L^2_{\mathrm{sym}}(\mu_Y\otimes\mu_Y)$ respectively). Hence, in order to measure the dissimilarity between them, we apply the pullback maps $(\pi_X\times\pi_X)^*$ and $(\pi_Y\times\pi_Y)^*$ to them so that the resulting kernels $K_\mX^*,(K_\mX^+)^*,K_\mY^*,(K_\mY^+)^*$  all sit in the common space $L^2_{\mathrm{sym}}(\mu\otimes\mu)$. Furthermore, under the condition that $\mu$ is absolutely continuous with respect to $\mu_X\otimes\mu_Y$, we are able to prove that indeed $(K_\mX^+)^*$ (resp. $(K_\mY^+)^*$) is the nearest point of $K_\mX^*$ (resp. $K_\mY^*$) onto the closure of a convex subset $L^2_{\succeq 0,\mathrm{sym}}(\mu\otimes\mu)$. Hence, by the nonexpansive property of the nearest point projection in a Hilbert space (cf. Theorem \ref{thm:projnonexpansive}), we conclude
$$\Vert K_\mX^+-K_\mY^+\Vert_{L^2(\mu\otimes\mu)}=\Vert (K_\mX^+)^*-(K_\mY^+)^*\Vert_{L^2(\mu\otimes\mu)}\leq\Vert K_\mX^*-K_\mY^*\Vert_{L^2(\mu\otimes\mu)}=\Vert K_\mX-K_\mY\Vert_{L^2(\mu\otimes\mu)}.$$

Of course, if we assume that the spaces involved are traceable, that is  $\mX,\mY\in\mwtr$, then we obtain the following corollary since $K_\mX^+=K_{\widehat{\mX}}$ and $K_\mY^+=K_{\widehat{\mY}}$ in that case.
\end{remark}

\begin{corollary}\label{cor:cmdsl2stab}
For any spaces $\mX=(X,d_X,\mu_X)$ and $\mY=(Y,d_Y,\mu_Y)\in\mwtr$ satisfying the Lebesgue differentiability property, we have
$$\inf_{\mu\in\Gamma(\mu_X,\mu_Y)}\Vert K_{\widehat{\mX}}-K_{\widehat{\mY}} \Vert_{L^2(\mu\otimes\mu)}\leq 8\max\{\diamna(X),\diamna(Y)\}\cdot d_{\mathrm{GW},2}(\mX,\mY).$$
\end{corollary}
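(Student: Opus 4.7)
The plan is to derive the corollary as an immediate consequence of Theorem \ref{thm:stability} by identifying the coordinate-free projections $K_\mX^+$ and $K_\mY^+$ with the kernels $K_{\widehat{\mX}}$ and $K_{\widehat{\mY}}$ induced by the cMDS embeddings into $\ell^2$. The hypothesis $\mX,\mY\in\mwtr$ is precisely what is needed to make this identification work.

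First, I would invoke Proposition \ref{prop:twoviewsame}, which asserts that for any space in $\mwtr$ the coordinate-free cMDS kernel coincides with the kernel induced by the $\ell^2$-valued cMDS embedding: namely, $K_\mX^+ = K_{\widehat{\mX}}$ and $K_\mY^+ = K_{\widehat{\mY}}$. Recall that the proof of this proposition rests on Theorem \ref{thm:optimalthmgen}, which shows $K_{\widehat{\mX}}$ is the unique nearest point of $K_\mX$ in the closure of $L^2_{\succeq 0,\mathrm{sym}}(\mu_X\otimes\mu_X)$, and hence must equal $K_\mX^+$.

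Next, since both $\mX$ and $\mY$ satisfy the Lebesgue differentiability property, Theorem \ref{thm:stability} applies and gives
\[
\inf_{\mu\in\Gamma(\mu_X,\mu_Y)}\Vert K_\mX^+-K_\mY^+ \Vert_{L^2(\mu\otimes\mu)}\leq 8\max\{\diamna(X),\diamna(Y)\}\cdot d_{\mathrm{GW},2}(\mX,\mY).
\]
Substituting the identifications $K_\mX^+ = K_{\widehat{\mX}}$ and $K_\mY^+ = K_{\widehat{\mY}}$ on the left-hand side yields exactly the claimed inequality.

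There is no real obstacle here; the content is entirely contained in Theorem \ref{thm:stability} and Proposition \ref{prop:twoviewsame}. The only thing worth checking is that the identification $K_\mX^+ = K_{\widehat{\mX}}$ is a \emph{pointwise} (up to $\mu_X\otimes\mu_X$-null sets) equality of kernels, not merely an equality of the induced operators — this is needed so that the substitution is legitimate under the $L^2(\mu\otimes\mu)$-norm for an arbitrary coupling $\mu\in\Gamma(\mu_X,\mu_Y)$. This follows from the uniqueness of the nearest-point projection in the Hilbert space $L^2_{\mathrm{sym}}(\mu_X\otimes\mu_X)$ invoked in Proposition \ref{prop:twoviewsame}, so the corollary follows cleanly.
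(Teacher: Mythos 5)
Your proof is correct and is exactly the paper's argument: the paper proves this corollary by the one-line combination of Proposition \ref{prop:twoviewsame} and Theorem \ref{thm:stability}. Your extra check on the kernel-level (a.e.) identification is sound and in fact automatic, since for any coupling $\mu\in\Gamma(\mu_X,\mu_Y)$ one has $(\pi_X\times\pi_X)_*(\mu\otimes\mu)=\mu_X\otimes\mu_X$, so $\mu_X\otimes\mu_X$-null sets pull back to $\mu\otimes\mu$-null sets and the substitution in the $L^2(\mu\otimes\mu)$-norm is legitimate.
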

\begin{proof}
Apply Proposition \ref{prop:twoviewsame} and Theorem \ref{thm:stability}.
\end{proof}

\begin{remark}\label{rem:stab-prob}
Corollary \ref{cor:cmdsl2stab} suggests asking whether one would be able to control the difference between $\widehat{\mX}$ and $\widehat{\mY}$ as subsets of $\ell^2$ by $\inf_{\mu\in\Gamma(\mu_X,\mu_Y)}\Vert K_{\widehat{\mX}}-K_{\widehat{\mY}} \Vert_{L^2(\mu\otimes\mu)}$. Though this question is not easy to answer in general, note that there exist results in the literature  which are  useful  when $\mX$ and $\mY$ are finite spaces (cf. \S \ref{sec:sec:finstab}). For example, consider the case when $\mX_1=(X,d_1,\mu_X)$ and $\mX_2=(X,d_2,\mu_X)$ (i.e., two mm-spaces with the same finite underlying set $X$, the same probability measure $\mu_X$, but different metrics). Then, one can directly apply \cite[Theorem 1]{arias2020perturbation} in order to bound the difference between $\widehat{\mX_1}$ and $\widehat{\mX_2}$ (measured via Procrustes-like ideas) by $f(\Vert K_{\widehat{\mX_1}}-K_{\widehat{\mX_2}} \Vert_{L^2(\mu_X\otimes\mu_X)})$ where $f$ is a certain specific decreasing function such that $f(t)\rightarrow 0$ as $t\rightarrow 0^+$.

In terms of a possible generalization, one goal would attempt to establish  the following inequality:
$$\inf_{\substack{\phi:\ell^2\rightarrow\ell^2\\\text{ isometry}}}d_{\mathrm{W},2}^{\ell^2}\big((\Phi_\mX)_*\mu_X,(\phi\circ\Phi_\mY)_*\mu_Y\big)\leq f\left(\inf_{\mu\in\Gamma(\mu_X,\mu_Y)}\Vert K_{\widehat{\mX}}-K_{\widehat{\mY}} \Vert_{L^2(\mu\otimes\mu)}\right)$$
where $d_{\mathrm{W},2}^{\ell^2}$ is the $2$-Wasserstein distance on the space of probability measures on $\ell^2$ (see \cite[\S 7.1]{villani2003topics} for the precise definition). Recall that $\widehat{\mX}$ (resp. $\widehat{\mY}$) is the image of the cMDS embedding $\phi_\mX$ (resp. $\phi_\mY$). This could be regarded as an extension of \cite[Theorem 1]{arias2020perturbation} to  general mm-spaces.
\end{remark}

\begin{remark}[stability of cMDS into $\R^k$] 
Note that our proof of stability of the coordinate free version of the cMDS procedure bypasses the cMDS embedding $\Phi_{\mX\!\!,k}$  and it crucially relies on a projection argument, which itself hinges on the convexity of the set of all symmetric kernels on a given set associated to positive semi-definite operators. If we were to specify a maximal embedding dimension parameter $k$ as in Algorithm \ref{alg:gencMDSRk} and Definition \ref{def:cmdsrk}, we would have to contend with the collection of all symmetric kernels associated to positive semi-definite operators with rank at most $k$, which is \emph{non-convex}. Alternatively, one could attempt to establish stability at the level of the embeddings of $\mX_k$ and $\mY_k$ into  $\R^k$, which would require carefully expressing the stability of eigenvalues and eigenfunctions of the respective cMDS operators; see \cite[Proposition 5.4]{cmds-stepanov} for a restricted version of this.
\end{remark}

\subsubsection{The proof of Theorem \ref{thm:stability}.}\label{sec:sec:sec:pfthmstab}
We are going to follow the strategy described in Remark \ref{rmk:stbpfovvw}. Recall that there are two major steps in the proof of Theorem \ref{thm:stability}. The first step, which is more complicated than the second step, is showing that $\Vert K_\mX^+-K_\mY^+\Vert_{L^2(\mu\otimes\mu)}$ is stable with respect to $\Vert K_\mX-K_\mY\Vert_{L^2(\mu\otimes\mu)}$. The second step, which is rather straightforward, is showing that $\Vert K_\mX-K_\mY\Vert_{L^2(\mu\otimes\mu)}$ is stable with respect to $\Vert d_X-d_X\Vert_{L^2(\mu\otimes\mu)}$ and $d_{\mathrm{GW},2}(\mX,\mY)$. Below, the first step will be dealt with by item (3) of Lemma \ref{lemma:pullbackkernel}, and the second step will be dealt with by Lemma \ref{lemma:KXL2bdd}.

Suppose $\mX,\mY\in\mw$ and $\mu\in\Gamma(\mu_X,\mu_Y)$ are given. Then, for any $K\in L^2(\mu_X\otimes\mu_X)$, we have the \emph{pullback} kernel $$(\pi_X\times\pi_X)^*(K):=K\circ(\pi_X\times\pi_X)\in L^2(\mu\otimes\mu)$$ where $\pi_X:X\times Y\rightarrow X$ is the canonical projection. It is clear that the map $$(\pi_X\times\pi_X)^*:L^2(\mu_X\otimes\mu_X)\hookrightarrow L^2(\mu\otimes\mu)$$ thus defined is an isometric embedding. Also, we define $(\pi_Y\times\pi_Y)^*:L^2(\mu_Y\otimes\mu_Y)\hookrightarrow L^2(\mu\otimes\mu)$ in a similar manner where $\pi_Y:X\times Y\rightarrow Y$ is the canonical projection.

Recall that, in Proposition \ref{prop:finstability}, we established the stability of finite cMDS where two metric spaces of interest have the same underlying set but different metrics. The main idea behind the proof is using the convexity of the set of positive semi-definite matrices and the non-expansiveness of the nearest point projection to a convex set. Item (3) of the following lemma is a generalization of Proposition \ref{prop:finstability} to the setting of general metric measure spaces. The main obstacle of this generalization is the fact that the kernels of interest $K_\mX$, $K_\mX^+$ and $K_\mY$, $K_\mY^+$ belong to different function spaces. In order to overcome this, we employ two technical concepts. One is that of a coupling measure between $\mu_X$ and $\mu_Y$ which specifies a relationship between $X$ and $Y$, and the other is the aforementioned notion of pullback kernel. With these, one is able to apply a  procedure similar to the one employed in the proof of Proposition \ref{prop:finstability}.

\begin{lemma}\label{lemma:pullbackkernel}
Suppose $\mX,\mY\in\mw$ satisfy the Lebesgue differentiability property. If a coupling measure $\mu\in\Gamma(\mu_X,\mu_Y)$ satisfies $\mu\ll\mu_X\otimes\mu_Y$ (i.e., $\mu$ is absolutely continuous with respect to  $\mu_X\otimes\mu_Y$), then the following holds, where $f:=\frac{d\mu}{d(\mu_X\otimes\mu_Y)}$ is the density of $\mu$ relative to $\mu_X\otimes \mu_Y$: 

\begin{enumerate}
    \item For $\mu_X$-almost everywhere $x\in X$, $f(x,\cdot)\,d\mu_Y$ is a probability measure on $Y$.

    \item $(\pi_X\times\pi_X)^*(K_\mX^+)$ and $(\pi_Y\times\pi_Y)^*(K_\mY^+)$ are the nearest points of $(\pi_X\times\pi_X)^*(K_\mX)$ and $(\pi_Y\times\pi_Y)^*(K_\mY)$, respectively, to the closure of $L^2_{\succeq 0,\mathrm{sym}}(\mu\otimes\mu)$.
    
    \item $\Vert K_\mX^+-K_\mY^+\Vert_{L^2(\mu\otimes\mu)}\leq\Vert K_\mX-K_\mY\Vert_{L^2(\mu\otimes\mu)}$. 
\end{enumerate}
\end{lemma}

Also, we need to recall the following classical result.

\begin{theorem}[{\cite[Theorem 3.6]{GR85}}]\label{thm:projnonexpansive}
The (unique) nearest point projection onto a closed convex subset of a Hilbert space is nonexpansive.
\end{theorem}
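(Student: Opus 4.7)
The plan is to prove the nonexpansiveness via the variational characterization of the nearest-point projection, which is the standard ``obtuse angle'' criterion: if $C\subseteq H$ is a closed convex subset of a Hilbert space $H$, then for $x\in H$, a point $p\in C$ equals $P(x)$ if and only if
\[
\langle x-p,\,c-p\rangle \leq 0 \quad \text{for all } c\in C.
\]
So first I would set up existence and uniqueness of $P(x)$. Uniqueness follows from the parallelogram identity applied to any two minimizers $p_1,p_2\in C$ at distance $d:=\inf_{c\in C}\|x-c\|$: by convexity $\tfrac{1}{2}(p_1+p_2)\in C$, and
\[
\Bigl\|x-\tfrac{p_1+p_2}{2}\Bigr\|^{2} = \tfrac{1}{2}\|x-p_1\|^{2}+\tfrac{1}{2}\|x-p_2\|^{2}-\tfrac{1}{4}\|p_1-p_2\|^{2},
\]
which forces $\|p_1-p_2\|=0$. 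Existence comes from choosing a minimizing sequence and applying the parallelogram law to show it is Cauchy, hence convergent in $H$, with limit in $C$ since $C$ is closed.

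Next, I would derive the variational inequality. For $c\in C$ and $t\in(0,1]$, convexity gives $p+t(c-p)\in C$, so
\[
\|x-p\|^{2}\leq \|x-p-t(c-p)\|^{2} = \|x-p\|^{2}-2t\langle x-p,c-p\rangle + t^{2}\|c-p\|^{2}.
\]
Dividing by $t$ and letting $t\downarrow 0$ yields $\langle x-p,c-p\rangle\leq 0$. The converse (that any $p\in C$ satisfying the inequality is the minimizer) is immediate from $\|x-c\|^{2}=\|x-p\|^{2}-2\langle x-p,c-p\rangle+\|p-c\|^{2}\geq\|x-p\|^{2}$.

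Finally, to conclude nonexpansiveness, set $p:=P(x)$ and $q:=P(y)$ for arbitrary $x,y\in H$. Applying the characterization twice gives
\[
\langle x-p,\,q-p\rangle\leq 0\quad\text{and}\quad\langle y-q,\,p-q\rangle\leq 0.
\]
Adding these inequalities and rearranging yields
\[
\|p-q\|^{2}\leq \langle x-y,\,p-q\rangle,
\]
and then Cauchy--Schwarz gives $\|p-q\|^{2}\leq \|x-y\|\cdot\|p-q\|$, so $\|P(x)-P(y)\|\leq\|x-y\|$. There is no genuine obstacle here; the only care needed is in the existence step (using the parallelogram law to pass from a minimizing sequence to a Cauchy sequence), after which the variational inequality does all the work.
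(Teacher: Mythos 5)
Your proof is correct and complete: the paper itself gives no proof of this statement, citing it as Theorem 3.6 of Goebel--Reich, and your argument is exactly the standard one found there and in any Hilbert space text (parallelogram law for existence and uniqueness, the variational inequality $\langle x-p,\,c-p\rangle\leq 0$ characterizing $P(x)$, and adding the two instances of that inequality followed by Cauchy--Schwarz). Every step checks out, including the identity $\bigl\|x-\tfrac{p_1+p_2}{2}\bigr\|^{2}=\tfrac{1}{2}\|x-p_1\|^{2}+\tfrac{1}{2}\|x-p_2\|^{2}-\tfrac{1}{4}\|p_1-p_2\|^{2}$ and the rearrangement $\|p-q\|^{2}\leq\langle x-y,\,p-q\rangle$.
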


\begin{lemma}\label{lemma:KXL2bdd}
Suppose $\mX=(X,d_X,\mu_X),\mY=(Y,d_Y,\mu_Y)\in\mw$ and a coupling measure $\mu\in\Gamma(\mu_X,\mu_Y)$ are given. Then,
$$ \Vert K_\mX-K_\mY\Vert_{L^2(\mu\otimes\mu)}
    \quad\leq \max\{\diamna(X),\diamna(Y)\}\cdot \big(3\,\Vert d_X-d_Y\Vert_{L^2(\mu\otimes\mu)}+2 \,d_{\mathrm{GW},2}(\mX,\mY)\big).$$
\end{lemma}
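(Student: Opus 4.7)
The plan is to expand $K_\mX(x,x')-K_\mY(y,y')$ using the definition of the cMDS kernel and then bound each of the resulting pieces separately.

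First, writing
\[
K_\mX(x,x') = -\tfrac{1}{2}\big(d_X^2(x,x') - f_X(x) - f_X(x') + c_X\big),
\]
where $f_X(x):=\int_X d_X^2(x,v)\,d\mu_X(v)$ and $c_X:=\big(\diamna_2(\mX)\big)^2$, and similarly for $\mY$, we decompose
\[
K_\mX(x,x') - K_\mY(y,y') = -\tfrac{1}{2}\Big[(d_X^2(x,x')-d_Y^2(y,y')) - (f_X(x)-f_Y(y)) - (f_X(x')-f_Y(y')) + (c_X-c_Y)\Big].
\]
Pulling back these quantities to $(X\times Y)^2$ via the coordinate projections, the triangle inequality in $L^2(\mu\otimes\mu)$ reduces the goal to bounding four separate terms $T_1,T_2,T_3,T_4$.

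The central trick is the factorization $a^2-b^2=(a-b)(a+b)$, combined with the bound $d_X(x,x')+d_Y(y,y')\leq 2D$ where $D:=\max\{\diamna(X),\diamna(Y)\}$. Applied pointwise this yields $T_1=\Vert d_X^2-d_Y^2\Vert_{L^2(\mu\otimes\mu)}\leq 2D\,\Vert d_X-d_Y\Vert_{L^2(\mu\otimes\mu)}$. For $T_2$ and $T_3$, I will first use the coupling property of $\mu$ to rewrite $f_X(x)-f_Y(y)=\int_{X\times Y}(d_X^2(x,v)-d_Y^2(y,v'))\,d\mu(v,v')$, then Jensen's (or Cauchy--Schwarz) inequality to pull the square inside the integral, and finally the same factorization as for $T_1$, arriving at $T_2,T_3\leq 2D\,\Vert d_X-d_Y\Vert_{L^2(\mu\otimes\mu)}$.

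The term $T_4=|c_X-c_Y|$ is handled purely by the diameter inequalities in Remark~\ref{rmk:dgwpdiam}: since $\diamna_p\leq\diamna$ by item (1), the same factorization gives $|c_X-c_Y|\leq 2D\cdot|\diamna_2(\mX)-\diamna_2(\mY)|$, and then item (3) supplies $|\diamna_2(\mX)-\diamna_2(\mY)|\leq 2\,d_{\mathrm{GW},2}(\mX,\mY)$, so $T_4\leq 4D\cdot d_{\mathrm{GW},2}(\mX,\mY)$.

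Combining everything with the factor $\tfrac{1}{2}$ from the definition gives
\[
\Vert K_\mX-K_\mY\Vert_{L^2(\mu\otimes\mu)}\leq \tfrac{1}{2}\big(6D\,\Vert d_X-d_Y\Vert_{L^2(\mu\otimes\mu)} + 4D\,d_{\mathrm{GW},2}(\mX,\mY)\big),
\]
which is precisely the stated inequality. There is no real obstacle here: everything is a routine $L^2$ manipulation, and the only care needed is to make sure the pulled-back integrals over $(X\times Y)^2$ use the marginal conditions of $\mu$ correctly when converting $\int_X\!\cdot\,d\mu_X$ and $\int_Y\!\cdot\,d\mu_Y$ into integrals against $\mu$.
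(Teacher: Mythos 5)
Your proposal is correct and follows essentially the same route as the paper's proof: the same four-term decomposition of $K_\mX-K_\mY$, the factorization $a^2-b^2=(a+b)(a-b)$ with the bound $2D$, the coupling-marginal rewriting plus Jensen/Cauchy--Schwarz for the one-variable integral terms, and Remark \ref{rmk:dgwpdiam} for the $\big|\big(\diamna_2(\mX)\big)^2-\big(\diamna_2(\mY)\big)^2\big|\leq 4D\cdot d_{\mathrm{GW},2}(\mX,\mY)$ term. The only cosmetic difference is that you invoke the triangle inequality in $L^2(\mu\otimes\mu)$ up front while the paper bounds pointwise and then integrates, which is the same computation.
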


Now we are ready to prove the stability theorem.

\begin{proof}[Proof of Theorem \ref{thm:stability}]
Fix arbitrary $\varepsilon>d_{\mathrm{GW},2}(\mX,\mY)$. Then, there is a coupling measure $\mu_\varepsilon\in\Gamma(\mu_X,\mu_Y)$ such that
$\Vert d_X-d_Y\Vert_{L^2(\mu_\varepsilon\otimes\mu_\varepsilon)}<2\varepsilon.$
Moreover, by \cite[Proposition 2.8]{eckstein2020robust}, we can assume that $\mu_\varepsilon\ll\mu_X\otimes\mu_Y$. Therefore,
\begin{align*}
    \inf_{\mu\in\Gamma(\mu_X,\mu_Y)}\Vert K_\mX^+-K_\mY^+ \Vert_{L^2(\mu\otimes\mu)}&\leq\Vert K_\mX^+-K_\mY^+\Vert_{L^2(\mu_\varepsilon\otimes\mu_\varepsilon)}\\
    &\leq\Vert K_\mX-K_\mY\Vert_{L^2(\mu_\varepsilon\otimes\mu_\varepsilon)}\,(\because\text{ item (3) of Lemma \ref{lemma:pullbackkernel}})\\
    &<\max\{\diamna(X),\diamna(Y)\}(6\varepsilon+2d_{\mathrm{GW},2}(\mX,\mY))
\end{align*}
where the last inequality holds by Lemma \ref{lemma:KXL2bdd} and the assumption on $\Vert d_X-d_Y\Vert_{L^2(\mu_\varepsilon\otimes\mu_\varepsilon)}$. Since $\varepsilon>d_{\mathrm{GW},2}(\mX,\mY)$ is arbitrary, this completes the proof.
\end{proof}

The proofs of Lemma \ref{lemma:pullbackkernel} and Lemma \ref{lemma:KXL2bdd} are relegated to Appendix \S\ref{sec:otherproofs}.

\subsection{Consistency}\label{sec:cons}

A basic consequence of our stability bounds is that cMDS is \emph{consistent} (as expressed by Corollary \ref{cor:consistency} below) for traceable mm-spaces satisfying the Lebesgue differentiability property.

It is useful to define the following notion of convergence of $4$-tuples $\mX:=(X,\Sigma_X,P_X,\mu_X)$ where $(X,\Sigma_X,\mu_X)$ is a measure space endowed with a $\sigma$-algebra $\Sigma_X$, a probability measure $\mu_X$, and $P_X\in L^2(\mu_X\otimes\mu_X)$ is a kernel on $X$ whose associated integral operator is positive semi-definite (see Appendix \S\ref{sec:sec:functanal}). We say that a sequence $(\mX_n = (X_n,\Sigma_n,P_{X_n},\mu_{X_n}))_n$ of such $4$-tuples \emph{converges weakly} to $\mX = (X,\Sigma_X,P_X,\mu_X)$ if for each $n$ there exist a couplings $\mu_n\in\Gamma(\mu_X,\mu_{X_n})$ such that 
$\left\|P_X-P_{X_n}\right\|_{L^2(\mu_n\otimes \mu_n)}\rightarrow 0\,\,\mbox{as $n\rightarrow\infty$}.$ We denote this by $P_{\mX_n}\stackrel{n}{\Longrightarrow}P_\mX.$

Together with Theorem \ref{thm:stability}, Theorem 5.1 item (e) of \cite{memoli2011gromov} imply:

\begin{corollary}[consistency]\label{cor:consistency}
Let $\mX=(X,d_X,\mu_X)\in\mwtr$ satisfies the Lebesgue differentiability property and let $X_n = X_n(\omega) =  \{\mathbf{x}_i(\omega),\,i=1,\ldots,n\}$ be any collection of i.i.d. random variables chosen from $X$ w.r.t. the  measure $\mu_X$ for $\omega\in\Omega$, for some probability space $\Omega$. Let $\mX_n=\mX_n(\omega)=(X_n,d_X|_{X_n\times X_n},\mu_{X_n})$ be the random metric measure where $\mu_{X_n}$ is the uniform measure on $X_n$. Then, for $\mu_X$-almost every $\omega\in \Omega$:
$$K_{\widehat{\mX}_n(\omega)}\stackrel{n}{\Longrightarrow} K_{\widehat{\mX}}.$$
\end{corollary}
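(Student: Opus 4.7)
The plan is to combine two ingredients. First, by Theorem~5.1(e) of~\cite{memoli2011gromov}, the i.i.d.\ sampling procedure yields $d_{\mathrm{GW},2}(\mX_n(\omega),\mX)\longrightarrow 0$ for $\mu_X$-almost every $\omega\in\Omega$ as $n\to\infty$. Second, Corollary~\ref{cor:cmdsl2stab} provides the quantitative bound
$$\inf_{\mu\in\Gamma(\mu_X,\mu_{X_n(\omega)})}\|K_{\widehat{\mX}}-K_{\widehat{\mX}_n(\omega)}\|_{L^2(\mu\otimes\mu)}\le 8\,\mathrm{diam}(X)\cdot d_{\mathrm{GW},2}(\mX,\mX_n(\omega)),$$
provided both $\mX$ and $\mX_n(\omega)$ belong to $\mwtr$ and satisfy the Lebesgue differentiability property.

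Before invoking the stability bound, I would verify these hypotheses. The space $\mX_n(\omega)$ is finite, hence it automatically belongs to $\mwtr$ (as remarked earlier, every finite metric measure space induces a trace-class cMDS operator) and it also satisfies the Lebesgue differentiability property trivially. The ambient space $\mX$ satisfies both hypotheses by assumption. Moreover, since $X_n(\omega)\subseteq X$ and $X$ is compact, one has $\mathrm{diam}(X_n(\omega))\le\mathrm{diam}(X)<\infty$, so the factor $\max\{\mathrm{diam}(X),\mathrm{diam}(X_n(\omega))\}$ appearing in Corollary~\ref{cor:cmdsl2stab} is uniformly bounded by $\mathrm{diam}(X)$, independently of $n$ and $\omega$.

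To produce the explicit couplings required by the definition of weak convergence $\Longrightarrow$ given just before the corollary, fix $\omega$ from the full-measure set on which the Gromov-Wasserstein distance converges, and for each $n$ select $\mu_n=\mu_n(\omega)\in\Gamma(\mu_X,\mu_{X_n(\omega)})$ achieving the infimum above up to an additive error of $1/n$. Then
$$\|K_{\widehat{\mX}}-K_{\widehat{\mX}_n(\omega)}\|_{L^2(\mu_n\otimes\mu_n)}\le 8\,\mathrm{diam}(X)\cdot d_{\mathrm{GW},2}(\mX,\mX_n(\omega))+\tfrac{1}{n}\xrightarrow{n\to\infty} 0,$$
which is exactly the asserted weak convergence $K_{\widehat{\mX}_n(\omega)}\stackrel{n}{\Longrightarrow}K_{\widehat{\mX}}$.

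There is no serious obstacle: the entire argument is a direct assembly of the stability theorem and the Glivenko--Cantelli-type convergence of empirical measures in the Gromov-Wasserstein distance. The only mild subtlety is that the infimum in the stability bound need not be attained by any single coupling, which is handled cleanly by the approximate minimizer chosen above; any other diagonal-type extraction of nearly-optimal couplings would produce the same conclusion.
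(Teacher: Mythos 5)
Your proposal is correct and follows exactly the route the paper intends: combining the almost-sure Gromov--Wasserstein convergence of empirical measures (Theorem 5.1(e) of \cite{memoli2011gromov}) with the stability bound of Theorem \ref{thm:stability} via Corollary \ref{cor:cmdsl2stab}, which applies since finite spaces lie in $\mwtr$ and satisfy the Lebesgue differentiability property. Your explicit verification of the hypotheses and the near-optimal-coupling selection fills in details the paper leaves implicit, but the argument is the same.
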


This corollary provides a partial answer to \cite[Conjecture 6.1]{adams2020multidimensional}.


\section{Discussion}\label{sec:discussion}
In this section we first compare the results in our paper with those in  \cite{kroshnin2022infinite} and then also formulate a number of  open questions.
\subsection{Comparison to Kroshnin et al. \cite{kroshnin2022infinite}}
As  already mentioned in the introduction, a recent paper \cite{kroshnin2022infinite} by Kroshnin, Stepanov, and Trevisan also studies a generalization of cMDS for metric measure spaces. Our paper and \cite{kroshnin2022infinite} were developed simultaneously. To ensure that future readers can fully benefit from reading both papers, let us clarify the relationships between the principal findings of each paper. \\
\\
\medskip
The following results have no counterparts in \cite{kroshnin2022infinite}:
\begin{itemize}
    \item Examples \ref{ex:PaleyL2natural} and \ref{ex:reg-polys} which show that the $L^2$-metric distortion is a natural choice for studying the cMDS embedding. Also, the relationship between the $L^2$-distortion and the negative trace of the cMDS operator that is described in Item (2) of Proposition \ref{prop:generrorbdd}.
\item[]
    \item  The  coordinate-free interpretation of generalized cMDS  given by $K_\mX^+$ in \S\ref{sec:sec:sec:altcmds}, which does not require the traceability of $K_\mX$. Also, Proposition \ref{prop:twoviewsame} which, under the traceability assumption, proves that $K_\mX^+=K_{\widehat{X}}$.
\item[]
    \item The explicit construction of a (non-compact)  metric measure space with non-traceable cMDS operator based on Paley graphs that is given in \S\ref{sec:non-trace-class}.
\item[]
    \item The relationship between the smallest positive eigenvalue of the cMDS operator of a Euclidean mm-space with  a certain notion of ``thickness" of the input mm-space (cf. \S\ref{sec:thickness}).
\item[]
    \item The traceability of the cMDS operator when the (Minkowski) dimension of the underlying mm-space is less than $2$ (cf.  \S\ref{sec:sec:metgraphs} where we establish Theorem \ref{thm:metenttrace}). In particular, this implies that every compact metric graph a is traceable mm-space. This result is achieved with the aid of \cite{gonzalez1993metric} which connects the traceability of the integral operator with the growth rate of the covering number for the domain of the operator.\footnote{ There might be a way to strengthen our result by employing a more careful analysis on the arguments used in \cite{gonzalez1993metric}.} 
\item[]
    \item The construction of a family of  metrics on $2$-sphere $\Sp^2$ which induce  traceable cMDS operators (cf. \S\ref{sec:sec:Sd-1diffmet}). These metrics are more general than the geodesic distance.
\end{itemize}
The following results have similar counterparts in \cite{kroshnin2022infinite}. Some of these are stronger and others are weaker than ours (as we discuss next):
\begin{itemize}
    \item \textbf{Generalized cMDS for homogeneous spaces.} Combining Lemma \ref{lemma:simplereigenproblem} and Corollary \ref{cor:twohomoeigenproblem}, one achieves the same result as \cite[Proposition 3.4]{kroshnin2022infinite}.
\item[]
    \item \textbf{cMDS is non-contracting.} Item (1) of Proposition \ref{prop:generrorbdd} and \cite[Corollary 4.6]{kroshnin2022infinite} make the same claim but the assumptions differ. Whereas Item (1) of of Proposition \ref{prop:generrorbdd} only assumes $\mX$ to be traceable, \cite[Corollary 4.6]{kroshnin2022infinite} makes the  additional assumption that almost every point in $\mX$ is a Lebesgue point.   
\item[]
    \item \textbf{Product spaces.} Both of \S\ref{sec:sec:cmdsproduct} and \cite[Section 6.2]{kroshnin2022infinite} study the generalized cMDS for product spaces. In particular, Corollary \ref{cor:propsofprod} is a generalization of \cite[Proposition 6.2]{kroshnin2022infinite} and Corollary \ref{cor:NtoruscMDS} is a generalization of \cite[Proposition 6.3]{kroshnin2022infinite} in the sense that our version can be applied to the product of $N$ spaces, for any integer $N\geq 2$ (not only for the case $N=2$). 
\item[]
    \item \textbf{The cMDS operator for spheres.} Both \S\ref{sec:Spd-1} in our paper and \cite[Section 6.1]{kroshnin2022infinite} study the spectrum of $\cmdsop_{\Sp^{d-1}}$, the cMDS operator for higher dimensional spheres, and prove that $\cmdsop_{\Sp^{d-1}}$ is traceable. In particular, Theorem \ref{thm:cMDSSd-1dist} agrees with \cite[Proposition 6.1]{kroshnin2022infinite}. However, we give a few more interesting auxiliary results such as the closed form expression of positive eigenvalues (cf. Corollary \ref{cor:labdandodd}) and the construction of an infinity family of non-euclidean metrics on $2$-spheres $\Sp^2$ all of 
 which induce traceable cMDS operators (cf. \S\ref{sec:sec:Sd-1diffmet}). 
\item[]
    \item \textbf{Stability and consistency.} Both of \S\ref{sec:stability} and \cite[Section 5.1]{kroshnin2022infinite} study the stability of cMDS with respect to the Gromov-Wasserstein distance. Although these sections are similar at the philosophical level, there are technical differences as follows: 
    \begin{enumerate}
        \item  Theorem \ref{thm:stability} and \cite[Lemma 5.1]{kroshnin2022infinite} are similar in the sense that Theorem \ref{thm:stability} expresses the stability of the cMDS kernel of the resulting spaces with respect to the $d_{\mathrm{GW},2}$ and \cite[Lemma 5.1]{kroshnin2022infinite} establishes the stability of the cMDS operator with respect to $d_{\mathrm{GW},4}$ (and applicable beyond the cases of bounded spaces $\mX$ as long as $\diamna_4(\mX)<\infty$; cf. equation (\ref{eq:diam-p})). 
        \item[]
        \item Also, with  additional assumptions on the spectral gap of the cMDS operator, \cite[Theorem 5.4, Corollary 5.6]{kroshnin2022infinite} proves the consistency of the cMDS embedding into a finite dimensional Euclidean spaces. Our result, Corollary \ref{cor:consistency},  proves consistency of cMDS at the kernel level in an $L^2$ sense, while \cite[Theorem 5.4, Corollary 5.6]{kroshnin2022infinite} proves consistency of cMDS embeddings (instead of kernels) into finite dimensional Euclidean space with respect to  $d_{\mathrm{GW},2}$.
    \end{enumerate}
\end{itemize}
\subsection{Some open questions}
We collect a few open questions about the  generalized cMDS:
\begin{itemize}
    \item In \S\ref{sec:thickness}, we prove that, when $\mX$ is a compact subset of Euclidean space, the smallest (positive) eigenvalue of the cMDS operator $\cmdsop_\mX$ is upper bounded by the square of $\mathrm{Th}(\mX)$, a certain notion of thickness of $\mX$. Next, we want to ask similar questions in the same spirit. Can we prove analogous results for non-Euclidean $\mX$? Can we bound positive eigenvalues other than the smallest one in a similar manner? What kind of geometric properties of $\mX$ other than  thickness can be used in order to control the spectrum of $\cmdsop_\mX$? 
\item[]
    \item In this paper, we argue that, in order to understand the strength and limits of the cMDS procedure, traceability is a natural condition on the cMDS operator which enables the study of the embedding into $\ell^2$. For this reason,  it is important to characterize metric measure spaces inducing traceable cMDS operators. We provide many nontrivial examples of  traceable spaces in \S\ref{sec:ntrvalexmple} and \S\ref{sec:Spd-1} and  in Section \S\ref{sec:non-trace-class}  construct  a non-compact metric measure space with 
 non-traceable cMDS operator. Moreover, recall the following questions: 
    \cmtr* 
    \cmtrman*

\begin{framed}
\noindent\textbf{Note}: After this paper was accepted, we learned that this question was recently answered to the negative by Ma and Stepanov; see  \cite{ma2024eigenvalues} where the authors prove that for $n\geq 1$ the spaces $\mathbb{RP}^{2n+1}$ induce non-traceable cMDS operators.  These recent developments motivate
the following loosely defined challenge:

\cmtrmannew*

\end{framed}
    
    \item As explained in Remark \ref{rmk:curvdist}, we think it is intriguing to establish connections between the $L^2$-distortion $\dis(\mX)$ of 
a compact Riemannian manifold $\mX$ (which is equal to $\sqrt{2\,\trng(\mX)}$ by item (2) of Proposition \ref{prop:generrorbdd}) to the sectional curvature of $\mX$. This is because both concepts quantify the deviation from flatness of the input space, with the former being a global measure of flatness and the latter being a local measure. More precisely, taking into account that $\trng(\mX)=0$ implies that the sectional curvature of $\mX$ is zero everywhere, we pose the following question:
    \curvdist*
    \item In \S\ref{sec:stability}, we establish the stability of the cMDS procedure at the kernel level in the following way: $$\inf_{\mu\in\Gamma(\mu_X,\mu_Y)}\Vert K_\mX^+-K_\mY^+ \Vert_{L^2(\mu\otimes\mu)}\leq 8\max\{\diamna(X),\diamna(Y)\}\cdot d_{\mathrm{GW},2}(\mX,\mY).$$
    In particular, as a result, there we establish the statistical consistency of the cMDS procedure.  Now, can we prove similar results but for the output spaces $\widehat{\mX}$, $\widehat{\mY}$ themselves (in the sense of Remark \ref{rem:stab-prob}) and not just for their associated kernels $K_\mX^+$, $K_\mY^+$?
\item[]
    \item As Corollary \ref{cor:propsofS1} and Theorem \ref{thm:cMDSSd-1dist} imply, $\widehat{\Sp}^{d-1}$ is homeomorphic to $\Sp^{d-1}$. Motivated by this observation, can we prove that $\widehat{M}$ is homeomorphic  (or even homotopy equivalent) to $M$ when $M$ is an arbitrary  compact manifold? In other words, does the cMDS procedure preserve the global topology of the input space?
\end{itemize}


\section*{Acknowledgments}
We are grateful to Henry Adams, Boris Mityagin, and Ery Arias-Castro for helpful conversations related to this work. We also thank Eugene Stepanov for pointing out to imprecisions in the proofs of Propositions \ref{prop:traceclasscmds} and \ref{prop:generrorbdd}.

\section*{Funding}
This work was supported by NSF grants DMS 1547357, CCF-1526513, IIS-1422400, and  CCF-1740761.


\bibliographystyle{acm}

\appendix

\section{Relegated proofs}\label{sec:otherproofs}
\subsection{Proof of Theorem \ref{thm:optimalthmgen}}
Our proof strategy is structurally similar to the proof of Theorem \ref{thm:optimalthm} (the optimality of the cMDS for finite metric spaces) given in \cite[Theorem 14.4.2]{mardiamultivariate}. Also, although we developed our proof separately, it shares the common strategy used in the proof of \cite[Theorem 6.4.3]{kassab2019multidimensional} since Kassab's proof also motivated by the proof of \cite[Theorem 14.4.2]{mardiamultivariate}. However,   Kassab's proof does not contemplate that some infinite sums therein considered may fail to converge (as they would if applied to the pathological mm-space constructed in \S\ref{sec:non-trace-class})  --  a problem which we circumvent by imposing the traceability condition \S\ref{sec:sec:sec:trblmmspace}.\medskip

We need the following preliminary results.

\begin{adefinition}[doubly (sub)stochastic matrix]
A square matrix $A=(a_{ij})_{i,j=1}^n$ of size $n$ is called \textbf{doubly stochastic} (resp. \textbf{doubly substochastic}) if it satisfies:
\begin{enumerate}
    \item $a_{ij}\geq 0$ for any $i,j\in\{1,\dots,n\}$.
    \item $\sum_{j=1}^n a_{ij}= 1$ for any $i\in\{1,\dots,n\}$ (resp. $\sum_{j=1}^n a_{ij}\leq 1$ for any $i\in\{1,\dots,n\}$).
    \item $\sum_{i=1}^n a_{ij}= 1$ for any $j\in\{1,\dots,n\}$ (resp. $\sum_{i=1}^n a_{ij}\leq 1$ for any $j\in\{1,\dots,n\}$).
\end{enumerate}
\end{adefinition}

\begin{atheorem}[{rearrangement inequality \cite[Theorem 368]{hardy1967inequalities}}]\label{thm:rearrangement}
Suppose we have two sequences of real numbers $x_1\leq\dots\leq x_n$ and $y_1\leq\dots\leq y_n$ for some positive integer $n$. Then,
$$x_ny_1+x_{n-1}y_2+\cdots+x_1y_n\leq x_{\sigma(1)}y_1+x_{\sigma(2)}y_2+\cdots+x_{\sigma(n)}y_n\leq x_1y_1+x_2y_2+\cdots+x_ny_n$$
for any permutation $\sigma$ on $[n]=\{1,\dots,n\}$.
\end{atheorem}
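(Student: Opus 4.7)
The plan is to establish both inequalities via a simple "bubble sort" exchange argument, which reduces the whole statement to a single nonnegativity of a product of differences.

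First I would prove the upper bound. Fix any permutation $\sigma$ on $[n]$ and let $S(\sigma) := \sum_{i=1}^n x_{\sigma(i)} y_i$. Suppose $\sigma$ is not the identity; then there exist indices $i < j$ with $\sigma(i) > \sigma(j)$. Set $a := \sigma(i)$ and $b := \sigma(j)$, so $a > b$, hence $x_a \geq x_b$ by monotonicity of the $x$-sequence, while $y_i \leq y_j$ by monotonicity of the $y$-sequence. Let $\sigma'$ be the permutation obtained from $\sigma$ by swapping the values at positions $i$ and $j$. Then
$$ S(\sigma') - S(\sigma) = (x_b y_i + x_a y_j) - (x_a y_i + x_b y_j) = (x_a - x_b)(y_j - y_i) \geq 0. $$
Thus each such "descent swap" in $\sigma$ weakly increases the sum. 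Since every permutation can be reduced to the identity by a finite sequence of adjacent-transposition descent swaps (as in bubble sort), one concludes $S(\sigma) \leq S(\mathrm{id}) = \sum_{i=1}^n x_i y_i$, which is the desired upper bound.

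For the lower bound I would apply the same argument in reverse: if $\sigma(i) < \sigma(j)$ for some $i < j$, then swapping the values at positions $i$ and $j$ produces a sum that is now weakly smaller by the same sign computation. Iterating yields the reversing permutation $\tau(i) := n+1-i$ as the minimiser, giving $S(\tau) = x_n y_1 + x_{n-1} y_2 + \cdots + x_1 y_n \leq S(\sigma)$. Alternatively, one may simply apply the upper bound to the sequence $(-y_n, -y_{n-1}, \ldots, -y_1)$ in place of $(y_1, \ldots, y_n)$, which is increasing, and conclude by negating.

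There is really no obstacle here: the whole content is the single inequality $(x_a - x_b)(y_j - y_i) \geq 0$, and the only organisational point to be careful about is that a finite number of descent swaps suffices to sort any permutation, so the monotone sequence of sums terminates at $S(\mathrm{id})$. The argument is classical and we would simply cite \cite[Theorem 368]{hardy1967inequalities} if a self-contained proof is not needed.
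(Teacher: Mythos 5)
Your proof is correct. Note that the paper does not prove this statement at all: it is imported verbatim from Hardy--Littlewood--P\'olya \cite[Theorem 368]{hardy1967inequalities} and used as a black box in the proof of Lemma \ref{lemma:TRdiagineq}, so your exchange argument is a genuine addition rather than a parallel to anything in the text. The argument itself is sound and complete: the single sign computation $S(\sigma')-S(\sigma)=(x_a-x_b)(y_j-y_i)\geq 0$ is right, and your termination argument works because each adjacent descent swap decreases the inversion count of $\sigma$ by exactly one, and the identity is the unique permutation with zero inversions (note the small presentational wrinkle that you state the exchange computation for arbitrary $i<j$ but then invoke only adjacent swaps for termination; either variant suffices, since a general inversion-fixing swap also strictly decreases the inversion count). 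Your alternative derivation of the lower bound by applying the upper bound to the increasing sequence $z_i:=-y_{n+1-i}$ and negating is clean and checks out: as $\sigma$ ranges over all permutations, so does $\sigma\circ\rho$ with $\rho(j)=n+1-j$, which converts the reindexed conclusion into the stated lower bound for every $\sigma$. Since all inequalities used are weak, ties among the $x_i$ or $y_i$ cause no difficulty. If a self-contained appendix proof were wanted in the paper, yours would serve as is; otherwise the citation, as you note, is the route the authors took.
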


\begin{atheorem}[{Birkhoff--von Neumann theorem \cite[Theorem 1.7.1]{brualdi2006combinatorial}}]\label{thm:Birkhoff}
Any doubly stochastic matrix is a convex combination of permutation matrices.
\end{atheorem}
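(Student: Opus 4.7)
The plan is to prove the Birkhoff--von Neumann theorem by induction on the number of positive entries of $A$, with Hall's marriage theorem providing the key selection of a permutation matrix at each step.

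First I would recast the problem in bipartite-graph language. Given a doubly stochastic matrix $A=(a_{ij})_{i,j=1}^n$, form the bipartite graph $G_A$ on vertex sets $R=\{r_1,\ldots,r_n\}$ and $C=\{c_1,\ldots,c_n\}$, with an edge $(r_i,c_j)$ whenever $a_{ij}>0$. The central sub-claim is that $G_A$ admits a perfect matching; such a matching corresponds to a permutation $\sigma \in S_n$ with $a_{i\sigma(i)}>0$ for every $i$, hence to a permutation matrix $P_\sigma$ ``supported inside'' $A$.

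To establish the sub-claim I would verify Hall's marriage condition by double counting. For any $S\subseteq R$, let $N(S)\subseteq C$ denote its neighborhood. Using the row-sum and column-sum constraints,
\begin{align*}
|S| \;=\; \sum_{r_i\in S}\sum_{j=1}^n a_{ij} \;=\; \sum_{r_i\in S}\sum_{c_j\in N(S)} a_{ij} \;\leq\; \sum_{c_j\in N(S)}\sum_{i=1}^n a_{ij} \;=\; |N(S)|,
\end{align*}
where the second equality uses that $a_{ij}=0$ for $c_j\notin N(S)$ when $r_i\in S$. Hall's theorem then yields the desired perfect matching, producing $P_\sigma$ with $a_{i\sigma(i)}>0$ for all $i$.

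Next I would carry out the induction on the number $k$ of positive entries of $A$. The base case is $k=n$: then each row of $A$ has a unique positive entry, which by double stochasticity must equal $1$, so $A$ is itself a permutation matrix. For the inductive step with $k>n$, set $\alpha := \min_i a_{i\sigma(i)} \in (0,1]$. If $\alpha=1$, then $A\geq P_\sigma$ entrywise and both matrices have row and column sums equal to $1$, forcing $A=P_\sigma$. Otherwise define
\begin{equation*}
A' \;:=\; \frac{1}{1-\alpha}\bigl(A-\alpha P_\sigma\bigr).
\end{equation*}
A direct check shows $A'$ is doubly stochastic, and by choice of $\alpha$ it has strictly fewer positive entries than $A$. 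The inductive hypothesis gives $A'=\sum_{k} \lambda_k P_k$ as a convex combination of permutation matrices, whence $A=\alpha P_\sigma + (1-\alpha)\sum_k \lambda_k P_k$ completes the argument.

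The main obstacle is the correct setup and verification of Hall's condition, since everything downstream hinges on being able to locate a permutation matrix within the support of $A$; once that is secured, the induction is clean. An alternative route would be to invoke the Krein--Milman theorem after identifying permutation matrices as precisely the extreme points of the (compact, convex) polytope of doubly stochastic matrices, but the constructive inductive approach above is more elementary and self-contained.
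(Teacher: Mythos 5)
Your proof is correct: Hall's condition is verified properly by the double-counting inequality, the base case $k=n$ forces $A$ to be a permutation matrix, and the inductive step via $A'=\frac{1}{1-\alpha}(A-\alpha P_\sigma)$ is sound, provided you formally run strong induction on the number of positive entries (several entries may simultaneously equal $\alpha$ and vanish at once, and your reuse of $k$ as the summation index in $\sum_k \lambda_k P_k$ collides with $k$ denoting the number of positive entries — both trivial fixes). The paper gives no proof of this statement, quoting it from \cite[Theorem 1.7.1]{brualdi2006combinatorial}, and your Hall-plus-induction argument is exactly the standard one found in that reference, so your approach coincides with the paper's (cited) proof.
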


\begin{atheorem}[{\cite[Theorem 3.2.6]{horn1994topics}}]\label{thm:substochasticuppbdd}
A square matrix $A=(a_{ij})_{i,j=1}^n$ of size $n$ is doubly substochastic matrix if and only if there is a doubly stochastic matrix $B=(b_{ij})_{i,j=1}^n$ of size $n$ such that $0\leq a_{ij}\leq b_{ij}$ for any $i,j=1,\dots,n$.
\end{atheorem}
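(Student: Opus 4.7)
The proof splits into the trivial direction and a constructive direction. For the easy direction ($\Leftarrow$), if $B=(b_{ij})$ is doubly stochastic with $0 \le a_{ij} \le b_{ij}$, then $a_{ij}\ge 0$ and $\sum_j a_{ij} \le \sum_j b_{ij} = 1$ for every $i$, and symmetrically $\sum_i a_{ij} \le 1$ for every $j$; hence $A$ is doubly substochastic. The plan is to record this in a single line and then concentrate on the forward direction.

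For the direction ($\Rightarrow$), given a doubly substochastic $A=(a_{ij})$, my plan is to exhibit an explicit rank-one correction that fills in the missing row/column mass. Introduce the row slacks $r_i := 1 - \sum_j a_{ij}\ge 0$ and column slacks $c_j := 1 - \sum_i a_{ij}\ge 0$. A direct bookkeeping computation gives the key identity
$$\sum_{i=1}^n r_i \;=\; n - \sum_{i,j} a_{ij} \;=\; \sum_{j=1}^n c_j,$$
and I will call this common value $s\ge 0$. If $s=0$ then $A$ is already doubly stochastic and we simply set $B:=A$. Otherwise define
$$b_{ij} \;:=\; a_{ij} + \frac{r_i\, c_j}{s}.$$

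The main step is then to verify the three properties of $B$: nonnegativity and domination $b_{ij}\ge a_{ij}\ge 0$ are immediate since $r_i,c_j,s\ge 0$; the row sums satisfy
$$\sum_{j} b_{ij} \;=\; (1-r_i) + \frac{r_i}{s}\sum_j c_j \;=\; (1-r_i)+r_i \;=\; 1,$$
and symmetrically $\sum_i b_{ij}=1$. Thus $B$ is doubly stochastic and dominates $A$ entrywise. Note that the bound $b_{ij}\le 1$ needed to certify that $B$ is a well-formed doubly stochastic matrix follows automatically from nonnegativity together with the row-sum identity. I do not anticipate any obstacle: the construction is elementary and the only subtlety is the degenerate case $s=0$, which is handled separately. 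An alternative plan would be to embed $A$ as the upper-left block of a $2n\times 2n$ doubly stochastic matrix and then extract $B$, but the rank-one correction above is shorter and avoids invoking Birkhoff--von Neumann.
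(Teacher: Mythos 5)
Your proof is correct. There is nothing in the paper to compare it against: the paper imports this statement verbatim from Horn and Johnson \cite[Theorem 3.2.6]{horn1994topics} and gives no proof of its own, using it only as a black box in the proof of Lemma \ref{lemma:TRdiagineq}. Relative to the standard textbook argument (which proceeds iteratively: as long as $A$ is not doubly stochastic, the identity $\sum_i r_i = \sum_j c_j$ guarantees a row $i$ and a column $j$ that are simultaneously deficient, one adds $\min\{r_i,c_j\}$ to the entry $a_{ij}$, and the process terminates since each step saturates at least one row or column), your one-shot rank-one correction
$$b_{ij} := a_{ij} + \frac{r_i\,c_j}{s}, \qquad s:=\sum_i r_i = \sum_j c_j,$$
is a genuinely different and arguably cleaner route: the same balance identity that drives the iterative argument is used here to make the row and column sums close in a single algebraic verification, with no termination argument needed. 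Your handling of the two potential pitfalls is also right: the degenerate case $s=0$ forces every $r_i$ and $c_j$ to vanish (nonnegative numbers summing to zero), so $A$ is already doubly stochastic; and the bound $b_{ij}\le 1$ is automatic from nonnegativity together with unit row sums, so it need not be checked separately. Both your construction and the textbook one avoid Birkhoff--von Neumann (Theorem \ref{thm:Birkhoff}), which the paper needs only as a separate ingredient, so there is no circularity concern.
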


\begin{alemma}\label{lemma:TRdiagineq}
Let $\mathcal{H}$ be a separable Hilbert space, $\mathfrak{A}\in\mathcal{L}(\mathcal{H})$ be Hilbert-Schmidt and self-adjoint, and $\mathfrak{B}\in\mathcal{L}(\mathcal{H})$ be Hilbert-Schmidt, self-adjoint, and positive semi-definite. Then, we have
$$\tr(\mathfrak{A}\mathfrak{B})\leq\sum_{i=1}^{M}\lambda_i\lambda_i'$$
where $M:=\min\{\mathrm{rank}(\mathfrak{B}),\pr(\mathfrak{A})\}$, $\lambda_1\geq\lambda_2\geq\cdots>0$ are the positive eigenvalues of $\mathfrak{A}$, and $\lambda_1'\geq\lambda_2'\geq\cdots>0$ are the positive eigenvalues of $\mathfrak{B}$.
\end{alemma}
\begin{proof}
Let $\{\lambda_i\}_{i=1}^{\pr(\mathfrak{A})}$ be the positive eigenvalues with corresponding orthonormal eigenfunctions $\{\phi_i\}_{i=1}^{\pr(\mathfrak{A})}$ of $\mathfrak{A}$, $\{\zeta_i\}_{i=1}^{\nr(\mathfrak{A})}$ be the negative eigenvalues with corresponding orthonormal eigenfunctions $\{\psi_i\}_{i=1}^{\nr(\mathfrak{A})}$ of $\mathfrak{A}$, and $\{\lambda_i'\}_{i=1}^{\mathrm{rank}(\mathfrak{B})}$ be the positive eigenvalues with corresponding orthonormal eigenfunctions $\{\phi_i'\}_{i=1}^{\mathrm{rank}(\mathfrak{B})}$ of $\mathfrak{B}$. Then,
\begin{align*}
    \tr(\mathfrak{A}\mathfrak{B})&=\sum_{j=1}^{\mathrm{rank}(\mathfrak{B})}\langle\phi_j',\mathfrak{A}\mathfrak{B}\phi_j'\rangle=\sum_{j=1}^{\mathrm{rank}(\mathfrak{B})}\lambda_j'\langle\phi_j',\mathfrak{A}\phi_j'\rangle\\
    &=\sum_{j=1}^{\mathrm{rank}(\mathfrak{B})}\lambda_j'\bigg\langle\phi_j',\sum_{i=1}^{\pr(\mathfrak{A})}\lambda_i\langle\phi_i,\phi_j'\rangle\phi_i+\sum_{i=1}^{\nr(\mathfrak{A})}\zeta_i\langle\psi_i,\phi_j'\rangle\psi_i\bigg\rangle\\
    &=\sum_{j=1}^{\mathrm{rank}(\mathfrak{B})}\sum_{i=1}^{\pr(\mathfrak{A})}\lambda_j'\lambda_i\langle\phi_j',\phi_i\rangle^2+\sum_{j=1}^{\mathrm{rank}(\mathfrak{B})}\sum_{i=1}^{\nr(\mathfrak{A})}\lambda_j'\zeta_i\langle\phi_j',\psi_i\rangle^2\\
    &\leq\sum_{j=1}^{\mathrm{rank}(\mathfrak{B})}\sum_{i=1}^{\pr(\mathfrak{A})}\lambda_j'\lambda_i\langle\phi_j',\phi_i\rangle^2
\end{align*}

First, let's consider the case when $M=\mathrm{rank}(\mathfrak{B})$. Observe that $\sum_{j=1}^{\mathrm{rank}(\mathfrak{B})} \langle\phi_j',\phi_i\rangle^2\leq 1$ for each $i=1,\dots,\pr(\mathfrak{A})$, and $\sum_{i=1}^{\pr(\mathfrak{A})}\langle\phi_j',\phi_i\rangle^2\leq 1$ for each $j=1,\dots,\mathrm{rank}(\mathfrak{B})$ by the orthonormal property. Now, we carry out a case-by-case analysis. 

    \smallskip
    \noindent (1) If $\pr(\mathfrak{A})<\infty$: Observe that, if we define $A=\big(a_{ji}\big)_{j,i=1}^{\pr(\mathfrak{A})}$ where $$a_{ji}=\begin{cases}\langle\phi_j',\phi_i\rangle^2&\text{if }j=1,\dots,\mathrm{rank}(\mathfrak{B})\\ 0&\text{if }j=\mathrm{rank}(\mathfrak{B})+1,\dots,\pr(\mathfrak{A}),\end{cases}$$ then $A$ is a substochastic matrix. Then, by Theorem \ref{thm:substochasticuppbdd}, $A$ is (entrywisely) upper bounded by some doubly stochastic matrix $B=\big(b_{ji}\big)_{j,i=1}^{\pr(\mathfrak{A})}$. Hence,
   $$
       \sum_{j=1}^{\mathrm{rank}(\mathfrak{B})}\sum_{i=1}^{\pr(\mathfrak{A})}\lambda_j'\lambda_i\langle\phi_j',\phi_i\rangle^2=\sum_{j=1}^{\mathrm{rank}(\mathfrak{B})} \lambda_j'\left(\sum_{i=1}^{\pr(\mathfrak{A})} a_{ji}\lambda_i\right)
        \leq\sum_{j=1}^{\mathrm{rank}(\mathfrak{B})} \lambda_j'\left(\sum_{i=1}^{\pr(\mathfrak{A})} b_{ji}\lambda_i\right)\leq\sum_{i=1}^{\mathrm{rank}(\mathfrak{B})}\lambda_i\lambda_i'
 $$
    as we wanted, where the last inequality holds by Theorem \ref{thm:rearrangement} and Theorem \ref{thm:Birkhoff}.
    
\smallskip
    \noindent (2) When $\pr(\mathfrak{A})=\infty$, there are two sub-cases:
    \begin{enumerate}
        \item If $\mathrm{rank}(\mathfrak{B})<\infty$: Choose arbitrary positive integer $l\geq k$. Then, by the similar argument of the previous case (1), we have
       $$\sum_{j=1}^{\mathrm{rank}(\mathfrak{B})}\sum_{i=1}^l\lambda_j'\lambda_i\langle\phi_j',\phi_i\rangle^2\leq\sum_{i=1}^{\mathrm{rank}(\mathfrak{B})}\lambda_i\lambda_i'.$$
        Since $l$ is arbitrary,        $\sum_{j=1}^{\mathrm{rank}(\mathfrak{B})}\sum_{i=1}^\infty\lambda_j'\lambda_i\langle\phi_j',\phi_i\rangle^2\leq\sum_{i=1}^{\mathrm{rank}(\mathfrak{B})}\lambda_i\lambda_i'$
        as we wanted.
        
        \item If $\mathrm{rank}(\mathfrak{B})=\infty$: Choose arbitrary positive integers $l\leq l'$. Then, by the similar argument of the previous case (1), we have
       $$\sum_{j=1}^l\sum_{i=1}^{l'}\lambda_j'\lambda_i\langle\phi_j',\phi_i\rangle^2\leq\sum_{i=1}^l\lambda_i\lambda_i'.$$
        Since $l,l'$ are arbitrary, we have
    $\sum_{j=1}^\infty\sum_{i=1}^\infty\lambda_j'\lambda_i\langle\phi_j',\phi_i\rangle^2\leq\sum_{i=1}^\infty\lambda_i\lambda_i'$
        as we wanted.
    \end{enumerate}
The case when $M=\pr(\mathfrak{A})$ is proved in a similar manner, which we omit.
\end{proof}

\begin{proof}[Proof of Theorem \ref{thm:optimalthmgen}]
Let's prove the first claim. Let $\{\lambda_i\}_{i=1}^{\pr(\cmdsop_\mX)}$ be the positive eigenvalues with corresponding orthonormal eigenfunctions $\{\phi_i\}_{i=1}^{\pr(\cmdsop_\mX)}$ of $\cmdsop_\mX$, $\{\zeta_i\}_{i=1}^{\nr(\cmdsop_\mX)}$ be the negative eigenvalues with corresponding orthonormal eigenfunctions $\{\psi_i\}_{i=1}^{\nr(\cmdsop_\mX)}$ of $\cmdsop_\mX$, and $\{\lambda_i'\}_{i=1}^{\mathrm{rank}(\cmdsop)}$ be the positive eigenvalues with corresponding orthonormal eigenfunctions $\{\phi_i'\}_{i=1}^{\mathrm{rank}(\cmdsop)}$ of $\cmdsop$. Note that $\mathrm{rank}(\cmdsop)\leq k\leq\pr(\cmdsop_\mX)$. Then,

\begin{align*}
    \Vert K_\mX-K \Vert_{L^2(\mu_X\otimes\mu_X)}^2&=\Vert \cmdsop_\mX-\cmdsop\Vert_2^2=\tr\big((\cmdsop_\mX-\cmdsop)^*(\cmdsop_\mX-\cmdsop\big))\\
    &=\tr\big((\cmdsop_\mX-\cmdsop)^2\big)=\tr(\cmdsop_\mX^2)+\tr(\cmdsop^2)-2\,\tr(\cmdsop_\mX\cmdsop)\quad(\because\text{item (4) of Lemma \ref{lemma:propstrHB}})\\
    &\geq \sum_{i=1}^{\pr(\cmdsop_\mX)}\lambda_i^2+\sum_{i=1}^{\nr(\cmdsop_\mX)}\zeta_i^2+\sum_{j=1}^{\mathrm{rank}(\cmdsop)}(\lambda_j')^2-2\sum_{i=1}^{\mathrm{rank}(\cmdsop)}\lambda_i\lambda_i'\quad(\because\text{Lemma \ref{lemma:TRdiagineq}})\\
    &=\sum_{i=1}^{\mathrm{rank}(\cmdsop)}(\lambda_i-\lambda_i')^2+\sum_{i=\mathrm{rank}(\cmdsop)+1}^{\pr(\cmdsop_\mX)}\lambda_i^2+\sum_{i=1}^{\nr(\cmdsop_\mX)}\zeta_i^2\\
    &\geq \sum_{i=\mathrm{rank}(\cmdsop)+1}^{\pr(\cmdsop_\mX)}\lambda_i^2+\sum_{i=1}^{\nr(\cmdsop_\mX)}\zeta_i^2\geq\Vert K_\mX-K_{\widehat{\mX}_k}\Vert_{L^2(\mu_X\otimes\mu_X)}^2.
\end{align*}

So, this concludes the first claim. The second claim is proved by a similar argument.
\end{proof}

\subsection{Proof of Lemma \ref{lemma:etandeven0}}
\begin{proof}[Proof of Lemma \ref{lemma:etandeven0}]
Recall from equation (\ref{eq:etand}) that $\eta_{n,d}=\frac{\vert \Sp^{d-2}\vert}{\vert\Sp^{d-1}\vert}\int_{-1}^1 P_{n,d}(t)\arccos(t)(1-t^2)^{\frac{d-3}{2}}\,dt$. Also, note that $P_{n,d}(-t)=P_{n,d}(t)$ for any $t\in[-1,1]$ by item (4) of Lemma \ref{lemma:Legdrprop}. Hence,
\begin{align*}
    \int_{-1}^1 P_{n,d}(t)\arccos(t)(1-t^2)^{\frac{d-3}{2}}\,dt&=\int_{-1}^1 P_{n,d}(-u)\arccos(-u)(1-u^2)^{\frac{d-3}{2}}\,du\,(\text{by substituting }u=-t)\\
    &=\int_{-1}^1 P_{n,d}(u)(\pi-\arccos(u))(1-u^2)^{\frac{d-3}{2}}\,du\\
    &=-\int_{-1}^1 P_{n,d}(u)\arccos(u)(1-u^2)^{\frac{d-3}{2}}\,du\,(\because\text{ item (6) of Lemma \ref{lemma:Legdrprop}}).
\end{align*}
Hence, $\int_{-1}^1 P_{n,d}(t)\arccos(t)(1-t^2)^{\frac{d-3}{2}}\,dt=0$ so that $\eta_{n,d}=0$.
\end{proof}

\subsection{Proof of Lemma \ref{lemma:tylrparity}}
\begin{proof}[Proof of Lemma \ref{lemma:tylrparity}] 
We carry out a case-by-case analysis.

\smallskip
    \noindent (1)
     If the parities of $n,m$ are different: Then, by item (4) of Lemma \ref{lemma:Legdrprop} and the parity assumption, $P_{n,d}(t)t^m(1-t^2)^{\frac{d-3}{2}}$ is odd function. So, $\int_{-1}^1 P_{n,d}(t)t^m(1-t^2)^{\frac{d-3}{2}}\,dt=0$.
    
    \smallskip
    \noindent (2) If the parities of $n,m$ are the same, and $n>m$: It is easy to show that
    $t^m=\sum_{k=0}^m b_k P_{k,d}(t)$
    since each $P_{k,d}(t)$ is a degree $k$ polynomial on $t$. So, by  orthogonality (see item (6) of Lemma \ref{lemma:Legdrprop}) and $n>m$, we have
    $$\int_{-1}^1 P_{n,d}(t)t^m(1-t^2)^{\frac{d-3}{2}}\,dt=0.$$
    
   \smallskip
    \noindent (3) If the parities of $n,m$ are the same, and $m=n+2k$: Recall the Rodrigues representation formula (see item (3) of Lemma \ref{lemma:Legdrprop}):
    $$P_{n,d}(t)=(-1)^n R_{n,d}(1-t^2)^{\frac{3-d}{2}}\left(\frac{d}{dt}\right)^n (1-t^2)^{n+\frac{d-3}{2}}$$ where $R_{n,d}=\frac{\Gamma\left(\frac{d-1}{2}\right)}{2^n\Gamma\left(n+\frac{d-1}{2}\right)}$.     Therefore,
    \begin{align*}
        \int_{-1}^1 P_{n,d}(t)t^{n+2k}(1-t^2)^{\frac{d-3}{2}}\,dt&=(-1)^n R_{n,d}\int_{-1}^1\left(\frac{d}{dt}\right)^n (1-t^2)^{n+\frac{d-3}{2}} t^{n+2k}\,dt\\
        &=R_{n,d}\frac{(n+2k)!}{(2k)!}\int_{-1}^1 (1-t^2)^{n+\frac{d-3}{2}}t^{2k}\,dt\quad(\because\text{integration by parts }n\text{ times})\\
        &=R_{n,d}\frac{(n+2k)!}{(2k)!}\int_0^1 (1-s)^{n+\frac{d-3}{2}} s^{k-\frac{1}{2}}\,ds\\
        &=R_{n,d}\frac{(n+2k)!}{(2k)!}B\left(k+\frac{1}{2},n+\frac{d-1}{2}\right)
    \end{align*}
    where $B(x,y):=\frac{\Gamma(x)\Gamma(y)}{\Gamma(x+y)}$ represents the Beta function (see \cite[1.14]{atkinson2012spherical}). Hence,
    \begin{align*}
        \int_{-1}^1 P_{n,d}(t)t^{n+2k}(1-t^2)^{\frac{d-3}{2}}\,dt&=R_{n,d}\frac{(n+2k)!}{(2k)!}\frac{\Gamma(k+\frac{1}{2})\Gamma(n+\frac{d-1}{2})}{\Gamma(k+n+\frac{d}{2})}
        =\frac{(n+2k)!}{2^n(2k)!}\frac{\Gamma(k+\frac{1}{2})\Gamma(\frac{d-1}{2})}{\Gamma(k+n+\frac{d}{2})}.
    \end{align*}
\end{proof}

\subsection{Proof of Lemma \ref{lemma:russianguys}}
\begin{proof}[Proof of Lemma \ref{lemma:russianguys}]
We follow a  strategy akin to  \cite[Section 6]{bogomolny2007distance}. Recall that
$$\eta_{n,d}=\frac{\vert \Sp^{d-2}\vert}{\vert\Sp^{d-1}\vert}\int_{-1}^1 P_{n,d}(t)\arccos(t)(1-t^2)^{\frac{d-3}{2}}\,dt.$$
Moreover, by Erdelyi's argument \cite[pg.248]{bateman1953higher}, it is shown that
\begin{equation}\label{eq:eta}\eta_{n,d}=\frac{i^n(2\pi)^{\frac{d}{2}}}{\vert\Sp^{d-1}\vert}\int_{-\infty}^\infty t^{1-\frac{d}{2}}J_{n+\frac{d}{2}-1}(t)\hat{f}(t)\,dt\end{equation}
where $J_{n+\frac{d}{2}-1}(t)$ denotes Bessel function \cite[Chapter VII]{bateman1953higher}, and
$\hat{f}(t)=\frac{1}{2\pi}\int_{-1}^1e^{-ixt}\arccos(x)\,dx.$
Moreover, by substituting $x=\cos\theta$,
\begin{align*}
    \hat{f}(t)&=\frac{1}{2\pi}\int_{-1}^1e^{-ixt}\arccos(x)\,dx=\frac{1}{2\pi}\int_0^\pi \theta\sin\theta e^{-it\cos\theta}\,d\theta=\frac{1}{2it}(e^{it}-J_0(t))
\end{align*}
Corresponding to the two terms in $\hat{f}(t)$ there are two terms in $\eta_{n,d}$ via equation (\ref{eq:eta}). The integral including $e^{it}$ is zero for all $n\neq 0$ and the integral with $J_0(t)$ is,
$$\eta_{n,d}=-\frac{i^{n-1}(2\pi)^{\frac{d}{2}}}{\vert\Sp^{d-1}\vert}\int_0^\infty t^{-\frac{d}{2}}J_{n+\frac{d}{2}-1}(t)J_0(t)\,dt$$

The last integral can be computed using the following integral \cite[7.7.4.30]{bateman1953higher}
$$\int_0^\infty t^{-\rho}J_\mu(t)J_\nu(t)\,dt=\frac{\Gamma(\rho)\Gamma(\frac{1}{2}(\nu+\mu+1-\rho))}{2^\rho\Gamma(\frac{1}{2}(1+\nu-\mu+\rho))\Gamma(\frac{1}{2}(1+\nu+\mu+\rho))\Gamma(\frac{1}{2}(1+\mu-\nu+\rho))}.$$
Hence, the final result is
\begin{align*}
    \eta_{n,d}&=-\frac{1}{\vert\Sp^{d-1}\vert}(-1)^{\frac{n-1}{2}}\pi^{\frac{d}{2}}\frac{\Gamma(\frac{d}{2})\Gamma(\frac{n}{2})}{\Gamma(1-\frac{n}{2})(\Gamma(\frac{n+d}{2}))^2}=-\frac{\pi^{\frac{d-1}{2}}n!!\Gamma(\frac{d}{2})\Gamma(\frac{n}{2})}{n2^{\frac{n-1}{2}}(\Gamma(\frac{n+d}{2}))^2\vert\Sp^{d-1}\vert}.
\end{align*}
\end{proof}

\subsection{Proof of Lemma \ref{lemma:tlfrtrc}}
\begin{proof}[Proof of Lemma \ref{lemma:tlfrtrc}]
Observe that $\arccos^2(t),(1-t^2),(1+r^2+2rt)\geq 0$ for any $r\in (0,1)$ and $t\in [-1,1]$. So, obviously $\int_{-1}^1\frac{\arccos^2(t)(1-t^2)^{\frac{d-3}{2}}}{(1+r^2+2rt)^{\frac{d}{2}}}\,dt\geq 0$. Moreover,
\begin{align*}
    \int_{-1}^1\frac{\arccos^2(t)(1-t^2)^{\frac{d-3}{2}}}{(1+r^2+2rt)^{\frac{d}{2}}}\,dt&\leq \pi^2\int_{-1}^1\frac{(1-t^2)^{\frac{d-3}{2}}}{(1+r^2+2rt)^{\frac{d}{2}}}\,dt\quad(\because\,\arccos(t)\in [0,\pi])\\
    &\leq \pi^2\int_{-1}^1\frac{(1+r^2+2rt)^{\frac{d-3}{2}}}{(1+r^2+2rt)^{\frac{d}{2}}}\,dt\quad(\because\,1-t^2\leq 1+r^2+2rt)\\
    &=\pi^2\int_{-1}^1\frac{1}{(1+r^2+2rt)^{\frac{3}{2}}}\,dt
\end{align*}
which equals $\frac{2\pi^2}{1-r^2}$, as we wanted.
\end{proof}

\subsection{Proof of Lemma \ref{lemma:Laplaciandistance}}
\begin{proof}[Proof of Lemma \ref{lemma:Laplaciandistance}]
Let $u=(u_1,\dots,u_d)\in\Sp^{d-1} \subset \mathbb{R}^d$. Observe that $d_{\Sp^{d-1}}(e_d,u)=\arccos(\langle e_d,u\rangle)=\arccos (u_d)$. We use the following expression for the Laplace-Beltrami operator $\Delta$ on $\Sp^{d-1}$ in these coordinates (see \cite[(3.18)]{atkinson2012spherical}): $\Delta=-\sum_{1\leq j<i\leq d}\left(u_i\frac{\partial}{\partial u_j}-u_j\frac{\partial}{\partial u_i}\right)^2.$ Then,
\begin{align*}
   \Delta\,d_{\Sp^{d-1}}(e_d,\cdot)(u)&=-\sum_{1\leq j<i\leq d}\left(u_i\frac{\partial}{\partial u_j}-u_j\frac{\partial}{\partial u_i}\right)^2\arccos (u_d)\\
    &=-\sum_{j=1}^{d-1}\left(u_d\frac{\partial}{\partial u_j}-u_j\frac{\partial}{\partial u_d}\right)\frac{u_j}{\sqrt{1-u_d^2}}\\
    &=-\sum_{j=1}^{d-1}\left(\frac{u_d}{\sqrt{1-u_d^2}}-\frac{u_j^2 u_d}{(1-u_d^2)^{\frac{3}{2}}}\right)\\
    &=-(d-1)\cdot\frac{u_d}{\sqrt{1-u_d^2}}+\frac{u_d}{\sqrt{1-u_d^2}}\quad\left(\because\,\sum_{j=1}^{d-1}u_j^2=1-u_d^2\right)\\
    &=-(d-2)\cdot\frac{u_d}{\sqrt{1-u_d^2}}=-(d-2)\cdot\cot d_{\Sp^{d-1}}(e_d,u).
\end{align*}
\end{proof}

\subsection{Proof of Lemma \ref{lemma:lambdaf3dbdd}}
\begin{proof}[Proof of Lemma \ref{lemma:lambdaf3dbdd}]
By equation (\ref{eq:lambdan2f}),
$$\lambda_{n,3}^f=-\frac{\vert\Sp^1\vert}{2\vert\Sp^2\vert}\int_{-1}^1 P_{n,3}(t) f^2(t)\,dt=-\frac{1}{2\vert\Sp^{2}\vert}\int_{\Sp^2}P_{n,3}(\langle e_3, u\rangle)F(u)\,d\,\vol_{\Sp^2}(u).$$

Observe that, for any $\delta\in(0,1)$,
\begin{align}
    \vert\lambda_{n,3}^f\vert&=\frac{1}{2\vert\Sp^2\vert}\left\vert\int_{\Sp^2}P_{n,3}(\langle e_3\cdot u \rangle)F(u)\,d\vol_{\Sp^2}(u)\right\vert\nonumber\\
    &=\frac{1}{2n(n+1)\vert\Sp^2\vert}\left\vert\int_{\Sp^2}\langle \nabla_u P_{n,3}(\langle e_3\cdot u \rangle), \nabla F(u)\rangle\,d\vol_{\Sp^2}(u)\right\vert\quad(\because\text{ item (9) of Lemma \ref{lemma:Legdrprop}})\nonumber\\
    &=\frac{1}{2n(n+1)\vert\Sp^2\vert}\left\vert\int_{\Sp^2}P_{n,3}(\langle e_3\cdot u \rangle)\Delta F(u)\,d\vol_{\Sp^2}(u)\right\vert\quad(\because\text{ item (8) of Lemma \ref{lemma:Legdrprop}})\nonumber\\
    &\leq \frac{1}{2n(n+1)\vert\Sp^2\vert}\Vert P_{n,3}(\langle e_3\cdot u \rangle)\Delta F(u) \Vert_{L^1(\vol_{\Sp^2})}\nonumber\\
    &\leq \frac{1}{2n(n+1)\vert\Sp^2\vert}\Vert P_{n,3}(\langle e_3\cdot u \rangle) \Vert_{L^{1+\frac{1}{\delta}}(\vol_{\Sp^2})}\cdot\Vert\Delta F(u) \Vert_{L^{1+\delta}(\vol_{\Sp^2})}\quad(\because\text{Holder's inequality})\label{eq:holdereq}
\end{align}
where the second and third equality make sense because $f$ is smooth almost everywhere so that $\nabla F$ and $\Delta F$ are well-defined almost everywhere on $\Sp^2$. Moreover, note that

\begin{align*}
    \int_{\Sp^2} \vert P_{n,3}(\langle e_3, u\rangle) \vert^{1+\frac{1}{\delta}}\,d\,\vol_{\Sp^2}(u)&=\int_{\Sp^2} \vert P_{n,3}(\langle e_3, u\rangle) \vert^2\cdot\vert P_{n,3}(\langle e_3, u\rangle) \vert^{\frac{1}{\delta}-1}\,d\,\vol_{\Sp^2}(u)\\
    &\leq\int_{\Sp^2} \vert P_{n,3}(\langle e_3, u\rangle) \vert^2\,d\,\vol_{\Sp^2}(u)\quad(\because\text{ item (1) of Lemma \ref{lemma:Legdrprop}})\\
    &=\frac{\vert\Sp^2 \vert}{N_{n,3}}\quad(\because\text{ item (2) of Lemma \ref{lemma:Legdrprop}}).
\end{align*}
This implies that $\Vert P_{n,3}(\langle e_3\cdot u \rangle) \Vert_{L^{1+\frac{1}{\delta}}(\vol_{\Sp^2})}\leq C_\delta N_{n,3}^{-\frac{1}{1+\frac{1}{\delta}}}$ for some positive constant $C_\delta>0$. Finally, from equation (\ref{eq:holdereq}) we conclude that
$$\vert \lambda_{n,3}^f\vert\leq\frac{C_\delta' \,\Vert\Delta F(u) \Vert_{L^{1+\delta}(\vol_{\Sp^2})}}{n(n+1)}N_{n,3}^{-\frac{1}{1+\frac{1}{\delta}}}\text{ for some }C_\delta'>0.$$
\end{proof}

\subsection{Proof of Lemma \ref{lemma:pullbackkernel}}
\begin{proof}[Proof of Lemma \ref{lemma:pullbackkernel}]
Let's prove item (1) first. Since $f\in L^1(\mu_X\otimes\mu_Y)$, the map $x\mapsto \int_Y f(x,y)\,d\mu_Y(y)$ is well-defined $\mu_X$-almost everywhere. Furthermore, because $\mu$ is a coupling measure between $\mu_X$ and $\mu_Y$, we have
\begin{align*}
    \int_{B(x,r)}\int_Y f(x',y)\,d\mu_Y(y)\,d\mu_X(x')=\mu(B(x,r)\times Y)=\mu_X(B(x,r))
\end{align*}
for any $x\in X$ and $r>0$. Hence, by the Lebesgue differentiability property of $\mX$, we conclude 
$$\int_Y f(x,y)\,d\mu_Y(y)=\lim_{r\rightarrow 0}\frac{1}{\mu_X(B(x,r))}\int_{B(x,r)}\int_Y f(x',y)\,d\mu_Y(y)\,d\mu_X(x')=1$$
$\mu_X$-almost everywhere.

To prove item (2), it is enough to show
$$\Vert (\pi_X\times\pi_X)^*(K_\mX)-(\pi_X\times\pi_X)^*(K_\mX^+)\Vert_{L^2(\mu\otimes\mu)}\leq\Vert (\pi_X\times\pi_X)^*(K_\mX)-L\Vert_{L^2(\mu\otimes\mu)}$$
for an arbitrary fixed $L\in L^2_{\succeq 0,\mathrm{sym}}(\mu\otimes\mu)$. We define 
$$
    \overline{L}:X\times X\longrightarrow\R\,\text{  s.t.  }
    (x,x')\longmapsto\iint_{Y\times Y}L(x,y,x',y')f(x,y)f(x',y')\,d\mu_Y(y)\,d\mu_Y(y').$$

Note that $\overline{L}$ is well-defined since $L\in L^2(\mu\otimes\mu)$.

\begin{claim}\label{clim:avgLL2sympostv}
$\overline{L}\in L^2_{\succeq 0,\mathrm{sym}}(\mu_X\otimes\mu_X)$.
\end{claim}
\begin{proof}[Proof of Claim \ref{clim:avgLL2sympostv}]
Observe that
\begin{align*}
    &\iint_{X\times X}\vert\overline{L}(x,x')\vert^2\,d\mu_X(x)\,d\mu_X(x')\\
    &\leq\iint_{X\times X}\left(\iint_{Y\times Y}\vert L(x,y,x',y')\vert f(x,y)f(x',y')\,d\mu_Y(y)\,d\mu_Y(y')\right)^2\,d\mu_X(x)\,d\mu_X(x')\\
    &\leq\iiiint_{X\times Y\times X\times Y}\vert L(x,y,x',y')\vert^2f(x,y)f(x',y')\,d\mu_X(x)\,d\mu_Y(y)\,d\mu_X(x')\,d\mu_Y(y')\\
    &=\iiiint_{X\times Y\times X\times Y}\vert L(x,y,x',y')\vert^2\,d\mu(x,y)\,d\mu(x',y')<\infty
\end{align*}
since $L\in L^2(\mu\otimes\mu)$. Hence, $\overline{L}\in L^2(\mu_X\otimes\mu_X)$. $\overline{L}$ is obviously symmetric since $L$ is so. Finally, fix an arbitrary $\phi\in L^2(\mu_X)$ and note that,
\begin{align*}
    &\iint_{X\times X} \overline{L}(x,x')\phi(x)\phi(x')\,d\mu_X(x)\,d\mu_X(x')\\
    &=\iiiint_{X\times Y\times X\times Y} L(x,y,x',y')\phi(x)\phi(x')f(x,y)f(x',y')\,d\mu_X(x)\,d\mu_Y(y)\,d\mu_X(x')\,d\mu_Y(y')\\
    &=\iiiint_{X\times Y\times X\times Y} L(x,y,x',y')\phi(x)\phi(x')\,d\mu(x,y)\,d\mu(x',y')\geq 0
\end{align*}
since the integral operator associated to $L$ is positive semi-definite. Hence, we can conclude that $\overline{L}\in L^2_{\succeq 0,\mathrm{sym}}(\mu_X\otimes\mu_X)$.
\end{proof}

\begin{claim}\label{claim:avgLleqL}
$\Vert K_\mX-\overline{L}\Vert_{L^2(\mu_X\otimes\mu_X)}\leq\Vert (\pi_X\times\pi_X)^*(K_\mX)-L\Vert_{L^2(\mu\otimes\mu)}$
\end{claim}
\begin{proof}[Proof of Claim \ref{claim:avgLleqL}]
\begin{align*}
    &\Vert K_\mX-\overline{L}\Vert_{L^2(\mu_X\otimes\mu_X)}^2\\
    &=\iint_{X\times X}\vert K_\mX(x,x')-\overline{L}(x,x')\vert^2\,d\mu_X(x)\,d\mu_X(x')\\
    &=\iint_{X\times X} \left\vert \iint_{Y\times Y} (K_\mX(x,x')-L(x,y,x',y'))f(x,y)f(x',y')\,d\mu_Y(y)\,d\mu_Y(y')\right\vert^2\,d\mu_X(x)\,d\mu_X(x')\\
    &\quad(\because\text{ item (1)})\\ 
    &\leq\iint_{X\times X} \left(\iint_{Y\times Y} \vert K_\mX(x,x')-L(x,y,x',y')\vert f(x,y)f(x',y')\,d\mu_Y(y)\,d\mu_Y(y')\right)^2\,d\mu_X(x)\,d\mu_X(x')\\
    &\leq\iiiint_{X\times Y\times X\times Y}\vert K_\mX(x,x')-L(x,y,x',y')\vert^2 f(x,y)f(x',y')\,d\mu_X(x)\,d\mu_Y(y)\,d\mu_X(x')\,d\mu_Y(y')\\
    &=\iiiint_{X\times Y\times X\times Y}\vert K_\mX(x,x')-L(x,y,x',y')\vert^2 \,d\mu(x,y)\,d\mu(x',y')\\
    &=\Vert (\pi_X\times\pi_X)^*(K_\mX)-L\Vert_{L^2(\mu\otimes\mu)}^2.
\end{align*}
\end{proof}

Then, finally we can prove the following inequality:

\begin{align*}
    \Vert (\pi_X\times\pi_X)^*(K_\mX)-(\pi_X\times\pi_X)^*(K_\mX^+)\Vert_{L^2(\mu\otimes\mu)}
    &=\Vert K_\mX-K_\mX^+\Vert_{L^2(\mu_X\otimes\mu_X)}\\
    &\leq\Vert K_\mX-\overline{L}\Vert_{L^2(\mu_X\otimes\mu_X)}\,(\because\text{  definition of }K_\mX^+\text{ and Claim \ref{clim:avgLL2sympostv}})\\
    &\leq\Vert (\pi_X\times\pi_X)^*(K_\mX)-L\Vert_{L^2(\mu\otimes\mu)}\,(\because\text{ Claim \ref{claim:avgLleqL}}).
\end{align*}

This completes the proof of item (2).\medskip

Lastly, for item (3), observe that $$\Vert(\pi_X\times\pi_X)^*(K_\mX^+)-(\pi_Y\times\pi_Y)^*(K_\mY^+)\Vert_{L^2(\mu\otimes\mu)}\leq\Vert(\pi_X\times\pi_X)^*(K_\mX)-(\pi_Y\times\pi_Y)^*(K_\mY)\Vert_{L^2(\mu\otimes\mu)}$$
by item (2) and Theorem \ref{thm:projnonexpansive}. Hence,
\begin{align*}
    \Vert K_\mX^+-K_\mY^+\Vert_{L^2(\mu\otimes\mu)}&=\Vert(\pi_X\times\pi_X)^*(K_\mX^+)-(\pi_Y\times\pi_Y)^*(K_\mY^+)\Vert_{L^2(\mu\otimes\mu)}\\
    &\leq\Vert(\pi_X\times\pi_X)^*(K_\mX)-(\pi_Y\times\pi_Y)^*(K_\mY)\Vert_{L^2(\mu\otimes\mu)}=\Vert K_\mX-K_\mY\Vert_{L^2(\mu\otimes\mu)}.
\end{align*}
\end{proof}

\subsection{Proof of Lemma \ref{lemma:KXL2bdd}}
\begin{proof}[Proof of Lemma \ref{lemma:KXL2bdd}]
By the definition of $K_\mX$ and $K_\mY$ we have:
\begin{align*}
    &2\,\big\vert K_\mX(x,x')-K_\mY(y,y') \big\vert
    \leq \vert d_X^2(x,x')-d_Y^2(y,y')\vert +\left\vert \int_X d_X^2(x,u)\,d\mu_X(u)-\int_Y d_Y^2(y,v)\,d\mu_Y(v)\right\vert\\
    &\quad+\left\vert \int_X d_X^2(x',u)\,d\mu_X(u)-\int_Y d_Y^2(y',v)\,d\mu_Y(v)\right\vert +\big\vert \big(\diamna_2(\mX)\big)^2-\big(\diamna_2(\mY)\big)^2\big\vert
\end{align*}
for any $x,x'\in X$ and $y,y'\in Y$. Also, if $D:=\max\{\diamna(X),\diamna(Y)\}$, we have
    \begin{align*}
    \vert d_X^2(x,x')-d_Y^2(y,y')\vert&=\vert d_X(x,x')+d_Y(y,y')\vert\cdot\vert d_X(x,x')-d_Y(y,y')\vert
    \leq 2\,D\cdot\vert d_X(x,x')-d_Y(y,y')\vert.
\end{align*}
Also, by employing  similar manipulations together with Remark \ref{rmk:dgwpdiam}, we can deduce:
\begin{align*}
    \left\vert \int_X d_X^2(x,u)\,d\mu_X(u)-\int_Y d_Y^2(y,v)\,d\mu_Y(v)\right\vert
    &\leq 2D\cdot\left(\int_{X\times Y} \vert d_X(x,u)-d_Y(y,v)\vert^2\,d\mu(u,v)\right)^{\frac{1}{2}}\\
    \left\vert \int_X d_X^2(x',u)\,d\mu_X(u)-\int_Y d_Y^2(y',v)\,d\mu_Y(v)\right\vert
    &\leq 2D\cdot\left(\int_{X\times Y} \vert d_X(x',u)-d_Y(y',v)\vert^2\,d\mu(u,v)\right)^{\frac{1}{2}}\\
    \big\vert \big(\diamna_2(\mX)\big)^2-\big(\diamna_2(\mY)\big)^2\big\vert&\leq 4D\cdot d_{\mathrm{GW},2}(\mX,\mY).
\end{align*}

The claim is obtained by combining all the above.
\end{proof}

\section{Concepts from Functional analysis and spectral theory of operators}\label{sec:sec:functanal}

We review the basics of spectral theory, including  the notions of compact and self-adjoint operators on Hilbert spaces. See \cite{reed2012methods} and \cite[Chs. 6 and 8]{renardy2006introduction} for a complete treatment.

\begin{adefinition}[bounded operator]
Let $(\mathcal{B},\Vert\cdot\Vert_{\mathcal{B}})$ and $(\mathcal{C},\Vert\cdot\Vert_{\mathcal{C}})$ be Banach spaces, and $\mathfrak{A}:\mathcal{B}\rightarrow\mathcal{C}$ be an operator from $\mathcal{B}$ to $\mathcal{C}$. Then, $\mathfrak{A}$ is said to be \textbf{bounded} if
$$\Vert \mathfrak{A} \Vert:=\sup_{b\in\mathcal{B}\backslash\{0\}}\frac{\Vert \mathfrak{A}b\Vert_{\mathcal{C}}}{\Vert b\Vert_{\mathcal{B}}}<\infty.$$
\end{adefinition}

From now on, $\mathcal{L}(\mathcal{B},\mathcal{C})$ denotes the set of bounded operators from $\mathcal{B}$ to $\mathcal{C}$. Also, for the simplicity, we use $\mathcal{L}(\mathcal{B})$ instead of $\mathcal{L}(\mathcal{B},\mathcal{B})$.

\begin{atheorem}[{adjoint \cite[Section VI.2]{reed2012methods}}]
Let $\mathcal{H}$ be a Hilbert space. Then, for any $\mathfrak{A}\in \mathcal{L}(\mathcal{H})$, there is a unique bounded operator $\mathfrak{A}^*:\mathcal{H}\longrightarrow\mathcal{H}$ satisfying:
$$\langle u,\mathfrak{A}v \rangle=\langle \mathfrak{A}^*u,v \rangle\,\,\mbox{for any $u,v\in\mathcal{H}$.}$$
\end{atheorem}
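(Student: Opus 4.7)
The plan is to construct $\mathfrak{A}^*$ pointwise via the Riesz representation theorem and then verify that the resulting map is a bounded linear operator and is the unique operator satisfying the adjoint relation. First I would fix an arbitrary $u\in\mathcal{H}$ and define the map $\ell_u:\mathcal{H}\to\mathbb{C}$ (or $\mathbb{R}$) by $\ell_u(v):=\langle u,\mathfrak{A}v\rangle$. Linearity of $\ell_u$ follows from linearity of $\mathfrak{A}$ and linearity of the inner product in the second slot, and boundedness follows from Cauchy--Schwarz together with the boundedness of $\mathfrak{A}$, giving $|\ell_u(v)|\leq\|u\|\|\mathfrak{A}\|\|v\|$, hence $\|\ell_u\|\leq\|u\|\|\mathfrak{A}\|$.

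Next I would apply the Riesz representation theorem to $\ell_u$: there exists a unique element of $\mathcal{H}$, which I denote $\mathfrak{A}^*u$, such that $\ell_u(v)=\langle\mathfrak{A}^*u,v\rangle$ for every $v\in\mathcal{H}$. This defines a map $\mathfrak{A}^*:\mathcal{H}\to\mathcal{H}$. To see that it is linear, I would note that for $u_1,u_2\in\mathcal{H}$ and scalars $\alpha,\beta$ we have
$$\langle\mathfrak{A}^*(\alpha u_1+\beta u_2),v\rangle=\langle\alpha u_1+\beta u_2,\mathfrak{A}v\rangle=\overline{\alpha}\langle\mathfrak{A}^*u_1,v\rangle+\overline{\beta}\langle\mathfrak{A}^*u_2,v\rangle=\langle\alpha\mathfrak{A}^*u_1+\beta\mathfrak{A}^*u_2,v\rangle$$
for all $v$; by the non-degeneracy of the inner product, $\mathfrak{A}^*(\alpha u_1+\beta u_2)=\alpha\mathfrak{A}^*u_1+\beta\mathfrak{A}^*u_2$.

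For boundedness of $\mathfrak{A}^*$, I would use the standard trick $\|\mathfrak{A}^*u\|^2=\langle\mathfrak{A}^*u,\mathfrak{A}^*u\rangle=\langle u,\mathfrak{A}(\mathfrak{A}^*u)\rangle\leq\|u\|\|\mathfrak{A}\|\|\mathfrak{A}^*u\|$, so $\|\mathfrak{A}^*u\|\leq\|\mathfrak{A}\|\|u\|$ (and in particular $\|\mathfrak{A}^*\|\leq\|\mathfrak{A}\|$). Finally, for uniqueness as an operator: if $B_1,B_2\in\mathcal{L}(\mathcal{H})$ both satisfy $\langle u,\mathfrak{A}v\rangle=\langle B_iu,v\rangle$ for every $u,v$, then $\langle B_1u-B_2u,v\rangle=0$ for every $v$, so $B_1u=B_2u$ for every $u$, hence $B_1=B_2$. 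There is no real obstacle here — the only substantive ingredient is the Riesz representation theorem, which is assumed as a standard tool; the rest is routine bookkeeping with the inner product.
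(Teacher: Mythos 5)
Your proposal is correct: the paper does not prove this statement (it is imported verbatim from Reed--Simon, \S VI.2, as background), and your construction via the Riesz representation theorem --- defining $\ell_u(v):=\langle u,\mathfrak{A}v\rangle$, representing it as $\langle \mathfrak{A}^*u,v\rangle$, then checking linearity, the bound $\Vert\mathfrak{A}^*\Vert\leq\Vert\mathfrak{A}\Vert$ via $\Vert\mathfrak{A}^*u\Vert^2=\langle u,\mathfrak{A}\mathfrak{A}^*u\rangle$, and uniqueness from non-degeneracy --- is precisely the standard proof in that source. One small remark: your conjugation bookkeeping in the linearity step presumes the inner product is linear in the second slot (Reed--Simon's convention, and trivially harmless in the real case $L^2(\mu_X)$ used throughout this paper), which is indeed the convention under which your appeal to Riesz in the form $\ell_u(v)=\langle \mathfrak{A}^*u,v\rangle$ is the right one.
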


From now on, the operator $\mathfrak{A}^*$ will be called the \textbf{adjoint} of $\mathfrak{A}$.

\begin{adefinition}[self-adjoint operator]
Let $\mathcal{H}$ be a Hilbert space. Then, $\mathfrak{A}\in \mathcal{L}(\mathcal{H})$ is said to be \textbf{self-adjoint} if $\mathfrak{A}=\mathfrak{A}^*$. Equivalently, if $\mathfrak{A}$ satisfies the following:
$$\langle \mathfrak{A}u,v\rangle=\langle u,\mathfrak{A}v\rangle
\,\,\mbox{for any $u,v\in\mathcal{H}$.}$$
\end{adefinition}

\begin{adefinition}
Let $\mathcal{H}$ be a Hilbert space and $\mathfrak{A}\in \mathcal{L}(\mathcal{H})$. Then, a complex number $\lambda$ is said to be in the \textbf{resolvent set} $\rho(\mathfrak{A})$ of $\mathfrak{A}$ if $\lambda I-\mathfrak{A}$ is a bijection with a bounded inverse. If $\lambda\notin\rho(\mathfrak{A})$, then $\lambda$ is said to be in the \textbf{spectrum} $\sigma(\mathfrak{A})$ of $\mathfrak{A}$.
\end{adefinition}

\begin{adefinition}
Let $\mathcal{H}$ be a Hilbert space and $\mathfrak{A}\in \mathcal{L}(\mathcal{H})$.
\begin{enumerate}
    \item An $u\neq 0$ which satisfies $\mathfrak{A}u=\lambda u$ for some $\lambda\in\mathbb{C}$ is called an \textbf{eigenvector} of $\mathfrak{A}$; $\lambda$ is called the corresponding eigenvalue. If $\lambda$ is an eigenvalue, then $\lambda I-\mathfrak{A}$ is not injective so $\lambda\in\sigma(\mathfrak{A})$. The set of all eigenvalues is called the \textbf{point spectrum} of $\mathfrak{A}$.
    
    \item If $\lambda$ is not an eigenvalue and if the range of $\lambda I-\mathfrak{A}$ is not dense, then $\lambda$ is said to be in the \textbf{residual spectrum}.
\end{enumerate}
\end{adefinition}

\begin{atheorem}[{\cite[Theorem VI.6, Theorem VI.8]{reed2012methods}}]\label{thm"selfadjtspectrmprpty}
Let $\mathfrak{A}$ be a bounded and self-adjoint operator on a Hilbert space $\mathcal{H}$. Then,
\begin{enumerate}
    \item $\mathfrak{A}$ has no residual spectrum.
    
    \item The spectrum $\sigma(\mathfrak{A})$ of $\mathfrak{A}$ is a subset of $\mathbb{R}$. In particular, $\sup_{\lambda\in\sigma(\mathfrak{A})}\vert\lambda\vert=\Vert \mathfrak{A} \Vert$.
    
    \item Eigenvectors corresponding to distinct eigenvalues of $\mathfrak{A}$ are orthogonal.
\end{enumerate}
\end{atheorem}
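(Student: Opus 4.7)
The plan is to handle the three items essentially independently. Part (3) is the quickest, part (2) carries the most content, and part (1) will follow as a short deduction from part (2). Across all three, the main lever is self-adjointness, summarized by $\langle \mathfrak{A}u, v\rangle = \langle u, \mathfrak{A}v\rangle$, plus the immediate consequence that any eigenvalue is real: if $\mathfrak{A}u = \lambda u$ with $u\neq 0$, then $\lambda\Vert u\Vert^2 = \langle \mathfrak{A}u, u\rangle = \overline{\langle u, \mathfrak{A}u\rangle} = \overline{\lambda}\Vert u\Vert^2$.

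For part (3) I would just compute: if $\mathfrak{A}u = \lambda u$ and $\mathfrak{A}v = \mu v$ with $\lambda\neq\mu$, then, using that $\mu$ is real by the preceding remark, self-adjointness gives $\lambda\langle u, v\rangle = \langle \mathfrak{A}u, v\rangle = \langle u, \mathfrak{A}v\rangle = \mu\langle u, v\rangle$, forcing $\langle u, v\rangle = 0$.

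For the first half of part (2) --- spectrum is real --- I would show that any $\lambda = \alpha + i\beta$ with $\beta\neq 0$ lies in $\rho(\mathfrak{A})$. The workhorse is the identity
\begin{align*}
\Vert(\mathfrak{A} - \lambda)u\Vert^2 \;=\; \Vert(\mathfrak{A} - \alpha)u\Vert^2 + \beta^2\Vert u\Vert^2,
\end{align*}
whose cross term vanishes because $\langle(\mathfrak{A}-\alpha)u, u\rangle$ is real for self-adjoint $\mathfrak{A}-\alpha$. This forces $\Vert(\mathfrak{A} - \lambda)u\Vert \geq |\beta|\Vert u\Vert$, so $\mathfrak{A} - \lambda$ is injective with closed range; applying the same estimate at $\bar{\lambda}$ shows $\ker(\mathfrak{A} - \bar{\lambda}) = 0$, and since $\mathrm{Ran}(\mathfrak{A} - \lambda)^{\perp} = \ker(\mathfrak{A} - \bar{\lambda})$ we conclude that $\mathfrak{A} - \lambda$ is onto; the open mapping theorem then produces a bounded inverse. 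For the second half $\sup_{\lambda\in\sigma(\mathfrak{A})}|\lambda| = \Vert \mathfrak{A}\Vert$, I would invoke Gelfand's spectral radius formula $r(\mathfrak{A}) = \lim_n\Vert \mathfrak{A}^n\Vert^{1/n}$ and combine it with the self-adjoint squaring identity $\Vert \mathfrak{A}^2\Vert = \Vert \mathfrak{A}\Vert^2$, which in turn follows from $\Vert \mathfrak{A}u\Vert^2 = \langle \mathfrak{A}^2 u, u\rangle \leq \Vert \mathfrak{A}^2\Vert\Vert u\Vert^2$. Iterating yields $\Vert \mathfrak{A}^{2^k}\Vert = \Vert \mathfrak{A}\Vert^{2^k}$, and sending $k\to\infty$ gives $r(\mathfrak{A}) = \Vert \mathfrak{A}\Vert$.

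Finally for part (1), I would argue by contradiction: if $\lambda$ lies in the residual spectrum, then $\mathrm{Ran}(\mathfrak{A} - \lambda)$ is not dense, so some nonzero $u$ satisfies $\langle u, (\mathfrak{A} - \lambda)v\rangle = 0$ for every $v$, which by self-adjointness means $(\mathfrak{A} - \bar{\lambda})u = 0$. Thus $\bar{\lambda}$ is an eigenvalue; but by part (2) the spectrum is real so $\bar{\lambda} = \lambda$, making $\lambda$ itself an eigenvalue, contradicting the definition of residual spectrum. The main obstacle is the norm-equals-spectral-radius identity in part (2): it really needs Gelfand's spectral radius formula as an input from Banach algebra theory. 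Once that formula is accepted, the self-adjoint squaring trick closes everything immediately; all other pieces are routine Hilbert-space bookkeeping.
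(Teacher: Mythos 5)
Your proof is correct, and it is essentially the canonical argument: the paper offers no proof of this statement at all, citing it directly from Reed--Simon, and your three steps (the estimate $\Vert(\mathfrak{A}-\lambda)u\Vert\geq|\beta|\Vert u\Vert$ for nonreal $\lambda$ combined with $\mathrm{Ran}(\mathfrak{A}-\lambda)^{\perp}=\ker(\mathfrak{A}-\bar{\lambda})$, Gelfand's formula plus the squaring identity $\Vert\mathfrak{A}^2\Vert=\Vert\mathfrak{A}\Vert^2$, and the deduction of empty residual spectrum from realness of eigenvalues) are exactly the route taken in that reference. The only cosmetic remark is that the open mapping theorem is dispensable, since the lower bound itself already yields $\Vert(\mathfrak{A}-\lambda)^{-1}\Vert\leq 1/|\beta|$.
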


\begin{adefinition}[positive semi-definite operator]\label{def:psdop}
Let $\mathcal{H}$ be a Hilbert space and $\mathfrak{A}\in \mathcal{L}(\mathcal{H})$. $\mathfrak{A}$ is called \textbf{positive semi-definite} if $\langle u,\mathfrak{A}u \rangle\geq 0$ for any $u\in\mathcal{H}$. We write $\mathfrak{A}\succeq 0$ if $\mathfrak{A}$ is positive semi-definite.
\end{adefinition}

\begin{aremark}\label{rml:A^*Apos}
If $\mathfrak{A}$ is a bounded operator on a Hilbert space $\mathcal{H}$. Then, $\mathfrak{A}^*\mathfrak{A}$ is always positive semi-definite.
\end{aremark}

\begin{atheorem}[{\cite[Theorem VI.9]{reed2012methods}}]\label{thm:sqroot}
Let $\mathfrak{A}$ be a positive semi-definite and bounded operator on a Hilbert space $\mathcal{H}$. Then, there is a unique positive semi-definite  bounded operator $\mathfrak{B}$ on $\mathcal{H}$ satisfying $\mathfrak{B}^2=\mathfrak{A}$. Furthermore, $\mathfrak{B}$ commutes with every bounded operator  commuting with $\mathfrak{A}$.
\end{atheorem}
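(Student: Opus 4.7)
The plan is to realize $\mathfrak{B}$ as a norm-convergent power series in $\mathfrak{A}$, which will automatically give self-adjointness, the desired commutation property, and (after a careful positivity argument) the remaining claims. After rescaling so that $\|\mathfrak{A}\|\leq 1$ (which is harmless, since one can multiply the final $\mathfrak{B}$ by the appropriate scalar), I set $\mathfrak{C}:=I-\mathfrak{A}$. Since $\mathfrak{A}$ is self-adjoint with $\|\mathfrak{A}\|\leq 1$ and $\mathfrak{A}\succeq 0$, one has $0\preceq \mathfrak{C}\preceq I$; in particular $\|\mathfrak{C}\|\leq 1$ and every power $\mathfrak{C}^n$ is self-adjoint with $0\preceq \mathfrak{C}^n\preceq I$.

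The Taylor expansion $\sqrt{1-x}=\sum_{n\geq 0}c_n x^n$ has $c_0=1$ and $c_n\leq 0$ for $n\geq 1$, and the series converges absolutely at $x=1$ with $\sum_{n\geq 0}c_n=0$. Consequently $\sum_{n\geq 1}|c_n|=1$. I would then define $\mathfrak{B}:=\sum_{n\geq 0}c_n\mathfrak{C}^n$; absolute convergence of the scalar series together with $\|\mathfrak{C}^n\|\leq 1$ makes this series convergent in operator norm, so $\mathfrak{B}\in\mathcal{L}(\mathcal{H})$ is self-adjoint. The identity $\mathfrak{B}^2=\mathfrak{A}$ follows from taking the Cauchy product of the series (justified by absolute convergence in operator norm) and invoking the scalar identity $(\sum c_n x^n)^2=1-x$ coefficient by coefficient. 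Moreover, as a norm-limit of polynomials in $\mathfrak{A}$, the operator $\mathfrak{B}$ commutes with every bounded operator which commutes with $\mathfrak{A}$, which takes care of the final clause.

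For positivity, the key observation is the sign structure of the coefficients: $\mathfrak{B}=I-\sum_{n\geq 1}|c_n|\mathfrak{C}^n$. Because each $\mathfrak{C}^n$ satisfies $0\preceq \mathfrak{C}^n\preceq I$ and $\sum_{n\geq 1}|c_n|=1$, the operator $\sum_{n\geq 1}|c_n|\mathfrak{C}^n$ is a norm-convergent sum of PSD operators bounded above (in the operator order) by $(\sum_{n\geq 1}|c_n|)I=I$. Hence $\mathfrak{B}=I-\sum_{n\geq 1}|c_n|\mathfrak{C}^n\succeq 0$, as required.

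For uniqueness, I would use the standard algebraic trick. Suppose $\mathfrak{B}_1,\mathfrak{B}_2$ are PSD square roots of $\mathfrak{A}$. Each commutes with $\mathfrak{A}$, since $\mathfrak{B}_i\mathfrak{A}=\mathfrak{B}_i^3=\mathfrak{A}\mathfrak{B}_i$; by the commutation clause already established (applied, say, to the $\mathfrak{B}$ built above, or to each $\mathfrak{B}_i$ regarded as a limit of polynomials in its own square), $\mathfrak{B}_1$ and $\mathfrak{B}_2$ commute with each other. For $x\in\mathcal{H}$, setting $y:=(\mathfrak{B}_1-\mathfrak{B}_2)x$ and expanding
\[
\langle \mathfrak{B}_1y,y\rangle+\langle \mathfrak{B}_2y,y\rangle=\langle(\mathfrak{B}_1^2-\mathfrak{B}_2^2)x,y\rangle=0,
\]
with both summands nonnegative, forces $\mathfrak{B}_1y=\mathfrak{B}_2y=0$ (using Theorem \ref{thm:sqroot} applied to each $\mathfrak{B}_i$ to obtain a PSD square root of $\mathfrak{B}_i$, or equivalently the fact that $\langle \mathfrak{B}_iy,y\rangle=0$ for $\mathfrak{B}_i\succeq 0$ implies $\mathfrak{B}_iy=0$ via Cauchy--Schwarz). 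Then $\|(\mathfrak{B}_1-\mathfrak{B}_2)x\|^2=\langle y,\mathfrak{B}_1x-\mathfrak{B}_2x\rangle=\langle \mathfrak{B}_1y-\mathfrak{B}_2y,x\rangle=0$, giving $\mathfrak{B}_1=\mathfrak{B}_2$. The main obstacle is the positivity step: unlike the other items it genuinely exploits the sign pattern of the Taylor coefficients of $\sqrt{1-x}$, and without this delicate bookkeeping one cannot avoid appealing to the continuous functional calculus.
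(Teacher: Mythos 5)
This statement is imported verbatim from Reed--Simon \cite[Theorem VI.9]{reed2012methods}; the paper offers no in-house proof, and your argument is precisely the classical power-series proof from that source: normalize $\Vert\mathfrak{A}\Vert\leq 1$, expand $\sqrt{1-x}=\sum_n c_nx^n$ with $c_n<0$ for $n\geq 1$ and $\sum_{n\geq 1}\vert c_n\vert=1$ (Abel's theorem at $x=1$), apply the series to $\mathfrak{C}=I-\mathfrak{A}$, and read off self-adjointness, $\mathfrak{B}^2=\mathfrak{A}$ via the Cauchy product, the commutation clause from $\mathfrak{B}$ being a norm-limit of polynomials in $\mathfrak{A}$, and positivity from $0\preceq\mathfrak{C}^n\preceq I$ together with $\sum_{n\geq 1}\vert c_n\vert=1$. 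All of these steps are correct as written, including the elementary verification that odd powers of $\mathfrak{C}$ remain positive semi-definite.

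There is one small but genuine slip in the uniqueness step. Your displayed identity implicitly uses $(\mathfrak{B}_1+\mathfrak{B}_2)(\mathfrak{B}_1-\mathfrak{B}_2)=\mathfrak{B}_1^2-\mathfrak{B}_2^2$, which requires $\mathfrak{B}_1\mathfrak{B}_2=\mathfrak{B}_2\mathfrak{B}_1$, and neither of your parenthetical justifications delivers this directly: applying the commutation clause to the constructed $\mathfrak{B}$ only shows that $\mathfrak{B}$ commutes with each $\mathfrak{B}_i$, not that $\mathfrak{B}_1$ commutes with $\mathfrak{B}_2$; and regarding $\mathfrak{B}_i$ as a limit of polynomials in $\mathfrak{B}_i^2=\mathfrak{A}$ is exactly the assertion $\mathfrak{B}_i=\sqrt{\mathfrak{A}}$ in disguise --- it requires the continuous functional calculus you set out to avoid and essentially presupposes uniqueness. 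The repair is one line and is how the cited source proceeds: run your computation on the pair $(\mathfrak{B},\mathfrak{B}_i)$, which do commute since $\mathfrak{B}_i\mathfrak{A}=\mathfrak{B}_i^3=\mathfrak{A}\mathfrak{B}_i$ and $\mathfrak{B}$ is a norm-limit of polynomials in $\mathfrak{A}$; this yields $\mathfrak{B}_i=\mathfrak{B}$ for each $i$, hence $\mathfrak{B}_1=\mathfrak{B}_2$. Your Cauchy--Schwarz argument that $\langle\mathfrak{B}_iy,y\rangle=0$ with $\mathfrak{B}_i\succeq 0$ forces $\mathfrak{B}_iy=0$ is the right choice there, since it avoids any circular appeal to the theorem itself.
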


By Theorem \ref{thm:sqroot} and Remark \ref{rml:A^*Apos}, we define $\vert \mathfrak{A} \vert$ in the following way.

\begin{adefinition}\label{def:operatorpmdecomposition}
Let $\mathfrak{A}$ be a bounded operator on a Hilbert space $\mathcal{H}$. Define the following associated operators: $\vert \mathfrak{A} \vert:=\sqrt{\mathfrak{A}^*\mathfrak{A}}$, $\mathfrak{A}^+:=\frac{1}{2}(\vert \mathfrak{A} \vert+\mathfrak{A})$, and $\mathfrak{A}^-:=\frac{1}{2}(\vert \mathfrak{A} \vert-\mathfrak{A})$. Note that $\mathfrak{A}=\mathfrak{A}^+-\mathfrak{A}^-$.
\end{adefinition}

\begin{adefinition}[compact operator]
Let $(\mathcal{B},\Vert\cdot\Vert_{\mathcal{B}})$ and $(\mathcal{C},\Vert\cdot\Vert_{\mathcal{C}})$ be Banach spaces. An operator $\mathfrak{A}\in \mathcal{L}(\mathcal{B},\mathcal{C})$ is called \textbf{compact} if $\mathfrak{A}$ takes bounded sets in $\mathcal{B}$ into precompact sets in $\mathcal{C}$.
\end{adefinition}

\begin{atheorem}[{Riesz-Schauder Theorem, \cite[Theorem VI.15]{reed2012methods}}]\label{thm:RieszSchauder}
Let $\mathcal{H}$ be a separable Hilbert space and $\mathfrak{A}\in \mathcal{L}(\mathcal{H})$ be a compact operator. Then, $\sigma(\mathfrak{A})$ is a discrete set having no limit points  except perhaps $\lambda=0$. Further, any nonzero $\lambda\in\sigma(\mathfrak{A})$ is an eigenvalue with finite multiplicity (i.e. the corresponding space of eigenvectors is finite dimensional).
\end{atheorem}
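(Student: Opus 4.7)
The plan is to prove the three assertions — (i) every nonzero element of $\sigma(\mathfrak{A})$ is an eigenvalue, (ii) each nonzero eigenvalue has a finite-dimensional eigenspace, and (iii) the nonzero part of $\sigma(\mathfrak{A})$ has no accumulation points — in an order that lets (iii) exploit the structural information obtained in (i) and (ii). Throughout, I would reduce matters to operators of the form $I - K$ with $K$ compact, since for $\lambda \neq 0$ we can write $\lambda I - \mathfrak{A} = \lambda(I - \lambda^{-1}\mathfrak{A})$ and $\lambda^{-1}\mathfrak{A}$ remains compact.

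For finite multiplicity, I would argue that $E_\lambda := \ker(\lambda I - \mathfrak{A})$ is a closed, $\mathfrak{A}$-invariant subspace on which $\mathfrak{A}$ acts as $\lambda \cdot I|_{E_\lambda}$. The restriction of a compact operator to a closed invariant subspace is still compact, so $\lambda \cdot I|_{E_\lambda}$ is compact; since $\lambda \neq 0$, the identity on $E_\lambda$ is then compact, and the fact that the closed unit ball of an infinite-dimensional Hilbert space fails to be precompact forces $\dim E_\lambda < \infty$.

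The central step is the Fredholm alternative: showing that any nonzero $\lambda \in \sigma(\mathfrak{A})$ is already a point-spectrum value. Writing $K = \lambda^{-1}\mathfrak{A}$, I would first show that $\mathrm{ran}(I-K)$ is closed by means of Riesz's lemma. Given a sequence $\{x_n\}$ with $(I-K)x_n \to y$, one replaces $x_n$ by its representative of minimal norm modulo $\ker(I-K)$ (which is finite-dimensional by step (ii)); if $\|x_n\| \to \infty$, then the unit vectors $u_n = x_n/\|x_n\|$ satisfy $(I-K)u_n \to 0$, and by compactness of $K$ a subsequence $Ku_{n_k}$ converges, forcing $u_{n_k}$ to converge to some $u$ with $(I-K)u = 0$ and $\|u\|=1$ — but $u$ is orthogonal to $\ker(I-K)$ by construction, a contradiction. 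Once $\mathrm{ran}(I-K)$ is closed, the Hilbert-space duality identity $\mathrm{ran}(I-K) = \ker(I-K^*)^\perp$ together with the same argument applied to the compact operator $K^*$ yields $\dim\ker(I-K) = \dim\ker(I-K^*)$, and hence injectivity implies surjectivity. Contrapositively, if $\lambda I - \mathfrak{A}$ fails to be bijective (i.e., $\lambda \in \sigma(\mathfrak{A})$), then it already fails to be injective, so $\lambda$ is an eigenvalue. I expect this to be the main obstacle — the closed-range argument is delicate, and handling the adjoint correctly requires some care.

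For the absence of nonzero accumulation points, I would suppose for contradiction that $\{\lambda_n\}$ are distinct nonzero eigenvalues with $\lambda_n \to \lambda \neq 0$, and pick corresponding eigenvectors $v_n$. Since eigenvectors for distinct eigenvalues are linearly independent, the subspaces $M_n := \mathrm{span}(v_1,\ldots,v_n)$ are strictly increasing, closed, and $\mathfrak{A}$-invariant. By Riesz's lemma I would select $w_n \in M_n$ with $\|w_n\|=1$ and $d(w_n, M_{n-1}) \geq 1/2$. Decomposing $w_n = c_n v_n + u_{n-1}$ with $u_{n-1} \in M_{n-1}$, one computes
\[
\tfrac{1}{\lambda_n}\mathfrak{A}w_n \;=\; w_n + \left(\tfrac{1}{\lambda_n}\mathfrak{A}u_{n-1} - u_{n-1}\right),
\]
where the parenthesized term lies in $M_{n-1}$ by invariance. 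Hence for $m<n$, $\|\lambda_n^{-1}\mathfrak{A}w_n - \lambda_m^{-1}\mathfrak{A}w_m\| \geq 1/2$, so $\{\lambda_n^{-1}\mathfrak{A}w_n\}$ has no convergent subsequence; since $\lambda_n \to \lambda \neq 0$ and $\{w_n\}$ is bounded, this contradicts the compactness of $\mathfrak{A}$, completing the proof.
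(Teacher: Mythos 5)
The paper offers no proof of this statement---it is quoted from Reed--Simon (Theorem VI.15) in an appendix of standard facts---so your proposal can only be measured against the classical Riesz theory argument, which is indeed the route you take. Your finite-multiplicity step is correct, and your accumulation-point argument (strictly increasing invariant chain $M_n$, Riesz's lemma, $\Vert \lambda_n^{-1}\mathfrak{A}w_n-\lambda_m^{-1}\mathfrak{A}w_m\Vert\geq 1/2$ against compactness) is correct and complete. The problem sits exactly where you predicted: in the Fredholm alternative. Your closed-range argument is fine in spirit (note you only rule out $\Vert x_n\Vert\to\infty$; to conclude closedness you still need the bounded case---extract $Kx_{n_k}\to w$, so $x_{n_k}\to y+w$ and $(I-K)(y+w)=y$---which is routine). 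But the assertion that ``the same argument applied to $K^*$ yields $\dim\ker(I-K)=\dim\ker(I-K^*)$'' is not a proof. Applying the closed-range lemma to the compact operator $K^*$ gives only that $\mathrm{ran}(I-K^*)$ is closed, hence $\mathrm{ran}(I-K^*)=\ker(I-K)^\perp$; together with $\mathrm{ran}(I-K)=\ker(I-K^*)^\perp$, these identities say that each kernel dimension equals the codimension of the other operator's range---they are restatements, not the equality of the two kernel dimensions. That equality is the index-zero statement, a genuinely deeper fact, and without it your implication ``injective $\Rightarrow$ surjective'' is unsupported.

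Fortunately, the implication you actually need can be closed with the very technique you deploy in part (iii), so the gap is repairable without new ideas. Suppose $I-K$ is injective but not surjective, and set $M_n:=\mathrm{ran}\big((I-K)^n\big)$. Each $(I-K)^n$ has the form $I-K_n$ with $K_n$ compact (the compact operators form an ideal), so each $M_n$ is closed by your closed-range lemma; and injectivity forces the inclusions $M_{n+1}\subseteq M_n$ to be strict, since if $y\notin\mathrm{ran}(I-K)$ then $(I-K)^n y\in M_n\setminus M_{n+1}$. Riesz's lemma then gives unit vectors $y_n\in M_n$ with $d(y_n,M_{n+1})\geq 1/2$, and for $m>n$ one has $Ky_n-Ky_m=y_n-\big[(I-K)y_n+y_m-(I-K)y_m\big]$ with the bracketed term in $M_{n+1}$, so $\Vert Ky_n-Ky_m\Vert\geq 1/2$, contradicting compactness of $K$. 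Hence injectivity implies surjectivity; a nonzero $\lambda\in\sigma(\mathfrak{A})$ that is not an eigenvalue would make $\lambda I-\mathfrak{A}$ bijective with bounded inverse (open mapping theorem, or directly from injectivity plus closed range), i.e. $\lambda\in\rho(\mathfrak{A})$, a contradiction. With this patch your proof is complete, and if you also want the dimension equality $\dim\ker(I-K)=\dim\ker(I-K^*)$, it requires a further finite-rank perturbation argument that your sketch does not contain---but the theorem as stated does not need it.
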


\begin{atheorem}[{Hilbert-Schmidt Theorem, \cite[Theorem VI.16]{reed2012methods} and \cite[Theorem 8.94]{renardy2006introduction}}]\label{thm:HilbertSchmidt}
Let $\mathcal{H}$ be a separable Hilbert space and $\mathfrak{A}\in \mathcal{L}(\mathcal{H})$ be compact and self-adjoint. Then, there is a complete orthonormal basis $\{\phi_i\}_{i=1}^N$ for $\mathcal{H}$ where $N=\mathrm{dim}(\mathcal{H})$ so that $\mathfrak{A}\phi_i=\lambda_i\phi_i$, $\vert\lambda_i\vert$ is monotone nonincreasing, and if $N=\infty$, $\lim_{i\rightarrow\infty}\lambda_i=0$. Furthermore, $\mathfrak{A}$ can be represented by
$$\mathfrak{A}(\cdot)=\sum_{i=1}^N\lambda_i\langle\phi_i,\cdot\rangle\phi_i.$$
\end{atheorem}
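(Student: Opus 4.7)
The plan is to prove the spectral decomposition for a compact self-adjoint operator $\mathfrak{A}$ on a separable Hilbert space $\mathcal{H}$ by building the eigenbasis one element at a time via a variational principle, then establishing completeness and the series representation.

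The first step is the variational characterization $\|\mathfrak{A}\| = \sup_{\|u\|=1}|\langle u,\mathfrak{A}u\rangle|$, which holds for any bounded self-adjoint operator via the polarization identity. Combined with the fact that $\sigma(\mathfrak{A})\subseteq \mathbb{R}$ and $\sup_{\lambda\in\sigma(\mathfrak{A})}|\lambda|=\|\mathfrak{A}\|$ (Theorem \ref{thm"selfadjtspectrmprpty}), this will let me extract a unit vector $\phi_1$ with $\mathfrak{A}\phi_1=\lambda_1\phi_1$ and $|\lambda_1|=\|\mathfrak{A}\|$. Concretely, take a sequence $u_n$ with $\|u_n\|=1$ and $\langle u_n,\mathfrak{A}u_n\rangle \to \lambda_1\in\{\pm\|\mathfrak{A}\|\}$; since $\mathfrak{A}$ is compact, pass to a subsequence along which $\mathfrak{A}u_n$ converges, and verify that $\|\mathfrak{A}u_n-\lambda_1 u_n\|^2 = \|\mathfrak{A}u_n\|^2 -2\lambda_1\langle u_n,\mathfrak{A}u_n\rangle + \lambda_1^2\to 0$, forcing $u_n$ to converge to some $\phi_1$ with $\mathfrak{A}\phi_1=\lambda_1\phi_1$.

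Next, I would iterate. Self-adjointness implies that $\phi_1^\perp$ is invariant under $\mathfrak{A}$, and the restriction $\mathfrak{A}|_{\phi_1^\perp}$ is still compact and self-adjoint, so the same argument yields $\phi_2\perp\phi_1$ with eigenvalue $\lambda_2$ satisfying $|\lambda_2|\leq|\lambda_1|$. Continuing, I obtain orthonormal $\phi_1,\phi_2,\dots$ with $|\lambda_1|\geq|\lambda_2|\geq\cdots$. If at some finite stage $\mathfrak{A}|_{\mathrm{span}(\phi_1,\dots,\phi_n)^\perp}=0$, the process terminates (and I pad with an orthonormal basis of $\ker\mathfrak{A}$, associating eigenvalue $0$). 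Otherwise the $\phi_n$ form an infinite orthonormal sequence, and I need $\lambda_n\to 0$: if instead $|\lambda_n|\geq c>0$ along a subsequence, then $\|\mathfrak{A}\phi_m-\mathfrak{A}\phi_n\|^2=\lambda_m^2+\lambda_n^2\geq 2c^2$ for $m\neq n$, so $\{\mathfrak{A}\phi_n\}$ has no convergent subsequence, contradicting compactness.

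Finally I must prove completeness and the series representation. Let $V=\overline{\mathrm{span}}\{\phi_i\}$. Since $V^\perp$ is $\mathfrak{A}$-invariant and $\|\mathfrak{A}|_{V^\perp}\|\leq |\lambda_n|$ for every $n$ (by the maximality of each $\lambda_n$ at its stage), the decay $\lambda_n\to 0$ forces $\mathfrak{A}|_{V^\perp}=0$, i.e.\ $V^\perp\subseteq\ker\mathfrak{A}$. Adjoining an orthonormal basis of $\ker\mathfrak{A}$ (which exists because $\mathcal{H}$ is separable) to $\{\phi_i\}$ gives a complete orthonormal system of eigenvectors. For the series formula, for any $u\in\mathcal{H}$ write $u=\sum_i\langle\phi_i,u\rangle\phi_i$; boundedness/continuity of $\mathfrak{A}$ lets me interchange $\mathfrak{A}$ with the sum, yielding $\mathfrak{A}u=\sum_i\lambda_i\langle\phi_i,u\rangle\phi_i$, with convergence in $\mathcal{H}$.

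The main obstacle will be the extraction of the top eigenvalue: the variational characterization together with compactness must be combined just so, and one has to be careful that $\lambda_1$ is a genuine eigenvalue rather than merely an element of the approximate spectrum. The inductive/completeness step is more conceptual than technical, but care is needed because the process may terminate in finitely many steps or produce an infinite sequence, and these cases should be unified (for separable $\mathcal{H}$, Zorn's lemma is not necessary since a countable recursion suffices).
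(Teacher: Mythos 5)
Your proof is correct, and since the paper gives no proof of this statement---it is quoted directly from Reed--Simon (Theorem VI.16) and Renardy--Rogers (Theorem 8.94)---your argument (variational extraction of a top eigenvector $\phi_1$ with $|\lambda_1|=\Vert\mathfrak{A}\Vert$ via compactness, induction on orthogonal complements, $\lambda_n\to 0$ from the orthogonality estimate $\Vert\mathfrak{A}\phi_m-\mathfrak{A}\phi_n\Vert^2=\lambda_m^2+\lambda_n^2$, and completeness via $\Vert\mathfrak{A}|_{V^\perp}\Vert\leq|\lambda_n|\to 0$) is essentially the classical proof found in those references. The only point to make explicit is the trivial case $\Vert\mathfrak{A}\Vert=0$: your step recovering convergence of $u_n$ from convergence of $\mathfrak{A}u_n$ requires $\lambda_1\neq 0$, so the zero operator should be dispatched separately by taking any orthonormal basis of $\mathcal{H}$.
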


\begin{adefinition}[singular values]
Let $\mathcal{H}$ be a separable Hilbert space and $\mathfrak{A}\in \mathcal{L}(\mathcal{H})$ be a compact operator. Then, the non-negative square roots of eigenvalues of the self-adjoint operator $\mathfrak{A}^*\mathfrak{A}$ are called \textbf{singular values} of $\mathfrak{A}$. 
\end{adefinition}

For a given mm-space $\mX=(X,d_X,\mu_X)\in\mathcal{M}_w$, the following theorem gives conditions under which the Hilbert space $L^2(\mu_X)$ is separable.

\begin{aproposition}[{\cite[Proposition 3.4.5]{cohn2013measure}}]
Let $(X,\Sigma_X,\mu_X)$ be a measure space, and let p satisfy $1\leq p <\infty$. If $\mu_X$ is $\sigma$-finite and $\Sigma_X$ is countably generated, then $L^p(X,\Sigma_X,\mu_X)$ is separable.
\end{aproposition}

In particular, if $(X,d_X)$ is complete and separable metric space (every compact metric space satisfies these conditions) and $\mu_X$ is Borel probability measure, then $L^2(\mu_X)$ is separable.\medskip

Next, we introduce particularly important kinds of compact operators: Trace class and Hilbert-Schmidt operators.

\begin{atheorem}[{\cite[Theorem VI.18]{reed2012methods}}]\label{thm:psdtr}
Let $\mathcal{H}$ be a separable Hilbert space and let $\{\phi_i\}_{i=1}^N$ be an orthonormal basis of $\mathcal{H}$ . Then, for any positive semi-definite operator $\mathfrak{A}\in \mathcal{L}(\mathcal{H})$, the value
$\sum_{i=1}^N \langle \phi_i,\mathfrak{A}\phi_i \rangle$
does not depend on the choice of basis.
\end{atheorem}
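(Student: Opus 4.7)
The plan is to reduce the claim to Parseval's identity applied to the unique positive semi-definite square root $\mathfrak{B}=\sqrt{\mathfrak{A}}$ supplied by Theorem \ref{thm:sqroot}. Since $\mathfrak{A}$ is positive semi-definite and bounded, there exists a positive semi-definite bounded operator $\mathfrak{B}\in\mathcal{L}(\mathcal{H})$ with $\mathfrak{B}^2=\mathfrak{A}$; in particular, $\mathfrak{B}$ is self-adjoint. Hence for any orthonormal basis $\{\phi_i\}_{i=1}^N$ of $\mathcal{H}$,
\begin{equation*}
\langle\phi_i,\mathfrak{A}\phi_i\rangle=\langle\phi_i,\mathfrak{B}^2\phi_i\rangle=\langle\mathfrak{B}\phi_i,\mathfrak{B}\phi_i\rangle=\Vert\mathfrak{B}\phi_i\Vert^2\geq 0.
\end{equation*}
This rewrites each diagonal entry as a norm squared, which is precisely the form that Parseval's identity will handle.

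Next, I would let $\{\psi_j\}_{j=1}^N$ be a second orthonormal basis and expand each $\mathfrak{B}\phi_i$ in this basis via Parseval:
\begin{equation*}
\Vert\mathfrak{B}\phi_i\Vert^2=\sum_{j=1}^N\vert\langle\psi_j,\mathfrak{B}\phi_i\rangle\vert^2=\sum_{j=1}^N\vert\langle\mathfrak{B}\psi_j,\phi_i\rangle\vert^2,
\end{equation*}
where the last equality uses that $\mathfrak{B}$ is self-adjoint. Summing over $i$, and exchanging the order of summation (which is justified because every term is non-negative, so Tonelli's theorem for the counting measure applies without any integrability assumption, even if $N=\infty$ and the total is $+\infty$), yields
\begin{equation*}
\sum_{i=1}^N\langle\phi_i,\mathfrak{A}\phi_i\rangle=\sum_{i=1}^N\sum_{j=1}^N\vert\langle\mathfrak{B}\psi_j,\phi_i\rangle\vert^2=\sum_{j=1}^N\sum_{i=1}^N\vert\langle\mathfrak{B}\psi_j,\phi_i\rangle\vert^2=\sum_{j=1}^N\Vert\mathfrak{B}\psi_j\Vert^2=\sum_{j=1}^N\langle\psi_j,\mathfrak{A}\psi_j\rangle,
\end{equation*}
where the penultimate equality applies Parseval's identity again (this time to expand $\mathfrak{B}\psi_j$ in the basis $\{\phi_i\}$). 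This proves the sum is basis-independent.

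There is essentially no main obstacle: the argument is entirely formal once one has the positive square root. The only mild subtlety is permitting the value $+\infty$, which is handled uniformly by the non-negativity of every term and Tonelli's theorem for counting measures, so no separate convergence hypothesis is needed.
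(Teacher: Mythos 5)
Your proof is correct and is essentially the argument in the paper's cited source: the paper defers to \cite[Theorem VI.18]{reed2012methods}, whose proof is exactly your reduction via the positive square root $\mathfrak{B}=\sqrt{\mathfrak{A}}$ from Theorem \ref{thm:sqroot}, two applications of Parseval's identity, and the interchange of the (non-negative) double sum. No gaps; the Tonelli justification you give for the interchange, valid even when the common value is $+\infty$, is precisely the point that makes the statement hold without any trace-class hypothesis.
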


Note that each summand in the expression in Theorem \ref{thm:psdtr} is non-negative since $\mathfrak{A}$ is positive semi-definite. Hence, the summation is well-defined though it can be infinite.

\begin{adefinition}[trace of a positive semi-definite operator]\label{def:psdtr}
Let $\mathcal{H}$ be a separable Hilbert space and $\mathfrak{A}\in \mathcal{L}(\mathcal{H})$ be a positive semi-definite operator. Then,
$\mathrm{tr}(\mathfrak{A}):=\sum_{n=1}^N \langle \phi_n,\mathfrak{A}\phi_n \rangle$
where $\{\phi_n\}_{n=1}^N$ is an orthonormal basis of $\mathcal{H}$.
\end{adefinition}

The notion of trace can be extended from  positive semi-definite operators to trace class operators.

\begin{adefinition}[trace class] \label{def:trace-norm}
Let $\mathcal{H}$ be a separable Hilbert space. Then, for any $\mathfrak{A}\in \mathcal{L}(\mathcal{H})$, we define the \textbf{trace norm} of $\mathfrak{A}$ in the following way:
$$\Vert \mathfrak{A} \Vert_1:=\mathrm{tr}(\vert \mathfrak{A} \vert).$$
Moreover, $\mathfrak{A}$ is said to be \textbf{trace class} or \textbf{traceable} if $\Vert \mathfrak{A} \Vert_1<\infty$. The set of all traceable operators in $\mathcal{L}(\mathcal{H})$ is denoted by $\mathcal{L}_1(\mathcal{H})$.
\end{adefinition}

\begin{atheorem}[{\cite[Theorems VI.19 and  VI.20]{reed2012methods}}]
Let $\mathcal{H}$ be a separable Hilbert space. Then,
\begin{enumerate}
    \item $\mathcal{L}_1(\mathcal{H})$ is a Banach space with the trace norm $\Vert\cdot\Vert_1$.
    
    \item If $\mathfrak{A}\in\mathcal{L}_1(\mathcal{H})$ and $\mathfrak{B}\in\mathcal{L}(\mathcal{H})$, then $\mathfrak{A}\mathfrak{B}\in\mathcal{L}_1(\mathcal{H})$ and $\mathfrak{B}\mathfrak{A}\in\mathcal{L}_1(\mathcal{H})$.
    
    \item If $\mathfrak{A}\in\mathcal{L}_1(\mathcal{H})$, then $\mathfrak{A}^*\in\mathcal{L}_1(\mathcal{H})$.
\end{enumerate}
\end{atheorem}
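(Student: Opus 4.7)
\medskip\noindent\emph{Proof proposal.} The main tool throughout will be the polar decomposition: for any $\mathfrak{A}\in\mathcal{L}(\mathcal{H})$ there exists a partial isometry $U$ with initial space $(\ker\mathfrak{A})^\perp$ such that $\mathfrak{A}=U|\mathfrak{A}|$, and consequently $|\mathfrak{A}^*|=U|\mathfrak{A}|U^*$ by uniqueness of the square root (Theorem \ref{thm:sqroot}). Since $\mathcal{H}$ is separable, I will freely pick an orthonormal basis $\{\phi_i\}$ of $\mathcal{H}$ that is adapted to whichever operator is under consideration (in particular, I will choose one for which the eigenvectors of a given positive self-adjoint operator appear as basis elements, so that Definition \ref{def:psdtr} reduces to the usual sum of eigenvalues). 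Given the general structure, the proof naturally proceeds in the order (3), (2), (1).

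For (3), write $\mathfrak{A}=U|\mathfrak{A}|$ so that $|\mathfrak{A}^*|=U|\mathfrak{A}|U^*$. Choose an ONB $\{\phi_i\}$ compatible with $(\ker U)^\perp$; then the $U^*\phi_i$ that are non-zero again form an ONB of $(\ker\mathfrak{A})^\perp$, and by Theorem \ref{thm:psdtr} one obtains $\tr|\mathfrak{A}^*|=\sum_i\langle\phi_i,U|\mathfrak{A}|U^*\phi_i\rangle=\sum_i\langle U^*\phi_i,|\mathfrak{A}|U^*\phi_i\rangle=\tr|\mathfrak{A}|<\infty$. For (2), decompose $\mathfrak{A}=U|\mathfrak{A}|$ and $\mathfrak{AB}=V|\mathfrak{AB}|$; then for any ONB $\{\phi_i\}$ the Cauchy--Schwarz inequality yields
\begin{align*}
\tr|\mathfrak{AB}|&=\sum_i\langle V^*U|\mathfrak{A}|^{1/2}\phi_i,|\mathfrak{A}|^{1/2}\mathfrak{B}\phi_i\rangle\\
&\leq\Big(\sum_i\||\mathfrak{A}|^{1/2}U^*V\phi_i\|^2\Big)^{1/2}\Big(\sum_i\||\mathfrak{A}|^{1/2}\mathfrak{B}\phi_i\|^2\Big)^{1/2}\leq\|\mathfrak{A}\|_1^{1/2}\cdot\|\mathfrak{B}\|\,\|\mathfrak{A}\|_1^{1/2},
\end{align*}
since $|\mathfrak{A}|^{1/2}$ is Hilbert--Schmidt with HS-norm squared equal to $\|\mathfrak{A}\|_1$ and $\mathfrak{B},U^*V$ are bounded. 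This gives $\|\mathfrak{AB}\|_1\leq\|\mathfrak{A}\|_1\|\mathfrak{B}\|<\infty$. The symmetric case $\mathfrak{BA}$ then follows by writing $(\mathfrak{BA})^*=\mathfrak{A}^*\mathfrak{B}^*$ and combining with (3).

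For (1), homogeneity and positive definiteness of $\|\cdot\|_1$ are immediate from Definition \ref{def:psdtr}. The non-trivial step is the triangle inequality $\|\mathfrak{A}+\mathfrak{C}\|_1\leq\|\mathfrak{A}\|_1+\|\mathfrak{C}\|_1$, which I would establish by proving the variational identity $\|\mathfrak{A}\|_1=\sup\{|\tr(\mathfrak{A}\mathfrak{D})|:\mathfrak{D}\in\mathcal{L}(\mathcal{H}),\|\mathfrak{D}\|\leq 1,\mathfrak{D}\text{ finite rank}\}$: the supremum is bounded by $\|\mathfrak{A}\|_1$ via part (2), and equality is witnessed by truncated partial isometries coming from the polar decomposition of $\mathfrak{A}$. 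Subadditivity is then trivial. For completeness, if $(\mathfrak{A}_n)$ is $\|\cdot\|_1$-Cauchy then it is $\|\cdot\|$-Cauchy (as $\|\cdot\|\leq\|\cdot\|_1$, seen via the variational characterization), so it converges in operator norm to some $\mathfrak{A}\in\mathcal{L}(\mathcal{H})$; a Fatou-type argument applied to partial sums $\sum_{i=1}^N\langle\phi_i,|\mathfrak{A}_n-\mathfrak{A}_m|\phi_i\rangle$ combined with passage to the operator-norm limit in $m$ yields $\mathfrak{A}_n\to\mathfrak{A}$ in $\|\cdot\|_1$. The hardest single step will be establishing the variational duality cleanly enough to deliver the triangle inequality without circular reasoning; once that is in hand, the remaining pieces are bookkeeping.
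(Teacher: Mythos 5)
You should know at the outset that the paper contains no proof of this statement: it is imported verbatim from Reed--Simon (Theorems VI.19 and VI.20 of \cite{reed2012methods}), so the only meaningful comparison is with the textbook argument. Your sketch is correct in outline, and it partly diverges from Reed--Simon's route. For item (2), Reed--Simon write an arbitrary bounded $\mathfrak{B}$ as a linear combination of four unitaries and use $\vert\mathfrak{A}U\vert=U^{*}\vert\mathfrak{A}\vert U$ for unitary $U$; you instead factor $\mathfrak{A}=\big(U\vert\mathfrak{A}\vert^{1/2}\big)\vert\mathfrak{A}\vert^{1/2}$ as a product of two Hilbert--Schmidt operators and run Cauchy--Schwarz. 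That is essentially the content of their Theorem VI.20 (recorded in the paper as item (3) of Lemma \ref{lemma:propstrHB}), and it buys you the quantitative bound $\Vert\mathfrak{A}\mathfrak{B}\Vert_1\leq\Vert\mathfrak{A}\Vert_1\Vert\mathfrak{B}\Vert$ in one stroke; note, however, that the step $\big\Vert\,\vert\mathfrak{A}\vert^{1/2}U^{*}V\big\Vert_2\leq\big\Vert\,\vert\mathfrak{A}\vert^{1/2}\big\Vert_2$ uses the right-ideal inequality for the Hilbert--Schmidt norm, whose standard proof passes through adjoint-invariance of $\Vert\cdot\Vert_2$ --- one line, but it should be said. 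For the triangle inequality, Reed--Simon argue directly from three polar decompositions and Cauchy--Schwarz, whereas you go through the duality $\Vert\mathfrak{A}\Vert_1=\sup\{\vert\tr(\mathfrak{A}\mathfrak{D})\vert:\Vert\mathfrak{D}\Vert\leq 1,\ \mathfrak{D}\ \text{finite rank}\}$; both are standard, and yours makes subadditivity and $\Vert\cdot\Vert\leq\Vert\cdot\Vert_1$ fall out simultaneously. Your upper bound for the sup also silently uses $\vert\tr\mathfrak{T}\vert\leq\Vert\mathfrak{T}\Vert_1$, which is a short Cauchy--Schwarz lemma of its own, not literally part (2).

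One point in your write-up is a genuine (though fixable) gap: as stated, the variational identity is asserted only for $\mathfrak{A}\in\mathcal{L}_1(\mathcal{H})$, so your argument delivers the triangle inequality but \emph{not} that $\mathfrak{A}+\mathfrak{C}$ is trace class in the first place --- i.e., it does not by itself show $\mathcal{L}_1(\mathcal{H})$ is a vector space. You need the identity for an arbitrary bounded $\mathfrak{T}$, with both sides allowed to be $+\infty$. This works: for finite-rank contractions $\mathfrak{D}$ the pairing $\tr(\mathfrak{T}\mathfrak{D})$ is defined for every bounded $\mathfrak{T}$, and truncations $\mathfrak{D}_N=P_N U^{*}$, with $\mathfrak{T}=U\vert\mathfrak{T}\vert$ and $P_N$ the projection onto the span of the first $N$ vectors of an ONB adapted to $(\ker\mathfrak{T})^{\perp}$, give $\sup_{\mathfrak{D}}\vert\tr(\mathfrak{T}\mathfrak{D})\vert\geq\sum_{k\leq N}\langle e_k,\vert\mathfrak{T}\vert e_k\rangle$; crucially this needs no eigenbasis of $\vert\mathfrak{T}\vert$, so it applies before compactness is known. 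Minor slips to repair: in (3) the basis should be adapted to the final space $\overline{\mathrm{ran}\,U}=(\ker U^{*})^{\perp}$, not to $(\ker U)^{\perp}$; the first line of your display in (2) should read $\langle\vert\mathfrak{A}\vert^{1/2}U^{*}V\phi_i,\ \vert\mathfrak{A}\vert^{1/2}\mathfrak{B}\phi_i\rangle$ (the adjoint, as your second line correctly has it); and your completeness argument tacitly uses norm-continuity of $\mathfrak{B}\mapsto\vert\mathfrak{B}\vert$, which follows from uniform continuity of the square root on positive operators with uniformly bounded spectrum and deserves a sentence.
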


\begin{atheorem}[{\cite[Theorem VI.21]{reed2012methods}}]\label{thm:tracesingvaluecndtn}
If $\mathfrak{A}$ is trace class, then $\mathfrak{A}$ is compact. Moreover, a compact operator $\mathfrak{A}$ is trace class iff $\sum_{i=1}^N \lambda_i<\infty$ where $\{\lambda_i\}_{i=1}^N$ are the singular values of $\mathfrak{A}$. 
\end{atheorem}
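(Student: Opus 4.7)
The plan is to prove the two assertions separately, relying on the polar decomposition $\mathfrak{A}=U|\mathfrak{A}|$ (with $U$ a partial isometry), the square-root theorem (Theorem \ref{thm:sqroot}), the Hilbert-Schmidt theorem (Theorem \ref{thm:HilbertSchmidt}), and the basis-independence of the trace of a positive semi-definite operator (Theorem \ref{thm:psdtr}).

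For the first assertion (trace class implies compact), I would start by setting $\mathfrak{B}:=\sqrt{|\mathfrak{A}|}$, which exists and is bounded positive semi-definite by Theorem \ref{thm:sqroot}. Then $\mathfrak{B}^*\mathfrak{B}=\mathfrak{B}^2=|\mathfrak{A}|$, so for any orthonormal basis $\{\phi_i\}_{i=1}^N$ of $\mathcal{H}$ one has
\[
\sum_{i=1}^N \Vert \mathfrak{B}\phi_i \Vert^2 = \sum_{i=1}^N \langle \phi_i, |\mathfrak{A}| \phi_i\rangle = \Vert\mathfrak{A}\Vert_1 < \infty.
\]
I would then show that $\mathfrak{B}$ is compact by approximating it in operator norm by finite-rank truncations $\mathfrak{B}\circ P_N$ (where $P_N$ projects onto $\mathrm{span}\{\phi_1,\dots,\phi_N\}$): the tail $\sum_{i>N}\Vert \mathfrak{B}\phi_i\Vert^2\to 0$ bounds the operator-norm error, so $\mathfrak{B}$ is a norm limit of compact operators and hence compact. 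Consequently $|\mathfrak{A}|=\mathfrak{B}\cdot\mathfrak{B}$ is compact (bounded $\circ$ compact), and then $\mathfrak{A}=U|\mathfrak{A}|$ is compact by the polar decomposition.

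For the second assertion, assume $\mathfrak{A}$ is compact. Then $\mathfrak{A}^*\mathfrak{A}$ is compact, self-adjoint, and positive semi-definite, so by Theorem \ref{thm:HilbertSchmidt} it has a complete orthonormal eigenbasis $\{\phi_i\}_{i=1}^N$ with nonnegative eigenvalues $\mu_i$ satisfying $\mu_i\to 0$. Taking the unique positive square root (Theorem \ref{thm:sqroot}), $|\mathfrak{A}|\phi_i=\lambda_i\phi_i$ with $\lambda_i=\sqrt{\mu_i}$; these $\lambda_i$ are precisely the singular values of $\mathfrak{A}$, and they also tend to zero. Now apply Definition \ref{def:psdtr} using this particular basis:
\[
\Vert \mathfrak{A}\Vert_1 \;=\; \mathrm{tr}(|\mathfrak{A}|) \;=\; \sum_{i=1}^N \langle \phi_i, |\mathfrak{A}|\phi_i\rangle \;=\; \sum_{i=1}^N \lambda_i,
\]
so $\mathfrak{A}$ is trace class exactly when the singular values are summable.

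The only genuinely delicate step is the first one: passing from the finiteness of $\mathrm{tr}(|\mathfrak{A}|)$ (which a priori is merely a statement about a positive semi-definite, possibly non-compact operator defined through Theorem \ref{thm:psdtr}) to the compactness of $\mathfrak{A}$. Going through the square root $\mathfrak{B}=\sqrt{|\mathfrak{A}|}$ and identifying the Hilbert-Schmidt norm condition $\sum_i\Vert \mathfrak{B}\phi_i\Vert^2<\infty$ sidesteps the circularity that would arise if one tried to diagonalize $|\mathfrak{A}|$ directly without first knowing it is compact. All remaining steps are formal manipulations with the polar decomposition and the spectral theorem for compact self-adjoint operators.
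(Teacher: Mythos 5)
Your proof is correct, and in fact the paper gives no proof of this statement at all --- it is imported verbatim from Reed--Simon \cite[Theorem VI.21]{reed2012methods}, and your argument (showing $\sqrt{\vert\mathfrak{A}\vert}$ satisfies $\sum_i\Vert\sqrt{\vert\mathfrak{A}\vert}\,\phi_i\Vert^2=\Vert\mathfrak{A}\Vert_1<\infty$, hence is a norm limit of finite-rank truncations and compact, then passing through the polar decomposition, and finally diagonalizing $\mathfrak{A}^*\mathfrak{A}$ and invoking the basis-independence of the trace to get $\Vert\mathfrak{A}\Vert_1=\sum_i\lambda_i$) is essentially the standard argument from that reference. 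The one step you compress --- that the tail $\sum_{i>N}\Vert\sqrt{\vert\mathfrak{A}\vert}\,\phi_i\Vert^2$ controls the operator norm of $\sqrt{\vert\mathfrak{A}\vert}(I-P_N)$ --- does go through by Cauchy--Schwarz, so there is no gap.
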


Moreover, the notion of trace can be extended to trace class operators via the following theorem.

\begin{atheorem}[{\cite[Theorem VI.24]{reed2012methods}}]
Let $\mathcal{H}$ be a separable Hilbert space, and $\{\phi_i\}_{i=1}^N$ be an orthonormal basis of $\mathcal{H}$ . Then for any $\mathfrak{A}\in \mathcal{L}_1(\mathcal{H})$, the value
$\sum_{i=1}^N \langle \phi_i,\mathfrak{A}\phi_i \rangle$
converges absolutely and the limit does not depend on the choice of basis.
\end{atheorem}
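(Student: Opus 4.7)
The plan is to reduce the problem to the positive semi-definite case already handled by Theorem \ref{thm:psdtr}, using the polar decomposition of $\mathfrak{A}$. Specifically, I would first invoke the polar decomposition $\mathfrak{A} = U|\mathfrak{A}|$ where $U$ is a partial isometry with $\|U\| \leq 1$ and $|\mathfrak{A}| = \sqrt{\mathfrak{A}^*\mathfrak{A}}$ is positive semi-definite with $\mathrm{tr}(|\mathfrak{A}|) = \|\mathfrak{A}\|_1 < \infty$. Since $|\mathfrak{A}|$ is compact (being trace class, by Theorem \ref{thm:tracesingvaluecndtn}) and self-adjoint, the Hilbert-Schmidt theorem (Theorem \ref{thm:HilbertSchmidt}) yields an orthonormal set $\{\psi_n\}$ and singular values $\{s_n\} \subset [0,\infty)$ with $\sum_n s_n = \|\mathfrak{A}\|_1$, such that $|\mathfrak{A}| v = \sum_n s_n \langle \psi_n, v\rangle \psi_n$ for every $v \in \mathcal{H}$.

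Next, for an arbitrary orthonormal basis $\{\phi_i\}_{i=1}^N$, I would expand
\[
\langle \phi_i, \mathfrak{A}\phi_i\rangle = \langle \phi_i, U|\mathfrak{A}|\phi_i\rangle = \sum_n s_n \langle \phi_i, U\psi_n\rangle \overline{\langle \phi_i, \psi_n\rangle}.
\]
To justify swapping the $i$ and $n$ summations I would establish absolute convergence via Cauchy-Schwarz and Parseval:
\[
\sum_i \sum_n s_n |\langle \phi_i, U\psi_n\rangle| \, |\langle \phi_i, \psi_n\rangle| \leq \sum_n s_n \Big(\sum_i |\langle \phi_i, U\psi_n\rangle|^2\Big)^{1/2} \Big(\sum_i |\langle \phi_i, \psi_n\rangle|^2\Big)^{1/2} \leq \sum_n s_n = \|\mathfrak{A}\|_1,
\]
using $\|U\psi_n\| \leq 1$ and $\|\psi_n\|=1$. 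Once the swap is permitted, Parseval again gives $\sum_i \langle \phi_i, U\psi_n\rangle \overline{\langle \phi_i, \psi_n\rangle} = \langle \psi_n, U\psi_n\rangle$, so
\[
\sum_i \langle \phi_i, \mathfrak{A}\phi_i\rangle = \sum_n s_n \langle \psi_n, U\psi_n\rangle,
\]
which is a canonical expression depending only on $\mathfrak{A}$, not on the basis $\{\phi_i\}$. The same estimate yields absolute convergence of $\sum_i \langle \phi_i, \mathfrak{A}\phi_i\rangle$.

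The only real subtlety is the swap of summations, and the Cauchy-Schwarz-plus-Parseval bound above handles it uniformly. An alternative route that avoids polar decomposition would be to write $\mathfrak{A} = A_1 + iA_2$ with $A_1 = (\mathfrak{A}+\mathfrak{A}^*)/2$, $A_2 = (\mathfrak{A}-\mathfrak{A}^*)/(2i)$ both self-adjoint and trace class (using that $\mathcal{L}_1(\mathcal{H})$ is closed under adjoints), then further decompose each via Definition \ref{def:operatorpmdecomposition} as $A_j = A_j^+ - A_j^-$ with $A_j^\pm \succeq 0$ and trace class, and apply Theorem \ref{thm:psdtr} to each of the four positive semi-definite pieces. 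The expected obstacle in either approach is purely bookkeeping: verifying that each auxiliary operator produced by the decomposition genuinely lies in $\mathcal{L}_1(\mathcal{H})$, so that all rearrangements of series are legitimate; this follows directly from the ideal property of the trace norm.
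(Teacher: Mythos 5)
Your argument is correct; note that the paper itself offers no proof of this statement, quoting it as background directly from \cite[Theorem VI.24]{reed2012methods}, and your proof is essentially the standard one from that source: factor $\mathfrak{A}=U\vert\mathfrak{A}\vert$ via polar decomposition, expand $\vert\mathfrak{A}\vert$ in its eigenbasis $\{\psi_n\}$ with summable singular values $s_n$, and use Cauchy--Schwarz plus Parseval to obtain absolute convergence of the double series, which licenses the interchange of summations and yields the basis-independent value $\sum_n s_n\langle\psi_n,U\psi_n\rangle$. The one point worth making explicit is that basis-independence only requires comparing every orthonormal basis $\{\phi_i\}$ against a single \emph{fixed} choice of polar/spectral data for $\mathfrak{A}$, which your computation does; with that observation, both your main route and your sketched alternative via the Cartesian and Jordan decompositions (reducing to Theorem \ref{thm:psdtr} for four positive semi-definite trace-class pieces) go through without gaps.
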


\begin{adefinition}[trace of a trace class operator]
Let $\mathcal{H}$ be a separable Hilbert space and $\mathfrak{A}\in\mathcal{L}_1(\mathcal{H})$. Then,
$\tr(\mathfrak{A}):=\sum_{n=1}^N \langle \phi_n,\mathfrak{A}\phi_n \rangle$
where $\{\phi_n\}_{n=1}^N$ is an orthonormal basis of $\mathcal{H}$. Observe that $\tr(\mathfrak{A})=\mathrm{tr}(\mathfrak{A})$ if $\mathfrak{A}$ is positive semi-definite.
\end{adefinition}

\begin{atheorem}[{\cite[Theorem VI.25]{reed2012methods}}]
\begin{enumerate}
    \item $\tr(c\mathfrak{A}+\mathfrak{B})=c\tr(\mathfrak{A})+\tr(\mathfrak{B})$ for any $\mathfrak{A},\mathfrak{B}\in\mathcal{L}_1(\mathcal{H})$ and $c\in\mathbb{C}$.
    
    \item $\tr(\mathfrak{A})=\overline{\tr(\mathfrak{A}^*)}$ for any $\mathfrak{A}\in\mathcal{L}_1(\mathcal{H})$.
    
    \item $\tr(\mathfrak{A}\mathfrak{B})=\tr(\mathfrak{B}\mathfrak{A})$ if $\mathfrak{A}\in\mathcal{L}_1(\mathcal{H})$ and $\mathfrak{B}\in\mathcal{L}(\mathcal{H})$.
\end{enumerate}
\end{atheorem}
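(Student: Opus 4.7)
The plan is to treat the three items in order of increasing depth, using a fixed but arbitrary orthonormal basis $\{\phi_n\}_{n=1}^N$ of $\mathcal{H}$ throughout, and relying on the basis-independence of the trace (already stated just above the theorem) together with absolute convergence of the defining sum for trace-class operators.

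First, for item (1), I would use that $c\mathfrak{A}+\mathfrak{B}$ lies in $\mathcal{L}_1(\mathcal{H})$ (which is a Banach space by an earlier theorem in the appendix) so that $\sum_n \langle \phi_n,(c\mathfrak{A}+\mathfrak{B})\phi_n\rangle$ converges absolutely. Absolute convergence justifies splitting the sum as $c\sum_n\langle\phi_n,\mathfrak{A}\phi_n\rangle+\sum_n\langle\phi_n,\mathfrak{B}\phi_n\rangle$, which is exactly $c\,\tr(\mathfrak{A})+\tr(\mathfrak{B})$. For item (2), the computation is pointwise in $n$: by the definition of the adjoint, $\langle\phi_n,\mathfrak{A}^*\phi_n\rangle=\overline{\langle\mathfrak{A}^*\phi_n,\phi_n\rangle}=\overline{\langle\phi_n,\mathfrak{A}\phi_n\rangle}$, and since $\mathfrak{A}^*\in\mathcal{L}_1(\mathcal{H})$ (also stated earlier), I may sum and pull the complex conjugate outside to obtain $\tr(\mathfrak{A}^*)=\overline{\tr(\mathfrak{A})}$, which rearranges to the desired identity.

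Item (3) is the main obstacle, because one cannot simply rearrange $\langle \phi_n, \mathfrak{A}\mathfrak{B}\phi_n\rangle$ and $\langle \phi_n,\mathfrak{B}\mathfrak{A}\phi_n\rangle$ term by term. My plan is the standard polar-decomposition reduction to the Hilbert–Schmidt cyclicity. Write $\mathfrak{A}=U|\mathfrak{A}|$ with $U$ a partial isometry (from the earlier construction $|\mathfrak{A}|=\sqrt{\mathfrak{A}^*\mathfrak{A}}$ in Definition \ref{def:operatorpmdecomposition} and Theorem \ref{thm:sqroot}) and factor $|\mathfrak{A}|=|\mathfrak{A}|^{1/2}\,|\mathfrak{A}|^{1/2}$. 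Using that $\mathfrak{A}\in\mathcal{L}_1(\mathcal{H})$ iff $|\mathfrak{A}|^{1/2}$ is Hilbert--Schmidt (a consequence of the singular value characterisation in Theorem \ref{thm:tracesingvaluecndtn}), and that Hilbert--Schmidt operators form a two-sided ideal in $\mathcal{L}(\mathcal{H})$, both $P:=|\mathfrak{A}|^{1/2}$ and $Q:=|\mathfrak{A}|^{1/2}\mathfrak{B}U$ are Hilbert--Schmidt. Then it suffices to establish the cyclicity identity $\tr(PQ)=\tr(QP)$ for Hilbert--Schmidt $P,Q$; this follows by writing both sides using Parseval on $\{\phi_n\}$ and a second orthonormal basis, swapping the two absolutely convergent double sums via Fubini in $\ell^2$, and recognising the answer as the Hilbert--Schmidt inner product $\langle P^*,Q\rangle_{\mathrm{HS}}=\langle Q^*,P\rangle_{\mathrm{HS}}$. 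Applied to our factorisation this gives $\tr(\mathfrak{A}\mathfrak{B})=\tr(U|\mathfrak{A}|^{1/2}\cdot|\mathfrak{A}|^{1/2}\mathfrak{B})=\tr(PQ)=\tr(QP)=\tr(|\mathfrak{A}|^{1/2}\mathfrak{B}U|\mathfrak{A}|^{1/2})=\tr(\mathfrak{B}U|\mathfrak{A}|)=\tr(\mathfrak{B}\mathfrak{A})$, where the last equalities again use the HS cyclicity to move $|\mathfrak{A}|^{1/2}$ across.

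The principal difficulty will be the Hilbert--Schmidt cyclicity step, since one must justify the double-sum interchange; this is the only place where genuine analysis (as opposed to bookkeeping with orthonormal bases) enters. Once that is in hand, items (1) and (2) are routine and item (3) reduces to polar decomposition plus the two-sided-ideal property of $\mathcal{L}_1$ and of the Hilbert--Schmidt class, both of which have already been recorded earlier in Appendix \S\ref{sec:sec:functanal}.
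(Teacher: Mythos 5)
Your proof is correct in substance, but note first that the paper itself offers no proof of this statement: it is quoted verbatim from \cite[Theorem VI.25]{reed2012methods}, so the relevant comparison is with the argument in that source. Items (1) and (2) are routine there exactly as in your write-up. For item (3), however, Reed and Simon argue quite differently from you: they first establish $\tr(U\mathfrak{A})=\tr(\mathfrak{A}U)$ for $U$ \emph{unitary} by evaluating the trace in the rotated orthonormal basis $\{U\phi_n\}$ (legitimate by the basis-independence theorem preceding this one), and then extend to arbitrary $\mathfrak{B}\in\mathcal{L}(\mathcal{H})$ by writing $\mathfrak{B}$ as a linear combination of four unitaries and invoking item (1). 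Your route --- polar decomposition, the observation that $\mathfrak{A}\in\mathcal{L}_1(\mathcal{H})$ iff $\vert\mathfrak{A}\vert^{1/2}\in\mathcal{L}_2(\mathcal{H})$, and cyclicity for Hilbert--Schmidt pairs via an absolutely convergent double-sum interchange --- is the standard factorization argument from the theory of trace ideals. The unitary trick buys freedom from any convergence analysis at the price of the four-unitaries decomposition; your version does the analysis once (Cauchy--Schwarz justifies the Fubini swap) and, pleasantly, makes item (4) of Lemma \ref{lemma:propstrHB} --- the Hilbert--Schmidt cyclicity, which the paper remarks is the one claim \emph{not} proved in \cite{reed2012methods} --- the primitive statement from which the present item (3) falls out, reversing the logical order the paper follows.

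Three bookkeeping repairs. (i) Your definitions of $P$ and $Q$ do not match your displayed chain: with $P:=\vert\mathfrak{A}\vert^{1/2}$ and $Q:=\vert\mathfrak{A}\vert^{1/2}\mathfrak{B}U$ one gets $PQ=\vert\mathfrak{A}\vert\mathfrak{B}U$, not $U\vert\mathfrak{A}\vert\mathfrak{B}=\mathfrak{A}\mathfrak{B}$. Take $P:=U\vert\mathfrak{A}\vert^{1/2}$, $Q:=\vert\mathfrak{A}\vert^{1/2}\mathfrak{B}$ for the first application of Hilbert--Schmidt cyclicity, and $P:=\vert\mathfrak{A}\vert^{1/2}$, $Q:=\mathfrak{B}U\vert\mathfrak{A}\vert^{1/2}$ for the second; then the chain reads exactly as you wrote it. (ii) Two ingredients you claim are ``recorded earlier in Appendix \S\ref{sec:sec:functanal}'' are not actually there: the polar decomposition $\mathfrak{A}=U\vert\mathfrak{A}\vert$ (Definition \ref{def:operatorpmdecomposition} and Theorem \ref{thm:sqroot} produce only $\vert\mathfrak{A}\vert$, not the partial isometry, which is \cite[Theorem VI.10]{reed2012methods}), and the two-sided ideal property of $\mathcal{L}_2(\mathcal{H})$ (the appendix records only the $\mathcal{L}_1$ ideal property). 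Both are standard and one line, e.g.\ $\Vert\mathfrak{B}P\Vert_2\leq\Vert\mathfrak{B}\Vert\,\Vert P\Vert_2$ follows from $\sum_n\Vert\mathfrak{B}P\phi_n\Vert^2\leq\Vert\mathfrak{B}\Vert^2\sum_n\Vert P\phi_n\Vert^2$, plus passage to adjoints. (iii) You can bypass the compactness bookkeeping in Theorem \ref{thm:tracesingvaluecndtn} when checking that $\vert\mathfrak{A}\vert^{1/2}$ is Hilbert--Schmidt: directly, $\sum_n\Vert\vert\mathfrak{A}\vert^{1/2}\phi_n\Vert^2=\sum_n\langle\phi_n,\vert\mathfrak{A}\vert\phi_n\rangle=\Vert\mathfrak{A}\Vert_1<\infty$.
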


Next, we introduce the notion of Hilbert-Schmidt operators.

\begin{adefinition}[Hilbert-Schmidt operator]\label{def:HSop}
Let $\mathcal{H}$ be a separable Hilbert space. Then, for any $\mathfrak{A}\in \mathcal{L}(\mathcal{H})$, we define the \textbf{Hilbert-Schmidt norm} of $\mathfrak{A}$ in the following way:
$$\Vert \mathfrak{A} \Vert_2:=\big(\tr(\mathfrak{A}^*\mathfrak{A})\big)^{\frac{1}{2}}.$$
Moreover, $\mathfrak{A}$ is called \textbf{Hilbert-Schmidt operator} if $\Vert \mathfrak{A} \Vert_2<\infty$. The set of all Hilbert-Schmidt operators in $\mathcal{L}(\mathcal{H})$ is denoted by $\mathcal{L}_2(\mathcal{H})$.
\end{adefinition}

Let us collect some useful properties of trace, trace class and Hilbert-Schmidt operators.

\begin{alemma}[{\cite[Theorem VI.19, Theorem VI.21, Theorem VI.22, Theorem VI.25]{reed2012methods}}]\label{lemma:propstrHB}
Let $\mathcal{H}$ be a separable Hilbert space and $\mathfrak{A},\mathfrak{B},\mathfrak{C}\in \mathcal{L}(\mathcal{H})$. Then,
\begin{enumerate}
    \item $\Vert \mathfrak{A} \Vert\leq\Vert \mathfrak{A} \Vert_2\leq\Vert \mathfrak{A} \Vert_1$.
    
    \item If $\mathfrak{A}$ is Hilbert-Schmidt operator, then $\mathfrak{A}$ is compact. Moreover, a compact operator $\mathfrak{A}$ is Hilbert-Schmidt if and only if $\sum_{i=1}^N \lambda_i^2<\infty$ where $\{\lambda_i\}_{i=1}^N$ are the singular values of $\mathfrak{A}$.
    
    \item $\mathfrak{A}$ is trace class if and only if $\mathfrak{A}=\mathfrak{B}\mathfrak{C}$ for some Hilbert-Schmidt operators $\mathfrak{B},\mathfrak{C}$.
    
    \item $\tr(\mathfrak{A}\mathfrak{B})=\tr(\mathfrak{B}\mathfrak{A})$ if both of $\mathfrak{A}$ and $\mathfrak{B}$ are Hilbert-Schmidt operators.
\end{enumerate}
\end{alemma}
\begin{proof}
To prove item (4), which is the only one having no proof in \cite{reed2012methods}, we use item (3) of this lemma and follow the proof of \cite[Theorem VI.25]{reed2012methods}.
\end{proof}

The following theorem connects Hilbert-Schmidt operator and integral kernel.

\begin{atheorem}[{\cite[Theorem VI.23]{reed2012methods}}]\label{thm:Hilbertschmidtintgkernell}
Let $(X,\Sigma_X,\mu_X)$ be a measure space with the separable Hilbert space $L^2(\mu_X)$. Then, $\cmdsop\in\mathcal{L}(L^2(\mu_X))$ is Hilbert-Schmidt if and only if there is a function
$$K\in L^2(\mu_X\otimes\mu_X)\,\,\mbox{with}\,\,\cmdsop\phi(\cdot)=\int_X K(\cdot,x')\,\phi(x')\,d\mu_X(x').$$
Moreover,
$\Vert \cmdsop\Vert_2^2=\Vert K \Vert_{L^2(\mu_X\otimes\mu_X)}^2.$
\end{atheorem}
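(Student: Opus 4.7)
Proof proposal for Theorem \ref{thm:Hilbertschmidtintgkernell}.

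The plan is to establish both directions simultaneously by identifying the Hilbert space of Hilbert-Schmidt operators with $L^2(\mu_X\otimes\mu_X)$ via an explicit unitary isomorphism built from orthonormal bases. Since $L^2(\mu_X)$ is separable, fix an orthonormal basis $\{\phi_n\}_{n\in\mathbb{N}}$. A standard fact (provable by approximating indicator functions of measurable rectangles in $L^2(\mu_X\otimes\mu_X)$ by tensor products, and then using monotone/dominated convergence to approximate arbitrary $L^2$ functions) is that the family $\{\Phi_{nm}\}_{n,m\in\mathbb{N}}$ defined by $\Phi_{nm}(x,x'):=\phi_n(x)\,\phi_m(x')$ is an orthonormal basis of $L^2(\mu_X\otimes\mu_X)$. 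This identification will be the workhorse of the proof.

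For the ``if'' direction, suppose $K\in L^2(\mu_X\otimes\mu_X)$ and expand $K=\sum_{n,m}c_{nm}\Phi_{nm}$ with $\sum_{n,m}|c_{nm}|^2=\|K\|_{L^2(\mu_X\otimes\mu_X)}^2<\infty$. First I would verify that for $\mu_X$-a.e.\ $x$, the slice $K(x,\cdot)$ lies in $L^2(\mu_X)$ (Fubini), so that $\mathfrak{K}\phi(x)=\int_X K(x,x')\phi(x')\,d\mu_X(x')$ is well-defined a.e.\ and belongs to $L^2(\mu_X)$ with $\|\mathfrak{K}\phi\|_{L^2(\mu_X)}\leq \|K\|_{L^2(\mu_X\otimes\mu_X)}\|\phi\|_{L^2(\mu_X)}$ by Cauchy-Schwarz, showing $\mathfrak{K}\in\mathcal{L}(L^2(\mu_X))$. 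A direct calculation using orthonormality gives $\mathfrak{K}\phi_m=\sum_n c_{nm}\phi_n$, hence $\|\mathfrak{K}\phi_m\|^2=\sum_n|c_{nm}|^2$. Summing over $m$ yields $\sum_m\|\mathfrak{K}\phi_m\|^2=\sum_{n,m}|c_{nm}|^2=\|K\|_{L^2(\mu_X\otimes\mu_X)}^2$. It remains to check that $\|\mathfrak{K}\|_2^2=\operatorname{tr}(\mathfrak{K}^*\mathfrak{K})$ equals $\sum_m\|\mathfrak{K}\phi_m\|^2$; this is immediate from the basis-independent definition of the trace for positive operators (Definition \ref{def:psdtr}).

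For the ``only if'' direction, assume $\mathfrak{K}\in\mathcal{L}(L^2(\mu_X))$ is Hilbert-Schmidt, and set $c_{nm}:=\langle\phi_n,\mathfrak{K}\phi_m\rangle$. By Parseval applied to each $\mathfrak{K}\phi_m$, $\sum_n|c_{nm}|^2=\|\mathfrak{K}\phi_m\|^2$, and summing over $m$ gives $\sum_{n,m}|c_{nm}|^2=\|\mathfrak{K}\|_2^2<\infty$. Therefore the series $K:=\sum_{n,m}c_{nm}\Phi_{nm}$ converges in $L^2(\mu_X\otimes\mu_X)$ to a function with $\|K\|_{L^2(\mu_X\otimes\mu_X)}=\|\mathfrak{K}\|_2$. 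Let $\widetilde{\mathfrak{K}}$ denote the integral operator associated with $K$ by the ``if'' direction. To finish, I would show $\widetilde{\mathfrak{K}}=\mathfrak{K}$ by comparing their actions on the basis: from the ``if'' direction, $\widetilde{\mathfrak{K}}\phi_m=\sum_n c_{nm}\phi_n=\mathfrak{K}\phi_m$, so the two bounded operators agree on a total set and hence on all of $L^2(\mu_X)$.

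The main obstacle is the clean verification that tensor products of basis elements yield a complete orthonormal system in $L^2(\mu_X\otimes\mu_X)$; once this is in place, the rest of the argument is essentially a bookkeeping exercise exploiting bilinearity of the construction and Parseval's identity. A secondary subtlety is ensuring that the integral operator constructed from the $L^2$-limit $K$ is genuinely realized as a pointwise (a.e.) integral; this is handled by Fubini together with the inequality $\|\mathfrak{K}\phi\|\leq\|K\|_{L^2(\mu_X\otimes\mu_X)}\|\phi\|$, which guarantees that the a.e.\ limit of partial sums coincides with the bounded operator limit.
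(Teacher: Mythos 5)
Your proof is correct and is essentially the classical argument behind the cited result (Reed, Theorem VI.23), which the paper invokes without giving a proof: expand the kernel in the tensor orthonormal basis $\{\phi_n(x)\,\phi_m(x')\}$ of $L^2(\mu_X\otimes\mu_X)$ and apply Parseval's identity twice, exactly as you do. The only cosmetic caveats are that over complex scalars one should take $\phi_n(x)\,\overline{\phi_m(x')}$ as the tensor basis so that the matrix coefficients come out right (over the real scalars used throughout this paper your version is fine), and that the Fubini steps require $\mu_X$ to be $\sigma$-finite, which holds here since $\mu_X$ is a probability measure.
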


Furthermore, the integral kernel associated to a given Hilbert-Schmidt operator on $L^2(\mu_X)$ is uniquely determined (save for a set with zero measure); cf.  the following lemma and corollary.

\begin{alemma}[\cite{wofsey}]\label{lemma:zerokernelzerooperator}
Suppose a measure space $(X,\Sigma_X,\mu_X)$ and the separable Hilbert space $L^2(\mu_X)$ are given. Let $K\in L^2(\mu_X\otimes\mu_X)$ and let $\cmdsop\in\mathcal{L}(L^2(\mu_X))$ be the Hilbert-Schmidt operator associated to $K$. If $\cmdsop=0$, then $K=0$ $(\mu_X\otimes\mu_X)$-almost everywhere.
\end{alemma}
\begin{proof}
For each $x\in X$, let $K_x:=K(x,\cdot)$. Then, since $K\in L^2(\mu_X\otimes\mu_X)$, we have $K_x\in L^2(\mu_X)$ for $\mu_X$-almost every $x$. Observe that $\cmdsop\psi(x)=\langle \psi,K_x\rangle$ for any $\psi\in L^2(\mu_X)$. Now, choose a complete orthonormal basis $\{\phi_i\}_{i=1}^N$ of $L^2(\mu_X)$. For each $i$, $\cmdsop\phi_i=0$ implying that $K_x$ is orthogonal to $\phi_i$ $\mu_X$-almost every $x$. Since a countable union of null sets is null, $K_x$ is orthogonal to $\phi_i$ for all $i$ and for $\mu_X$-almost every $x$. Hence, $K_x=0$ almost everywhere for almost every $x\in X$. Therefore, $K=0$ $(\mu_X\otimes\mu_X)$-almost everywhere. 
\end{proof}

\begin{acorollary}\label{cor:HSuniquekernel}
Suppose a measure space $(X,\Sigma_X,\mu_X)$ and the separable Hilbert space $L^2(\mu_X)$ are given. Let $K_1,K_2\in L^2(\mu_X\otimes\mu_X)$ and $\cmdsop_1,\cmdsop_2\in\mathcal{L}(L^2(\mu_X))$ be the Hilbert-Schmidt operators associated to $K_1$ and $K_2$ respectively. If $\cmdsop_1=\cmdsop_2$, then $K_1=K_2$ $(\mu_X\otimes\mu_X)$-almost everywhere.
\end{acorollary}

\begin{adefinition}[symmetric kernel]\label{def:symker}
Let $(X,\Sigma_X,\mu_X)$ be a measure space. A kernel $K:X\times X\rightarrow\R$ is said to be \textbf{symmetric} if $K(x,x')=K(x',x)$ $\mu_X\otimes\mu_X$-a.e.
\end{adefinition}

\begin{atheorem}[{\cite[Corollary 3.2]{brislawn1991traceable}}]\label{thm:trintegdiag}
Let $\mu_X$ be a $\sigma$-finite Borel measure on a second countable space $X$, and let $K$ be the integral kernel of a trace class operator $\cmdsop$ on $L^2(\mu_X)$. If $K$ is continuous at $(x,x)$ for $\mu_X$-almost every $x$ then
$$\tr(\cmdsop)=\int_X K(x,x)\,d\mu_X(x).$$
\end{atheorem}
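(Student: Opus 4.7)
The strategy is to identify the trace with the diagonal integral through a Mercer-type spectral expansion of $K$. I would first reduce to the case where $\cmdsop$ is positive semi-definite: after splitting $\cmdsop$ into self-adjoint and anti-self-adjoint parts, one writes each self-adjoint trace class piece as $\cmdsop^+ - \cmdsop^-$ in the sense of Definition \ref{def:operatorpmdecomposition}. Each resulting summand is trace class (with kernel continuous at $\mu_X$-a.e. diagonal point), so by linearity of both the trace and integration along the diagonal it suffices to prove the identity when $\cmdsop$ is positive semi-definite.

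For such $\cmdsop$, the Hilbert-Schmidt Theorem (Theorem \ref{thm:HilbertSchmidt}) yields an orthonormal eigenbasis $\{\phi_i\}$ of $L^2(\mu_X)$ with non-negative eigenvalues $\lambda_i$ satisfying $\sum_i \lambda_i = \tr(\cmdsop) < \infty$. The finite-rank partial sums
\[
K_N(x,y) := \sum_{i=1}^N \lambda_i \,\phi_i(x)\,\overline{\phi_i(y)}
\]
are themselves symmetric kernels of positive semi-definite Hilbert-Schmidt operators, and they converge to $K$ in $L^2(\mu_X\otimes\mu_X)$ by Theorem \ref{thm:Hilbertschmidtintgkernell} together with the uniqueness statement of Corollary \ref{cor:HSuniquekernel}. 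The crux is to promote this $L^2$ convergence to the pointwise identity
\[
K(x,x) \;=\; \sum_{i=1}^\infty \lambda_i\,|\phi_i(x)|^2 \qquad \mu_X\text{-a.e.}
\]
Since the diagonal typically has zero $(\mu_X\otimes\mu_X)$-measure, the $L^2$ convergence tells us nothing directly there; however, second countability of $X$ enables a Lebesgue differentiation argument, so that for $\mu_X$-a.e. $x$ the continuity hypothesis lets us recover $K(x,x)$ as the limit of averages of $K$ over shrinking products of balls $B_r(x)\times B_r(x)$. Those averages agree with the corresponding averages of $K_N$, and combined with the monotone increase of $\sum_{i=1}^N \lambda_i|\phi_i(x)|^2$, a Dini-type argument forces the pointwise identity above.

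Once this is in hand, the conclusion follows by Tonelli's theorem applied to the non-negative terms, using orthonormality of $\{\phi_i\}$:
\[
\int_X K(x,x)\,d\mu_X(x) \;=\; \sum_i \lambda_i \int_X |\phi_i(x)|^2\,d\mu_X(x) \;=\; \sum_i \lambda_i \;=\; \tr(\cmdsop).
\]
The main obstacle is the promotion from $L^2$ to pointwise convergence on the diagonal: without the continuity of $K$ at a.e. diagonal point there is no natural way to make pointwise sense of $K(x,x)$ on a measure-zero set, and without second countability the Lebesgue differentiation machinery needed to pin the pointwise values down is unavailable. Both hypotheses of the theorem are essential precisely at this juncture.
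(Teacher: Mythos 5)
The paper does not actually prove this statement---it is imported verbatim from Brislawn \cite{brislawn1991traceable}---so your argument must stand on its own, and it has a genuine gap at the very first step. The reduction to the positive semi-definite case via the Jordan decomposition $\cmdsop=\cmdsop^+-\cmdsop^-$ (Definition \ref{def:operatorpmdecomposition}) does not preserve the hypothesis: the kernels of $\cmdsop^+$ and $\cmdsop^-$ are only defined up to $\mu_X\otimes\mu_X$-null sets, and the diagonal \emph{is} such a null set, so your parenthetical claim that each summand has a kernel ``continuous at $\mu_X$-a.e.\ diagonal point'' is unsupported, and generically false: these kernels are $L^2$-limits of eigen-expansions $\sum_{\pm\lambda_i>0}\lambda_i\,\phi_i(x)\phi_i(y)$ whose eigenfunctions $\phi_i$ are merely $L^2$ and carry no continuity. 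Continuity of $K$ at $(x,x)$ is exactly what pins down the otherwise meaningless diagonal values of $K$; nothing pins down $K^{\pm}(x,x)$, so the linearity step $\int K(x,x)\,d\mu_X=\int K^+(x,x)\,d\mu_X-\int K^-(x,x)\,d\mu_X$ has no content. (The split into symmetric and antisymmetric parts is fine, since those kernels are algebraic combinations of $K(x,y)$ and $K(y,x)$ and inherit continuity at $(x,x)$; it is the positive/negative-part split that breaks.) This is precisely why the theorem is nontrivial: Brislawn treats general trace class operators directly via a Hilbert--Schmidt factorization $\cmdsop=\mathfrak{A}\mathfrak{B}$ (cf.\ item (3) of Lemma \ref{lemma:propstrHB}), shows the regularized diagonal coincides a.e.\ with $x\mapsto\int_X A(x,z)B(z,x)\,d\mu_X(z)$, whose integral is $\tr(\cmdsop)$ by a Parseval-type computation, and uses the continuity hypothesis only to identify this regularized diagonal with $K(x,x)$.

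Your differentiation step is also not available as stated: $X$ is merely second countable, so there is no metric and no balls, and even on separable metric spaces the ball-differentiation theorem can fail for general Borel measures (Preiss's example on a separable Hilbert space); note the paper itself treats Lebesgue differentiability as an \emph{additional} hypothesis in \S\ref{sec:stability} rather than a consequence of second countability. What second countability does provide is a refining sequence of countable Borel partitions generated by a countable base, whose cells shrink topologically to points; replacing ball averages by the martingale conditional expectations $E_n\otimes E_n K$ along these partitions (Brislawn's device, with Doob's maximal inequality in place of Hardy--Littlewood) does repair your positive semi-definite case: $A_n(x):=(E_n\otimes E_n K)(x,x)=\sum_i\lambda_i\big(E_n\phi_i(x)\big)^2\geq 0$ converges a.e.\ to $K(x,x)$ by the continuity hypothesis, $\int_X A_n\,d\mu_X\to\sum_i\lambda_i$ by $L^2$-martingale convergence, and Fatou applied twice (pointwise via $E_n\phi_i\to\phi_i$ a.e., then under the integral) closes the argument with no Dini step needed. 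A further small slip: the averages of $K$ over a fixed small square do not ``agree with'' those of $K_N$ for fixed $N$; in the positive semi-definite case the latter increase monotonically to the former, which is what the Fatou argument exploits. None of this, however, rescues the failed reduction: without the factorization argument your proof covers only the positive semi-definite case.
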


\section{Spherical harmonics, Legendre Polynomials, and the Funk-Hecke Theorem}\label{sec:sphehar}

Now we review basic results on spherical harmonics. Our main reference is \cite{atkinson2012spherical}.

For any $d\geq 2$ and $n\geq 0$, let $\mathbb{H}_n^d$ be the space of all homogeneous polynomials of degree $n$ in $d$ dimensions.

\begin{aexample}
Here are some examples of $\mathbb{H}_n^d$.
\begin{itemize}
    \item $\mathbb{H}_2^2=\{a_1x_1^2+a_2x_1x_2+a_3x_2^2:a_1,a_2,a_3\in\R\}$.
    
    \item $\mathbb{H}_3^2=\{a_1x_1^3+a_2x_1x_2^2+a_3x_1^2x_2+a_4x_2^3:a_1,a_2,a_3,a_4\in\R\}$.
    \item $\mathbb{H}_2^3=\{a_1x_1^2+a_2x_2^2+a_3x_3^2+a_4x^1x_2+a_5x_2x_3+a^5x_3x_1:a_1,\dots,a_6\in\R\}$.
\end{itemize}
\end{aexample}

Recall that a function $f:\R^d\rightarrow\R$ is called \textbf{harmonic} if $\Delta f=0$. The space of homogeneous harmonics of degree $n$ in $d$ dimension is denoted by $\mathbb{Y}_n(\R^d)$. More precisely,

$$\mathbb{Y}_n(\R^d):=\{f\in\mathbb{H}_n^d:\Delta f=0\}.$$

Let $N_{n,d}:=\mathrm{dim}(\mathbb{Y}_n(\R^d))$. The following equality is well-known (\cite[(2.10))]{atkinson2012spherical}). 

\begin{equation}
    N_{n,d}=\frac{(2n+d-2)(n+d-3)!}{n!(d-2)!}.
\end{equation}

Moreover, $N_{n,d}=O(n^{d-2})$ for $n$ sufficiently large (\cite[(2.12))]{atkinson2012spherical}).\medskip

Finally, we are ready to introduce spherical harmonics.

\begin{adefinition}[spherical harmonics]\label{def:spheharmon}
$\mathbb{Y}_n^d:=\mathbb{Y}_n(\R^d)\vert_{\Sp^{d-1}}$ is called the spherical harmonic space of order $n$ in $d$ dimensions. Any function in $\mathbb{Y}_n^d$ is called a spherical harmonic of order $n$ in $d$ dimensions.
\end{adefinition}

\begin{aremark}
It is known that $\mathrm{dim}(\mathbb{Y}_n^d)=\mathrm{dim}(\mathbb{Y}_n(\R^d))=N_{n,d}$.
\end{aremark}

Now, let us introduce Legendre polynomials, which will be useful later. A more detailed explanation on Legendre polynomials can be found in \cite[Section 2]{atkinson2012spherical}.

\begin{adefinition}[Legendre polynomial]\label{def:Legendrepol}
The map $P_{n,d}:[-1,1]\longrightarrow\mathbb{R}$
$$
    t\longmapsto n!\,\Gamma\left(\frac{d-1}{2}\right)\,\sum_{k=0}^{\lfloor n/2 \rfloor}(-1)^k\frac{(1-t^2)^k\,t^{n-2k}}{4^k\,k!\,(n-2k)!\,\Gamma\left(k+\frac{d-1}{2}\right)}
$$
is said to be the \emph{Legendre polynomial of degree $n$ in $d$ dimensions.}
\end{adefinition}

The following theorem connects spherical harmonics and Legendre polynomials.

\begin{atheorem}[addition theorem {\cite[Theorem 2.9]{atkinson2012spherical}}]\label{thm:addition}
Let $\{Y_{n,j}^d:1\leq j\leq N_{n,d}\}$ be an orthonormal basis of $\mathbb{Y}_n^d$, i.e.,
$$\int_{\Sp^{d-1}}Y_{n,j}^d(u)Y_{n,k}^d(u)\,d\vol_{\Sp^{d-1}}(u)=\delta_{jk}.$$
Then
$$\sum_{j=1}^{N_{n,d}}Y_{n,j}^d(u)Y_{n,j}^d(v)=\frac{N_{n,d}}{\vert\Sp^{d-1} \vert}P_{n,d}(\langle u,v\rangle)$$
for any $u,v\in\Sp^{d-1}$.
\end{atheorem}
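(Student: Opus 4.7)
The plan is to exploit three features of the function
\[
K_n(u,v) \;:=\; \sum_{j=1}^{N_{n,d}} Y_{n,j}^d(u)\, Y_{n,j}^d(v):
\]
it is independent of the chosen orthonormal basis, invariant under the diagonal action of the orthogonal group $O(d)$, and for each fixed $v$ it yields an element of $\mathbb{Y}_n^d$ invariant under rotations fixing $v$.

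First I would verify basis-independence. If $\{Y'_{n,j}\}$ is another orthonormal basis, write $Y'_{n,j} = \sum_k a_{jk}\, Y_{n,k}^d$ with $A=(a_{jk})$ orthogonal; then
\begin{align*}
\sum_j Y'_{n,j}(u)\, Y'_{n,j}(v)
&= \sum_{j,k,l} a_{jk} a_{jl}\, Y_{n,k}^d(u)\, Y_{n,l}^d(v) \\
&= \sum_{k,l} (A^{\mathrm{T}} A)_{kl}\, Y_{n,k}^d(u)\, Y_{n,l}^d(v) \;=\; K_n(u,v).
\end{align*}
For the rotational invariance, I would note that for any $Q\in O(d)$ the family $\{Y_{n,j}^d \circ Q\}$ is again an orthonormal basis of $\mathbb{Y}_n^d$, since pulling back by $Q$ preserves harmonicity, degree, and the $L^2$-inner product on $\Sp^{d-1}$. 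Basis-independence then gives $K_n(Qu,Qv) = K_n(u,v)$. Because $O(d)$ acts transitively on pairs of unit vectors with a prescribed inner product, this forces $K_n(u,v) = f_n(\langle u, v\rangle)$ for some $f_n:[-1,1]\to\R$.

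Next I would identify $f_n$ with a scaled Legendre polynomial. Fixing $v \in \Sp^{d-1}$, the map $u \mapsto K_n(u,v)$ lies in $\mathbb{Y}_n^d$ and is invariant under the stabiliser $O(d-1)$ of $v$. The classical uniqueness of zonal harmonics asserts that the subspace of such $v$-zonal elements of $\mathbb{Y}_n^d$ is one-dimensional and is spanned precisely by $u \mapsto P_{n,d}(\langle u,v\rangle)$; I would invoke this from the standard theory (cf.\ \cite[\S 2.1.2]{atkinson2012spherical}). Hence $f_n(t) = c_n\, P_{n,d}(t)$ for a constant $c_n$ depending only on $n$ and $d$.

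Finally $c_n$ is pinned down by a trace computation. On the one hand, orthonormality gives
\[
\int_{\Sp^{d-1}} K_n(u,u)\, d\vol_{\Sp^{d-1}}(u) \;=\; \sum_{j=1}^{N_{n,d}} \int_{\Sp^{d-1}} (Y_{n,j}^d(u))^2\, d\vol_{\Sp^{d-1}}(u) \;=\; N_{n,d};
\]
on the other hand $K_n(u,u) = c_n\, P_{n,d}(1) = c_n$, since only the $k=0$ summand of the explicit formula in Definition~\ref{def:Legendrepol} survives at $t=1$, yielding $P_{n,d}(1)=1$. Therefore $c_n\,|\Sp^{d-1}| = N_{n,d}$, so $c_n = N_{n,d}/|\Sp^{d-1}|$, which is the stated constant. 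The only step requiring genuine input from the theory of spherical harmonics is the uniqueness of $v$-zonal harmonics in $\mathbb{Y}_n^d$; everything else reduces to changing orthonormal bases and evaluating a single trace.
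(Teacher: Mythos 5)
Your proof is correct. The paper itself does not prove this statement---it is quoted directly from Atkinson--Han \cite[Theorem 2.9]{atkinson2012spherical}---and your argument (basis-independence of $K_n$, $O(d)$-invariance forcing $K_n(u,v)=f_n(\langle u,v\rangle)$, one-dimensionality of the space of $v$-zonal elements of $\mathbb{Y}_n^d$ spanned by $u\mapsto P_{n,d}(\langle u,v\rangle)$, and the trace normalization $\int_{\Sp^{d-1}}K_n(u,u)\,d\vol_{\Sp^{d-1}}(u)=N_{n,d}$ combined with $P_{n,d}(1)=1$) is precisely the classical proof given in that reference, with the one invoked lemma (uniqueness of zonal harmonics) a legitimate citation given that the statement itself is standard background for the paper.
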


\begin{alemma}[{\cite[Corollary 2.15]{atkinson2012spherical}}]\label{cor:sphharmonicsorthonogal}
For $m\neq n$, $\mathbb{Y}_m^d\perp\mathbb{Y}_n^d$ in $L^2(\nvol_{\Sp^{d-1}})$.
\end{alemma}

\begin{acorollary}\label{cor:L1Ynzero}
For any $n>0$ and $Y_n^d\in\mathbb{Y}_n^d$, $\int_{\Sp^{d-1}}Y_n^d(u)\,d\nvol_{\Sp^{d-1}}(u)=0$.
\end{acorollary}
\begin{proof}
Apply Lemma \ref{cor:sphharmonicsorthonogal} and the fact $Y_0^d\equiv 1\in \mathbb{Y}_0^d$.
\end{proof}

Let us collect some useful properties of Legendre polynomials which will be used in the main body.

\begin{alemma}\label{lemma:Legdrprop}
For any $d\geq 3$, $m,n\geq 0$, and $t\in [-1,1]$,
\begin{enumerate}
    \item $\vert P_{n,d}(t)\vert\leq 1=P_{n,d}(1)$ (see \cite[(2.39)]{atkinson2012spherical}).
    
    \item $\int_{\Sp^{d-1}} \vert P_{n,d}(\langle u,v\rangle )\vert^2\,d\vol_{\Sp^{d-1}}(v)=\frac{\vert\Sp^{d-1}\vert}{N_{n,d}}$ for any $u\in\Sp^{d-1}$ (see \cite[(2.40)]{atkinson2012spherical}).

    \item \textbf{(Rodrigues representation formula)} $P_{n,d}(t)=(-1)^n R_{n,d}(1-t^2)^{\frac{3-d}{2}}\left(\frac{d}{dt}\right)^n (1-t^2)^{n+\frac{d-3}{2}}$ where $R_{n,d}=\frac{\Gamma\left(\frac{d-1}{2}\right)}{2^n\Gamma\left(n+\frac{d-1}{2}\right)}$ (see \cite[Theorem 2.23]{atkinson2012spherical}).
    
    \item  $P_{n,d}(-t)=(-1)^n P_{n,d}(t)$ (see \cite[(2.73)]{atkinson2012spherical}).
    
    \item If $f\in C^n([-1,1])$, then
    $$\int_{-1}^1 f(t)P_{n,d}(t)(1-t^2)^{\frac{d-3}{2}}\,dt=R_{n,d}\int_{-1}^1 f^{(n)}(t)(1-t^2)^{n+\frac{d-3}{2}}\,dt$$
    where $R_{n,d}=\frac{\Gamma\left(\frac{d-1}{2}\right)}{2^n\Gamma\left(n+\frac{d-1}{2}\right)}$ (see \cite[Proposition 2.26]{atkinson2012spherical}).
    
    \item $\int_{-1}^1 P_{m,d}(t) P_{n,d}(t)(1-t^2)^{\frac{d-3}{2}}\,dt=\frac{\vert\Sp^{d-1}\vert}{N_{n,d}\vert\Sp^{d-2}\vert}\delta_{mn}$ (see \cite[(2.79)]{atkinson2012spherical}).
    
    \item \textbf{(Poisson identity)} For any $r\in (-1,1)$ and any $t\in [-1,1]$,
    $$\sum_{n=0}^\infty N_{n,d} P_{n,d}(t)\,r^n=\frac{1-r^2}{(1+r^2-2rt)^{\frac{d}{2}}}.$$
    Moreover, for each fixed $r\in (-1,1)$, the convergence is uniform as a function of $t$ (see \cite[Proposition 2.28]{atkinson2012spherical}).
    
    \item \textbf{(Green-Beltrami identity)} For any $f\in C^2(\Sp^{d-1})$ and any $g\in C^1(\Sp^{d-1})$, we have
    $$\int_{\Sp^{d-1}} g\Delta f\,d\vol_{\Sp^{d-1}}=-\int_{\Sp^{d-1}} \langle\nabla g, \nabla f\rangle\,d\vol_{\Sp^{d-1}}$$
    (see \cite[Proposition 3.3]{atkinson2012spherical}).
    
    \item For any $f\in C^1(\Sp^{d-1})$ and $u\in\Sp^{d-1}$, we have
    $$\int_{\Sp^{d-1}} \nabla f(v)\cdot\nabla_v P_{n,d}(\langle u,v \rangle)\,d\vol_{\Sp^{d-1}}(v)=n(n+d-2)\int_{\Sp^{d-1}} f(v)P_{n,d}(\langle u,v \rangle)\,d\vol_{\Sp^{d-1}}(v)$$
    (see \cite[(3.21)]{atkinson2012spherical}).
    
    \item For any $u\in\Sp^{d-1}$, we have
    $$\int_{\Sp^{d-1}} \Vert\nabla_v P_{n,d}(\langle u, v \rangle) \Vert^2\,d\vol_{\Sp^{d-1}}(v)=n(n+d-2)\frac{\vert\Sp^{d-1}\vert}{N_{n,d}}$$
    (see \cite[Proposition 3.6]{atkinson2012spherical}).
\end{enumerate}
\end{alemma}
\begin{proof}
For item (7) (Poisson identity), the uniform convergence holds because of the following argument: Observe that, for any positive integer $k>0$,

$$\left\vert\sum_{n=k}^\infty N_{n,d} P_{n,d}(t)\,r^n\right\vert\leq\sum_{n=k}^\infty N_{n,d} \vert P_{n,d}(t)\vert\,\vert r\vert^n=\sum_{n=k}^\infty N_{n,d}\vert r\vert^n\quad(\because\text{ item (1)}).$$

But, since
$$\sum_{n=0}^\infty N_{n,d}\vert r\vert^n=\sum_{n=0}^\infty N_{n,d}P_{n,d}(1)\,\vert r\vert^n=\frac{1+\vert r\vert}{(1-\vert r \vert)^d}<\infty,$$

$\sum_{n=k}^\infty N_{n,d}\vert r\vert^n$ becomes arbitrarily small for large enough $k$.\medskip

For the proof of the other items, see \cite{atkinson2012spherical}.
\end{proof}
\end{document}